\documentclass{amsart}
\usepackage[utf8]{inputenc}
\usepackage{graphicx}
\usepackage[dvipsnames]{xcolor}
\usepackage{amsmath,latexsym}
\usepackage{amsfonts,amssymb}
\usepackage{amscd, amsthm}
\usepackage{stmaryrd}
\usepackage{fancyhdr}
\usepackage{commath,enumitem,chngcntr}
\usepackage{indentfirst}
\usepackage[titletoc,toc,title]{appendix}
\usepackage{bbm}
\usepackage[top=1in, bottom=1in, left=1.3in, right=1.3in]{geometry}
\usepackage{pst-node}
\usepackage{tikz}
\usepackage{tikz-cd}
\usetikzlibrary{
  arrows,
  arrows.meta,
  calc,
  positioning,
  decorations.pathreplacing
}

\usepackage{float}
\usepackage{mathrsfs}
\usepackage{overpic}
\usepackage{natbib}          %
\usepackage{faktor}
\usepackage{mathtools}
\usepackage{booktabs,tabularx,array}
\mathtoolsset{showonlyrefs}
\usepackage{hyperref}
\usepackage{subcaption}

\usetikzlibrary{
  arrows,
  arrows.meta,
  calc,
  positioning,
  decorations.pathreplacing
}
\usepackage{float}
\usepackage{subcaption}
\newcommand{\CasePanelWidth}{.99}

\newtheorem{theorem}{Theorem}[section]
\newtheorem{lemma}[theorem]{Lemma}

\newtheorem{definition}[theorem]{Definition}

\newtheorem{proposition}[theorem]{Proposition}
\newtheorem{corollary}[theorem]{Corollary}
\newtheorem{remark}[theorem]{Remark}
\newtheorem{conjecture}[theorem]{Conjecture}
\DeclareMathOperator{\Mod}{Mod}

\def\C{\mathbb{C}}
\def\R{\mathbb{R}}
\def\H{\mathbb{H}}
\def\T{\mathcal{T}}
\def\D{\mathbb{D}}
\def\SS{\mathbb{S}}
\def\E{\mathbb{E}}
\def\Z{\mathbb{Z}}
\def\P{\mathcal{P}}
\def\K{\mathbb{K}}

\newcommand{\dmo}{\DeclareMathOperator}
\dmo{\Isom}{Isom}
\dmo{\Dev}{Dev}
\dmo{\arctanh}{arctanh}

\dmo{\inte}{int}
\dmo{\st}{st}

\definecolor{ground}{HTML}{F3F5F6}
\definecolor{groundedge}{HTML}{A7B0B7}
\definecolor{foot}{HTML}{DDEEF4}
\definecolor{surface}{HTML}{77B8C5}
\definecolor{surfaceedge}{HTML}{23697A}
\definecolor{vertex}{HTML}{164B5B}
\definecolor{rayred}{HTML}{C9342D}
\definecolor{construct}{HTML}{3B70AD}
\definecolor{muted}{HTML}{75818A}

\tikzset{
  ground/.style={fill=ground,draw=groundedge!82,line width=.40pt},
  footprint/.style={fill=foot,fill opacity=.72,draw=surfaceedge!42,line width=.45pt},
  surface cell/.style={fill=surface,fill opacity=.42,draw=none},
  surface cell alt/.style={fill=surface!72!white,fill opacity=.42,draw=none},
  face/.style={fill=surface!38!white,fill opacity=.14,draw=surfaceedge!56,line width=.42pt},
  mesh/.style={draw=surfaceedge!78,line width=.34pt,opacity=.42},
  surface rim/.style={draw=surfaceedge,line width=.82pt,opacity=.96},
  hull edge/.style={draw=surfaceedge,line width=.70pt},
  hidden hull edge/.style={draw=surfaceedge,line width=.43pt,densely dashed,opacity=.55},
  vpoint/.style={circle,fill=vertex,fill opacity=.82,draw=none,inner sep=.88pt},
  cpoint/.style={circle,fill=construct,draw=white,line width=.15pt,inner sep=1.09pt},
  rpoint/.style={circle,fill=rayred,draw=white,line width=.15pt,inner sep=1.23pt},
  red ray/.style={draw=rayred,line width=1.02pt,-{Latex[length=2.2mm,width=1.5mm]}},
  red connector/.style={draw=rayred,line width=.66pt,densely dashed},
  red curve/.style={draw=rayred,line width=.96pt},
  red curve end/.style={draw=rayred,line width=.96pt,-{Latex[length=2.1mm,width=1.4mm]}},
  red segment/.style={draw=rayred,line width=1.08pt,densely dashed,-{Latex[length=2.1mm]}},
  construction/.style={draw=construct,line width=.68pt,-{Latex[length=1.85mm,width=1.25mm]}},
  guide/.style={draw=muted,line width=.50pt,densely dashed,opacity=.68},
  continuation/.style={draw=surfaceedge!82,line width=.58pt,-{Latex[length=1.85mm,width=1.25mm]}},
  axis/.style={draw=muted,line width=.55pt,-{Latex[length=1.8mm,width=1.25mm]}},
  lab/.style={font=\footnotesize,inner sep=.25pt},
  slab/.style={font=\footnotesize,inner sep=.15pt},
  redlab/.style={font=\footnotesize,text=rayred,inner sep=.25pt}
}

\newcommand{\PanelBoxA}{\path[use as bounding box] (-5.25,-1.30) rectangle (5.25,2.15);}
\newcommand{\PanelBoxB}{\path[use as bounding box] (-5.25,-1.30) rectangle (5.25,4.60);}
\newcommand{\PanelBoxC}{\path[use as bounding box] (-5.25,-1.30) rectangle (5.25,4.18);}
\newcommand{\PanelBoxD}{\path[use as bounding box] (-5.25,-1.30) rectangle (5.25,4.42);}
\newcommand{\PanelBoxE}{\path[use as bounding box] (-5.25,-1.30) rectangle (5.25,4.10);}
\newcommand{\PanelBoxF}{\path[use as bounding box] (-5.25,-1.30) rectangle (5.25,4.12);}
\newcommand{\PanelBoxG}{\path[use as bounding box] (-5.25,-1.30) rectangle (5.25,4.16);}
\newcommand{\ProcessPanelBox}{\path[use as bounding box] (-7.02,-1.28) rectangle (5.18,3.15);}
\newcommand{\BasePlane}{%
  \path[ground] (-3.55,-2.20,0)--(3.55,-2.20,0)--(3.55,2.20,0)--(-3.55,2.20,0)--cycle;
  \draw[axis] (-3.35,-1.90,0)--(-3.35,-1.90,.85) node[left,slab] {$z$};
}
\newcommand{\VPoint}[2]{\node[vpoint] at (#1,#2,0) {};}
\newcommand{\RPoint}[3]{\node[rpoint] at (#1,#2,#3) {};}
\newcommand{\CPoint}[2]{\node[cpoint] at (#1,#2,0) {};}
\newcommand{\RedRayTop}{3.57}

\newcommand{\LatticeTile}[2]{%
  \foreach \ii in {0,1,2,3,4,5}{%
    \foreach \jj in {0,1,2,3,4,5}{%
      \pgfmathsetmacro{\xa}{#1+\ii/6}
      \pgfmathsetmacro{\xb}{#1+(\ii+1)/6}
      \pgfmathsetmacro{\ya}{#2+\jj/6}
      \pgfmathsetmacro{\yb}{#2+(\jj+1)/6}
      \pgfmathsetmacro{\za}{sqrt(max(0,.5-(\xa-(#1+.5))^2-(\ya-(#2+.5))^2))}
      \pgfmathsetmacro{\zb}{sqrt(max(0,.5-(\xb-(#1+.5))^2-(\ya-(#2+.5))^2))}
      \pgfmathsetmacro{\zc}{sqrt(max(0,.5-(\xb-(#1+.5))^2-(\yb-(#2+.5))^2))}
      \pgfmathsetmacro{\zd}{sqrt(max(0,.5-(\xa-(#1+.5))^2-(\yb-(#2+.5))^2))}
      \path[surface cell] (\xa,\ya,\za)--(\xb,\ya,\zb)--(\xb,\yb,\zc)--(\xa,\yb,\zd)--cycle;
    }%
  }%
}

\newcommand{\StripTile}[1]{%
  \foreach \ii in {0,1,2,3,4,5,6,7}{%
    \foreach \jj in {0,1,2,3,4,5,6,7}{%
      \pgfmathsetmacro{\xa}{#1+\ii/8}
      \pgfmathsetmacro{\xb}{#1+(\ii+1)/8}
      \pgfmathsetmacro{\ya}{-1+2*\jj/8}
      \pgfmathsetmacro{\yb}{-1+2*(\jj+1)/8}
      \pgfmathsetmacro{\za}{sqrt(max(0,1.25-(\xa-(#1+.5))^2-\ya^2))}
      \pgfmathsetmacro{\zb}{sqrt(max(0,1.25-(\xb-(#1+.5))^2-\ya^2))}
      \pgfmathsetmacro{\zc}{sqrt(max(0,1.25-(\xb-(#1+.5))^2-\yb^2))}
      \pgfmathsetmacro{\zd}{sqrt(max(0,1.25-(\xa-(#1+.5))^2-\yb^2))}
      \path[surface cell] (\xa,\ya,\za)--(\xb,\ya,\zb)--(\xb,\yb,\zc)--(\xa,\yb,\zd)--cycle;
    }%
  }%
}

\newcommand{\StripWall}[2]{%
  \path[face] (#1,#2,3.45)--({#1+1},#2,3.45)
    --plot[domain=0:180,samples=28] ({#1+.5+.5*cos(\x)},{#2},{.5*sin(\x)})--cycle;
  \draw[surface rim] plot[domain=0:180,samples=28]
    ({#1+.5+.5*cos(\x)},{#2},{.5*sin(\x)});
}

\newcommand{\WedgeWall}[2]{%
  \pgfmathsetmacro{\aa}{.62}
  \pgfmathsetmacro{\sgn}{#2}
  \path[face,fill opacity=.24] (#1,{\sgn*\aa*#1},3.45)--({#1+1},{\sgn*\aa*(#1+1)},3.45)
    --plot[domain=0:180,samples=28]
      ({#1+.5+.5*cos(\x)},{\sgn*\aa*(#1+.5+.5*cos(\x))},{.5*sqrt(1+\aa^2)*sin(\x)})--cycle;
  \draw[surface rim] plot[domain=0:180,samples=28]
    ({#1+.5+.5*cos(\x)},{\sgn*\aa*(#1+.5+.5*cos(\x))},{.5*sqrt(1+\aa^2)*sin(\x)});
}

\newcommand{\CurtainCell}[1]{%
  \path[face] (#1,0,3.45)--({#1+1},0,3.45)
    --plot[domain=0:180,samples=30] ({#1+.5+.5*cos(\x)},0,{.5*sin(\x)})--cycle;
  \draw[surface rim] plot[domain=0:180,samples=30]
    ({#1+.5+.5*cos(\x)},0,{.5*sin(\x)});
}

\begin{document}

\title{Discrete uniformization of polyhedral surfaces}

\author{Feng Luo$^{\dagger}$}
\thanks{$\dagger$Corresponding author email: fluo@math.rutgers.edu;}
\thanks{Contributing authors: yanwen.luo@outlook.com; zhenghao.rao@rutgers.edu; xrzhao24@m.fudan.edu.cn}
\address{Department of Mathematics, Rutgers University--New Brunswick, Piscataway, NJ 08854, USA}
\email{fluo@math.rutgers.edu  }

\author{Yanwen Luo}
\address{School of Mathematics and Physics, University of Science and Technology Beijing, Beijing, 100083, P.R. China}
\email{yanwen.luo@outlook.com}

\author{Zhenghao Rao}
\address{Department of Mathematics, Rutgers University--New Brunswick, Piscataway, NJ 08854, USA}
\email{zhenghao.rao@rutgers.edu}

\author{Xinrong Zhao}
\address{School of Mathematical Sciences, Fudan University, Shanghai, 200433, P.R. China}
\email{xrzhao24@m.fudan.edu.cn}

\begin{abstract}
The main result of the paper shows that each connected polyhedral surface with a hyperbolic background metric, or a Euclidean background metric with uniform boundedness of radii of circumdisks, is discrete conformal to a complete constant-curvature Riemannian surface equipped with a non-empty closed discrete subset. We also prove a discrete Riemann mapping theorem. The proofs are based on the recent work on the discrete Schwarz lemma, the discrete Liouville theorem for polyhedral surfaces, and a Weyl-type realization theorem for hyperbolic surfaces. 
\end{abstract}

\maketitle

\section{Introduction}

The classical uniformization theorem states that any connected Riemannian surface, whether it is compact or not,   is conformally diffeomorphic to a complete Riemannian surface of constant curvature $-1, 0$, or $1$.  Furthermore, two complete Riemannian surfaces of constant curvature are conformally diffeomorphic if and only if they are related by a Möbius transformation \footnote{A Möbius transformation is either a holomorphic or an anti-holomorphic linear fractional transformation. A surface with a constant curvature metric admits special charts whose transition functions are Möbius transformations. A Möbius transformation between two Riemannian surfaces of constant curvature metrics is a diffeomorphism whose coordinate representations in these special charts are Möbius.  The Schwarz lemma implies that a Möbius transformation between two complete hyperbolic surfaces is an isometry.}. 
Figure \ref{standardunif} illustrates the theorem. 
 
\begin{figure}[h!]
\begin{center}
\includegraphics[scale=0.3]{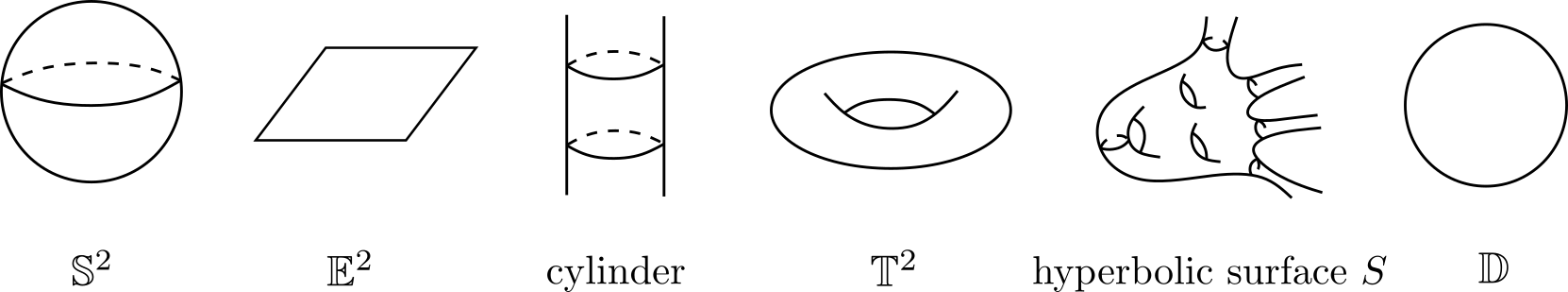}
\caption{The complete Riemannian surfaces of constant curvature in the classical uniformization theorem. }
\label{standardunif}
\end{center}
\end{figure}
 \vspace{0.2cm}
\begin{figure}[ht!]  
\begin{center}
\includegraphics[scale=0.27]{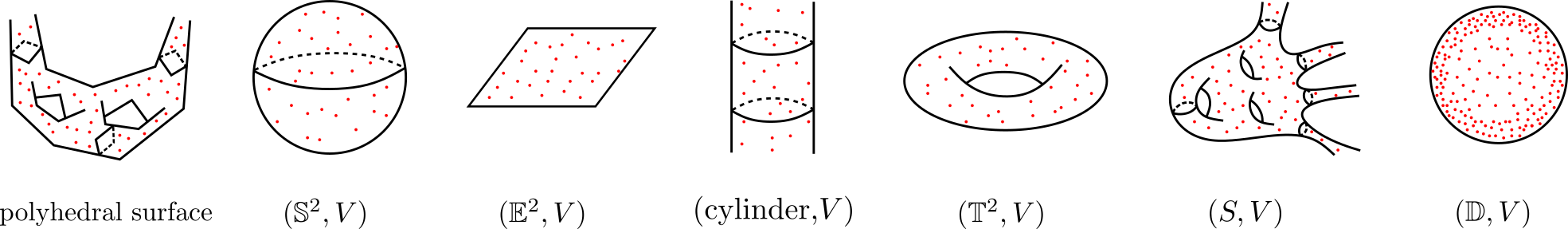}
\caption{The standard models of polyhedral surfaces in the discrete uniformization theorem.} 
\label{unifc}
\end{center}
\end{figure}

The main result of the paper shows the analogous theorem for polyhedral surfaces in discrete conformal geometry in terms of standard model surfaces. See Figure \ref{unifc}.

\begin{theorem} \label{dut}
 (i) Every connected polyhedral surface with a hyperbolic background metric is discrete conformal to a standard model surface.

(ii) Every connected polyhedral surface with a Euclidean background metric whose empty-disks have uniformly bounded diameters is discrete conformal to a standard model surface.  Moreover, the assumption that the empty-disk diameters are uniformly bounded is sharp.

(iii) Every connected polyhedral surface with a spherical background metric whose empty-disks 
have diameters uniformly bounded away from $\pi$ is discrete conformal to a standard model surface. Moreover, the assumption that the empty-disk diameters are uniformly bounded away from $\pi$ is sharp.

(iv) Two standard model surfaces are discrete conformal if and only if they are related by a Möbius transformation. 
\end{theorem}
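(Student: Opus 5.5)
The plan is to pass through hyperbolic geometry throughout. Via the Delaunay/ideal-polyhedron correspondence, a polyhedral surface with a Delaunay triangulation is discrete conformal to another exactly when their associated decorated ideal hyperbolic structures agree. Concretely, we glue ideal tetrahedra or convex hulls over the empty disks to obtain a hyperbolic 3-manifold with boundary; its convex-hull boundary is a complete hyperbolic surface with cusps at the vertices. Discrete conformal equivalence then becomes isometry of these cusped hyperbolic surfaces $\Sigma(S)$.

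For (i)--(iii), start with a connected polyhedral surface $S$ with vertex set $V$ and build $\Sigma(S)$. The target standard model is a complete constant-curvature surface $X$ with a closed discrete set $P$, and $\Sigma(X,P)$ is the boundary of the convex hull of $P$ in the corresponding hyperbolic 3-space quotient. The key input is the Weyl-type realization theorem for hyperbolic surfaces quoted in the abstract. Given the complete cusped hyperbolic surface $\Sigma(S)$, take its universal cover, a complete hyperbolic surface, possibly of infinite type. Realize it equivariantly as the boundary of the convex hull of a closed set in $\partial\H^3$ minus a domain $\Omega$. Uniformize $\Omega$ via the classical uniformization theorem (it is a sphere, plane or disk) to get $X$, and let $P$ be the image of the cusp points. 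The equivariance descends to a quotient, which yields $(X,P)$.

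The trichotomy hinges on whether the complementary limit set is a point, empty, or large. In the hyperbolic-background case, completeness of the Delaunay structure forces $\Omega$ to be a disk type; this is where the discrete Schwarz lemma enters, controlling degeneration. In the Euclidean case, the uniform bound on empty-disk diameters guarantees that $\Sigma(S)$ is complete. The horoball decorations then have bounded geometry, and the discrete Liouville theorem shows that the target is $\C$ or a quotient of it, rather than producing a boundary. The spherical case is handled in the same way, with diameters bounded away from $\pi$.

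Sharpness is shown by explicit examples, such as the strip and wedge surfaces in the figures. For these, empty disks grow and the resulting hyperbolic surface is incomplete, i.e.\ it has a funnel. Any uniformization would then conflict with the Liouville theorem or the Schwarz lemma.

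For (iv), the 'if' direction is immediate, since Möbius maps preserve circles and hence Delaunay empty disks and the convex hulls. For 'only if', a discrete conformal map between $(X_1,P_1)$ and $(X_2,P_2)$ gives an isometry $\Sigma(X_1,P_1)\to\Sigma(X_2,P_2)$ of the convex-hull boundaries. We lift it to the universal covers and extend it to the boundary at infinity. The discrete Liouville and Schwarz results provide rigidity: the isometry of the boundary surfaces extends to an isometry of the convex hulls and hence to a Möbius map of the spheres at infinity. That map conjugates the deck groups and sends $P_1$ to $P_2$.

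The main obstacle is the existence step: showing that the realization of an infinite-type complete cusped hyperbolic surface lands in the correct one of the three model cases. The expected difficulty is particularly in the Euclidean case, where one must rule out an ideal boundary of positive size. There I would first try to use the bounded-diameter hypothesis, together with the discrete Schwarz lemma applied to exhausting subsurfaces, to show that the image domain has no boundary.
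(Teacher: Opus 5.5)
You assert completeness of the associated metric $d^*$ but never prove it. You simply say the glued surface is complete with cusps, and in the Euclidean case that bounded empty disks ``guarantee'' it. This fails in general: the examples of Appendix~\ref{appen:incomplete} have incomplete $d^*$. Proving completeness under the hypotheses is the main technical step of the paper, Theorem~\ref{thm:completeness}, and without it Theorem~\ref{lw}, which requires a complete surface, cannot be invoked at all.

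The paper's argument runs as follows. Pass to the universal cover, suppose a finite-length ray leaves every compact subset of $(S-V,d^*)$, and develop the chain of triangles $\tau_i$ it crosses. The projection carrying each $\tau_i^*$ onto $\tau_i$ is uniformly Lipschitz on the planes $D_i^*$. It is automatically $1$-Lipschitz in the hyperbolic case, and this is exactly where the diameter bounds enter in the Euclidean and spherical cases. So the projected path has finite length, while a compactness argument keeps its crossing points a uniform distance from the circles $\partial D_i$. Hence its tail lies in a single compact circumdisk $D_n$, which then contains infinitely many disjoint chords mapped into edges of the Delaunay triangulation, contradicting Proposition~\ref{finiteness}. ``Bounded geometry of horoball decorations'' does not replace this.

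Your sharpness paragraph also gets the mechanism wrong. The examples have a finite-length escaping path, not a funnel (funnels are complete ends). They are sharp simply because every standard model has a complete convex-hull metric; Liouville and Schwarz play no role there.

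The existence step rests on a false trichotomy and a step that fails. The background geometry does not determine the model type. A closed hyperbolic cone sphere is polyhedral and is discrete conformal to a spherical model (Theorem~\ref{rivin}). The universal cover of a closed flat cone surface of genus $2$ has bounded empty disks but is discrete conformal to some $(\D,W,d_\H)$. So there is no ``ideal boundary'' to rule out in the Euclidean case: Liouville is a uniqueness statement, not a type criterion, and your announced main obstacle is not one. In the paper, once $d^*$ is complete with cusps at $V$, Theorem~\ref{lw} gives $\partial C(Y)$ with $Y$ circle-type. The cusps force at most one disk component, and whichever case occurs gives the model.

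Uniformizing $\Omega$ classically and taking the image of the cusp points would break this. Non-Möbius conformal maps do not preserve domes, which is exactly why Theorem~\ref{disriemap} must produce a new set $W$. Moreover, for circle-type $Y$ the complement is already $\hat\C$, $\C$, or a round disk up to Möbius maps, so the step is not needed.

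``Realize equivariantly'' is also unjustified, since Theorem~\ref{lw} says nothing about group actions. Each deck transformation acts isometrically on $\partial C(Y)$ and must be extended to $\H^3$ via Theorems~\ref{rivin}, \ref{llr}, \ref{llz}. The last of these requires $\overline{W}-W=\partial\D$, which you never establish. The paper proves it in Proposition~\ref{polylimit}: an arc of $\partial\D$ avoided by $\overline{W}$ yields an embedded hyperbolic half-plane in the dome. Such a half-plane cannot exist in $(S-V,d^*)$, because every edge of its ideal triangulation runs into a cusp.

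Finally, for (iv) you must also rule out a Euclidean model being discrete conformal to a hyperbolic one. The paper does this with Zhao's strong rigidity theorem.
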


Here, an \underline{empty-disk} in a polyhedral surface $(S, V,  d)$ is an isometric immersion of a round disk into the punctured surface $(S-V, d|_{S-V})$.
Theorem \ref{dut} is a theorem of Rivin if the polyhedral surface is homeomorphic to the 2-sphere. See Theorem \ref{rivin}.
The precise definitions of polyhedral surfaces, standard models, and discrete conformal equivalence are given in \S 2. 
We will briefly describe the first two notions.
A \underline{standard model surface} is a complete Riemannian surface of constant curvature endowed with a nonempty closed discrete subset. A \underline{polyhedral surface} $(S,V,d)$ consists of a surface equipped with a nonempty closed discrete subset $V$ and a cone metric $d$ whose cone singularities are contained in $V$, 
such that $(S, V, d)$
is obtained by isometrically gluing geometric triangles\footnote{A spherical triangle is assumed to be 
contained in an open hemisphere.} in the unit $2$-sphere
$\mathbb{S}^2$, or the Euclidean plane $\E^2$, or the hyperbolic plane $\H^2$, subject to the \underline{Delaunay condition}. 
The Delaunay condition means the circumdisks of geometric triangles in $S$ are compact empty-disks.
The resulting surfaces are called polyhedral surfaces with a spherical, or Euclidean, or hyperbolic background metric, respectively. 

Note that if the surface is closed, then a fundamental theorem in computational geometry shows that for any hyperbolic or Euclidean cone metric $d$ on $(S, V)$, $(S, V, d)$ is polyhedral.

Theorem \ref{dut} is a consequence of recent work on a Weyl-type realization theorem for hyperbolic surfaces \cite{luow2024},  the discrete Liouville theorem \cite{luo-luo-rao}, and the discrete Schwarz lemma \cite{Zhao2026rigidity, luo-luo}. It was previously known to be true for closed surfaces by the works of \cite{rivin1994, filla2008,  gglsw, glsw} in Euclidean and hyperbolic background metrics. 

\medskip
\noindent{\bf Discrete conformality and convex geometry in the hyperbolic 3-space.} Discrete conformal equivalence of polyhedral surfaces is closely related to the convex hull construction in the hyperbolic 3-space $\H^3$.  Let $X$ be a compact set in the Riemann sphere $\hat \C =\C \cup \{\infty\}$ considered as the conformal infinity of the upper-half-space model $\H^3_U$ of the hyperbolic 3-space. Assume that $X$ contains at least three points. We use $C(X)$ to denote the convex hull of $X$ in hyperbolic 3-space and $\partial C(X)$ to denote its boundary in the induced path metric. Notice that if $C(X)$ is two-dimensional, its boundary $\partial C(X)$ is defined as its metric double in the sense of Alexandrov \cite{alexandrov-collected-papers-vol-2}. One usually calls the surface $\partial C(X)$ the \underline{Thurston dome} over the open set $\hat \C-X$. See Figure \ref{Thudome}.  A theorem of W. Thurston, \cite{thurston, epsteinm2006}, says that $\partial C(X)$ is a complete hyperbolic surface.  Suppose  $V_1, V_2$ are two closed discrete subsets in the unit disk $\D$. 
Let $d_{\H}$ be the hyperbolic metric on $\D$. We will see that two standard model surfaces $(\D, V_1, d_{\H})$ and $(\D, V_2, d_{\H})$  are discrete conformal if and only if the boundaries of the convex hulls $C(V_i \cup \D^c)$ are isometric. See Figure \ref{fig:standardmodel}.

Our strategy to prove Theorem \ref{dut} follows the classical approach by going to the universal cover.  Recall that the proof of the uniformization theorem goes as follows. Let $(\tilde{S}, \tilde{g})$ be the universal cover of a connected Riemannian surface $(S, g)$ with the pull-back metric $\tilde{g}$. The fundamental group $\Gamma$ of $S$ acts on $(\tilde{S}, \tilde{g})$ isometrically with quotient $(S, g)$.  For the simply connected Riemannian surfaces $(\tilde{S}, \tilde{g})$, one proves two results. The first is the existence theorem, which states that ($\tilde{S}, \tilde{g})$ is conformal to the 2-sphere $\SS ^2$, or the Euclidean plane $\E^2$ or the hyperbolic plane $\H^2$. The existence theorem implies that the fundamental group $\Gamma$ acts conformally, freely, and discontinuously on $\mathbb {S} ^2$, $\mathbb {E}  ^2$, or $\mathbb {H}  ^2$ with quotient surface conformal to $(S,g)$. The second result says that conformal diffeomorphisms of $\mathbb {S} ^2$, $\E^2$, and $\H^2$ are Möbius transformations, or complex linear maps (i.e., similarity maps), or hyperbolic isometries, respectively.  These follow from the Liouville theorem and the Schwarz lemma and can be considered the uniqueness results. With the existence and uniqueness theorems for simply connected surfaces, the uniformization theorem for all Riemannian surfaces follows by noticing that the fundamental group $\Gamma$ acts isometrically on $\E^2$ or $\H^2$. 

To prove Theorem \ref{dut} in the discrete setting, 
we need similar existence and uniqueness theorems for simply connected polyhedral surfaces.  In the case of the 2-sphere, the existence and uniqueness were proved in \cite{rivin1994}. 

\begin{theorem}[Rivin \cite{rivin1994}]
\label{rivin}
    Any finite area complete hyperbolic metric on a punctured sphere is isometric to the boundary of an ideal hyperbolic polyhedron in the hyperbolic 3-space $\H^3$. Moreover, the ideal polyhedron is unique up to hyperbolic isometries.
\end{theorem}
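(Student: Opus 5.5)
Rivin's theorem is classical; I will prove it variationally, splitting it into existence and uniqueness. Let $(S,d)$ be a complete finite-area hyperbolic metric on the sphere with $n\ge 3$ punctures $V$. Fix an ideal triangulation $T$ of $(S,V)$ by geodesic ideal triangles. Such a triangulation exists by straightening arcs between punctures, and can be taken Delaunay via the Epstein--Penner convex hull construction after choosing horocycles.

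\textbf{Existence.} An ideal polyhedron $P\subset\H^3$ with vertex set $V$ is determined, up to isometry, by its exterior dihedral angles $\theta(e)\in(0,\pi)$ along edges. A triangulated ideal polyhedron is determined by the dihedral angles $\alpha(e)\in[0,\pi)$ at the edges of a triangulation of its boundary, where $\alpha(e)=0$ marks a fake edge lying inside a face. Gluing ideal triangles with prescribed shears yields $(S,d)$. So I will set up Rivin's volume functional: for each ideal triangle with angles $a,b,c$ summing to $\pi$, take $\Lambda(a)+\Lambda(b)+\Lambda(c)$, the volume of the ideal tetrahedron with those dihedral angles, where $\Lambda$ is the Lobachevsky function. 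Then:
\begin{enumerate}
\item Maximize the total volume $\mathcal V$ over the convex polytope of angle assignments to the triangles of $T$. The constraints are that the angles of each triangle sum to $\pi$, the angles around each cusp sum to $2\pi$ (so the cusps are complete), and the two angles facing each edge $e$ sum to at most $\pi$.
\item By strict concavity of $\mathcal V$, a unique maximum exists.
\item The Lagrange multipliers, or equivalently the first variation computed via Milnor's formula $\partial\Lambda=-\log|2\sin|$, show that at an interior critical point the shears along every edge vanish. This means the resulting cusped structure is exactly $d$.
\item The angle assignment then realizes $d$ as a Delaunay decomposition. By the Epstein--Penner correspondence, coning from a horoball center, or equivalently gluing ideal tetrahedra with vertex at a common ideal point in $\partial\H^3$, produces a convex ideal polyhedron whose boundary is isometric to $(S,d)$.
\end{enumerate}

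\textbf{Main obstacle.} The hardest step is showing that the maximum is not attained on the boundary of the polytope, where some triangle angle degenerates to $0$. I would handle this with Rivin's lemma: if some angles tend to $0$, then the derivative of $\Lambda$ near $0$ is $+\infty$. Hence moving into the interior along a direction that increases all the vanishing angles strictly increases $\mathcal V$. Such a direction exists because the feasible polytope has nonempty interior, which follows from an explicit Delaunay angle structure of $(S,d)$. Edges where the constraint $\le\pi$ is tight are not a problem: they become flat, fake edges of $P$.

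\textbf{Uniqueness.} Suppose $P_1,P_2$ are two ideal polyhedra whose boundaries are isometric to $(S,d)$. Their face decompositions are both the Epstein--Penner/Delaunay decomposition of $(S,d)$ relative to the horocycles induced by the respective polyhedra. Choosing a common refinement triangulation $T'$, each $P_i$ gives a feasible angle structure on $T'$ with zero shears. Each is therefore a critical point, hence the maximizer, of the strictly concave $\mathcal V$. So their angles coincide. Ideal polyhedra with the same combinatorics and dihedral angles are congruent by Rivin's characterization; alternatively, both can be developed from a common face. Thus $P_1$ and $P_2$ differ by an isometry of $\H^3$.
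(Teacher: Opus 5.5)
The paper does not prove this statement. It is quoted from Rivin \cite{rivin1994} and used as a black box, in the sphere case of the proof of Theorem \ref{dut} and in Remark \ref{rem:spherical-standard}. Judged on its own, your existence argument fails at the outset, because the polytope you maximize over is empty. An ideal triangulation of the $n$-punctured sphere has $2n-4$ triangles, so the corner angles sum to $(2n-4)\pi$. Requiring angle sum $2\pi$ at each of the $n$ cusps forces the total to be $2n\pi$. This is Gauss--Bonnet: there is no flat structure on $\mathbb{S}^2$ without cone points. It comes from conflating two different conditions. Completeness at a cusp is a condition on shears (they sum to zero around the cusp), not on Euclidean angles. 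Euclidean angles only appear after you put one vertex $v_\infty$ at $\infty\in\partial\mathbb{H}^3_U$ and cone from it. Then only the triangles not containing $v_\infty$ carry Euclidean angles, namely those of a planar Delaunay tessellation of the other $n-1$ vertices. Interior vertices get sum $2\pi$, but the neighbours of $v_\infty$ are boundary vertices of a convex polygon and get no such constraint.

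The more serious gap survives that repair: nothing in your optimization depends on $d$ beyond the combinatorics of $T$. Both $\mathcal V=\sum\Lambda$ and the constraints are determined by $T$ alone, so there is one maximizer. By contrast, the shear coordinates of $d$ along $T$ range over a $(2n-6)$-dimensional family.

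Your Step 3 actually identifies the maximizer. For an edge $e=uw$ with adjacent triangles $t_1,t_2$, let $\beta_i,\gamma_i$ be the angles of $t_i$ at $u,w$. The direction $\delta\beta_1=\delta\gamma_2=1$, $\delta\gamma_1=\delta\beta_2=-1$ preserves all triangle and vertex sums. Along it, $d\mathcal V(\delta)=\log\frac{\sin\gamma_1\sin\beta_2}{\sin\beta_1\sin\gamma_2}$, which is the log length cross-ratio, i.e.\ the shear of $e$. So an interior critical point is the unique structure whose shears along $T$ all \emph{vanish}. That equals $d$ only in a special case, so ``zero shears'' does not mean ``the cusped structure is exactly $d$''.

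To realize $d$, its data must enter the objective. One way is a linear term pairing the shears of $d$ with these rotation directions. Dually, you can use a convex functional of vertex-scaling variables whose coefficients are Penner $\lambda$-lengths of $d$; this is the Bobenko--Pinkall--Springborn variational proof \cite{bps}. Once the data enters, edges where $\alpha_e+\alpha'_e\le\pi$ is tight carry Lagrange multipliers, so they are not automatically harmless as you claim.

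Your uniqueness argument has the same flaw. The angle structures induced by $P_1,P_2$ carry the generally nonzero shears of $d$, so they are not critical points of $\mathcal V$. In addition, a triangulation $T'$ refining both face decompositions need not exist a priori, since edges of two Delaunay decompositions for different decorations can cross.
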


For non-compact simply connected surfaces, the existence 
theorem was proved in \cite{luow2024}. Recall that a \underline{circle-type closed set} $Y \subset \mathbb S^2$ is a compact set whose connected components are either a round disk or a
point. Consider the 2-sphere $\mathbb S^2$ as the conformal infinity of the Poincar\'e model $\H^3_P$ of the hyperbolic 3-space. 

\begin{theorem} [Existence Theorem \cite{luow2024}]
     Every genus zero complete hyperbolic surface with countably many topological ends is isometric to $\partial C(Y)$ for a circle-type closed set $Y\subset \SS^2 = \partial\H^3_P$.
     \label{lw}
\end{theorem}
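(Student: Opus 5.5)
The plan is to prove the finite-type case first by a continuity (deformation) argument, and then to pass to an exhaustion limit for surfaces of infinite type.

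\emph{Finite type.} Let $\Sigma$ be a genus zero complete hyperbolic surface with finitely many ends, say $m$ cusps and $n$ funnels. The claim is that $\Sigma \cong \partial C(Y)$, where $Y$ is a union of $m$ points and $n$ disjoint closed round disks in $\SS^2$.
\begin{enumerate}[label=(\arabic*)]
\item Check directly that for such a $Y$ the dome $\partial C(Y)$ is of this form. Each point of $Y$ produces a cusp. Each disk $D\subset Y$ produces a funnel, whose boundary geodesic lies on the hyperbolic plane spanned by $\partial D$.
\item Consider the space $\mathcal C_{m,n}$ of such configurations modulo Möbius transformations, and the map $\Phi:\mathcal C_{m,n}\to \mathcal T_{0,m,n}$ sending $Y$ to the marked hyperbolic structure of $\partial C(Y)$. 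Both spaces are cells of dimension $2m+3n-6$: each point contributes $2$ parameters, each disk $3$, and the Möbius group has dimension $6$; this agrees with the real dimension of the Teichmüller space of genus zero surfaces with $m$ cusps and $n$ funnels of free length.
\item Show that $\Phi$ is injective. This is a Rivin/Schlenker-type rigidity statement for the convex hull boundary, and the expected route is the infinitesimal rigidity of $\partial C(Y)$ under the bending lamination.
\item Show that $\Phi$ is proper. If $Y$ degenerates (disks collide or shrink, points merge), then some simple closed geodesic on $\partial C(Y)$ becomes short or its length diverges, so $\Phi(Y)$ leaves every compact set of $\mathcal T_{0,m,n}$.
\end{enumerate}
Invariance of domain then makes $\Phi$ a homeomorphism. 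The case $m+n=3$ is checked by hand, and the case with only cusps is Theorem~\ref{rivin}.

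\emph{Infinite type.} For general $\Sigma$ with countably many ends, proceed as follows.
\begin{enumerate}[label=(\arabic*)]
\item Exhaust $\Sigma$ by compact geodesically bordered subsurfaces $K_1\subset K_2\subset\cdots$ (with horocyclic boundary at cusps).
\item Complete each $K_j$ to a finite-type surface $\Sigma_j$ by gluing funnels along the geodesic boundary components and cusps along the horocycles.
\item By the finite-type case, $\Sigma_j\cong\partial C(Y_j)$. Normalize by Möbius transformations so that a fixed basepoint $p\in K_1$, together with a unit frame at $p$, is sent to a fixed point and frame in $\H^3_P$.
\item Pass to a subsequence with $Y_j\to Y$ in the Hausdorff topology. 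Then $C(Y_j)\to C(Y)$, and the path metrics on $\partial C(Y_j)$ converge in the pointed Gromov--Hausdorff sense to that of $\partial C(Y)$.
\item The isometric embeddings $K_j\hookrightarrow\partial C(Y_j)$ are $1$-Lipschitz and based. By Arzel\`a--Ascoli they converge to an isometric embedding $\iota:\Sigma\to\partial C(Y)$. Since $\Sigma$ is complete, $\iota(\Sigma)$ is closed; it is also open by invariance of domain, so $\iota$ is onto the component of $\partial C(Y)$ containing it.
\end{enumerate}

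\emph{Main obstacle.} The hardest step is to show that the limit $Y$ is of circle type and that no mass escapes. Hausdorff limits of unions of disks and points can produce non-round continua, for instance chains of disks accumulating onto an arc. They can also make $C(Y)$ degenerate: if $Y$ lies on a circle, then $\partial C(Y)$ is a double.

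The first thing I would try is to use the geometry of $\Sigma$ directly. Each end of $\Sigma$ corresponds to a component of $Y$. A non-round, non-point continuum component $A$ would force $\partial C(Y)$ near $A$ to carry a bending lamination with infinitely many geodesics that bound no funnel, and hence either uncountably many ends or a non-planar end behaviour. That would contradict the genus zero, countably-many-ends hypothesis; the hypothesis is used through a Cantor–Bendixson induction on the countable end space. To rule out collapse of funnel disks to points, compare the boundary geodesic lengths, which stay fixed along the sequence, with the width of the corresponding half-spaces. Nondegeneracy of $C(Y)$ follows from the normalization at $p$.
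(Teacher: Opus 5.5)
The paper does not prove this theorem; it imports it from \cite{luow2024}. Your outline therefore has to stand on its own, and it has genuine gaps.

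\textbf{Finite-type case.} Injectivity of $\Phi$ is the heart of the continuity method, and it is not a citable ``Rivin/Schlenker-type'' fact. The rigidity results the paper has (Theorems \ref{rivin}, \ref{llr}, \ref{llz}, \ref{5.2}) cover only configurations with at most one disk component, which is why Corollary \ref{dutold} assumes that all but one end are cusps. For two or more disks you are assuming a rigidity theorem that neither you nor the paper supplies. An infinitesimal-rigidity route would also need a differentiable structure on $\Phi$, which the bent, non-smooth domes do not obviously provide. Without injectivity, invariance of domain gives nothing. For existence alone you would need $\Phi$ proper of nonzero degree. Both properness (which needs uniform control of the dome metric along degenerating configurations) and any degree computation are missing. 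Your description in (1) is also wrong. If $Y\neq D$, take a point $a\in Y$ outside $D$. Every point of the plane $C(\partial D)$ lies on an open geodesic segment from $a$ into the open half-space $C(D)^\circ$, so that plane lies in the interior of $C(Y)$ and cannot carry the funnel geodesic.

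\textbf{Infinite-type case.} The decisive gap is the one you flag, and your proposed remedy cannot work, because no property of $\Sigma$ forces the components of a limit set to be round. If $Y$ is a closed square, $\partial C(Y)$ is a complete simply connected hyperbolic surface, hence isometric to $\H^2$, exactly like $\partial C(\D^c)$. More generally, the ends of $\partial C(Y)$ correspond to the components of $Y$ whatever their shapes, so end counting and Cantor--Bendixson bookkeeping on $\Sigma$ detect nothing. Roundness must be inherited from the approximants. You must show that each relevant component of the limit is the Hausdorff limit of a single component of $Y_j$ (limits of round disks are round disks or points). Equivalently, no chain of components of $Y_j$ may coalesce; such a chain can even disconnect $\hat\C-Y$. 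This needs quantitative conformal control of the kind He and Schramm developed for countable circle domains: transboundary extremal length with transfinite induction on Cantor--Bendixson rank. It would have to be transported to domes by estimates like those quoted from \cite{luow2024} in \S6, and this is where countability of the ends genuinely enters.

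Several other steps fail as stated.
\begin{itemize}
\item Compact geodesically bordered subsurfaces only fill the convex core, so they cannot exhaust surfaces containing embedded half-planes. An example is $\partial C(W\cup\D^c)$ with $\overline W$ missing an arc of $\partial\D$, as in the proof of Proposition \ref{polylimit}.
\item The funnels glued to $\partial K_j$ have boundary lengths that change with $j$, so your length comparison controls only genuine funnel ends.
\item Normalizing at $p$ does not prevent $Y$ from lying on a circle.
\item An Arzel\`a--Ascoli limit of the embeddings is not automatically injective, since it can fold onto the two sheets of a double. Nor is it a local isometry for the intrinsic metrics without a convergence theorem for convex surfaces. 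So ``open by invariance of domain'' is unsupported.
\end{itemize}
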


The uniqueness result for polyhedral surfaces with Euclidean background metrics, called the discrete Liouville Theorem, is a consequence of recent work \cite{luo-luo-rao}.
\begin{theorem}[Discrete  Liouville Theorem \cite{luo-luo-rao}] \label{llr}
    If $V_1$ and $V_2$ are two closed discrete subsets of the plane $\C$ such that the boundaries of the hyperbolic convex hulls $\partial C(V_1\cup\{\infty\})$ 
    and $\partial C(V_2\cup\{\infty\})$ in the upper-half-space model of the hyperbolic 3-space are isometric, then $ C(V_1\cup\{\infty\})$ is isometric to $ C(V_2\cup\{\infty\})$.
\end{theorem}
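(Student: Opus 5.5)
Here $V_1, V_2 \subset \C$ are closed and discrete, and $\phi \colon \partial C(V_1\cup\{\infty\}) \to \partial C(V_2\cup\{\infty\})$ is an isometry. The aim is to build a hyperbolic isometry of $\H^3_U$ carrying one hull onto the other. The plan is to show that $\phi$ respects the faceted structure of the domes and that the faces are glued with the same bending data on both sides. A convex ideal polyhedral surface is then determined up to ambient isometry by its intrinsic metric together with the positions of its ideal vertices. The overall approach is a rigidity argument in the spirit of Rivin's uniqueness in Theorem~\ref{rivin}, extended to the infinite case by exhaustion and a maximum-principle argument.

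\textbf{Step 1 (combinatorics).} Since $V_i$ is discrete and contains $\infty$ in its closure, $\partial C(V_i\cup\{\infty\})$ is a locally finite ideal polyhedral surface. Each face is an ideal hyperbolic polygon whose vertices lie in $V_i\cup\{\infty\}$; the face is the hemisphere over an empty Delaunay disk, or a vertical face over a boundary edge of the convex hull of $V_i$. The cusps of the complete hyperbolic surface $\partial C$ are exactly the ideal vertices. Hence $\phi$ induces a bijection $V_1\cup\{\infty\}\to V_2\cup\{\infty\}$, which must send $\infty$ to $\infty$: the cusp at $\infty$ is distinguished as the unique non-isolated end when $V_i$ is infinite, and the finite case is Rivin's theorem. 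Intrinsically the metric alone does not see the bending lines, so I will not try to match faces directly. Instead I will use horoball normalizations. Fix horospheres at the cusps and encode each surface by its Penner lambda-lengths, which give a Euclidean Delaunay triangulation of $V_i$ with vertex scaling factors $u$. The isometry $\phi$ then becomes a discrete conformal equivalence between the Delaunay triangulated sets $V_1$ and $V_2$, related by a discrete conformal factor $w\colon V_1\to\R$ through a sequence of Delaunay flips.

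\textbf{Step 2 (reduction to a Liouville statement).} The goal is now to show that $w$ is constant and that the induced map on vertices is a similarity. Equivalently, $\phi$ extends to the restriction of a Möbius map fixing $\infty$, i.e.\ a complex affine map, which extends to an isometry of $\H^3_U$ between the hulls. I will use the variational/maximum-principle structure of discrete conformal maps. Choose the horosphere at $\infty$ to be a horizontal plane, and take the horospheres at points of $V_1$ to be the pullbacks under $\phi$ of those for $V_2$. The height function $w$ then satisfies a discrete harmonic-type equation in which all curvatures vanish. By the discrete Schwarz lemma type comparison (as in the cited work), bounded-below discrete conformal factors on a complete flat Delaunay surface obey a maximum principle. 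Moreover, since $\phi$ is an isometry at the cusp $\infty$, the factor $w$ is bounded: the horospherical cross-section at $\infty$ is a Euclidean plane on both sides, and $\phi$ restricted to it is an isometry of flat planes up to scale.

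\textbf{Step 3 (Liouville).} A bounded solution on a complete parabolic configuration is constant. Then $V_2$ is the image of $V_1$ under a similarity $z\mapsto az+b$ or its conjugate, which extends to an isometry of $\H^3_U$ mapping one hull onto the other. I expect the main obstacle to be precisely this discrete Liouville step in Steps 2–3: establishing boundedness of $w$ and the maximum principle across infinitely many Delaunay flips without any uniform geometry assumption on $V_i$. I would first try exhausting by finite subsets, applying Rivin's uniqueness to convex hulls of finite truncations, and taking limits.
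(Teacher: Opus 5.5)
\textbf{There is a genuine gap: Steps~2--3 assume the theorem rather than prove it.}

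Your Step~1 reformulation is reasonable. When $V_1,V_2$ are infinite, $\phi$ matches the cusps at $V_1$ with those at $V_2$, and the unique non-cusp end with the unique non-cusp end. What has to be shown is that $w(v)$ is constant, where $w(v)$ is the signed distance at the cusp $v$ between two horocycles: the one cut out by the horosphere of Euclidean diameter one at $v$, and the $\phi$-pullback of the corresponding horocycle at $\phi(v)$.

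Your reason for $w$ being bounded fails. When $V_i$ is infinite, $\infty$ is not a cusp, since every neighbourhood of that end contains infinitely many cusps. Moreover, no horosphere centred at $\infty$ lies in the dome, and $\phi$ is only defined on the dome, so it induces no map between such horospheres. Claiming that $\phi$ respects the flat horospherical structure at $\infty$ is in effect claiming that $\phi$ comes from a complex affine map, which is the conclusion. A single non-cusp end does not by itself bound the conformal factor. In Wu's example (Appendix~\ref{appendix:example}), a polyhedral surface is discrete conformal to $(\E^2,V,d_{\E})$ through the factor $\lambda(\pm(2^n,0))=2^{-n/2}$, whose logarithm is unbounded at that end.

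Even granting boundedness, ``a bounded solution on a complete parabolic configuration is constant'' is an unproved nonlinear Liouville theorem:
\begin{itemize}
\item The flip-sequence description you quote is a theorem about closed surfaces. Here the Delaunay tessellation of $V_1$ and the pullback of that of $V_2$ may differ by infinitely many flips, so there is no fixed triangulation on which $w$ solves a fixed discrete equation.
\item You give no argument for parabolicity.
\item When $C_{\E}(V_i)\neq\E^2$, the dome has vertical faces, possibly with infinitely many ideal vertices, so there is not even a Delaunay triangulation of the plane to work on.
\item Appealing to a comparison principle ``as in the cited work'' does not help. It either cites the very result you are proving, or the disk-case Schwarz lemma (Theorem~\ref{llz}), which is a different statement.
\end{itemize}

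Your fallback does not repair this. For finite $F\subset V_1$, the dome $\partial C(F\cup\{\infty\})$ is not a subsurface of $\partial C(V_1\cup\{\infty\})$, and its metric is not determined by that of the larger dome. So $\phi$ gives no isometry $\partial C(F\cup\{\infty\})\to\partial C(\phi(F)\cup\{\infty\})$, and Rivin's uniqueness (Theorem~\ref{rivin}) has nothing to act on. Passing to a limit would need exactly the uniform control that is missing.

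For comparison, the paper gives no independent proof of this statement. It is quoted from \cite{luo-luo-rao}, and it follows at once from the stronger Theorem~\ref{5.2}(i). The set $X_i=V_i\cup\{\infty\}$ is compact and countable, hence has vanishing $1$-dimensional Hausdorff measure. So every isometry $\partial C(X_1)\to\partial C(X_2)$ extends to an isometry of $\H^3$, which then carries $C(X_1)$ onto $C(X_2)$. There the global input is the smallness of the sets $X_i$, not a maximum principle for a discrete conformal factor.
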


The uniqueness result for polyhedral surfaces with hyperbolic background metrics, called the discrete Schwarz Lemma, is a consequence of recent independent works by \cite{Zhao2026rigidity} and \cite{luo-luo}.  

\begin{theorem}[Discrete Schwarz Lemma \cite{Zhao2026rigidity}, \cite{luo-luo}] \label{llz}
    If $V_1$ and $V_2$ are two closed discrete subsets of the open unit disk $\D$ with $\overline{V_i}-V_i =\partial \D$ such that the boundaries of the hyperbolic convex hulls $\partial C(V_1  \cup \D^c)$ and $\partial C(V_2 \cup \D^c)$ in the upper-half-space model of the hyperbolic 3-space are isometric, then   $ C(V_1 \cup \D^c)$ is isometric to $ C(V_2 \cup \D^c)$.
\end{theorem}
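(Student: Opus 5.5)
The plan is to reduce the statement to a maximum principle for a discrete conformal factor on the vertex set, in the spirit of the Ahlfors–Schwarz lemma.

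\textbf{Setup.} Write $\Sigma_i=\partial C(V_i\cup\D^c)$. The part of $\Sigma_i$ over $\D$ is an ideal hyperbolic polyhedral surface with ideal vertices $V_i$. Its faces are the ideal polygons spanned by the hyperbolic Delaunay cells $\T_i$ of $V_i$ in $(\D,d_{\H})$, and it is glued along bending lines to pieces of the hemisphere over $\partial\D$. Since $\overline{V_i}-V_i=\partial\D$, I expect that $\Sigma_i$ is in fact the ideal polyhedral surface itself. This surface is a complete hyperbolic disk with one cusp at each point of $V_i$.

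\textbf{Step 1 (vertex correspondence).} An isometry $\phi:\Sigma_1\to\Sigma_2$ carries cusps to cusps, so it induces a bijection $V_1\to V_2$.

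\textbf{Step 2 (conformal factor).} Decorate every ideal vertex $v\in V_i$ with the horosphere in $\H^3_U$ that is canonically attached to the point $v\in\D$ by the background disk: its Euclidean size is proportional to $1-|v|^2$. Its trace on $\Sigma_i$ is a horocycle $H_i(v)$. Define
\[
u(v)=\text{signed distance between }\phi(H_1(v))\text{ and }H_2(\phi(v))\text{ on }\Sigma_2 .
\]
Penner's lambda-lengths then satisfy $\lambda_2(e)=\lambda_1(e)+u(a)+u(b)$ for each edge $e=ab$ of a common refinement. Via the Gu–Luo–Sun–Wu diagonal-switch description, this says that $(\D,V_1,d_{\H})$ and $(\D,V_2,d_{\H})$ are discrete conformal with factor $u$, and that $u\equiv 0$ iff the hulls agree. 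So it suffices to show $u\le 0$; applying the same argument to $\phi^{-1}$ gives $u\ge0$.

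\textbf{Step 3 (local maximum principle).} At a vertex $v$ where $u$ attains a local maximum, the hyperbolic Delaunay angles around $v$ in the two disk structures must compare in a fixed direction. This uses the monotonicity of inner angles in lambda-lengths, exactly as in the proof of the discrete Liouville theorem (Theorem \ref{llr}). Because both background metrics have zero cone angle defect at interior points of $\D$, this should force $u(v)\le 0$ whenever a maximum is attained.

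\textbf{Step 4 (the main obstacle).} $V_1$ is infinite and accumulates on all of $\partial\D$, so a maximum need not be attained. I would first try the classical Schwarz–Pick dilation trick. Replace $V_2$ by $rV_2$ for $r<1$ and compare $V_1$ with $rV_2$. Under this replacement the canonical horocycles shrink relative to those of $V_1$ by an amount that tends to $-\infty$ as $|v|\to1$. This comes from the factor $(1-r^2|v|^2)/(1-|v|^2)$, provided I can show that $u$ stays bounded near $\partial\D$. The comparison function $u_r$ then tends to $-\infty$ at the ideal boundary, so its supremum is attained, and Step 3 gives $u_r\le0$. Letting $r\to1$ yields $u\le 0$.

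The delicate point is the a priori boundedness of $u$ near $\partial\D$. I would try to obtain it from completeness of $\Sigma_i$ together with a uniform bound on Delaunay circumradii in the hyperbolic metric, which Thurston's theorem on domes should supply. A second subtlety is that $\T_1$ and $\T_2$ differ by infinitely many flips. I would handle this by working locally with finite star neighbourhoods, where only finitely many flips matter. Once $u\equiv0$, the decorated lambda-lengths coincide, so the Delaunay structures are isometric, and the isometry of the ideal surfaces extends to an isometry $C(V_1\cup\D^c)\cong C(V_2\cup\D^c)$.
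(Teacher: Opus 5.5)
Your normalization (the horosphere at $v$ tangent to the plane $C(\partial\D)$, which gives $\lambda=\sinh(d_{\H}/2)$) and the reduction to the one-sided inequality $u\le 0$ are sound. But the argument has genuine gaps at the two places where the difficulty of the theorem lies. The paper does not prove Theorem \ref{llz}; it quotes it, together with the stronger Theorem \ref{5.2}(ii). So your argument has to stand on its own.

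\textbf{The combinatorial framework of Steps 2--3 is not available in general.} A supporting plane of $C(V_i\cup\D^c)$ is the dome over a disk $D\subset\overline{\D}$ with $D^\circ\cap V_i=\varnothing$, and $D$ may be internally tangent to $\partial\D$. Consequently, faces can be ideal polygons with infinitely many vertices on a horocycle, and bending lines can end on $\partial\D$. Moreover, as recalled in Appendix \ref{appen:vs}, there are $V$ with $\overline{V}-V=\partial\D$ for which infinitely many bending lines of $\partial C(V\cup\D^c)$ end at a single vertex. (Also, $\partial C(V_i\cup\D^c)$ never meets $C(\partial\D)$, so there are no hemisphere pieces.) So $\Sigma_i$ need not be a locally finite gluing of compact Delaunay cells, and a vertex need not have a finite star or a finite angle sum to perturb.

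Even in the locally finite case there is no common refinement: $\T_1$ and $\phi^{-1}(\T_2)$ are different ideal cell decompositions of one surface, and their edges cross. For an edge $e=ab$ of $\T_1$, the arc $\phi(e)$ is bent in $\Sigma_2$. All you get is $\sinh\bigl(d_{\H}(\phi(a),\phi(b))/2\bigr)\le e^{(u(a)+u(b))/2}\sinh\bigl(d_{\H}(a,b)/2\bigr)$ (with $u>0$ meaning $H_2$ lies deeper in the cusp), not an identity. The diagonal-switch description of \cite{glsw} recalled in Appendix \ref{appen:vs} is for closed surfaces and a finite sequence of flips. Nothing shows that only finitely many flips matter near $v$, since the Delaunay cells of an intermediate decoration near $v$ are determined by vertices arbitrarily far away. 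So Step 3 is asserted (``should force $u(v)\le 0$''), not proved. Nor is there a maximum-principle proof of Theorem \ref{llr} in the paper to copy: it is quoted from \cite{luo-luo-rao} in a form (Theorem \ref{5.2}(i)) that also covers Cantor-type sets without isolated points.

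\textbf{Step 4 needs $\sup u<\infty$ near $\partial\D$, and this is the heart of the theorem, not a technicality.} The source you propose does not exist. Hyperbolic Delaunay circumradii are not uniformly bounded: empty disks can be arbitrarily large hyperbolic disks, or horodisks (cf.\ the remark in \S\ref{sec:Polyhedral-standard-models}). Thurston's theorem gives completeness of the dome, not control of canonical horocycles.

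Concretely, in the locally finite case the canonical horocycle at $v$ has length $\ell_i(v)=\sum\frac{\sinh(d_{\H}(w,w')/2)}{\sinh(d_{\H}(v,w)/2)\,\sinh(d_{\H}(v,w')/2)}$, summed over the Delaunay triangles $vww'$ at $v$. Then $u(v)=\pm\log\bigl(\ell_1(v)/\ell_2(\phi(v))\bigr)$. Each $\ell_i$ can be arbitrarily small or large depending on the local geometry of $V_i$. So a uniform bound on $u$ near $\partial\D$ is already a coarse form of the rigidity you are trying to prove.

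In the Ahlfors--Schwarz argument such a bound is free, because one shrinks the domain and works on the compact set $\overline{D_r}$. By shrinking the target instead, you have put the non-compactness into the very quantity that must be bounded. Then $u_r$ need not attain its supremum, and Step 3 is never triggered.

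Finally, even $u\equiv 0$ only says that $\phi$ preserves the canonical decorations. To conclude that $\phi$ extends to an ambient isometry, you still have to show that the extrinsic face decomposition of each $\Sigma_i$ is the intrinsic Delaunay decomposition of the decorated surface. That is again delicate when faces are infinite or bending lines accumulate.
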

Figure \ref{fig:standardmodel} illustrates Theorems \ref{llr} and \ref{llz}. The formulation of Theorems \ref{llr} and \ref{llz} is similar to Cauchy's rigidity theorem for polyhedra in the Euclidean space $\E^3$. It states\footnote{Cauchy's original theorem assumes that the isometry between the boundaries $\partial C_E(V_1)$ and $\partial C_E(V_2)$ preserves the cellular structure. This constraint was removed by A. D. Alexandrov.} that if $V_1$ and $V_2$ are two finite sets in the Euclidean space $\E^3$ such that their Euclidean convex hulls $C_E(V_1)$ and $C_E(V_2)$ are of dimension at least two and $\partial C_E(V_1)$ is isometric to $\partial C_E(V_2)$, then  $C_E(V_1)$ is isometric to $C_E(V_2)$.

\begin{figure}[htbp]
  \centering
    \hfill
  \begin{minipage}[t]{0.45\textwidth}
    \centering
    \includegraphics[width=\textwidth]{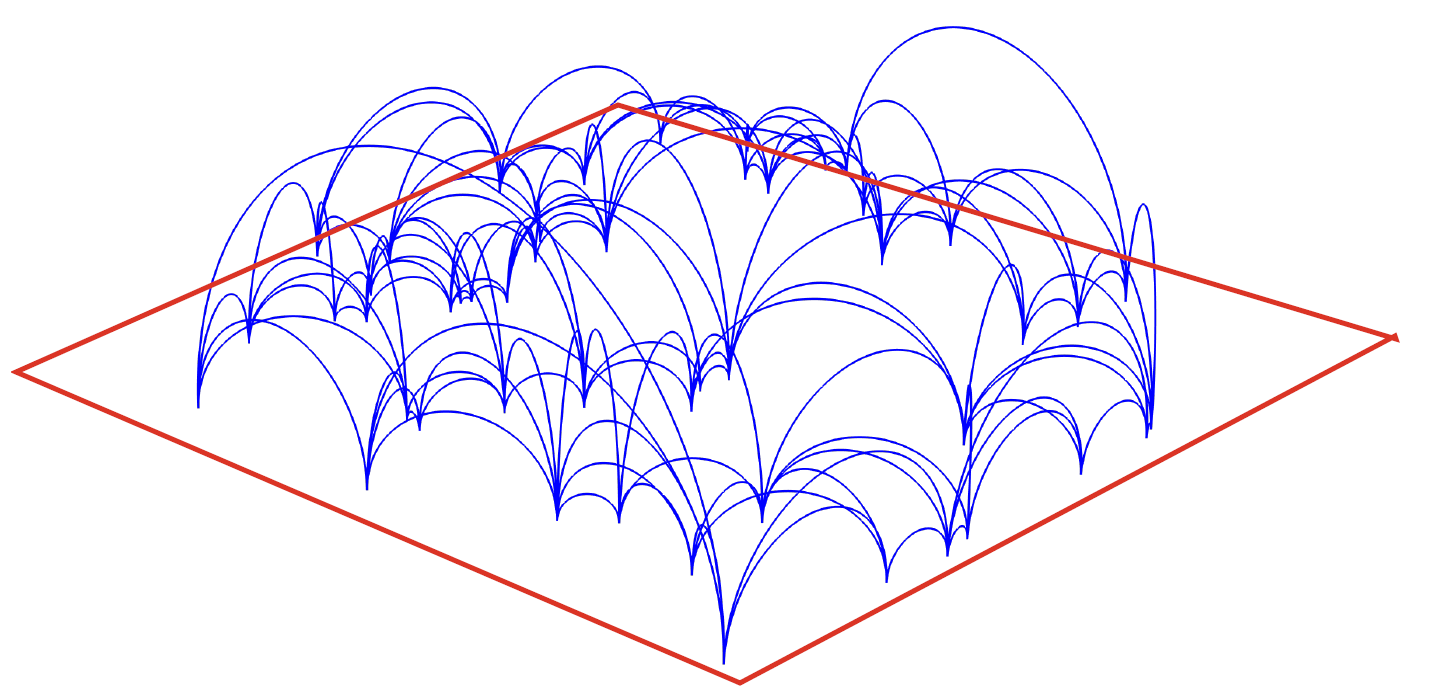}
  \end{minipage}
  \hfill
  \begin{minipage}[t]{0.4\textwidth}
    \centering
    \includegraphics[width=\textwidth]{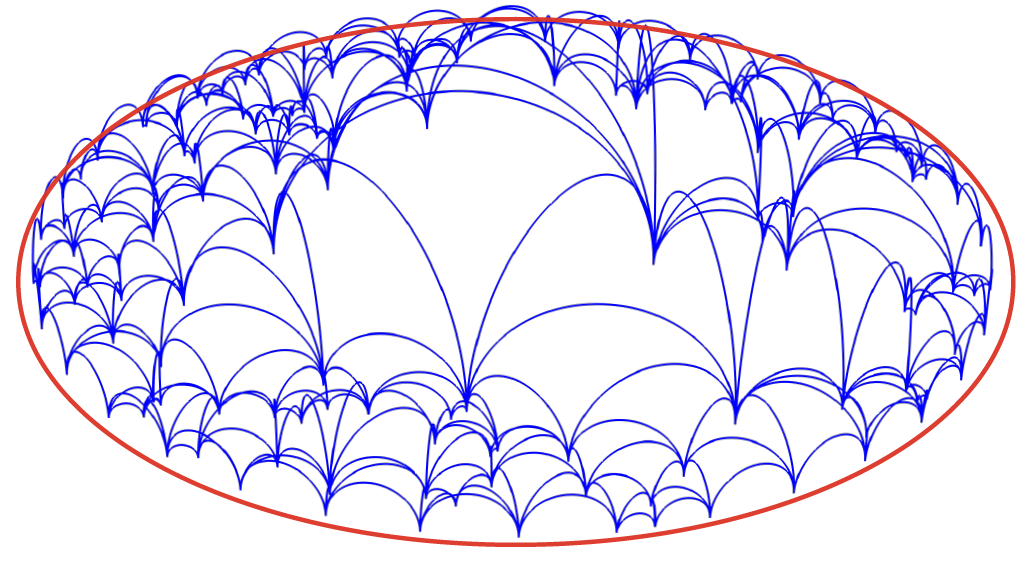}
  \end{minipage}
   \hfill
  \caption{The convex hulls $C(V\cup\{\infty\})$ of $V \subset \C$ and  $C(V \cup \D^c)$ of $V \subset \D$.}
  \label{fig:standardmodel}
\end{figure}

A corollary of Theorems \ref{lw}, \ref{llr} and \ref{llz} is the following result, first conjectured in \cite{luow2024}. It is used to prove Theorem \ref{dut}. See also Theorem 1.6 in \cite{Zhao2026rigidity} for a stronger version.

\begin{corollary} \label{dutold} Suppose $S$ is a genus zero complete hyperbolic surface with countably many ends such that all except possibly one of the ends are cusps.   Then $S$ is isometric to the boundary $\partial C(X)$ of the convex hull of a circle-type closed set $X \subset \partial \H^3_P$. Furthermore, the set $X$ is unique up to Möbius transformations. 
\end{corollary}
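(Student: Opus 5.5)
Existence follows directly from Theorem \ref{lw}: $S$ is a genus zero complete hyperbolic surface with countably many ends, so $S$ is isometric to $\partial C(X)$ for some circle-type closed set $X\subset \SS^2=\partial\H^3_P$. The real content is uniqueness. Suppose $\partial C(X_1)$ and $\partial C(X_2)$ are both isometric to $S$. We want to show $X_1$ and $X_2$ differ by a Möbius transformation, i.e.\ by an isometry of $\H^3$ extended to the sphere at infinity. The plan is to normalize each $X_i$ according to the structure of the ends, and then apply Theorem \ref{llr} or Theorem \ref{llz}.

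First I would relate the ends of $\partial C(X)$ to the components of $X$. Each isolated point of $X$ gives a cusp of $\partial C(X)$. A round-disk component $D$ gives a funnel-type (non-cusp) end, whose boundary at infinity is $\partial D$. A non-isolated accumulation structure of points gives a non-cusp end as well.

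Since at most one end of $S$ is not a cusp, two cases arise; the sphere case (all ends cusps, finitely many) is treated separately.

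\begin{enumerate}[label=(\alph*)]
\item \emph{Finitely many ends, all cusps.} Each $X_i$ is a finite set of points. The uniqueness is then Rivin's Theorem \ref{rivin}. The degenerate case where $C(X_i)$ is two-dimensional is handled by the Alexandrov double convention.
\item \emph{Exactly one non-cusp end.} Then $X_i$ consists of countably many points together with either one disk component $D_i$ or one accumulation point $p_i$.
\begin{itemize}
\item If there is a disk, apply a Möbius map sending $D_i$ to $\D^c$ in the upper-half-space model. Then $X_i=V_i\cup \D^c$ with $V_i\subset \D$ closed discrete, and since the remaining ends are cusps accumulating only at the disk, $\overline{V_i}-V_i=\partial\D$. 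Theorem \ref{llz} then gives an isometry $C(X_1)\cong C(X_2)$.
\item If there is a single accumulation point, send $p_i$ to $\infty$. Then $X_i=V_i\cup\{\infty\}$ with $V_i\subset\C$ closed discrete, and Theorem \ref{llr} gives the isometry of hulls.
\end{itemize}
\end{enumerate}

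The main obstacle is twofold. First, one must show that the isometry type of $\partial C(X)$ determines which normalization occurs, i.e.\ that a disk end and a point-accumulation end cannot give isometric surfaces. I would distinguish them by the geometry of the end: the disk case yields an end containing a hyperbolic half-plane-like region of infinite area bounded by a geodesic at infinity ($\partial C$ contains the totally geodesic plane spanned by $\partial D$ locally near the end). The point case $\infty$ gives a parabolic-type end, with horospherical cross-sections of bounded length. This needs care, but can be extracted from the convex-hull pictures.

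Second, one must upgrade the isometry of convex hulls $C(X_1)\cong C(X_2)$ to a Möbius map carrying $X_1$ to $X_2$. An isometry of convex hulls of dimension three extends to an isometry of $\H^3$ (convex hulls with nonempty interior determine the ambient isometry). Its boundary extension maps the limit set $\overline{C(X_1)}\cap\partial\H^3=X_1$ onto $X_2$. In the planar degenerate case one additionally composes with a reflection, which is still Möbius.
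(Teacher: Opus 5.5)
There is a genuine gap, in two places; the rest of your outline (existence from Theorem \ref{lw}, the finite case via Rivin, extending a hull isometry to $\H^3$ and then to the sphere at infinity) is sound.

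First, in the disk case you assert $\overline{V_i}-V_i=\partial\D$. The hypotheses only give $\overline{V_i}-V_i\subseteq\partial\D$. For example, $X=\{a,b,c\}\cup\D^c$ with $a,b,c\in\D$ gives a sphere with three cusps and one non-cusp end, so $V_i$ may be finite; it may also accumulate on only an arc of $\partial\D$. Theorem \ref{llz} as stated requires accumulation on all of $\partial\D$, so it does not apply. The result that covers this case is the stronger rigidity theorem, Theorem \ref{5.2}(ii) from \cite{Zhao2026rigidity}. It only requires $X_i$ minus its disk component to have vanishing $1$-dimensional Hausdorff measure, and this holds because $V_i$ is countable. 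Your accumulation-point case is fine: there $V_i$ is an infinite closed discrete subset of $\C$, and Theorem \ref{llr} applies.

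Second, you must rule out the mixed case, where $X_1$ has a disk component and $X_2$ has a non-isolated point. Neither Theorem \ref{llr} nor Theorem \ref{llz} compares sets of different types, and the invariant you propose does not separate them.
\begin{itemize}
\item If $V\neq\varnothing$, the dome $\partial C(V\cup\D^c)$ never meets the plane $C(\partial\D)$. Every supporting plane comes from an open round disk $D'\subsetneq\D$, and $\partial D'$ cannot cross $\partial\D$.
\item If $V$ accumulates on all of $\partial\D$, the dome contains no embedded half-plane at all (cf.\ Steps 1--4 in the proof of Theorem \ref{disriemap}).
\item In the point case, take $X=\Z\cup\{\infty\}$. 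Horospheres about $\infty$ at height above $1/2$ meet the dome in complete lines of infinite length, so horospherical cross-sections are not bounded.
\end{itemize}
The intrinsic invariant that does work is conformal type. An isometry of domes carries cusps to cusps, so it extends to a conformal homeomorphism between the simply connected Riemann surfaces obtained by filling in the cusps. By Proposition \ref{typehy} with $\Omega=\D$, this surface is conformal to $\D$ in the disk case. By Proposition \ref{typeEu}, it is conformal to $\C$ in the accumulation-point case, where $V_i$ is infinite. Alternatively, you can invoke \cite[Theorem 1.2]{Zhao2026rigidity}, as the paper does in the proof of Theorem \ref{thm:new5.2}.
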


We remark that stronger results than Theorems \ref{llr} and \ref{llz} are proved in \cite{luo-luo-rao} and \cite{Zhao2026rigidity}, in which the closed discrete condition is replaced by closed sets with vanishing 1-dimensional Hausdorff measure.
These will be recalled in Theorem \ref{5.2}. These works imply a stronger rigidity theorem for complete constant-curvature Riemannian surfaces equipped with closed subsets of vanishing 1-dimensional Hausdorff measure. See Theorem \ref{thm:new5.2} for details.





A consequence of Theorems \ref{lw} and \ref{llz} is the following discrete version of the Riemann mapping theorem. Recall that a 
\underline{crosscut} of a domain $\Omega$ in the complex plane $\C$ is simple arc joining two distinct boundary points of $\Omega$ such that the arc, with its endpoints removed, is contained entirely $\Omega$.    By the Jordan curve theorem, $\Omega -\alpha$ has two connected components. 
See, for instance, \cite{milnor} for details. A closed discrete subset $V$ of $\Omega$ is called an \underline{approximation} of $\Omega$ if for any crosscut $\alpha$ of $\Omega$, $V$ intersects both connected components of $\Omega -\alpha.$  In case that $\Omega$ is a Jordan domain, this is equivalent to the condition that every point in the Jordan curve $\partial \Omega$ is a limit point of $V$ in $\mathbb{C}$, i.e., $\overline{V}-V=\partial \Omega$.

\begin{theorem}\label{disriemap} Suppose $\Omega$ is a simply connected domain in the Riemann sphere $\hat \C$ such that its boundary $\partial \Omega$ contains at least three points and $V \subset \Omega$ is a discrete approximation of $\Omega$. Then there exists a discrete approximation $W$ of $\D$  such that the Thurston dome
$\partial C(V \cup \Omega^c)$ is isometric to $\partial C(W \cup \D^c)$. Furthermore, the discrete set $W$ is unique up to Möbius transformations of $\D$.
\end{theorem}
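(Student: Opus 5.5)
The plan is to apply the Existence Theorem \ref{lw} to the Thurston dome $\Sigma=\partial C(V\cup\Omega^c)$, and the Discrete Schwarz Lemma \ref{llz} for uniqueness.

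\textbf{Step 1: the topology and geometry of $\Sigma$.} Since $\Omega^c$ is a connected compact set with at least three points, $V\cup\Omega^c$ has at least three points. By Thurston's theorem \cite{thurston, epsteinm2006}, $\Sigma$ is a complete hyperbolic surface. The nearest-point retraction gives a homeomorphism from $\hat\C-(V\cup\Omega^c)=\Omega-V$ onto $\Sigma$ (onto the appropriate double if $C$ is degenerate, which cannot happen here). Since $\Omega$ is simply connected, $\Omega-V$ has genus zero. Its ends are the punctures at points of $V$, which are countably many and are cusps, together with one extra end corresponding to $\partial\Omega$. The approximation hypothesis enters here. For any crosscut $\alpha$, both sides of $\Omega-\alpha$ contain points of $V$. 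From this I want to deduce that the end of $\Sigma$ at $\partial\Omega$ is a single end and is not a funnel. The point is that the dome contains no flat ideal region spanning a crosscut: any support plane touching $\Omega^c$ along a crosscut-type arc would cut off vertices. So $\Sigma$ is a genus zero complete hyperbolic surface with countably many ends, all cusps except one. Theorem \ref{lw}, or more precisely Corollary \ref{dutold}, then gives a circle-type closed set $Y\subset\SS^2$ with $\Sigma\cong\partial C(Y)$.

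\textbf{Step 2: identifying $Y$.} The components of $Y$ correspond to the ends of $\Sigma$. Cusps correspond to point components, and an end that is not a cusp corresponds to a disk component. If $\partial\Omega$ gave a cusp-like end, I would need to exclude it, or else handle it by the same argument. In the generic case there is exactly one disk component, and after a Möbius transformation it equals $\D^c$, so $Y=W\cup\D^c$ with $W$ a closed discrete subset of $\D$. It remains to show that $W$ is an approximation of $\D$, that is, $\overline W-W=\partial\D$. Suppose instead that an arc of $\partial\D$ is not in the closure of $W$. Then $\partial C(W\cup\D^c)$ contains a region lying in the hemisphere over $\D^c$ near that arc, and this region spans a crosscut of $\D$. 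That region is a half-plane-like totally geodesic piece, which yields a funnel-type geometric feature: an embedded infinite-area region without cusps. The dome $\Sigma$ lacks such a feature, by the approximation property of $V$. So the real work is a geometric characterization:
\begin{quote}
\emph{For a simply connected domain $U$ and a discrete $P\subset U$, $P$ approximates $U$ if and only if every properly embedded geodesic ray in $\partial C(P\cup U^c)$ going out the non-cusp end comes within bounded distance of cusps,}
\end{quote}
or some other isometry-invariant condition of this kind. I expect this characterization to be the main obstacle. My first attempt would use the face structure of the dome: the approximation property holds exactly when all faces are ideal polygons with vertices in $P$, plus possibly the faces touching $U^c$, which are then thin.

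\textbf{Step 3: uniqueness.} If $W_1$ and $W_2$ are both discrete approximations of $\D$ with $\partial C(W_i\cup\D^c)$ isometric to $\Sigma$, then $\overline{W_i}-W_i=\partial\D$. Theorem \ref{llz} gives an isometry $C(W_1\cup\D^c)\cong C(W_2\cup\D^c)$. This isometry extends to a Möbius transformation of $\hat\C$ carrying $W_1\cup\D^c$ to $W_2\cup\D^c$. It must map the unique disk component to itself, so it preserves $\D$ and is a Möbius transformation of $\D$ taking $W_1$ to $W_2$.
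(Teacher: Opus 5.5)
Your overall plan matches the paper's: Theorem~\ref{lw} for existence, an embedded half-plane as the obstruction to $W$ approximating $\D$, and Theorem~\ref{llz} for uniqueness. Your Step~3 is correct. However, both substantive steps of the existence part are missing.

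First, Step~2 rests on the claim that an end of $\partial C(Y)$ which is not a cusp corresponds to a disk component of $Y$. This is false. For an infinite closed discrete $U\subset\C$, the end of $\partial C(U\cup\{\infty\})$ at $\infty$ is not a cusp, since cusps accumulate there, yet it corresponds to the point component $\{\infty\}$. So you must rule out that $Y$ is a countable closed set with a single accumulation point, and you explicitly defer this. Your worry about funnels in Step~1 is beside the point, since Theorem~\ref{lw} needs no such hypothesis.

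The paper settles this by conformal type, which the isometry of domes preserves once cusps are filled in. By Proposition~\ref{typehy}, $\partial C(V\cup\Omega^c)\cup V$ is conformal to $\D$. The proof shows that an annular neighborhood of the end has finite modulus. Curves separating the end have Euclidean length bounded below (checked in the Klein model and transferred by the $2$-Lipschitz map $\H^3_P\to\H^3_K$), while the Euclidean area of a convex surface in $\H^3_P$ is at most $16\pi$. By contrast, $\partial C(U\cup\{\infty\})\cup U$ is conformal to $\C$ (Proposition~\ref{typeEu}). Note that this step does not use the approximation hypothesis at all. Also, the nearest-point retraction $\Omega-V\to\Sigma$ is not injective over bent bending lines; the homeomorphism should instead come from radial projection from an interior point of the hull.

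Second, the proof that $W$ approximates $\D$ is exactly what you call the main obstacle and leave as a conjectural characterization, so existence is not established. Your picture of the bad region is also off. Near an arc $I\subset\partial\D$ missed by $\overline W$, the dome is ruled by geodesics from points of $I$ to points of $W$; it is not contained in the hemisphere over $\partial\D$. The paper gets a genuine half-plane, as in Proposition~\ref{polylimit}, by choosing $p\in I$ whose maximal empty horodisk touches a single $w\in W$ and sending $w$ to $\infty$. Then a vertical strip $J\times\R_{>0}$ lies in the dome.

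The missing argument transports this half-plane $P$ into $\partial C(V\cup\Omega^c)$. It then combines the Thurston--Epstein--Marden decomposition into open faces and bending lines with the crosscut form of the approximation hypothesis, in four steps:
(1) No complete geodesic of the dome lies in $P$, since its radial projection would be a crosscut of $\Omega$ with one side free of $V$.
(2) Hence each face meets $P$ in an open triangle contributing at most one ideal point of $P$. Since faces are countable, there are three rays of $P$ on bending lines, with distinct ideal endpoints and lying in pairwise distinct support planes.
(3) Two of these rays land at distinct points of $\partial\Omega$. Otherwise two of the support planes meet in a geodesic asymptotic to both rays, which keeps the rays within bounded distance in $P$ and contradicts their distinct ideal endpoints.
(4) Joining these two rays by a segment in $P$ gives a crosscut of $\Omega$ one of whose sides misses $V$, a contradiction.
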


\begin{figure}[htbp]
  \centering
  \begin{minipage}[t]{0.45\textwidth}
    \centering
\includegraphics[width=\textwidth]{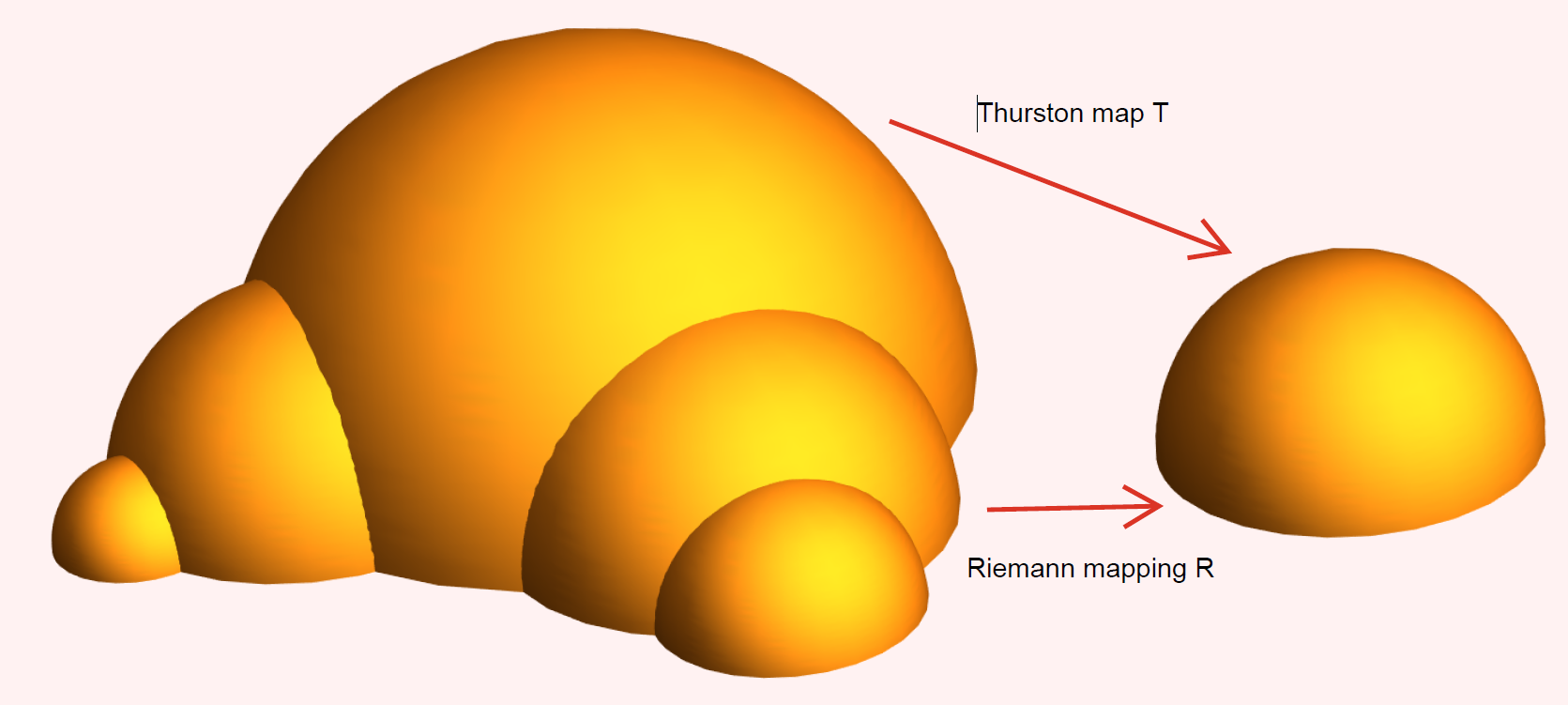}
    \caption{Thurston's isometry $T$ between domes and the Riemann mapping $R$ between the domains.} 
     \label{Thudome}
  \end{minipage} 
  \hspace{0.5cm}
  \begin{minipage}[t]{0.5\textwidth}
    \centering
\includegraphics[width=\textwidth]{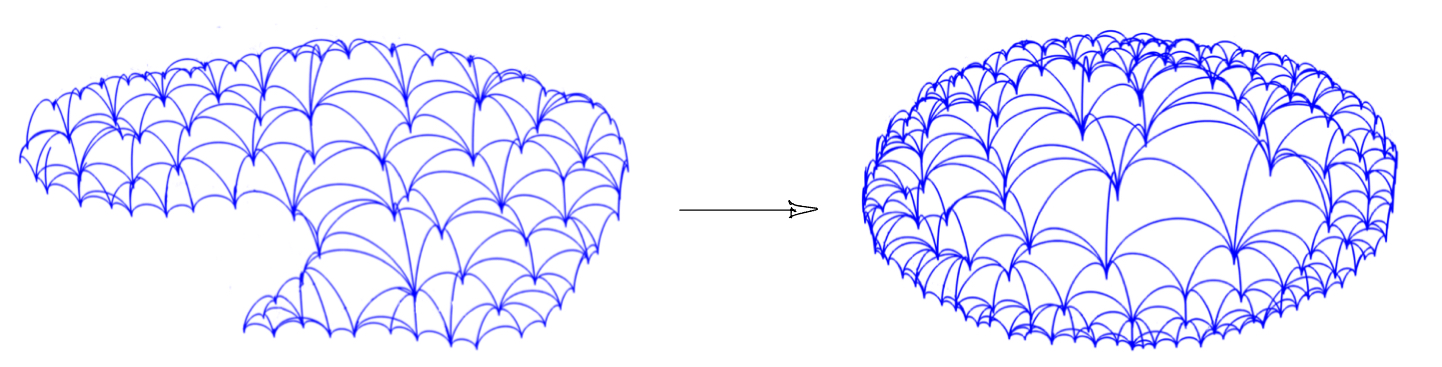}
    \caption{The discrete Riemann mapping $f:V \to W$ is induced by the isometric map between the domes over $\Omega-V$ and $\D-W$.}
    \label{drm}
  \end{minipage} 
 \end{figure}
We call the map from $V$ to $W$ induced by the isometry from $\partial C(V\cup \Omega^c)$ to $\partial C(W\cup \D^c)$ the \underline{discrete Riemann mapping}. See Figure \ref{drm}. Note that Theorem \ref{disriemap} does not follow from Theorem \ref{dut} since $(\Omega, V, d_{\E})$ may not be polyhedral.  

We conjecture that the discrete Riemann mapping converges to the Riemann mapping. 
Recall that an $\epsilon$-net in a metric space $(X, d)$ is a subset $V$ such that each point in $X$ is contained in an $\epsilon$-ball centered at a point in $V$. 
\begin{conjecture}[Convergence of Discrete Riemann Mappings]\label{cov-drm}
    Suppose $\Omega$ is a  simply connected proper domain in $\C$ and $f: \Omega \to \D$ is the normalized Riemann mapping sending a given point $p \in \Omega$ to $0$ with $f'(p)>0$. Let $V_{n}$ be a $\frac{1}{n}$-net in $\Omega$ such that $p \in V_n$ and $V_n$ is an approximation of $\Omega$. 
    Suppose $f_n: V_n \to W_n$ is the discrete Riemann mapping with $f_n(p)=0$, normalized so that $\frac{\partial}{\partial x}|_p$ is sent to $\frac{\partial}{\partial x}|_0$ (see Figure \ref{drm}). Then for any compact subset $K \subset \Omega$, $f_n |_{ V_n \cap K}$ converges uniformly\footnote{This means that for any $\epsilon >0$, there exists $N$ such that if $n>N$, $|f_n(x) -f(x)| \leq \epsilon$, for all $x \in V_n \cap K$.} to $f|_K$.
\end{conjecture}
Preliminary results on the convergence of discrete conformal maps to their smooth counterparts have been established for closed surfaces \cite{guluowu, wuz2024, luowuzhu} and bounded Jordan domains approximated by standard regular hexagonal triangulations \cite{luosunwu}.

\medskip
\noindent
{\bf Two discretization schemes of Riemann surfaces: circle packing and vertex scaling.} 
Since the introduction of circle packing theory by Thurston as a discrete approach to conformal mapping \cite{thurston}, discrete conformal geometry has grown into a very active research area that provides a discrete counterpart to the classical theory of Riemann surfaces. The subject has deep connections with geometry, topology, analysis, computer graphics, and related fields. Among the various discretization schemes that have been developed \cite{glickenstein2017duality, gu-luo-yau, bowers2004uniformizing}, two have emerged as particularly influential: tangential circle packings and vertex scaling. Both arise from fundamental properties of conformal maps. On the one hand, conformal maps preserve circles in tangent spaces, leading to circle packing metrics and the associated circle packing theory. On the other hand, they preserve length cross-ratios in tangent spaces. Discretizing this property gives the notion of vertex scaling and the resulting discrete conformal equivalence for non-compact surfaces investigated in this paper. See Definition \ref{dcdef} and Appendix \ref{appen:vs} for details.

The discrete uniformization theory for tangential circle packings was established through the pioneering work of Koebe \cite{koebe1936kontakt}, Andreev \cite{andreev1970convex}, Thurston \cite{thurston}, Beardon and Stephenson \cite{beardon1990uniformization, beardons1991}, and He and Schramm \cite{he1999,hes1993,heschramm1995hyperbolic,schramm1991}. The classical Koebe--Andreev--Thurston theorem states that every simplicial triangulation of the 2-sphere is the nerve of a circle packing on the 2-sphere, unique up to Möbius transformations. Here, the nerve of a circle packing is the triangulation whose vertices correspond to the circles and whose edges record tangencies between pairs of circles. 

Beardon and Stephenson \cite{beardon1990uniformization} and He and Schramm \cite{hes1993,he1999,schramm1991} extended this theory to noncompact simply connected triangulated surfaces. They proved that every locally finite infinite simplicial triangulation $\mathcal{T}$ of the plane admits a circle packing whose nerve is isomorphic to the one-skeleton of $\mathcal{T}$, realized either in the unit disk or in the Euclidean plane. The realization is unique up to Möbius transformations in the disk case and up to similarities in the Euclidean case. Moreover, these two possibilities are mutually exclusive, giving rise to the classification of circle packings into hyperbolic and parabolic types. They also established a strengthened circle packing discrete Riemann mapping theorem: if $(\Omega,\mathcal{P})$ is a bounded simply connected domain equipped with a locally finite infinite circle packing $\mathcal{P}$, then there exists a circle packing on the unit disk, unique up to Möbius transformations, whose nerve is isomorphic to that of $\mathcal{P}$. Comprehensive accounts of circle packing theory can be found in \cite{bowers2020combinatorics, stephenson2005}.

The results of this paper and the work \cite{rivin1994, filla2008, glsw, gglsw} provide a complete analog of this theory in the setting of vertex scaling. Rivin's
Theorem~\ref{rivin} 
plays the role of the Koebe--Andreev--Thurston theorem for the $2$-sphere. The existence and uniqueness of discrete uniformization via vertex scaling established in Theorems~\ref{lw}, \ref{llr}, and \ref{llz} are the counterparts of the work of Beardon-Stephenson, Schramm, and He. These results further yield a strengthened discrete Riemann mapping theorem ~\ref{disriemap} 
precisely paralleling the corresponding circle packing theorem.

There is a fundamental difference between these two discretization schemes. The circle packing scheme depends on the choice of triangulation, whereas the vertex scaling scheme is intrinsic and independent of any particular triangulation. Moreover, (tangential) circle packing theory applies only to the special class of cone metrics arising from circle packing metrics. In contrast, vertex scaling is defined for arbitrary cone metrics on closed surfaces, making the theory applicable to all polyhedral surfaces rather than only those that arise from circle packings. From this perspective, vertex scaling provides a more general and flexible notion of discrete conformality while preserving many of the geometric and variational features that originally motivated circle packing theory.

\medskip
\noindent
{\bf Organization of the paper.}
The paper is organized as follows. In \S2, we formulate the definitions of polyhedral surfaces, 
standard models,
and discrete conformal equivalence between them. Some basic properties of Delaunay triangulations and tessellations of polyhedral surfaces are proved in \S3.  In \S4, we prove that the associated hyperbolic metrics (i.e., the discrete conformal class) for polyhedral surfaces satisfying the assumptions in Theorem \ref{dut} are complete.  The main theorem, Theorem \ref{dut}, is proved in \S5. In \S6, we prove  Theorem \ref{disriemap}.   In the Appendices, we will recall the equivalent definition of discrete conformality using vertex scaling, construct Tianqi Wu's polyhedral surface, and give two examples of polyhedral surfaces in Euclidean and spherical background metrics whose associated hyperbolic metrics are not complete. 

\medskip
\noindent{\bf List of notations} 
\begin{enumerate}
    \item $\R^n$ is considered as a vector space. 
    \item Three $n$-dimensional space forms $\mathbb{S}^n$,  $\E^n$,  $\H^n$ with metrics $d_\SS$, $d_\E$, and $d_\H$.
    \item Models of $\H^n$: the Poincar\'e ball model $\H^n_P$, the Klein ball model $\H^n_K$, and the upper half-space model $\H^n_U$.

    \item The convex hull of a set $X$ in $\mathbb{S}^2$,  $\E^2$,  $\H^2$ is denoted by $C_{\mathbb S}(X)$, $C_{\E}(X)$, and $C_{\H}(X)$ respectively.  For a set $X \subset \partial \H^3$, its hyperbolic convex hull is denoted by $C(X)$, whose boundary is $\partial C(X)$.
    \item $\D$ is the unit disk in $\C$.
    \item The common identifications: $\mathbb{S}^2=\partial\mathbb{H}^3_P,
\E^2=\partial\mathbb{H}^3_U-\{\infty\}, 
\mathbb{H}^2_P=\D\subset\mathbb{C}\cup\{\infty\}=\partial\mathbb{H}^3_U.$

    \item $M^\circ$ denotes the interior of a manifold $M$ with boundary, or the interior of a set $M$ in $\mathbb{R}^n$.
    \item $\overline{X}$ is the closure of a subset $X$ in $\R^3$.

\item If $K$ is a proper 3-dimensional convex set in $\H^3$, we use $\partial K$ to denote the boundary of $K$ in $\H^3$.

\end{enumerate}

\medskip
\noindent
 {\bf Acknowledgment.}  
 We thank Tianqi Wu for discussions and for allowing us to use his examples in this paper. F. L. would like to thank Dennis Sullivan for the insightful discussions and encouragement that inspired and sustained the long-term research behind this project. The two examples in Appendix B, which justify that conditions in Theorem \ref{dut} are optimal, are produced with the assistance of ChatGPT.  The work of F. L. is supported in part by NSF DMS 2220271, NSF DMS 2501286,  and the Simons Foundation grant SFI-MPS-SFM-00011051, and Z. R. is supported in part by NSF grant DMS-2603932.

\section{Polyhedral surfaces, standard models, and discrete conformal equivalence}\label{sec:Polyhedral-standard-models}

 Polyhedral surfaces are a standard discretization of smooth Riemannian surfaces.  While the concept of a closed polyhedral surface is classical, its non-compact counterpart demands a more careful treatment. We therefore begin with marked surfaces carrying cone metrics and subsequently define polyhedral surfaces using Delaunay triangulations. The Delaunay condition is crucial for the definition of discrete conformal equivalence. 

 All surfaces in the rest of this paper are assumed to be connected and without boundary unless otherwise mentioned.
 
\noindent{\bf Marked surfaces with cone metrics.} 
A marked surface is a pair $(S, V)$ where $S$ is  connected and possibly non-compact, and $V$ is a non-empty closed discrete subset in $S$ called the set of vertices. 
A \underline{Euclidean (hyperbolic or spherical) cone metric}  $d$ on $(S, V)$ is a metric on $S$ such that it is a Euclidean (hyperbolic or spherical) metric away from $V$ and has a cone-type singularity, whose neighborhood is isometric to an open set in a cone. 
A standard example of Euclidean cone metrics is the boundary of the convex hull of a finite set in the Euclidean 3-space.
We call $(S, V, d)$ a marked surface with a cone metric,  and call $d$ the background metric. 

We usually rely on a triangulation $\T$ to describe a marked surface with a cone metric. A \underline{geometric triangulation}  $\T$ of a marked surface with a cone metric $(S, V, d)$ is a triangulation of $S$ such that $V$ is the set of vertices $V(\T)$ of $\T$, and each open triangle in $\T$ is isometric to an open triangle in  $\mathbb E^2$,  $\H^2$, or $\mathbb S^2$.  Recall that a \underline{spherical triangle} is the convex hull of three distinct points in an open hemisphere in $\mathbb {S} ^2$ such that the convex hull is 2-dimensional.
With the triangulation $\mathcal{T}$, the cone metric $d$ is reconstructed by isometrically gluing Euclidean triangles (hyperbolic or spherical) along pairs of their edges. The metric $d$ can be described by the \underline{edge length function} $l: E(\T)\to \mathbb{R}_{>0}$, which assigns each edge its length. 
Here, $E(\mathcal{T})$ is the set of edges of $\mathcal{T}$.

The \underline{discrete curvature} $  K(v)$ of a marked cone surface $(S, V, d)$ at a vertex $v \in V$ is defined to be $2\pi$ minus the cone angle. 

\medskip

\noindent{\bf Polyhedral surfaces and their associated hyperbolic metrics.}
We introduce a special class of marked surfaces with cone metrics, called \underline{polyhedral surfaces}. It is related to a well-known class of triangulations in computational geometry called  \underline{Delaunay triangulations}.

Let $(S, V, d)$ be a marked surface with a cone metric $d$ and $B$ be a round open ball in $\mathbb{K}^2$, where $\mathbb{K}$ is $\E,\H$ or $\mathbb S$. The closure of $B$  in $\mathbb{K}^2$ is denoted by $\overline{B}$. An \underline{empty-disk} in $(S,V, d)$ introduced in \cite{bobenkos2007} is an isometric immersion $\varphi: B\to (S-V, d|_{S-V})$. We call an  empty-disk $(B, \varphi)$ a \underline{circumdisk} for $(S, V, d)$ if $\varphi$ extends continuously to $\Phi:\overline{B} \to S$ such that $\Phi^{-1}(V)$ contains at least three points.  From this definition, the image of each circumdisk is contained in a compact subset, namely $\Phi(\overline{B})$, of the surface $S$. For simplicity, we also call $\Phi(\overline{B})$ the circumdisk.

\begin{definition}[Polyhedral surfaces and Delaunay condition]
    A marked surface with a cone metric $(S, V, d)$ is a polyhedral surface if there exists a geometric triangulation $\mathcal{T}$ of $(S, V, d)$ satisfying the \underline{Delaunay condition} that each triangle $\tau$ in $\mathcal{T}$ is contained in a \textbf{compact} circumdisk in $S$, whose interior is disjoint from $V$. 
    The triangulation $\mathcal{T}$ is called a Delaunay triangulation, and the metric $d$ is called a polyhedral metric on $(S, V)$.
    \label{poly}
\end{definition}
The notion of Delaunay triangulation is a fundamental concept and a widely used tool in computational geometry. It is dual to the Voronoi diagram associated with $(S, V, d)$. In general, the Delaunay triangulation $\T$ for $(S, V, d)$ in the above definition is not unique. 
A basic theorem in computational geometry states that every closed marked surface with a  Euclidean or hyperbolic cone metric is polyhedral. However, it is false for spherical cone metrics on closed surfaces. For instance, there are many spherical cone metrics $d$ on a closed marked surface $(S, V)$ such that $(S-V, d|_{S-V})$ contains an embedded open hemisphere.
These cone metrics can never be triangulated, and hence they are not polyhedral.  It is proved in \cite{Izmestiev-Prosanov-Wu} that if $(S, V, d)$ is a closed surface with a spherical background metric that admits a geometric triangulation, then it is polyhedral. 

 We remark that there exists a geometric triangulation of the hyperbolic plane $\H^2$ such that $(\mathbb{H}^2, V(\mathcal{T}), d_{\H})$  admits no Delaunay triangulation; in this example, some circumdisks are horoballs and hence are noncompact. 
On the other hand, it is not hard to prove that if $\mathcal{T}$ is a geometric triangulation of the Euclidean plane $\mathbb{E}^2$, there exists a Delaunay triangulation $\mathcal{T}'$ of $\mathbb{E}^2$ such that $V(\mathcal{T}) = V(\mathcal{T}')$. Therefore, $(\mathbb{E}^2, V(\mathcal{T}), d_{\E})$ is a polyhedral surface.   

Here is an example of a marked surface with a Euclidean cone metric, which is not polyhedral. See Figure \ref{non-poly}. The underlying surface is an open triangulated triangle in $\E^2$ where the triangles in the triangulation are obtained by adding one diagonal to each trapezoid. The circumdisk of each triangle contains no vertices in its interior. However, many circumdisks do not lie within the open triangle. 
\begin{figure}[h!]
    \centering
    \includegraphics[width=0.25\textwidth]{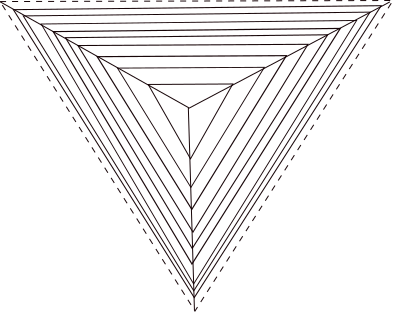}
    \caption{A non-polyhedral open triangulated surface.}
    \label{non-poly}
\end{figure}

Given a polyhedral surface $(S, V, d)$ with a Euclidean, or hyperbolic,  or spherical background metric, we will construct its \underline{associated hyperbolic metric} $d^*$ on the punctured surface $S-V$. This metric $d^*$ is used to define the 
discrete conformal class of $(S, V, d)$.  There are several other equivalent definitions of the discrete conformal class. See Appendix~\ref{appen:vs} for more information.

The basic construction of $d^*$ is based on a natural correspondence that associates to each geometric triangle $\tau$ in $\mathbb K^2 =\E^2$, or $\mathbb S^2$,  or $\H^2$ an ideal hyperbolic triangle $\tau^*$ with the same set of vertices, together with a naturally defined homeomorphism from $\tau^*$ to $\tau$. See Figure \ref{geo}. We identify $\E^2$, $\mathbb S^2$, and $\H^2$ with the complex plane in $\partial \H^3_U$, the conformal boundary $\partial \H^3_P$, and the open unit disk $\D$ in $\partial \H^3_U$ respectively. Under these identifications, each isometry $f$ of $\mathbb K^2$ extends uniquely to an isometry $f^*$ of $\H^3$.  Furthermore, there exist canonical projections $\phi_{\mathbb K}$ from $\H^3_U$ to $\mathbb K^2$ for $\mathbb K=\E, \H$ and $\phi_{\mathbb S}: \H^3_P-\{0\} \to \mathbb S^2$. These maps are characterized by the property that the geodesic through $x$ and $\phi_{\mathbb K}(x)$  contains $\infty$ if $\mathbb K^2=\E^2$, contains the origin $(0,0,0)$ if $\mathbb K^2 =\mathbb S^2$, and is perpendicular to the hemisphere $C(\partial \D) =\mathbb S^2_+$ if $\mathbb K^2=\H^2$. In particular, $\phi_{\E}$ and $\phi_{\mathbb S}$ are vertical and radial projections respectively. 
More precisely, for $(u, t) \in \R^2 \times \R$, one easily checks that $\phi_{\mathbb K}$ is given by
\begin{equation}\label{eq:projection}
\phi_{\mathbb K}(u,t)=
\begin{cases}
(u,0), & \text{if } \mathbb K^2=\mathbb{E}^2, t>0\\
 \frac{2u}
{1+|u|^2+t^2+
\sqrt{\left(1+|u|^2+t^2\right)^2-4|u|^2}},
          & \text{if } \mathbb K^2=\mathbb{H}^2, t>0 \\
\frac{(u,t)}{\sqrt{ ||u||^2+t^2}}, & \text{if } \mathbb K^2=\mathbb S^2,  (u,t) \neq (0,0).
\end{cases}
\end{equation}

If $\tau$ is a geometric triangle with vertices $v_1, v_2, v_3$ in $\mathbb K^2$, its associated ideal hyperbolic  triangle $\tau^*$ is the convex hull $C(\{v_1, v_2, v_3\})$ 
of $v_1, v_2, v_3$ in $\H^3$, i.e.,  $\tau^* =C(\{v_1, v_2, v_3\})$ if $\tau=C_{\mathbb K}(\{v_1, v_2, v_3\})$.   
Similarly, if $e=C_{\mathbb K}(\{v_i, v_j\})$ is an edge of $\tau$ in $\mathbb K^2$, we let $e^*=C(\{v_i, v_j\})$ be the geodesic in $\H^3$ having the same end points as $e$. 
The natural homeomorphism 
$\phi_{\tau}: \tau^* \to \tau$ is defined to be $$\phi_{\tau}=\phi_{\mathbb K}|_{\tau^*},$$ where $\mathbb K$ depends on the constant curvature geometry in which $\tau$ is embedded.
By the construction, $\phi_{\tau}$ sends the edge $e^*$ of $\tau^*$ to the corresponding edge $e$ of $\tau$.

\begin{figure}[htbp]  
\begin{center}
\begin{overpic}[width=\linewidth]{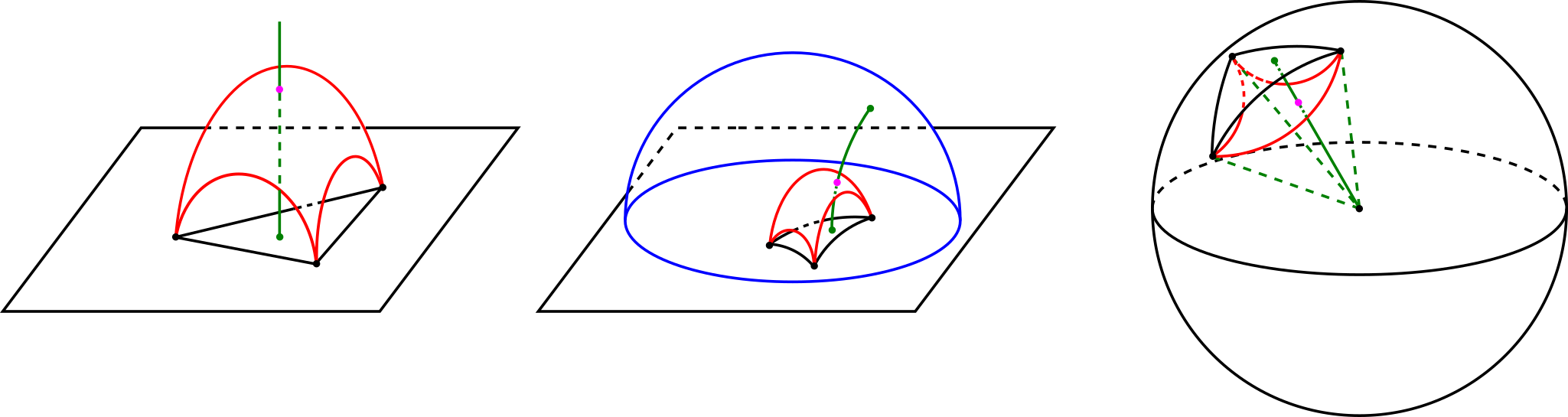}
    \put(21,22){\textcolor{red}{$\tau^*$}}
    \put(3,8){$\E^2$}
    \put(16,20){\textcolor[HTML]{FF00FF}{$x$}}
    \put(22.5,10.5){$\tau$}
    \put(14,8){\textcolor[HTML]{008000}{$\phi_\tau(x)$}}

    \put(47.5,14){\textcolor{red}{$\tau^*$}}
    \put(47.5,9){$\tau$}
    \put(41,12){$\D$} 
    \put(55,14.4){\textcolor[HTML]{FF00FF}{$x$}}
    \put(54,10.4){\textcolor[HTML]{008000}{$\phi_\tau(x)$}}

    \put(77,14){\textcolor{red}{$\tau^*$}}
    \put(75,18){$\tau$}
    \put(93,20){$\H^3_P$} 
    \put(87,11){$O$} 
    \put(84.3,19){\textcolor[HTML]{FF00FF}{$x$}}
    \put(82,24.4){\textcolor[HTML]{008000}{$\phi_\tau(x)$}}
    \put(100,22){$\SS^2$}

\end{overpic}
\caption{Ideal triangles associated with geometric triangles and natural maps.}  
\label{geo}
\end{center}
\end{figure}

The following lemma follows from the definition. We omit the proof. 
\begin{lemma}  \label{natural-h} Suppose $f$ is an isometry of $\mathbb K^2$ sending a triangle $\tau$ to another triangle $f(\tau)$ and $f^*$ is the isometry of $\H^3$ which extends $f$.  Then,

(i) $\phi_{\mathbb K} \circ f^* = f \circ \phi_{\mathbb K},$

(ii) $f^*(\tau^*) = (f(\tau))^*,$ and

(iii) $ \phi_{f(\tau)} \circ f^* = f \circ \phi_{\tau}.$
  \end{lemma}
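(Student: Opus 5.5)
The plan is to prove (i) first and then deduce (ii) and (iii) from it by restricting to $\tau^*$. The key point is that each projection $\phi_{\mathbb K}$ is defined purely in terms of structure in $\H^3$ that the extension $f^*$ preserves. For $\mathbb K=\E$, every similarity of $\E^2$ (in particular every isometry) extends to an isometry of $\H^3_U$ fixing $\infty$. For $\mathbb K=\mathbb S$, $f\in O(3)$ extends to $\H^3_P$ as the same linear map, which fixes the origin $O$. For $\mathbb K=\H$, an isometry of $\D$ is a Möbius transformation preserving $\partial\D$, so its extension $f^*$ preserves the hemisphere $C(\partial\D)=\mathbb S^2_+$.

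To establish (i), I will use the geodesic characterization of $\phi_{\mathbb K}$ rather than the explicit formula \eqref{eq:projection}. Take $x\in\H^3$ (with $x\neq O$ in the spherical case). Then $\phi_{\mathbb K}(x)$ is the unique point $y\in\mathbb K^2$ such that the geodesic through $x$ with endpoint $y$ has the following property:
\begin{enumerate}
\item in the Euclidean case, its other endpoint is $\infty$;
\item in the spherical case, it passes through $O$ (and $y$ is the endpoint on the side of $x$);
\item in the hyperbolic case, it is orthogonal to $\mathbb S^2_+$.
\end{enumerate}
Applying $f^*$, the geodesic through $f^*(x)$ ending at $f(y)=f^*(y)$ has the same property, because $f^*$ fixes $\infty$ (resp.\ fixes $O$, resp.\ preserves $\mathbb S^2_+$) and isometries preserve orthogonality. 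By uniqueness, $\phi_{\mathbb K}(f^*(x))=f(\phi_{\mathbb K}(x))$.

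The only step needing a remark is uniqueness in the hyperbolic case. Through a point $x$ off $\mathbb S^2_+$ there is a unique geodesic orthogonal to that plane. This geodesic meets $\partial\H^3_U$ at two points, one in $\D$ and one in $\hat\C-\overline{\D}$, and $\phi_{\H}(x)$ is the endpoint lying in $\D$, on the same side as $x$. For points $x$ on $\mathbb S^2_+$ itself, one takes the perpendicular geodesic to the plane at $x$ and its endpoint in $\D$. Since $f^*$ preserves $\mathbb S^2_+$ and maps $\D$ to itself, it also preserves the sides, so the "endpoint in $\D$" choice is equivariant.

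For (ii), $f^*$ is an isometry of $\H^3$ extending $f$ on the sphere at infinity, so it maps convex hulls of ideal point sets to convex hulls. Hence
$$f^*(C(\{v_1,v_2,v_3\}))=C(\{f(v_1),f(v_2),f(v_3)\})=(f(\tau))^*.$$
Finally, (iii) follows by restricting (i) to $\tau^*$: by (ii), $f^*$ maps $\tau^*$ onto $(f(\tau))^*$, so
$$\phi_{f(\tau)}\circ f^*=\phi_{\mathbb K}|_{(f(\tau))^*}\circ f^*|_{\tau^*}=f\circ\phi_{\mathbb K}|_{\tau^*}=f\circ\phi_\tau.$$
I expect no real obstacle here. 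The only care needed is the side and endpoint bookkeeping in the hyperbolic and spherical cases described above.
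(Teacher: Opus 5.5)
Your proposal is correct and is essentially the argument the paper leaves to the reader (it says the lemma follows from the definitions): the geodesic characterization of $\phi_{\K}$, together with the fact that $f^*$ fixes $\infty$, fixes the origin, or preserves the plane $C(\partial\D)$, gives (i), and (ii) and (iii) follow from equivariance of convex hulls and restriction to $\tau^*$. One small correction: for $x$ outside the unit hemisphere, $\phi_{\H}(x)$ is still the endpoint in $\D$ of the perpendicular geodesic, but that endpoint lies on the \emph{opposite} side of $C(\partial \D)$ from $x$ (e.g.\ $\phi_{\H}(0,2)=0$), so you should drop the phrase ``on the same side as $x$''; your equivariance argument uses only the ``endpoint in $\D$'' rule and is unaffected.
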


The \underline{discrete conformal map} between  two geometric triangles $\tau_1$ and $\tau_2$ is defined to be $\phi_{\tau_2} \circ \psi \circ \phi_{\tau_1}^{-1}$ for an isometry $\psi$ from $\tau_1^*$ to $\tau_2^*$.  Note that $\tau_1$ and $\tau_2$  may come from different constant curvature geometries. As first observed by \cite{bps}, discrete conformal maps from a Euclidean triangle $\tau_1$ to another Euclidean triangle $\tau_2$ are the same as real projective maps sending $\tau_1$ to $\tau_2$ preserving their circumdisks.

Now, we proceed to the construction of $d^*$, using geometric triangulations. Let $(S, V, d)$ be a polyhedral surface in a $\mathbb K^2$ background metric, where $\mathbb K^2=\E^2, \mathbb S^2$, or $\H^2$. Find a geometric triangulation $\T$ of $(S, V, d)$ and let $\Delta$ be the set of all triangles in $\T$. The triangulation $\T$ provides a way to reconstruct the metric surface $(S, V, d)$ by taking the disjoint union $\bigsqcup _{\tau \in \Delta} \tau \times \{\tau\}$ and quotienting out by the isometric gluing of pairs of edges of triangles, i.e.,
$(S, V, d)$ is isometric to $\bigsqcup _{\tau \in \Delta} \tau \times \{\tau\}/\sim$. Here we consider $\tau \times \{\tau\} \subset \mathbb K^2$ as an isometric copy of $\tau$. The isometric gluing is defined as follows.
If $\sigma, \tau \in \T$ are two geometric triangles sharing a common edge $e$, then there exists a unique isometry $f$ of $\mathbb K^2$ sending $e \times \{\tau\}$ to $e \times \{\sigma\}$ such that $f(\tau \times \{\tau\})$ and $\sigma \times \{\sigma\}$ are on different sides of $e \times \{\sigma\}$. Let $f^*$ be the isometric extension of $f$ to $\H^3$. We now glue  $(\tau \times \{\tau\})^*$ to  $(\sigma \times \{\sigma\})^*$ along their corresponding edges $(e \times \{\tau\})^*$ and $(e \times \{\sigma\})^*$ by $f^*$.  The quotient surface  $\hat{S}:=\bigsqcup _{\tau \in \Delta} (\tau
\times \{\tau\})^* /\sim$ has the same vertex set $V$. Due to the isometric gluing of ideal hyperbolic triangles along edges, the surface $\hat S -V$ carries a (possibly incomplete) hyperbolic metric $\hat d_{\T}$. By Lemma \ref{natural-h}, the natural homeomorphisms $\phi_{\tau}$'s induce a homeomorphism $\phi_{S,V}: (\hat S, V) \to (S, V)$.  Define the hyperbolic metric $d^*_{\T}$ on $S-V$ to be the pushforward metric $(\phi_{S,V})_*(\hat d_{\T})$.
It is known that $d^*_{\T}$ has a cusp end at each vertex $v \in V$ (see for instance \cite{glsw}).  The introduction of the hyperbolic metric $d^*_{\T}$ associated with $d$ first appeared in the important work~\cite{bps}. 
In general, the metric $d^*_{\T}$ depends on the choice of $\T$ and cannot be
considered as a metric associated with $(S, V, d)$. It was observed in \cite{glsw}, and will be proved in \S3,   that if $\T$ and $\T'$ are two Delaunay triangulations of $(S, V, d)$, then $d_{\T}^*=d_{\T'}^*$.  Due to this, we will write $d^*$ for $d^*_{\T}$ and call it the \underline{associated hyperbolic metric} for the polyhedral surface $(S, V, d)$.
See Figure \ref{777}. 

It is well-known that Delaunay triangulations are related to convexity and convex hulls in hyperbolic 3-space $\H^3$. Given a closed subset $V\subset\K^2$, we define a closed subset $Y\subset
\partial\mathbb{H}^3$ by
\begin{equation}\label{eq:def-Y}
Y=
\begin{cases}
V, & \K^2=\mathbb{S}^2,\\
V\cup\{\infty\}, & \K^2=\E^2,\\
V\cup \D^c, & \K^2=\mathbb{H}^2.
\end{cases}
\end{equation}
Let $C(Y)\subset \mathbb{H}^3$ be the hyperbolic convex hull of $Y$. Note that if $C(Y)$ is 2-dimensional, the surface $\partial C(Y)$ is understood to be the metric double of $C(Y)$ in the sense of Alexandrov, i.e., $\partial C(Y)$ is the gluing of two copies $C(Y)_+$ and $C(Y)_{-}$ of $C(Y)$ by the identity map along their 1-dimensional boundaries. By Thurston's theorem, the induced path metric $d_{\partial C(Y)}$ on $\partial C(Y)$ is complete hyperbolic.

\begin{lemma} \label{convexhulld*} Let $\T$ be a Delaunay triangulation of $\mathbb K^2$ with vertex set $V$ for $\mathbb K^2 =\SS^2, \E^2$ or $\H^2$. Then there exists a natural homeomorphism $h$ from $\mathbb {K}^2-V$ to $\partial C(Y)$ such that $\phi_{\tau} \circ h|_{\tau} =id$ for all triangles $\tau \in \T$.
\end{lemma}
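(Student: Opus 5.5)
The plan is to show that the ideal triangles $\tau^*=C(\{v_1,v_2,v_3\})$, $\tau\in\T$, are exactly the faces of $\partial C(Y)$ lying over $\K^2$, and that they tile $\partial C(Y)$ away from the ideal points. Then $h$ can be defined triangle by triangle as $h|_\tau=\phi_\tau^{-1}$.

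The first step is the key convexity fact. Take a triangle $\tau\in\T$ with vertices $v_1,v_2,v_3$, and let $D_\tau\subset\partial\H^3$ be the round disk bounded by the circle through the $v_i$, chosen on the side corresponding to the compact circumdisk of $\tau$. Then $D_\tau$ is a round disk in $\K^2$ for $\K=\SS,\E$, and for $\K=\H$ it is a compact disk in $\D$. The totally geodesic plane $P_\tau$ spanned by $\partial D_\tau$ contains $\tau^*$. The Delaunay condition says the open disk $D_\tau^\circ$ contains no point of $V$. I must also check that $D_\tau^\circ$ misses the extra part of $Y$:
\begin{itemize}
\item for $\E^2$, the disk is bounded, so it misses $\infty$;
\item for $\H^2$, the disk is compact in $\D$, so it misses $\D^c$;
\item for $\SS^2$, there is nothing extra to check.
\end{itemize}
Hence $Y$ lies in the closed complementary disk, and $P_\tau$ is a support plane of $C(Y)$. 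It follows that $\tau^*\subset P_\tau\cap C(Y)\subset\partial C(Y)$. In the degenerate case where $C(Y)$ is 2-dimensional, I use the Alexandrov double convention instead. This case only occurs for $\SS^2$ with $V$ on one circle, where the two hemispheres give the two copies.

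The second step is to show that the $\tau^*$ are glued along their edges exactly as in $\T$. If $\tau,\sigma$ share an edge $e$, then $\tau^*$ and $\sigma^*$ share the geodesic $e^*$, and they lie on the two sides of it. This holds because $\phi_\K$ is compatible with the geometry: $\phi_\K(\tau^*)=\tau$ and $\phi_\K(\sigma^*)=\sigma$ lie on opposite sides of $e$. Therefore $U=\bigcup_\tau\tau^*$ is a surface, and $\phi_\K$ maps it homeomorphically onto $\bigcup\tau=\K^2$. Injectivity across different triangles comes from $\phi_\K|_U$ being the union of the bijections $\phi_\tau$, whose images have disjoint interiors. I then define $h=(\phi_\K|_U)^{-1}$ on $\K^2-V$. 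This $h$ is continuous because the pieces agree on shared edges, and it satisfies $\phi_\tau\circ h|_\tau=\mathrm{id}$.

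The third step, which I expect to be the main obstacle, is surjectivity: $U=\partial C(Y)$, meaning every point of $\partial C(Y)$ lies in some $\tau^*$. My approach is as follows. $U$ is open in $\partial C(Y)$, since each point has a neighborhood made of finitely many triangles by local finiteness of $\T$. So it suffices to show that $U$ is closed in $\partial C(Y)$, which is connected. Take a limit point $x\in\partial C(Y)$ of $U$, with $x=\lim x_n$ and $x_n\in\tau_n^*$. Then $\phi_\K(x_n)\to\phi_\K(x)\in\K^2$, because $\phi_\K$ is continuous on $\H^3$. Since $\phi_\K(x)$ is a point of $\K^2$, it lies in a compact union of finitely many triangles of $\T$. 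So $\tau_n$ ranges over finitely many triangles, and the $\tau_n^*$ are closed in $\H^3$, which gives $x\in U$.

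The point needing care is that the limit $\phi_\K(x)$ really is in $\K^2$ and does not escape to $Y$'s extra part. For $\H^2$ it could escape to $\partial\D$, and for $\E^2$ to $\infty$. This is prevented because $x$ is an interior point of $\H^3$ and $\phi_\K$ maps $\H^3$ into $\K^2$. For $\SS^2$, I also need to exclude $x=0$, which can only occur in the degenerate planar case.
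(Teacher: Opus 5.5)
Your argument is correct in outline, but it proves surjectivity by a different route from the paper's.

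\textbf{Comparison.} The paper works pointwise. For $p\in\partial C(Y)$ it takes a support plane through $p$ and notes that its boundary circle carries at least two points of $V$. It then either recognizes this circle as a Delaunay circumcircle, or pivots it about those two points until it picks up a third. Either way $p$ lies in the ideal polygon of a Delaunay cell, hence in some $\tau^*$. You instead show that $U=\bigcup_\tau\tau^*$ is nonempty, open and closed in $\partial C(Y)$. This spares you two facts the paper uses implicitly:
\begin{itemize}
\item every empty disk with at least three points of $V$ on its boundary circumscribes a cell of the given $\T$ (cf.\ Proposition \ref{tessellation});
\item support circles through points of $Y-V$ (the point $\infty$, or a point of $\partial\D$) cause no trouble.
\end{itemize}
In exchange, the paper's argument is constructive and needs no global topological information about $\partial C(Y)$.

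\textbf{What you still need to supply.} That global information is exactly what is missing from your write-up.
\begin{itemize}
\item \emph{Openness.} Local finiteness only shows that the star of $x$ is a neighborhood of $x$ in $U$, not in $\partial C(Y)$. To get openness in $\partial C(Y)$, apply invariance of domain to the map $h\colon\K^2-V\to\partial C(Y)$. This map is injective, and continuous by pasting the maps $\phi_\tau^{-1}$ over the locally finite closed cover $\{\tau-V\}$. You also need that $\partial C(Y)$ is a topological surface, which follows, for instance, from radial projection from an interior point of $C(Y)$.
\item \emph{Connectedness.} Connectedness of $\partial C(Y)$ needs a reason. For instance, $\partial C(Y)$ is the image of the connected set $\hat\C-Y$ under the continuous, surjective nearest-point retraction.
\item \emph{Excluding $x=0$.} Your claim that $x=0$ can occur only in the planar case is false as stated. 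Take $V$ to be three equally spaced points on the equator together with the south pole. Then $C(V)$ is three-dimensional, but its equatorial face contains the origin. The claim is also unnecessary: for $\SS^2$ the set $V$, and hence $\T$, is finite, so $U$ is a finite union of closed sets $\tau^*$ and is closed outright.
\item \emph{The planar case.} The degenerate case is vacuous under the hypotheses. If $V$ lay on one circle, each triangle would either be degenerate (great circle) or lie in the smaller closed cap, so $\T$ could not cover $\SS^2$. Similarly, $V$ cannot be collinear in $\E^2$, and in the hyperbolic case $Y\supset\D^c$ has interior.
\item \emph{Small slip.} $\phi_\K$ maps $U$ onto $\K^2-V$, not onto $\K^2$.
\end{itemize}
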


 \begin{proof}      
 Take the circumdisk $D$ for a triangle $\tau \in \T$. By definition, $\tau^* \subset \partial C( D)$. By the Delaunay condition on $\T$, $D$ contains no vertices of $V$ in its interior. Therefore, the interior of the hyperbolic half-space $C(D)$ in $\H^3$ is disjoint from $C(Y)$ and hence $\partial C(D)$ is a supporting plane for $C(Y)$. It follows that $\tau^* \subset \partial C(Y)$. Conversely, take a point $p \in \partial C(Y)$ and a supporting plane $\partial  C(D')$ of $C(Y)$ such that $D' \subset \partial \H^3$ is a round disk whose interior is disjoint from $V$ and  $p \in C(\partial D')$. By construction, $\partial D'$ contains at least two points of $V$.  If  $\partial D' \cap V$ contains at least three points, then $D'$ is the circumdisk of some triangle in $\T$ since $\T$ is Delaunay. Therefore $p \in \tau^*$ for some triangle $\tau \in \T$.  If $\partial D' \cap V$ contains only two points $u, v$, then $p \in C(\{u,v\})$. In this case, we can deform $D'$ to a new round disk $D$ while keeping $\{u,v\} \subset \partial D \cap V$ such that $\partial D \cap V$ contains at least three points. This shows $D$ is a circumdisk, and by the case just proved, we see that $p \in \tau^*$ for some triangle $\tau \in \T$.  It follows that $(\hat S, V, \hat d) =\partial C(Y)$. The restriction $\phi_{\mathbb K}|_{\partial C(Y)}: \partial C(Y) \to \mathbb K^2-V$ produces the natural homeomorphism $h^{-1}$. Therefore, the lemma follows. 
\end{proof}

\begin{figure}[ht!]
\centering
\begin{overpic}[scale=0.8]{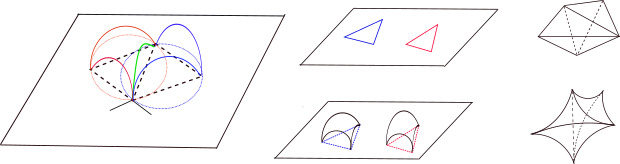}
    \put(60,23.5){$\tau$}
    \put(63.5,21){$f$}
    \put(63,23){$\longrightarrow$}
    \put(68,23.5){$\sigma$}
    \put(60,6){$f^*$}
    \put(59.5,8){$\longrightarrow$}
    \put(66.5,8){$\sigma^*$}
    \put(56.5,8){$\tau^*$}
   \put(15,21){$\sigma^*$}
   \put(32,21){$\tau^*$}
     \put(86,16){$d$}
     \put(86,1.5){$d^*$}
        \put(92,14){$\downarrow$}
\end{overpic}

\caption{The construction of $d^*$ associated to $d$ and the convexity from the Delaunay condition.}
\label{777}
\end{figure}

\medskip
\noindent{\bf Standard model surfaces and convex-hull hyperbolic metrics.} Standard model surfaces are discrete counterparts of complete Riemannian surfaces of constant curvature in the classical uniformization theorem.  If $X$ is a closed subset of a Riemannian surface $(S, g)$, we call the triple $(S, X, g)$ \underline{exceptional} if $(S, X, g)$ is conformal to one of  $(\mathbb S^2, \{p\}), (\mathbb S^2, \{p_1, p_2\})$, $(\R P^2, \{p\})$ and $(\C, \{p\})$.

\begin{definition} A \underline{standard model surface} is a triple $(S,V,d)$, where $(S,d)$ is a complete Riemannian surface of constant curvature $-1, 0$ or $1$,  $V \subset S$ is a nonempty closed discrete subset, and $(S, V, d)$ is not exceptional.
    \label{standardmodel}
\end{definition}

Not all standard models are polyhedral. For example, a non-compact hyperbolic surface with a finite set of vertices is a standard model that is not polyhedral. 

For a standard model surface $(S, V, d)$, we associate with $(S,V,d)$ a complete hyperbolic metric $d^*_c$ on $S- V$, called the \underline{convex-hull hyperbolic metric}.  The construction is motivated by taking the hyperbolic convex hulls.  More precisely, given a non-empty closed subset $X$ in the hyperbolic plane $\H^2_P =\D \subset \partial \H^3_U$, the canonical homeomorphism $Q:=(\phi_{\H}|_{\partial C(X \cup \D^c)})^{-1}$  satisfies 
$Q \circ \phi=\phi \circ Q$ for  any isometry $\phi$ of $\H^3_U$ with $\phi(\D)=\D$.
The pullback under $Q$ of the hyperbolic metric on $\partial C(X\cup \D^c)$ is the hyperbolic convex hull metric $d_c^*$ on $\D-X$. The metric should be compared with the Poincar\'e metric, i.e., the complete conformal hyperbolic metric, on $\D-X$. The relationship has been studied by many authors.
The goal of this subsection is to show that convex hull hyperbolic metrics exist for a large class of surfaces, including standard models. 

\begin{proposition}\label{chhm}
Suppose that $(S,V,d)$ is a complete Riemannian surface of constant curvature $0$, $-1$, or $1$, where $V\subset S$ is a nonempty closed subset. Assume that $(S,V)$ is non-exceptional and, in the case where the curvature of $d$ is $1$, that $V$ is finite.
Then there exists a canonical complete hyperbolic metric $d^*_c$ on $S-V$ such that

(i) if $(S, d)$ is $\H^2_P$, $\mathbb S^2$, or $\E^2$, then $(S-V, d_c^*)$ is isometric to $\partial C(Y)$, with $Y$ defined by \eqref{eq:def-Y};

(ii) if $\phi: (S_1, V_1, d_1) \to (S_2, V_2, d_2)$ is a Riemannian covering map, then the pullback metric $\phi^* (d_2)^*_c$ is equal to $(d_1)^*_c$ on $S_1-V_1$, i.e.,
\begin{equation} \label{eq-chh}
\phi^* (d_2)^*_c =(d_1)^*_c.
\end{equation}
\end{proposition}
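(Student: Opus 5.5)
We first treat the simply connected model surfaces, then pass to quotients using the universal cover.

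\textbf{Step 1: the model spaces.} For $(S,d)$ equal to $\H^2_P$, $\E^2$, or $\SS^2$, we define $d^*_c$ as the pullback of the intrinsic path metric of $\partial C(Y)$, with $Y$ given by \eqref{eq:def-Y}. The pullback is taken through the homeomorphism $Q=(\phi_{\mathbb K}|_{\partial C(Y)})^{-1}$ from $S-V$ to $\partial C(Y)$.

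We must check two facts.
\begin{itemize}
\item $\phi_{\mathbb K}$ restricts to a homeomorphism $\partial C(Y)\to \mathbb K^2-V$. This holds because each geodesic fibre of $\phi_{\mathbb K}$ is a geodesic ray from a point of $\mathbb K^2$ toward the distinguished point or plane (namely $\infty$, the origin, or the hemisphere $\SS^2_+$). Such a ray over a point of $\mathbb K^2-V$ meets the convex set $C(Y)$ in a boundary point exactly once.
\item The pulled-back metric is complete hyperbolic. This is Thurston's theorem, since $Y$ has at least three points by non-exceptionality. In the degenerate case where $C(Y)$ is 2-dimensional, we use the Alexandrov double as in the text.
\end{itemize}

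\textbf{Step 2: naturality.} Every isometry $f$ of $\mathbb K^2$ with $f(V)=V$ extends to an isometry $f^*$ of $\H^3$ that preserves $Y$. For $\mathbb K=\H$ this uses that $f^*$ preserves $\D^c$, and for $\mathbb K=\E$ that it fixes $\infty$. Hence $f^*$ preserves $C(Y)$ and restricts to an isometry of $\partial C(Y)$. By Lemma~\ref{natural-h}(i), $\phi_{\mathbb K}\circ f^*=f\circ\phi_{\mathbb K}$, so $Q\circ f=f^*\circ Q$. Therefore $f$ is an isometry of $(S-V,d^*_c)$.

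\textbf{Step 3: general surfaces.} Let $(S,V,d)$ be arbitrary and let $\pi:\tilde S\to S$ be the Riemannian universal cover, with $\tilde S$ one of $\H^2_P$, $\E^2$, $\SS^2$. Set $\tilde V=\pi^{-1}(V)$. This set is nonempty and closed, and it is finite in the spherical case because the deck group has order at most $2$. We also need $(\tilde S,\tilde V)$ to be non-exceptional. This follows from non-exceptionality of $(S,V)$:
\begin{itemize}
\item the exceptional covers $(\SS^2,\{p\})$ and $(\SS^2,\{p_1,p_2\})$ only cover $(\SS^2,\{p\})$, $(\SS^2,\{p_1,p_2\})$, or $(\R P^2,\{p\})$;
\item $(\C,\{p\})$ only covers itself, since a nontrivial translation group cannot fix a one-point preimage set.
\end{itemize}
So Step 1 applies to $\tilde S-\tilde V$. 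The deck group acts by isometries of $d$ preserving $\tilde V$, hence by Step 2 by isometries of $\tilde d^*_c$. It acts freely and properly on $\tilde S-\tilde V$, so $\tilde d^*_c$ descends to a complete hyperbolic metric $d^*_c$ on $S-V$. Completeness passes to the quotient under a covering. A different choice of universal cover differs by an isometry of the model space, which preserves everything by Step 2, so $d^*_c$ is canonical.

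Statement (i) holds by construction, since the identity is the universal cover of a model space. For (ii), let $\phi:S_1\to S_2$ be a Riemannian covering with $V_1=\phi^{-1}(V_2)$. A universal cover $\pi_1$ of $S_1$ gives the universal cover $\pi_2=\phi\circ\pi_1$ of $S_2$ with the same $\tilde V$. Both metrics are then quotients of the same $\tilde d^*_c$, and \eqref{eq-chh} follows.

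The main obstacle is Step 1 in the case $\mathbb K=\H$ with $V$ arbitrary closed, for instance when $V$ accumulates on $\partial\D$. There one must show carefully that each fibre of $\phi_{\H}$ over $\D-V$ hits $\partial C(V\cup\D^c)$ exactly once. This means checking that $C(Y)$ lies below the hemisphere $\SS^2_+$ and that the ray does not run inside $\partial C(Y)$. We would do this by separating a point $x\in\D-V$ from $Y$ with a small round disk around $x$ disjoint from $Y$, which gives a supporting half-space.
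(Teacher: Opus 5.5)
Your Step~1 rests on a claim that is false in most Euclidean and spherical cases, and those cases are where the real work of this proof lies. The projection $\phi_{\mathbb K}$ restricts to a homeomorphism $\partial C(Y)\to\mathbb K^2-V$ only in special situations:
\begin{itemize}
\item for $\mathbb K=\H$, where your supporting-disc argument does work (note that it is $\partial C(Y)$, not $C(Y)$, that lies between $\D$ and $\SS^2_+$, since $C(Y)\supset C(\D^c)$);
\item for $\mathbb K=\E$, only when $C_{\E}(V)=\E^2$;
\item for $\mathbb K=\SS$, only when the origin lies in $C(V)^\circ$.
\end{itemize}
Here is why the other cases fail.
\begin{itemize}
\item If $C_{\E}(V)\neq\E^2$, the vertical fibres over the nonempty set $\E^2-\overline{C_{\E}(V)}$ miss $C(Y)$ altogether. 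For finite $V$, the fibres over $\partial C_{\E}(V)-V$ lie in vertical faces of $\partial C(Y)$. So $\phi_{\E}|_{\partial C(Y)}$ is neither onto nor injective.
\item If $V$ lies in a small spherical cap, radial rays from the origin either miss $C(V)$ or cross $\partial C(V)$ twice.
\item Whenever $C(Y)$ is $2$-dimensional ($V$ on a circle in $\SS^2$, or $C_{\E}(V)$ a segment, half-line or line), $\phi_{\mathbb K}$ on $\partial C(Y)$ factors through the folding map of the Alexandrov double onto $C(Y)$, so it is two-to-one. Saying you ``use the double'' defines the metric, but not its identification with $S-V$.
\end{itemize}
These cases are forced on you in Step~3. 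A flat cylinder or M\"obius band with finitely many marked points lifts to a $\tilde V$ whose hull is a strip or a line. $\R P^2$ with $V$ on a projective line lifts to $\tilde V$ on a great circle.

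Since $d^*_c$ is a metric on $S-V$ (not merely an isometry class), and (ii) is an equality of metrics, you need in every case an explicit homeomorphism $Q_{S,V,d}\colon S-V\to\partial C(Y)$. It must be equivariant under the isometries of the model surface. That is the missing ingredient, and the paper supplies it case by case.
\begin{itemize}
\item On $\SS^2$, project radially from the Douady--Earle conformal barycenter $b(V)$. It is M\"obius-equivariant and lies in $C(Y)^\circ$ when $C(Y)$ is $3$-dimensional. In the planar case, with $M\supset V$ a circle, compose orthogonal projection onto the plane $C(M)$ with a radial rescaling from $b(V)$ onto $C(Y)$. The two components of $\SS^2-M$ then go to the two sheets of the double.
\item On $\E^2$ with $C_{\E}(V)\neq\E^2$, keep the vertical lift $R$ over $C_{\E}(V)$. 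Send $\xi\notin C_{\E}(V)$ to $R(P(\xi))+(0,0,\|\xi-P(\xi)\|)$, where $P$ is a canonically chosen retraction onto $\partial C_{\E}(V)$. It is radial from a barycenter, orthogonal, or along a canonical direction, according to the shape of the hull. Segments, lines and half-lines need separate two-sheet constructions.
\end{itemize}
Once such an equivariant $Q$ is in hand, your Step~3 goes through as written: non-exceptionality of the lift, descent to the quotient, independence of the choice of cover, and the proof of (ii).
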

Note that if $\phi$ is a self-isometry of $(S_1, V_1, d_1)$, then \eqref{eq-chh} implies that $\phi|_{S_1-V_1}$ is a self-isometry of $(S_1-V_1, (d_1)^*_c)$.

In view of this proposition, we introduce the following definition.

\begin{definition}[Convex hull hyperbolic metric] For a surface $(S,V,d)$ satisfying conditions in Proposition \ref{chhm}, the metric $d^*_c$ is called the \underline{convex hull hyperbolic metric} on $S-V$ associated to $(S, V, d)$.   
    \end{definition}

\begin{proof}[Proof of Proposition \ref{chhm}]

The construction is first carried out for the simply connected model surfaces and then for general surfaces through their universal covers.

We first consider the case where $S$ is simply connected.  According to its curvature, we identify $(S,d)$ with one of the following standard models as before:
$$
\mathbb{S}^2=\partial\mathbb{H}^3_P,
\qquad
\E^2=\partial\mathbb{H}^3_U-\{\infty\},
\qquad
\mathbb{H}^2_P= \D \subset \partial \H^3_U.
$$
Under this identification, $A|_S$ is defined for any hyperbolic isometry $A\in\Isom(\H^3)$.
We still denote by $Y$ the subset of $\partial\mathbb{H}^3$ that is defined by \eqref{eq:def-Y} for $V$.
Our objective is to construct a canonical homeomorphism $Q_{S,V,d}:  S-V \to \partial C(Y)$.

\begin{lemma}\label{lem:Q-S-X-d}
    For each simply connected  surface $(S, V, d)$ satisfying the same assumption,  there exists a canonical homeomorphism $Q_{S,V,d}: S - V \to \partial C(Y)$ such that if  $(S_1, V_1, d_1)$ and $(S_2, V_2, d_2)$ are simply connected and related by an isometry $A \in \operatorname{Isom}(\mathbb{H}^3)$ with $A(S_1) = S_2$ and $A(V_1) = V_2$,  then
    \begin{equation} \label{equivq}
        A \circ Q_{S_1, V_1, d_1}(\xi) = Q_{S_2, V_2, d_2}\circ A (\xi),\quad\forall \xi\in S_1 - V_1.
    \end{equation}
\end{lemma}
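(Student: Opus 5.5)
The plan is to take $Q_{S,V,d}$ to be the inverse of the canonical projection $\phi_{\mathbb K}$ restricted to $\partial C(Y)$, as the paper already does for $\H^2_P$. The work is to show that $\phi_{\mathbb K}|_{\partial C(Y)}$ is a homeomorphism onto $S-V$, and that it commutes with every isometry $A$ of $\H^3$ preserving $S$. Equivariance is then essentially Lemma \ref{natural-h}(i). The characterization of $\phi_{\mathbb K}$ says $x$ and $\phi_{\mathbb K}(x)$ lie on a geodesic through $\infty$, through the origin, or orthogonal to $\mathbb S^2_+$. This description is invariant under isometries of $\H^3$ that fix, respectively, $\infty$ (similarities of $\E^2$), the origin (isometries of $\mathbb S^2$), or the hemisphere $C(\partial\D)$ (isometries of $\D$). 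Hence $\phi_{\mathbb K}\circ A = A\circ \phi_{\mathbb K}$. Since $A(Y_1)=Y_2$ and so $A(C(Y_1))=C(Y_2)$, inverting gives \eqref{equivq}.

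The main step is the homeomorphism claim. In each case I would exhibit a family of geodesics in $\H^3$ foliating the relevant region. These are the vertical lines over $\E^2$, the radial rays from the origin, and the geodesics orthogonal to the plane $P=C(\partial \D)$ with endpoints in $\D$. Each such geodesic $\gamma_\xi$ has one ideal endpoint $\xi\in S$, and its other end is at $\infty$, at the origin, or on $P$, which lies in $C(Y)$ or in its closure. I then argue as follows.

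(a) For $\xi\in V$, the geodesic $\gamma_\xi$ has both ends in $\overline{C(Y)}$, so it lies in $C(Y)$. Any boundary point $x$ on it would therefore project into $V$. I must check that such points do not lie in $\partial C(Y)$. When $C(Y)$ is $3$-dimensional, the interior of $\gamma_\xi$ lies in the interior of $C(Y)$, so this holds. The degenerate 2-dimensional case is handled separately via the metric double; non-exceptionality rules out the cases where $C(Y)$ is a geodesic or empty.

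(b) For $\xi\notin V$, I choose a small round disk $D$ around $\xi$ disjoint from the closed set $Y$. The half-space $C(D)$ misses $C(Y)$, so $\gamma_\xi$ starts outside $C(Y)$ and ends inside $\overline{C(Y)}$. By convexity it meets $\partial C(Y)$ in exactly one point. The argument for this is that the set $\gamma_\xi\cap C(Y)$ is a closed sub-ray. If the ray met the boundary in a nondegenerate segment, a supporting plane containing that segment would also contain the endpoint at $\infty$, the origin, or a point of $P$. That is impossible unless the segment bounds a support half-space whose disk contains $\xi$, which contradicts $\xi\notin V$ being exposed correctly. I expect this uniqueness argument (injectivity), together with the degenerate planar case, to be the main obstacle. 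For the degenerate case I would use the Alexandrov double, sending the two copies to the two sides.

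Continuity of $\phi_{\mathbb K}|_{\partial C(Y)}$ is clear. Continuity of the inverse follows from properness: the first hitting point varies continuously, since $C(Y)$ is closed and convex with nonempty interior in the nondegenerate case. Surjectivity onto $S-V$ is (b). Canonicity holds because the construction uses only $Y$ and the fixed projection, so no choices are made. For the spherical case, the finiteness of $V$ is not needed here, only for the later covering statement.
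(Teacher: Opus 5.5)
There is a genuine gap. Your central claim, that $\phi_{\mathbb K}|_{\partial C(Y)}$ is a homeomorphism onto $S-V$, holds only for $\H^2_P$ and for $\E^2$ with $C_{\E}(V)=\E^2$. In exactly those cases (the paper's Cases 1 and 3.1) your argument coincides with the paper's. In the general Euclidean case it fails, because $C(V\cup\{\infty\})$ lies in the vertical cylinder over $\overline{C_{\E}(V)}$: every closed half-plane containing $V$ gives a vertical hyperbolic half-space containing $Y$. Two consequences follow.
\begin{itemize}
\item The vertical line over a point outside $\overline{C_{\E}(V)}$ never meets $C(Y)$. In (b) you conflate ``ends at an ideal point of $\overline{C(Y)}$'' with ``enters $C(Y)$''.
\item Over a point in the interior of an edge $[v_1,v_2]$ of $\partial C_{\E}(V)$, the vertical line lies in a vertical supporting plane and meets $\partial C(Y)$ in a whole ray. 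A supporting plane through $\infty$ is precisely what happens, so your uniqueness argument has no contradiction to derive.
\end{itemize}
Take $V$ to be three non-collinear points. Then $\partial C(Y)$ is the ideal triangle on the hemisphere through $V$ together with three vertical walls, and $\phi_{\E}$ maps it onto the closed triangle minus $V$. The vertical edges over the vertices even project into $V$, so (a) fails as well.

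The paper therefore builds $Q_{S,V,d}$ by hand in Cases 3.2--3.4. Over $C_{\E}(V)$ it uses the first-hit vertical lift $R$. It then chooses a canonical retraction $P$ of $\E^2-C_{\E}(V)$ onto $\partial C_{\E}(V)$: radial from the barycenter, translation along a direction determined by the recession set $\Lambda$, or nearest-point projection, depending on the case. It sends $\xi$ to $R(P(\xi))+(0,0,\|\xi-P(\xi)\|)$ on the walls. Separate explicit maps onto the Alexandrov double handle the two-dimensional hulls. Similarity-equivariance comes from the canonicity of these choices; your proposal has no substitute for any of this.

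In the spherical case, radial projection from the origin fails in two ways. First, the homeomorphism claim fails. Take four non-concyclic points in an open cap smaller than a hemisphere; then $0\notin C(V)$. Radial geodesics in directions outside the cap miss $C(V)$, and those entering its interior cross $\partial C(V)$ twice.

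Second, and more fundamentally, equivariance fails. Every $A\in\Isom(\H^3)$ satisfies $A(\SS^2)=\SS^2$, so \eqref{equivq} must hold for all M\"obius transformations. Radial projection from $0$ commutes only with those fixing $0$, and you silently restricted to that subgroup.

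The paper's fix is to project radially from the Douady--Earle conformal barycenter $b(V)$ of the normalized counting measure. This point lies in the interior of $C(V)$ when $C(V)$ is three-dimensional and satisfies $b(A(V))=A(b(V))$. This is also one place where finiteness of $V$ is used, contrary to your closing remark. When $C(V)$ is planar, the paper reaches the double by combining orthogonal projection onto the plane $C(M)$ with a radial rescaling centered at $b(V)$.
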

\begin{proof}[Proof of Lemma \ref{lem:Q-S-X-d}]
According to the curvature of $(S,d)$, we consider the following three cases.

 \noindent\textbf{Case 1.} $(S, d)$ is the hyperbolic plane $\mathbb{H}^2_P$.  The restriction of the canonical projection $\phi_{\H}$ to $\partial C(Y)$ is a homeomorphism onto $\H^2_P$ due to convexity.  
  We define the map $Q_{S,V,d}: \H_P^2 - V \longrightarrow \partial C(Y)$  to be the inverse map $(\phi_{\H}|_{\partial C(Y)})^{-1}$. By Lemma \ref{natural-h}(i), the equation \eqref{equivq} holds.  

\medskip
    \noindent\textbf{Case 2.} $(S, d)$ is the unit sphere $\mathbb{S}^2$. Then the set $V$ is finite and $|V| \geq 3$.
    For each finite subset $Z \subset \mathbb{S}^2$, let
    $$\mu_Z = \frac{1}{|Z|}\sum_{\xi \in Z}\delta_\xi$$
    be the normalized counting measure on $Z$, and let $b(Z)$ be the conformal barycenter of $Z$ with respect to $\mu_Z$ (see \cite{Douady-Earle} for the concept of conformal barycenter). Since $|V| \geq 3$, $b(V) \in C(Y)$ is well-defined by \cite{Douady-Earle}, and for any hyperbolic isometry $A \in \operatorname{Isom}(\mathbb{H}^3_P)$, we have $b(A(V)) = A(b(V))$.
    \begin{enumerate}
        \item[]\textbf{Case 2.1.} If $C(Y)$ is three-dimensional, then $b(V) \in  C(Y)^\circ$. We define $Q_{S,V,d}$ to be the inverse map of the hyperbolic projection from $\partial C(Y)$ to $\mathbb{S}^2 - V$ centered at $b(V)$. 

        \item[]\textbf{Case 2.2.} 
        If $C(Y)$ is two-dimensional, say, $Y$ is contained in a circle $M \subset \mathbb S^2$, then $b(V)$ lies in the relative interior of $C(Y)$. 
        Let $\Omega_+$ and $\Omega_-$ be the two connected components of $\mathbb{S}^2 - M$.
        For each $w \in \mathbb{S}^2 - M$, let $P(w)$ be the unique point in the hyperbolic plane $C(M)$ such that the geodesic line passing through $P(w)$ and $w$ is orthogonal to $C(M)$. Extend $P$ to a continuous map from $\mathbb S^2$ to $C(M) \cup M$, still denoted by $P$, by setting $P(m) =m$ for $m \in M.$
        Then the restriction of $P$ to $\Omega_i \cup M$, $i \in \{+,-\}$,  is a homeomorphism onto $C(M) \cup M$.  We now construct a homeomorphism $Q_V$ from $C(M)\cup M$ to the $C(Y)\cup Y$ as follows. For an arc length parameterized geodesic ray $\gamma (t)$ from $b(V)$, $t \in [0, \infty]$, we define
        $$Q_V(\gamma(t)) := \gamma\left(\frac{t}{1+t/R_V}\right),$$
        where $R_V\in(0, \infty]$ such that $\gamma(R_V) \in \partial C(Y) \cup Y$.

Define the homeomorphism $F_i(x)=(Q_V(P(x)), i):  \Omega_i \cup M \to (C(Y) \cup Y) \times \{i\}$ for each $i\in\{+,-\}$ such that $F_+|_M =F_-|_M$. The restriction of the gluing $F_+\cup F_-$ is a homeomorphism from $S-V$ to the Alexandrov double $\partial C(Y)$. 

 \end{enumerate}
    \noindent\textbf{Case 3.} $(S, d)$ is the Euclidean plane $\E^2$. Let $C_{\E}(V)$ be the Euclidean convex hull of the set $V$. For each $\xi\in C_{\E}(V)$, we define a map $R: C_{\E}(V) \longrightarrow \partial C(Y)\cup V$ by setting $R(\xi)=\xi$ when $\xi\in V$, and letting $R(\xi)$ be the first point of intersection between $\partial C(Y)$ and the vertical ray emanating from $\xi$ when $\xi\in C_{\E}(V) - V$.  In terms of canonical projection $\phi_{\E}$, we have $\phi_{\E} \circ R$ is the identity map. 
    
    \begin{enumerate}
        \item[] \textbf{Case 3.1.} $C_{\mathbb{E}}(V)=\mathbb{E}^2$. By convexity, $\phi_{\E}|: \partial C(Y) \to \E^2-V$ is a homeomorphism. Thus we define $Q_{S,V,d}$ to be the inverse map $(\phi_{\E}|_{\partial C(Y)})^{-1}$.  Furthermore, by Lemma \ref{natural-h}(i), the equation \eqref{equivq} holds.

        \item[] \textbf{Case 3.2.} $C_{\mathbb{E}}(V)$ is bounded. Let $\xi_0$ be the barycenter of $C_{\mathbb{E}}(V)$.
    
        (1) if the interior $C_{\mathbb{E}}(V)^\circ \neq \varnothing$. For each $\xi \in \mathbb{E}^2 - C_{\mathbb{E}}(V)$, let $P(\xi)$ be the first intersection point of the ray $\xi_0 + \mathbb{R}_{\geq 0}(\xi - \xi_0)$ with $\partial C_{\mathbb{E}}(V)$.  We then define the map $Q_{S,V,d}: \mathbb{E}^2 - V \longrightarrow \partial C(Y)$ as follows:
        \begin{equation}\label{eq:Q-1}
            Q_{S,V,d}(\xi) = 
            \begin{cases}         
                R(\xi), & \xi \in C_{\mathbb{E}}(V) - V, \\         
                R(P(\xi)) + \left(0, 0,\Vert\xi - P(\xi)\|\right), & \xi \in \mathbb{E}^2 - C_{\mathbb{E}}(V).   
            \end{cases}
        \end{equation}
    
        (2) if the interior $C_{\mathbb{E}}(V)^\circ = \varnothing$. Then $C_{\mathbb{E}}(V)$ is a line segment of finite length. Let $p_1$ and $p_2$ be the two endpoints of $C_{\mathbb{E}}(V)$, and let $l$ be the line passing through $C_{\mathbb{E}}(V)$. For each $\xi \in l - C_{\mathbb{E}}(V)$, let $P(\xi)$ denote the endpoint of $C_{\mathbb{E}}(V)$ nearest to $\xi$. We define a homeomorphism $Q$ from $l - V$ to the relative boundary of $C(V)$ by 
        \begin{equation*}
            Q(\xi) = 
            \begin{cases}         
                R(\xi), & \xi \in C_{\mathbb{E}}(V) - V, \\         
                P(\xi) + \left(0, 0,\Vert\xi - P(\xi)\|\right), & \xi \in l - C_{\mathbb{E}}(V).      
            \end{cases}
        \end{equation*}
        Let $H_+$ and $H_-$ be the two connected components of $\mathbb{E}^2 - l$. We now extend $Q$ to a homeomorphism $Q_{S,V,d}$ from $\mathbb{E}^2 - V$ to $\partial C(Y)$ by defining a homeomorphism $Q_i$ from $H_i$ to the relative interior of the copy $C(V)_i$ for each $i \in \{+,-\}$.  For each $\xi\in l-\{p_1,p_2\}$, let $L_i(\xi)$ be the ray in $H_i$ emanating from $\xi$ such that the angle between $L_i(\xi)$ and the ray from $p_2$ to $\infty$ through $p_1$ is $\frac{\pi\|\xi - p_1\|}{\|p_2 - p_1\|}$. Then $\cup_{\xi\in l-\{p_1,p_2\}} L_i(\xi) = H_i$. We then let $Q_i$ be the map sending $L_i(\xi)$ isometrically to the vertical ray emanating from $R(\xi)$ for each $\xi\in(p_1,p_2)$. 

\begin{figure}[!htbp]\label{fig:case-3-i}
\centering
\begin{subfigure}[b]{.497\linewidth}
\centering
\resizebox{\CasePanelWidth\linewidth}{!}{%
\begin{tikzpicture}
\PanelBoxA
\begin{scope}[x={(1.05cm,-.12cm)},y={(.62cm,.33cm)},z={(0cm,1.05cm)}]
  \BasePlane
  \foreach \j in {2,1,0,-1,-2}{\foreach \i in {-2,-1,0,1,2}{\LatticeTile{\i}{\j}}}
  \foreach \j in {-2,-1,0,1,2,3}{%
    \foreach \i in {-2,-1,0,1,2}{%
      \draw[mesh] plot[domain=0:180,samples=18]
        ({\i+.5+.5*cos(\x)},{\j},{.5*sin(\x)});
    }%
  }%
  \foreach \i in {-2,-1,0,1,2,3}{%
    \foreach \j in {-2,-1,0,1,2}{%
      \draw[mesh] plot[domain=0:180,samples=18]
        ({\i},{\j+.5+.5*cos(\x)},{.5*sin(\x)});
    }%
  }%
  \foreach \i in {-2,-1,0,1,2}{%
    \draw[surface rim] plot[domain=0:180,samples=20]
      ({\i+.5+.5*cos(\x)},{-2},{.5*sin(\x)});
  }
  \foreach \j in {-2,-1,0,1,2}{%
    \draw[surface rim] plot[domain=0:180,samples=20]
      ({3},{\j+.5+.5*cos(\x)},{.5*sin(\x)});
  }
  \foreach \i in {-2,-1,0,1,2,3}{\foreach \j in {-2,-1,0,1,2,3}{\VPoint{\i}{\j}}}
  \node[lab,text=surfaceedge,anchor=south] at (-1.35,1.35,1.08) {$\partial C(Y)$};
  \draw[red ray] (-.5,.5,.03)--(-.5,.5,.70);
  \RPoint{-.5}{.5}{0}\RPoint{-.5}{.5}{.707}
  \draw[red ray] (1.5,.5,.03)--(1.5,.5,.70);
  \RPoint{1.5}{.5}{0}\RPoint{1.5}{.5}{.707}
  \RPoint{.5}{-.5}{0}
  \draw[red ray] (.5,-.5,.03)--(.5,-.5,.70);
  \RPoint{.5}{-.5}{.707}
  \node[slab,anchor=north east] at (.40,-.62,0) {$\xi$};
\end{scope}
\end{tikzpicture}%
}
\caption{Case 3.1}
\end{subfigure}\hfill%
\begin{subfigure}[b]{.497\linewidth}
\centering
\resizebox{\CasePanelWidth\linewidth}{!}{%
\begin{tikzpicture}
\PanelBoxB
\begin{scope}[x={(1.05cm,-.12cm)},y={(.62cm,.33cm)},z={(0cm,1.05cm)}]
  \BasePlane
  \def\a{1.45}
  \def\H{3.45}
  \path[footprint] (-\a,-\a,0)--(\a,-\a,0)--(\a,\a,0)--(-\a,\a,0)--cycle;
  \path[face] (-\a,\a,\H)--(\a,\a,\H)
    --plot[domain=0:180,samples=32] ({\a*cos(\x)},{\a},{\a*sin(\x)})--cycle;
  \path[face] (-\a,-\a,\H)--(-\a,\a,\H)
    --plot[domain=0:180,samples=32] ({-\a},{\a*cos(\x)},{\a*sin(\x)})--cycle;
  \foreach \ii in {0,1,2,3,4,5,6,7}{%
    \foreach \jj in {0,1,2,3,4,5,6,7}{%
      \pgfmathsetmacro{\xa}{-\a+2*\a*\ii/8}
      \pgfmathsetmacro{\xb}{-\a+2*\a*(\ii+1)/8}
      \pgfmathsetmacro{\ya}{-\a+2*\a*\jj/8}
      \pgfmathsetmacro{\yb}{-\a+2*\a*(\jj+1)/8}
      \pgfmathsetmacro{\za}{sqrt(max(0,2*\a*\a-\xa*\xa-\ya*\ya))}
      \pgfmathsetmacro{\zb}{sqrt(max(0,2*\a*\a-\xb*\xb-\ya*\ya))}
      \pgfmathsetmacro{\zc}{sqrt(max(0,2*\a*\a-\xb*\xb-\yb*\yb))}
      \pgfmathsetmacro{\zd}{sqrt(max(0,2*\a*\a-\xa*\xa-\yb*\yb))}
      \path[surface cell] (\xa,\ya,\za)--(\xb,\ya,\zb)--(\xb,\yb,\zc)--(\xa,\yb,\zd)--cycle;
    }%
  }%
  \path[face] (-\a,-\a,\H)--(\a,-\a,\H)
    --plot[domain=0:180,samples=32] ({\a*cos(\x)},{-\a},{\a*sin(\x)})--cycle;
  \path[face] (\a,-\a,\H)--(\a,\a,\H)
    --plot[domain=0:180,samples=32] ({\a},{\a*cos(\x)},{\a*sin(\x)})--cycle;
  \draw[surface rim] plot[domain=-1.45:1.45,samples=36]
    (\x,{\a},{sqrt(max(0,\a*\a-\x*\x))});
  \draw[surface rim] plot[domain=-1.45:1.45,samples=36]
    (\x,{-\a},{sqrt(max(0,\a*\a-\x*\x))});
  \draw[surface rim] plot[domain=-1.45:1.45,samples=36]
    ({\a},\x,{sqrt(max(0,\a*\a-\x*\x))});
  \draw[surface rim] plot[domain=-1.45:1.45,samples=36]
    ({-\a},\x,{sqrt(max(0,\a*\a-\x*\x))});
  \foreach \x/\y in {-1.45/-1.45,1.45/-1.45,1.45/1.45,-1.45/1.45}{\VPoint{\x}{\y}}
  \CPoint{0}{0}\node[slab,anchor=north east] at (-.06,-.05,0) {$\xi_0$};
  \draw[guide] (0,0,0)--(0,-\a,0);
  \draw[red ray] (0,-\a,0)--(0,-3.00,0);
  \draw[red connector] (0,-\a,0)--(0,-\a,\a);
  \RPoint{0}{-\a}{0}\RPoint{0}{-\a}{\a}
  \draw[red ray] (0,-\a,\a)--(0,-\a,\RedRayTop);
  \draw[guide] (0,0,0)--(-\a,0,0);
  \draw[red ray] (-\a,0,0)--(-3.00,0,0);
  \draw[red connector] (-\a,0,0)--(-\a,0,\a);
  \RPoint{-\a}{0}{0}\RPoint{-\a}{0}{\a}
  \draw[red ray] (-\a,0,\a)--(-\a,0,\RedRayTop);
  \draw[guide] (0,0,0)--(\a,0,0);
  \draw[red ray] (\a,0,0)--(3.05,0,0);
  \draw[red connector] (\a,0,0)--(\a,0,\a);
  \RPoint{\a}{0}{0}\node[redlab,anchor=north west] at ({\a+.06},-.08,0) {$P(\xi)$};
  \RPoint{2.55}{0}{0}\node[redlab,anchor=north] at (2.55,-.08,0) {$\xi$};
  \draw[red ray] (\a,0,\a)--(\a,0,\RedRayTop);
  \RPoint{\a}{0}{\a}
  \node[redlab,anchor=south west] at ({\a+.10},0,{\a+.08}) {$R(P(\xi))$};
  \foreach \x/\y in {-1.45/-1.45,1.45/-1.45,1.45/1.45,-1.45/1.45}{%
    \draw[continuation] (\x,\y,2.85)--(\x,\y,3.62);
  }
\end{scope}
\end{tikzpicture}%
}
\caption{Case 3.2(1)}
\end{subfigure}

\par\vspace{.5mm}

\begin{subfigure}[b]{.497\linewidth}
\centering
\resizebox{\CasePanelWidth\linewidth}{!}{%
\begin{tikzpicture}
\PanelBoxC
\begin{scope}[x={(1.05cm,-.12cm)},y={(.62cm,.33cm)},z={(0cm,1.05cm)}]
  \BasePlane
  \draw[line width=2.72pt,draw=foot] (-2,0,0)--(2,0,0);
  \draw[hull edge] (-3.25,0,0)--(3.25,0,0);
  \VPoint{-2}{0}\VPoint{2}{0}
  \path[face] (-2,0,3.45)--(2,0,3.45)
    --plot[domain=0:180,samples=42] ({2*cos(\x)},0,{2*sin(\x)})--cycle;
  \draw[surface rim] plot[domain=0:180,samples=42]
    ({2*cos(\x)},0,{2*sin(\x)});
  \draw[red ray] (-2,0,0)--(-3.28,0,0);
  \draw[red ray] (-2,0,.02)--(-2,0,\RedRayTop);
  \draw[red ray] (2,0,0)--(3.28,0,0);
  \draw[red ray] (2,0,.02)--(2,0,\RedRayTop);
  \pgfmathsetmacro{\zs}{sqrt(3)}
  \draw[red ray] (-1,0,0)--(-2.28,1.28,0);
  \draw[red connector] (-1,0,0)--(-1,0,\zs);
  \RPoint{-1}{0}{0}\RPoint{-1}{0}{\zs}
  \draw[red ray] (-1,0,\zs)--(-1,0,\RedRayTop);
  \draw[red ray] (1,0,0)--(2.28,1.28,0);
  \draw[red connector] (1,0,0)--(1,0,\zs);
  \RPoint{1}{0}{0}\RPoint{1}{0}{\zs}
  \draw[red ray] (1,0,\zs)--(1,0,\RedRayTop);
  \draw[red ray] (0,0,0)--(0,2.12,0);
  \draw[red connector] (0,0,0)--(0,0,2);
  \RPoint{0}{0}{0}
  \draw[red ray] (0,0,2)--(0,0,\RedRayTop);
  \RPoint{0}{0}{2}
  \node[slab,anchor=west] at (2.48,1.48,0) {$H_+$};
  \node[slab,anchor=north east] at (-2.04,-.08,0) {$p_1$};
  \node[slab,anchor=north west] at (2.04,-.08,0) {$p_2$};
\end{scope}
\end{tikzpicture}%
}
\caption{Case 3.2(2)}
\end{subfigure}\hfill%
\begin{subfigure}[b]{.497\linewidth}
\centering
\resizebox{\CasePanelWidth\linewidth}{!}{%
\begin{tikzpicture}
\PanelBoxD
\begin{scope}[x={(1.05cm,-.12cm)},y={(.62cm,.33cm)},z={(0cm,1.05cm)}]
  \BasePlane
  \path[footprint] (-3.35,-1,0)--(3.35,-1,0)--(3.35,1,0)--(-3.35,1,0)--cycle;
  \foreach \n in {-3,-2,-1,0,1,2}{\StripWall{\n}{1}}
  \foreach \n in {-3,-2,-1,0,1,2}{\StripTile{\n}}
  \foreach \n in {-3,-2,-1,0,1,2}{\StripWall{\n}{-1}}
  \foreach \n in {-3,-2,-1,0,1,2,3}{%
    \draw[mesh] plot[domain=-1:1,samples=24] ({\n},\x,{sqrt(max(0,1-\x*\x))});
    \VPoint{\n}{-1}\VPoint{\n}{1}
  }
  \foreach \px in {-1.5,1.5}{%
    \draw[red ray] (\px,-1,0)--(\px,-2.04,0);
    \draw[red connector] (\px,-1,0)--(\px,-1,.5);
    \RPoint{\px}{-1}{0}\RPoint{\px}{-1}{.5}
    \draw[red ray] (\px,-1,.5)--(\px,-1,\RedRayTop);
  }
  \draw[red ray] (.5,-1,0)--(.5,-2.12,0);
  \draw[red connector] (.5,-1,0)--(.5,-1,.5);
  \RPoint{.5}{-1}{0}
  \draw[red ray] (.5,-1,.5)--(.5,-1,\RedRayTop);
  \RPoint{.5}{-1}{.5}
\end{scope}
\end{tikzpicture}%
}
\caption{Case 3.3(1)}
\end{subfigure}
\caption{The map $Q$ in Cases 3.1, 3.2(1), 3.2(2), and 3.3(1). Horizontal red rays are mapped isometrically to vertical red rays.}
\end{figure}

        \item[]\textbf{Case 3.3.} $C_{\mathbb{E}}(V)\neq\mathbb{E}^2$ is unbounded and the interior $ C_{\mathbb{E}}(V)^\circ \neq \varnothing$. Let $\Lambda$ be the set of unit vectors $v \in \mathbb{S}^1$ such that $p_0 + \mathbb{R}_+ v \subset C_{\mathbb{E}}(V)$ for some $p_0 \in \mathbb{E}^2$. It is clear that either $\Lambda=\{v,-v\}$ for some $v\in\mathbb{S}^1$ or $\Lambda$ is an arc or a point with length $l(\Lambda)\in[0,\pi]$.
        
        (1) $\Lambda=\{v,-v\}$ for some $v\in\mathbb{S}^1$. Let $w$ be a unit vector orthogonal to $v$. For each $\xi \in \mathbb{E}^2 - C_{\mathbb{E}}(V)$, let $P(\xi)$ denote the intersection point of the line $\xi + \mathbb{R}w$ and $\partial C_{\mathbb{E}}(V)$ that is closest to $\xi$. We define the map $Q_{S,V,d}: \mathbb{E}^2 - V \longrightarrow \partial C(Y)$ as in \eqref{eq:Q-1}.

        (2) $\Lambda$ is an arc or a point of length $l(\Lambda)\in[0,\pi]$. We denote $w \in \Lambda$ to be the midpoint of $\Lambda$. If $\Lambda\neq\{w\}$, then we define the map $Q_{S,V,d}: \mathbb{E}^2 - V \longrightarrow \partial C(Y)$ as in \eqref{eq:Q-1}, where $P(\xi)$ is the first intersection point of the ray $\xi + \mathbb{R}_{\geq 0}w$ with $\partial C_{\mathbb{E}}(V)$ for each $\xi \in \mathbb{E}^2 - C_{\mathbb{E}}(V)$. Now suppose $\Lambda=\{w\}$. Let 
        $$
        m=\min\{x\cdot w:x\in C_{\mathbb{E}}(V)\},\quad I=\{x\in C_{\mathbb{E}}(V):x\cdot w=m\},
        $$
        and let $\xi_0$ be the midpoint of $I$, where $I$ must be compact since $\Lambda$ is a point. Denote by $\Lambda'$ the set of unit vectors $v \in \mathbb{S}^1$ such that $(\xi_0 + \mathbb{R}_+ v )\cap  \partial C_{\mathbb{E}}(V)\neq\varnothing$. Since $C_{\mathbb{E}}(V)^\circ \neq \varnothing$, $\Lambda'$ is an open arc containing  $w\in\Lambda'$. Let $w_1$ and $w_2$ be the midpoints of the two connected components of $\Lambda' -\{w\}$. For each $i\in\{1,2\}$, let $\xi_i$ be the unique intersection point of the ray $\xi_0 + \mathbb{R}_{\geq 0}w_i$ with $\partial C_{\mathbb{E}}(V)$. Denote by $\xi_3$ the midpoint of the line segment connecting $\xi_1$ and $\xi_2$. 
        It is evident that $\xi_3\in  C_{\mathbb{E}}(V)^\circ$.
        For each $\xi \in \mathbb{E}^2 - C_{\mathbb{E}}(V)$, let $P(\xi)$ denote the first intersection point of the ray $\xi_3 + \mathbb{R}_{\geq 0}(\xi - \xi_3)$ with $\partial C_{\mathbb{E}}(V)$. We then define the map $Q_{S,V,d}: \mathbb{E}^2 - V \longrightarrow \partial C(Y)$ as in equation \eqref{eq:Q-1}.
    
        \item[] \textbf{Case 3.4.} $C_{\mathbb{E}}(V)$ is unbounded and its interior $C_{\mathbb{E}}(V)^\circ = \varnothing$. It follows that $C_{\mathbb{E}}(V)$ is either a line or a half-line. 
        
        (1) $C_{\mathbb{E}}(V)$ is a line. Let $P$ denote the nearest-point projection of $\mathbb{E}^2$ onto $C_{\mathbb{E}}(V)$. We then define the map $Q_{S,V,d}: \mathbb{E}^2 - V \longrightarrow \partial C(Y)$ using the same formula as in \eqref{eq:Q-1}. 
      By the definition of Alexandrov double $\partial C(Y)$, we see that  $Q_{S,V,d}$ is a homeomorphism.

\begin{figure}[!htbp]
\centering
\begin{subfigure}[b]{.497\linewidth}
\centering
\resizebox{\CasePanelWidth\linewidth}{!}{%
\begin{tikzpicture}
\PanelBoxE
\begin{scope}[x={(1.05cm,-.12cm)},y={(.62cm,.33cm)},z={(0cm,1.05cm)}]
  \BasePlane
  \def\aa{.62}
  \path[footprint] (0,0,0)--(3.45,{-\aa*3.45},0)--(3.45,{\aa*3.45},0)--cycle;
  \foreach \n in {0,1,2}{\WedgeWall{\n}{1}}
  \foreach \n in {0,1,2}{\WedgeWall{\n}{-1}}
  \foreach \n in {0,1,2,3}{\VPoint{\n}{\aa*\n}\VPoint{\n}{-\aa*\n}}
  \draw[construction] (2.05,0,.02)--(3.20,0,.02);
  \node[slab,text=construct!80!black,anchor=north] at (2.66,-.14,.03) {$w$};
  \pgfmathsetmacro{\zp}{.5*sqrt(1+\aa*\aa)}
  \draw[red ray] (.5,{.5*\aa},0)--(-1.18,{.5*\aa},0);
  \draw[red connector] (.5,{.5*\aa},0)--(.5,{.5*\aa},\zp);
  \RPoint{.5}{.5*\aa}{0}\RPoint{.5}{.5*\aa}{\zp}
  \draw[red ray] (.5,{.5*\aa},\zp)--(.5,{.5*\aa},3.52);
  \draw[red ray] (2.5,{-2.5*\aa},0)--(-.95,{-2.5*\aa},0);
  \draw[red connector] (2.5,{-2.5*\aa},0)--(2.5,{-2.5*\aa},\zp);
  \RPoint{2.5}{-2.5*\aa}{0}\RPoint{2.5}{-2.5*\aa}{\zp}
  \draw[red ray] (2.5,{-2.5*\aa},\zp)--(2.5,{-2.5*\aa},3.52);
  \draw[red ray] (1.5,{1.5*\aa},0)--(-1.25,{1.5*\aa},0);
  \draw[red connector] (1.5,{1.5*\aa},0)--(1.5,{1.5*\aa},\zp);
  \RPoint{1.5}{1.5*\aa}{0}
  \draw[red ray] (1.5,{1.5*\aa},\zp)--(1.5,{1.5*\aa},3.62);
  \RPoint{1.5}{1.5*\aa}{\zp}
\end{scope}
\end{tikzpicture}%
}
\caption{Case 3.3(2), $\Lambda\neq\{w\}$}
\end{subfigure}\hfill%
\begin{subfigure}[b]{.497\linewidth}
\centering
\resizebox{\CasePanelWidth\linewidth}{!}{%
\begin{tikzpicture}
\PanelBoxF
\begin{scope}[x={(1.05cm,-.12cm)},y={(.62cm,.33cm)},z={(0cm,1.05cm)}]
  \BasePlane
  \path[footprint] (0,-1,0)--(3.45,-1,0)--(3.45,1,0)--(0,1,0)--cycle;
  \foreach \n in {0,1,2}{\StripWall{\n}{1}}
  \path[face] (0,-1,3.45)--(0,1,3.45)
    --plot[domain=0:180,samples=30] (0,{cos(\x)},{sin(\x)})--cycle;
  \draw[surface rim] plot[domain=0:180,samples=30]
    (0,{cos(\x)},{sin(\x)});
  \foreach \n in {0,1,2}{\StripWall{\n}{-1}}
  \foreach \n in {0,1,2,3}{\VPoint{\n}{-1}\VPoint{\n}{1}}
  \draw[construction] (2.55,0,.02)--(3.35,0,.02);
  \node[slab,text=construct!80!black,anchor=north] at (2.96,-.13,.03) {$w$};
  \draw[line width=1.92pt,draw=construct!65] (0,-1,0)--(0,1,0);
  \CPoint{0}{0}
  \draw[construction] (0,0,0)--(1,-1,0);
  \draw[construction] (0,0,0)--(1,1,0);
  \draw[draw=groundedge!90,line width=.53pt] (1,-1,0)--(1,1,0);
  \CPoint{1}{0}
  \draw[guide] (1,0,0)--(1.5,-1,0);
  \draw[red ray] (1.5,-1,0)--(2.08,-2.16,0);
  \draw[red connector] (1.5,-1,0)--(1.5,-1,.5);
  \RPoint{1.5}{-1}{0}\RPoint{1.5}{-1}{.5}
  \draw[red ray] (1.5,-1,.5)--(1.5,-1,3.52);
  \draw[guide] (1,0,0)--(1.5,1,0);
  \draw[red ray] (1.5,1,0)--(2.08,2.16,0);
  \draw[red connector] (1.5,1,0)--(1.5,1,.5);
  \RPoint{1.5}{1}{0}\RPoint{1.5}{1}{.5}
  \draw[red ray] (1.5,1,.5)--(1.5,1,3.62);
  \draw[guide] (1,0,0)--(2.5,-1,0);
  \draw[red ray] (2.5,-1,0)--(3.45,-1.63,0);
  \draw[red connector] (2.5,-1,0)--(2.5,-1,.5);
  \RPoint{2.5}{-1}{0}\RPoint{2.5}{-1}{.5}
  \draw[red ray] (2.5,-1,.5)--(2.5,-1,3.52);
  \node[slab,text=construct!80!black,anchor=east] at (-.035,-.015,.055) {$\xi_0$};
  \node[slab,text=construct!80!black,anchor=north east] at (.99,-1.055,.01) {$\xi_1$};
  \node[slab,text=construct!80!black,anchor=south west] at (1.01,1.055,.055) {$\xi_2$};
  \node[slab,text=construct!80!black,anchor=south west] at (1.045,-.02,.16) {$\xi_3$};
\end{scope}
\end{tikzpicture}%
}
\caption{Case 3.3(2), $\Lambda=\{w\}$}
\end{subfigure}

\par\vspace{.5mm}

\begin{subfigure}[b]{.497\linewidth}
\centering
\resizebox{\CasePanelWidth\linewidth}{!}{%
\begin{tikzpicture}
\PanelBoxG
\begin{scope}[x={(1.05cm,-.12cm)},y={(.62cm,.33cm)},z={(0cm,1.05cm)}]
  \BasePlane
  \draw[line width=2.72pt,draw=foot] (-3.35,0,0)--(3.35,0,0);
  \draw[hull edge] (-3.35,0,0)--(3.35,0,0);
  \foreach \n in {-3,-2,-1,0,1,2}{\CurtainCell{\n}}
  \foreach \n in {-3,-2,-1,0,1,2,3}{\VPoint{\n}{0}}
  \draw[continuation] (-2.80,0,3.45)--(-3.48,0,3.45);
  \draw[continuation] (2.80,0,3.45)--(3.48,0,3.45);
  \node[slab,anchor=west] at (2.48,1.48,0) {$H_+$};
  \foreach \px in {-1.5,1.5}{%
    \draw[red ray] (\px,0,0)--(\px,2.02,0);
    \draw[red connector] (\px,0,0)--(\px,0,.5);
    \RPoint{\px}{0}{0}\RPoint{\px}{0}{.5}
    \draw[red ray] (\px,0,.5)--(\px,0,3.52);
  }
  \draw[red ray] (.5,0,0)--(.5,2.12,0);
  \draw[red connector] (.5,0,0)--(.5,0,.5);
  \RPoint{.5}{0}{0}
  \draw[red ray] (.5,0,.5)--(.5,0,3.62);
  \RPoint{.5}{0}{.5}
\end{scope}
\end{tikzpicture}%
}
\caption{Case 3.4(1)}
\end{subfigure}\hfill%
\begin{subfigure}[b]{.497\linewidth}
\centering
\resizebox{\CasePanelWidth\linewidth}{!}{%
\begin{tikzpicture}
\ProcessPanelBox
  \begin{scope}[shift={(-5.05cm,-.55cm)},scale=1.02]
    \path[ground] (-1.75,0)--(1.75,0)--(1.75,2.25)--(-1.75,2.25)--cycle;
    \draw[hull edge,-{Latex[length=1.9mm]}] (0,0)--(-1.82,0);
    \draw[axis] (0,0)--(1.90,0) node[below,slab] {$x$};
    \draw[axis] (0,0)--(0,2.38) node[left,slab] {$y$};
    \draw[red ray] (0,0)--(-1.42,1.42);
    \draw[red ray] (0,0)--(0,2.02);
    \draw[red ray] (0,0)--(1.42,1.42);
    \draw[draw=rayred,line width=.58pt]
      plot[domain=0:45,samples=12] ({.54*cos(\x)},{.54*sin(\x)});
    \node[redlab] at (.60,.25) {$\theta$};
    \node[lab,anchor=north] at (0,-.31) {$H_+$};
  \end{scope}

  \draw[axis,line width=.72pt] (-2.85,.72)--(-2.22,.72);

  \begin{scope}[shift={(-1.90cm,-.55cm)},scale=1.02]
    \path[footprint] (0,0)--(2.25,0)--(2.25,2.25)--(0,2.25)--cycle;
    \draw[axis] (0,0)--(2.42,0);
    \draw[axis] (0,0)--(0,2.42) node[left,slab] {$t$};
    \draw[red ray] (0,0)--(.77,1.86);
    \draw[red ray] (0,0)--(1.43,1.43);
    \draw[red ray] (0,0)--(1.86,.77);
    \draw[draw=rayred,line width=.58pt]
      plot[domain=0:22.5,samples=12] ({.55*cos(\x)},{.55*sin(\x)});
    \node[redlab] at (.86,.14) {$\theta/2$};
    \node[lab,anchor=north] at (1.12,-.30)
      {$C_{\mathbb E}(V)\times\mathbb R_+$};
  \end{scope}

  \draw[axis,line width=.72pt] (.83,.72)--(1.46,.72);

  \begin{scope}[shift={(1.78cm,-.55cm)},x=.98cm,y=.98cm]
    \foreach \n in {0,1,2}{%
      \path[face] (\n,3.40)--({\n+1},3.40)
        --plot[domain=0:180,samples=28]
          ({\n+.5+.5*cos(\x)},{.5*sin(\x)})--cycle;
      \draw[surface rim] plot[domain=0:180,samples=28]
        ({\n+.5+.5*cos(\x)},{.5*sin(\x)});
    }
    \draw[axis] (0,0)--(3.30,0) node[below,slab] {$x$};
    \draw[axis] (0,0)--(0,3.65) node[left,slab] {$z$};
    \node[lab,text=surfaceedge,anchor=north] at (1.72,3.24) {$C(Y)$};
    \draw[red curve] plot[domain=0:1,samples=28]
      (\x,{sqrt(max(0,.25-(\x-.5)^2))+2.4142*\x});
    \draw[red curve end] plot[domain=1:1.16,samples=12]
      (\x,{sqrt(max(0,.25-(\x-1.5)^2))+2.4142*\x});
    \draw[red curve] plot[domain=0:1,samples=24]
      (\x,{sqrt(max(0,.25-(\x-.5)^2))+\x});
    \draw[red curve] plot[domain=1:2,samples=24]
      (\x,{sqrt(max(0,.25-(\x-1.5)^2))+\x});
    \draw[red curve end] plot[domain=2:2.62,samples=18]
      (\x,{sqrt(max(0,.25-(\x-2.5)^2))+\x});
    \draw[red curve] plot[domain=0:1,samples=24]
      (\x,{sqrt(max(0,.25-(\x-.5)^2))+.4142*\x});
    \draw[red curve] plot[domain=1:2,samples=24]
      (\x,{sqrt(max(0,.25-(\x-1.5)^2))+.4142*\x});
    \draw[red curve end] plot[domain=2:3,samples=24]
      (\x,{sqrt(max(0,.25-(\x-2.5)^2))+.4142*\x});
  \end{scope}
\end{tikzpicture}%
}
\caption{Case 3.4(2)}
\end{subfigure}
\caption{The map $Q$ in Cases 3.3(2), 3.4(1), and 3.4(2).}
\label{fig:case-3-ii}
\end{figure}

(2)      $C_{\mathbb{E}}(V)$ is a half-line. Identify $\E^2$ with the $xy$-plane $\R^2$. We may assume, after using an isometry of $\E^2$, that $C_{\E}(V)$ is the closed ray $[0, \infty)\times \{0\} \subset \R^2$.
Define a homeomorphism $Q: C_{\E}(V) \times \R_{\geq 0} \to C(Y)\cup V$ by the formula:
$$Q(\xi, t) =R(\xi) + (0,0, t).$$
Note that by definition, $Q(\xi, 0) =(\xi, 0)$ for $\xi \in V$.
Consider the degree-one homogeneous map
$F: \E^2 \to C_{\E}(V) \times \R_{\geq 0} =\{(x, 0, z) \in \R^3 : x,z \geq 0\}$ given by
$$ F(r e^{i \theta}) =(r\cos(\theta/2), 0, r|\sin(\theta/2)|), \quad -\pi \leq \theta  \leq \pi. $$
One easily checks that the restriction of $F$ to the closed upper or lower half planes in $\R^2$ is a homeomorphism onto $C_{\E}(V) \times \R_{\geq 0}$, $F|_V =id$,  and $Q\circ F|_{\R \times \{0\} - V}$ is a homeomorphism onto the 1-dimensional boundary of $C(Y)$. 
 It follows that the composition $Q_{S,V,d}:=Q \circ F|: \E^2-V \to \partial C(Y)$ is a homeomorphism. 
\end{enumerate}

        The equivariance condition \eqref{equivq} for $Q_{S,V,d}$ follows immediately from the naturality of the construction.        
      More precisely, all the ingredients used in the spherical case are equivariant under hyperbolic isometries; those used in the Euclidean case are invariant under hyperbolic isometries fixing $\infty$.
      Consequently, the resulting map $Q_{S,V,d}$ satisfies \eqref{equivq}.
\end{proof}

For each simply connected surface $(S, V, d)$, we define $d^*_c$ to be the pullback metric of $d_{\partial C(Y)}$ through the map $Q_{S, V, d}$ in Lemma \ref{lem:Q-S-X-d}. By the equivariance of $Q_{S, V, d}$, the metric $d^*_c$ is well-defined and has the following property. 

\begin{corollary} \label{2.8}If $\phi$ is a self-isometry of a simply connected surface $(S, V, d)$, then $\phi|_{S-V}$ is an isometry of $d^*_c$.
 \end{corollary}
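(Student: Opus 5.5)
The plan is to treat $\phi$ as the boundary restriction of a hyperbolic isometry and then use the equivariance \eqref{equivq} of Lemma \ref{lem:Q-S-X-d} with $S_1=S_2=S$ and $V_1=V_2=V$.

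First, extend $\phi$ to $\H^3$. In each of the three models, $\mathbb S^2=\partial\H^3_P$, $\E^2=\partial\H^3_U-\{\infty\}$ and $\H^2_P=\D\subset\partial\H^3_U$, every isometry of $(S,d)$ is the restriction of a unique isometry $A=\phi^*\in\Isom(\H^3)$. Concretely:
\begin{itemize}
\item for $\mathbb S^2$, $A$ is the Poincaré extension of an orthogonal map;
\item for $\E^2$, $A$ is the Poincaré extension of a Euclidean isometry, and it fixes $\infty$;
\item for $\H^2_P$, $A$ is the Poincaré extension of a Möbius map preserving $\D$, so it also preserves $\D^c$.
\end{itemize}
In all three cases $A(S)=S$, and since $\phi(V)=V$, $A$ maps the set $Y$ of \eqref{eq:def-Y} to itself. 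Hence $A$ restricts to an isometry of $C(Y)$, and therefore of $\partial C(Y)$ with its induced path metric. This also holds when $C(Y)$ is two-dimensional: $A$ preserves the plane containing $C(Y)$, and we let it act on the Alexandrov double copywise. Which copy goes to which is dictated by how $\phi$ acts on the two sides of the circle or line.

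Next, apply \eqref{equivq}. Taking $(S_1,V_1,d_1)=(S_2,V_2,d_2)=(S,V,d)$ gives $A\circ Q_{S,V,d}=Q_{S,V,d}\circ\phi$ on $S-V$. By definition $d^*_c=Q_{S,V,d}^*\,d_{\partial C(Y)}$. Since $A|_{\partial C(Y)}$ is an isometry of $d_{\partial C(Y)}$, we get
$$\phi^*d^*_c=(Q_{S,V,d}\circ\phi)^*d_{\partial C(Y)}=(A\circ Q_{S,V,d})^*d_{\partial C(Y)}=Q_{S,V,d}^*d_{\partial C(Y)}=d^*_c.$$
Also $\phi|_{S-V}$ is a bijection of $S-V$. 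So $\phi|_{S-V}$ is an isometry of $d^*_c$.

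The main obstacle is the degenerate cases where $\partial C(Y)$ is an Alexandrov double, namely Cases 2.2, 3.2(2) and 3.4. There \eqref{equivq} has to be read with $A$ acting on the double, and this action depends on whether $\phi$ swaps the two half-planes $H_\pm$ or hemispheres $\Omega_\pm$. I would check directly that the constructions are symmetric under such swaps:
\begin{itemize}
\item the angle rule $\pi\|\xi-p_1\|/\|p_2-p_1\|$ is symmetric under the reflection across $l$ and under exchanging $p_1$ and $p_2$;
\item the map $F$ depends only on $|\sin(\theta/2)|$;
\item the map $P$ is symmetric under the reflection in $M$.
\end{itemize}
With these checks, the copywise action of $A$ on the double intertwines correctly with $Q_{S,V,d}$. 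This is the step I would verify case by case.
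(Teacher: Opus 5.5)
Your proposal is correct and is essentially the paper's argument. The paper derives Corollary~\ref{2.8} directly from the equivariance \eqref{equivq} of $Q_{S,V,d}$ applied to the extension $A\in\Isom(\H^3)$ of $\phi$, which is exactly your computation $\phi^*d^*_c=(A\circ Q_{S,V,d})^*d_{\partial C(Y)}=d^*_c$; your explicit handling of the Alexandrov-double cases, with $A$ acting copywise according to whether $\phi$ swaps the two sides, spells out what the paper only attributes to the ``naturality of the construction.''
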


Now consider the case where $(S,V, d)$ is not simply connected.
Let $F:(\tilde{S}, \tilde d) \to (S, d)$ be the Riemannian universal cover of $S$,  $\tilde{V}=F^{-1}(V)$, and $\Gamma:=\pi_1(S)$ act isometrically on $(\tilde S, \tilde V, \tilde d)$ as the deck transformation group. 
By Corollary \ref{2.8}, for each $\gamma \in \Gamma$, $\gamma|: (\tilde S-\tilde V,  (\tilde d)^*_c) \to (\tilde S-\tilde V,  (\tilde d)^*_c) $ is an isometry.  Since $F|: \tilde S-\tilde V \to S-V $ is a regular covering map with deck transformation group $\Gamma$, it follows that the pushforward metric $(F|)_*(\tilde d)^*_c$ is a complete hyperbolic metric on $S-V$, which is defined to be the associated convex hull metric $d^*_c$ on $S-V$. 
 The property \eqref{eq-chh} follows from the definition and \eqref{equivq} and the well-known fact about covering spaces. Namely, if $\alpha: X \to Z$ is a Riemannian covering from a connected Riemannian manifold $X$ and $\beta: \tilde X \to X$ is the universal Riemannian covering space of $X$, then $\alpha \circ \beta: \tilde X \to Z$ is the universal Riemannian covering space of $Z$.
 \end{proof}

We remark that if a standard model surface is also a polyhedral surface, then its associated hyperbolic metric is isometric to its convex-hull hyperbolic metric. 
More precisely, Lemma \ref{convexhulld*} implies the following relationship between them.

\begin{corollary} \label{st-poly} Suppose $(S, V, d)$ is a standard model surface which is also a polyhedral surface.  Then,

(i) if $d$ is hyperbolic or Euclidean, then the associated hyperbolic metric $d^*$ and the convex-hull hyperbolic metric $d^*_c$ are equal on $S-V$;

(ii) if $d$ is spherical, then $d^*$ is isometric to $d^*_c$ by an isometry isotopic to the identity.
\end{corollary}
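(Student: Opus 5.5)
We reduce to the simply connected case through the universal cover and then compare the two constructions triangle by triangle, using Lemma \ref{convexhulld*}.

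\emph{Reduction to the universal cover.} Let $F:(\tilde S,\tilde d)\to (S,d)$ be the Riemannian universal cover, and put $\tilde V=F^{-1}(V)$. A Delaunay triangulation $\T$ of $(S,V,d)$ lifts to a geometric triangulation $\tilde\T$ of $(\tilde S,\tilde V,\tilde d)$. Each compact circumdisk $\Phi(\overline B)$ of a triangle $\tau\in\T$ is the image of an immersion of a disk, which is simply connected. Hence $\Phi$ lifts to $\tilde S$, and the lift is a compact circumdisk of the lifted triangle whose interior misses $\tilde V$. Therefore $\tilde\T$ is Delaunay, and $(\tilde S,\tilde V,\tilde d)$ is polyhedral.

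The associated metric $d^*_{\T}$ is built by gluing ideal triangles. Consequently $F^*d^*=\tilde d^*$, where $\tilde d^*$ is formed from $\tilde\T$. By Proposition \ref{chhm}(ii) we also have $F^*d^*_c=(\tilde d)^*_c$. Both metrics on $S-V$ are therefore the pushforwards of the corresponding $\Gamma$-invariant metrics on $\tilde S-\tilde V$. So it suffices to prove that $\tilde d^*=(\tilde d)^*_c$ when $d$ is hyperbolic or Euclidean. In the spherical case we need an isometry between them that is isotopic to the identity.

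\emph{Hyperbolic or Euclidean case.} Identify $(\tilde S,\tilde d)$ with $\H^2_P=\D$ or with $\E^2$. Lemma \ref{convexhulld*} gives a homeomorphism $h:\tilde S-\tilde V\to\partial C(Y)$ with $\phi_\tau\circ h|_\tau=\mathrm{id}$. On each triangle $\tau$, $h$ is therefore the inverse of $\phi_{\mathbb K}$ restricted to $\tau^*\subset\partial C(Y)$. This shows that $h$ is an isometry from $\tilde d^*$ to the path metric on $\partial C(Y)$, since both metrics are obtained by gluing the same ideal triangles $\tau^*$ along the same edges.

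Next we compare $h$ with $Q_{S,V,d}$. In the hyperbolic case, and in the Euclidean case where $C_{\E}(V)=\E^2$ (Case 3.1), $Q$ is defined as $(\phi_{\mathbb K}|_{\partial C(Y)})^{-1}$, which is exactly $h$. Hence $d^*_c=Q^*d_{\partial C(Y)}=h^*d_{\partial C(Y)}=\tilde d^*$. It remains to check that these are the only cases that occur. Because the Delaunay triangulation covers all of $\tilde S$, we get $C_{\E}(\tilde V)=\E^2$ in the Euclidean case. In the hyperbolic case the triangles fill $\D$, and Case 1 applies with no further condition. We expect this to be the main point requiring care: one must show that $\phi_{\mathbb K}|_{\partial C(Y)}$ is exactly the same map $h$ on every face, including possible faces of $\partial C(Y)$ that are not ideal triangles. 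In the hyperbolic case such a face would be a region touching $\D^c$. Lemma \ref{convexhulld*} rules this out: every point of $\partial C(Y)$ lies in some $\tau^*$.

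\emph{Spherical case.} Here $\tilde S=S=\mathbb S^2$ and $V$ is finite. As before, $h$ is an isometry from $d^*$ to $\partial C(V)$, with $h=(\phi_{\mathbb S}|_{\partial C(V)})^{-1}$ the inverse of the radial projection from the origin. On the other hand, $Q$ is the inverse of the radial projection from the barycenter $b(V)$ (or the doubled construction of Case 2.2). Thus $Q^{-1}\circ h$ is an isometry $(\mathbb S^2-V,d^*)\to(\mathbb S^2-V,d^*_c)$. It is isotopic to the identity: move the projection center along the segment from the origin to $b(V)$, both of which lie in the interior of $C(V)$. This requires the origin to be interior, which holds because every point of $\mathbb S^2$ lies in a triangle contained in an open hemisphere, so $V$ is not contained in any closed hemisphere. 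Each intermediate radial projection is a homeomorphism fixing $V$. In the degenerate two-dimensional case, the doubled construction of Case 2.2 is treated in the same way.
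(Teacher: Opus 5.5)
Your proof follows the paper's route for the hyperbolic and Euclidean cases and for the spherical case with $S=\mathbb S^2$. There you identify the map $h$ of Lemma \ref{convexhulld*} with $Q_{S,V,d}$ (Case 1, or Case 3.1 because the triangles force $C_{\E}(\tilde V)=\E^2$), and in the spherical case you slide the centre of radial projection from the origin to $b(V)$. Your check that the origin lies in $C(V)^\circ$ is a detail the paper leaves implicit. It also shows that $V$ does not lie on a circle, so Case 2.2 never occurs and your remark about it can be dropped.

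The gap is in the spherical case. You write $\tilde S=S=\mathbb S^2$, but a polyhedral spherical standard model can be $(\R P^2,V)$ with $|V|\ge 2$; only $(\R P^2,\{p\})$ is exceptional. Your reduction paragraph does not cover this case.

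Equality of metrics on the cover descends automatically, but an isometry $\tilde d^*\to(\tilde d)^*_c$ of $\mathbb S^2-\tilde V$ descends to $\R P^2-V$ only if it commutes with the antipodal deck map $\sigma$. The same holds for the isotopy, if you want ``isotopic to the identity'' downstairs. Write $\pi_c$ for radial projection from $c$ restricted to $\partial C(\tilde V)$. Your isometry is $\pi_{b}\circ\pi_0^{-1}$ with $b=b(\tilde V)$, and $\sigma\circ\pi_c=\pi_{-c}\circ\sigma$. So equivariance requires control of $b$, which you never establish.

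The missing observation, which the paper uses, is that $\tilde V=-\tilde V$ and $x\mapsto -x$ is an isometry of $\H^3_P$. Equivariance and uniqueness of the conformal barycenter then give $b(\tilde V)=-b(\tilde V)$, so $b(\tilde V)=0$. Hence $Q_{\mathbb S^2,\tilde V,\tilde d}$ is exactly your $h$, $\tilde d^*=(\tilde d)^*_c$ on the cover, and this equality descends to $d^*=d^*_c$ on $\R P^2-V$. With that step added, the proof is complete.
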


\begin{proof}
    We first consider the case where $S$ is simply connected.
    If $d$ is hyperbolic, then the result follows from Lemma \ref{convexhulld*}. If $d$ is Euclidean, since $(S, V, d)$ is polyhedral, we know that $V$ is the set of vertices of a Delaunay triangulation of $\E^2$ and then $C_{\E}(V) = \E^2$. Thus (i) also follows from Lemma \ref{convexhulld*}.
   If $d$ is spherical, the conformal barycenter $b(V)$ of the vertex set $V$ may be different from the origin $0$ in the Poincar\'e model. Therefore, the radial projections from $0$ and $b(V)$ may produce different metrics via pushforward construction.  However, these two hyperbolic metrics are still isometric by an isometry isotopic to the identity.  Indeed, move $b(V)$ to $0$ along a hyperbolic geodesic segment and use the radial projections at points in the segment.  We produce the required isotopy.

If $S$ is not simply connected, let $F: (\tilde S, \tilde V, \tilde d) \to (S, V, d)$ be the Riemannian universal cover of $(S, V, d)$. By definition, the pullback metrics $F^*(d^*)$ and $F^*(d^*_c)$ are the associated hyperbolic metric $(\tilde d)^*$ and the convex-hull metric $(\tilde d)^*_c$ on $\tilde S-\tilde V$ respectively.  By the simply connected case just proved, we see $(\tilde d)^* =(\tilde d)^*_c$, i.e., $F^*(d^*) =F^*(d^*_c)$ if $d$ is hyperbolic or Euclidean. It follows that $d^*=d^*_c$ in these two cases. If $d$ is spherical, the same argument shows that $d^*=d^*_c$. This is because the lift $\tilde V$ of $V$ on $\R P^2$ to is symmetric above the origin in $\SS^2$ so that the conformal barycenter of $\tilde V$ is always the origin. Then we can project via the projection \eqref{eq:projection}.  
\end{proof}

In the rest of the paper, we will use these two metrics interchangeably if a standard model surface is polyhedral in hyperbolic or Euclidean background metrics.  

\bigskip
\noindent{\bf Discrete conformal equivalence of polyhedral surfaces.} 

\begin{definition}[Discrete conformal equivalence and discrete conformal maps]\label{dcdef} 
Two polyhedral surfaces or standard model surfaces $(S_1, V_1, d_1)$ and $(S_2, V_2, d_2)$ with their associated hyperbolic metrics or convex hull metrics $\delta_1$ and $\delta_2$ are \underline{discrete conformal}, if there exists an isometry $\phi$ from $(S_1 - V_1, \delta_1)$ to $(S_2 -V_2,\delta_2)$ such that $\phi$ can be extended to a homeomorphism $\bar \phi$ from $(S_1, V_1)$ to $(S_2, V_2).$ 
We call $\bar \phi$ a \underline{discrete conformal map} from $(S_1, V_1,d_1)$ to $(S_2, V_2, d_2).$ 
\end{definition}

In particular, we see that for any finite sets $V_1 \subset \mathbb S^2$ and $V_2 \subset \E^2$, the two standard models $(\mathbb S^2, V_1, d_{\mathbb S})$ and $(\E^2, V_2, d_{\E})$ are not discrete conformal.

Note that in the case of the spherical background metric, unless we specify the choices of the associated hyperbolic metric $d^*$ or the convex hull hyperbolic metric, the discrete conformal map is only well defined up to isotopy of the surface $(S, V, d)$.  

In Appendix~\ref{appendix:example}, we present Tianqi Wu's construction of  
a non-polyhedral standard model surface that is discrete conformal to a polyhedral surface.

 




\section{Delaunay tessellations of polyhedral surfaces}


In this section, we define the \underline{Delaunay tessellations} of polyhedral surfaces with spherical, hyperbolic, and Euclidean background metrics. The main observation is that the associated hyperbolic metric $d^*$ of a polyhedral surface $(S, V, d)$ depends only on the Delaunay tessellation and is independent of the choice of Delaunay triangulations. The idea of the proofs comes from the work in \cite{bobenkos2007}, where the Delaunay tessellations of closed polyhedral surfaces with Euclidean background metrics are studied.  

Recall that given a marked surface with a cone metric $(S,V,d)$, a \underline{circumdisk} in $(S, V, d)$ is a pair $(B, \phi)$, where $B$ is an open round disk and $\phi: B \to (S-V, d)$ is an isometric immersion such that (i) $\phi$ can be extended continuously to a map, still denoted by $\phi$, from the closure $\overline{B}$ to $(S, d)$, and (ii) $\phi^{-1}(V)$ contains at least three points. Let $\{(B_\alpha,\varphi_\alpha)\}$ be the collection of all circumdisks in $(S, V, d)$,
and define  $\mathcal D$ to be the following collection of the vertices, edges, and faces. 
\begin{enumerate}
    \item the set of vertices $V(\mathcal{D}) = V$, 
    
    \item the set of faces (2-cells) of $\mathcal{D}$ consists of images of the convex hull of $\varphi_\alpha^{-1}(V)$ in $\overline{B_\alpha}$ under $\varphi_\alpha$ for any $(B_\alpha, \varphi_\alpha)$, and

    \item the set of edges of $\mathcal{D}$ consists of images of boundary edges of the convex hull of $\varphi_\alpha^{-1}(V)$ in $\overline{B_\alpha}$ under $\varphi_\alpha$ for any $(B_\alpha, \varphi_\alpha)$.  
\end{enumerate}

Notice that the faces, edges, and vertices are compact subsets of $S$, since closed circumdisks are compact. Furthermore, the convex hull $C_{\mathbb K}(\varphi_\alpha^{-1}(V))$ is a finite-sided polygon in $\overline{B_{\alpha}} \subset \mathbb{K}^2$ for $\mathbb{K}=\H, \E$, and $\mathbb{S}$, since $\varphi_\alpha^{-1}(V)$ is a finite set. Indeed, if $\varphi_\alpha^{-1}(V)$ were infinite, some vertex $v\in V$ would have infinitely many preimages under $\varphi_{\alpha}$, giving rise to a sequence of geodesic loops in $S$ based at $v$ with lengths converging to zero. This contradicts the fact that $v$ admits a cone neighborhood.
 

In general, there may not exist any circumdisk in the surface $(S, V, d)$. However, if 
$(S, V, d)$ is polyhedral, then $\mathcal{D}$ is non-empty since circumdisks of triangles in a Delaunay triangulation are in $\mathcal D$.  The goal  of this section is to prove the following. 

\begin{proposition}
\label{tessellation} Suppose  $(S, V, d)$ is a marked surface with a hyperbolic, or Euclidean, or spherical background cone metric $d$. Then $(S, V, d)$ is a polyhedral surface if and only if the collection $\mathcal D$ forms a CW-decomposition of the surface $S$.  Furthermore, $\T$ is a Delaunay triangulation of $(S, V, d)$ if and only if $\T$ is a geometric subdivision of $\mathcal D$ with vertex set $V(\T) =V.$
\end{proposition}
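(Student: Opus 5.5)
The plan is to follow the approach of Bobenko--Springborn for closed Euclidean surfaces and argue locally. Throughout, the Delaunay condition is used in its ``empty-disk'' form. The basic tool is a \emph{local Delaunay lemma}: if two faces $\sigma,\tau$ of a geometric triangulation share an edge $e$, and the union $\sigma\cup\tau$ is developed into $\mathbb K^2$, then the vertex of $\tau$ opposite $e$ does not lie in the open circumdisk of $\sigma$. This holds whenever $\sigma$ and $\tau$ are Delaunay triangles.

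\textbf{Step 1: the faces of $\mathcal D$ tile $S$ with disjoint interiors.} Two distinct circumdisks $(B_\alpha,\varphi_\alpha)$ and $(B_\beta,\varphi_\beta)$ have relatively open convex polygons $P_\alpha^\circ,P_\beta^\circ$ whose images are disjoint, and they are not multiply covered. Suppose two images meet at a point $x\in S-V$. Develop both disks into $\mathbb K^2$ along a path through $x$ so that the two local sheets agree near $x$. In the plane, two round disks whose boundary circles carry the vertices, and whose interiors contain none of the other disk's vertices, meet in a lens. The two inscribed polygons are then separated by the common chord of the lens, so their interiors are disjoint. To carry this over to $S$, analytically continue the overlap: the set of points of $B_\alpha$ lying over points of $B_\beta$ along the developed sheets is open and closed in the lens region. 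This forces $P_\alpha$ and $P_\beta$ to lie on opposite sides of the chord, unless the two disks coincide. Injectivity of $\varphi_\alpha$ on $P_\alpha^\circ$ follows by the same argument applied to $\beta=\alpha$ with a nontrivial deck-like self-overlap. I expect this step, making the planar lens argument rigorous for \emph{immersed} disks on a cone surface with possibly nontrivial topology, to be the main obstacle. I will handle it by lifting to the universal cover of $S-V$ completed at cone points, or more simply by working in the developing map of the union of the two disks along the segment joining $x$ to the disks' centers.

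\textbf{Step 2: ``only if'' and the subdivision claim.} Assume $\mathcal T$ is Delaunay. Each triangle $\tau$ lies in a compact circumdisk $D_\tau$, and $\tau$ is inscribed in the polygon $P_{D_\tau}$, since its vertices lie on $\partial D_\tau$ and are among $\varphi^{-1}(V)$. Conversely, every face of $\mathcal D$ is a union of triangles of $\mathcal T$. To see this, apply the local Delaunay lemma across each edge of $\tau$ interior to $P_D$: the neighbouring triangle has its opposite vertex on $\partial D$ (not inside, by emptiness, and not outside, else the neighbour's circumdisk would contain a vertex of $\tau$). By finiteness of $\varphi^{-1}(V)$ this flood fill terminates and covers $P_D$. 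Hence the faces of $\mathcal D$ cover $S$, and together with Step 1 and compactness of faces they form a CW decomposition (the cells are closed polygons, glued along edges, and local finiteness follows from $V$ being closed discrete). This gives that $\mathcal T$ is a geometric subdivision of $\mathcal D$. Conversely, any geometric subdivision of $\mathcal D$ with vertex set $V$ triangulates each polygon $P_\alpha$ by diagonals, so each triangle lies in the compact circumdisk $\varphi_\alpha(\overline{B_\alpha})$, and the subdivision is Delaunay.

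\textbf{Step 3: ``if''.} If $\mathcal D$ is a CW decomposition, triangulate each face $\varphi_\alpha(P_\alpha)$ by diagonals from one vertex. Each resulting triangle is contained in the compact circumdisk $\varphi_\alpha(\overline{B_\alpha})$, whose interior misses $V$, so the resulting geometric triangulation is Delaunay and $(S,V,d)$ is polyhedral. In the spherical case, the fact that each polygon lies in a hemisphere follows from the fact that $B_\alpha$ is a proper round disk.
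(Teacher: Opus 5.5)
Steps 1 and 3 are essentially the paper's argument: Step 1 is the lens argument of Lemma~\ref{facehomeo}, and Step 3 is the paper's converse. There is, however, a genuine error in Step 2.

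\textbf{The flood-fill justification is false.} To show that the apex $w$ of the triangle $\sigma$ across a diagonal $e=ab$ of $P_D$ is not outside $\overline{D}$, you say the circumdisk of $\sigma$ would then contain a vertex of $\tau$. It would not. Let $c$ be the apex of $\tau$, with $c$ and $w$ on opposite sides of $e$. The in-circle test is symmetric: $w\notin\overline{D_\tau}$ if and only if $c\notin\overline{D_\sigma}$. In $\mathbb{E}^2$ both conditions say that the angles at $c$ and $w$ sum to less than $\pi$, and the same symmetry holds in $\mathbb{H}^2$ and $\mathbb{S}^2$. Since also $a,b\in\partial D_\sigma$, no vertex of $\tau$ lies in $D_\sigma$, and the contradiction you invoke does not occur.

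\textbf{How to repair it.} The conclusion you want is still true, but the witness must be a vertex of $P_D$ that is not a vertex of $\tau$. Because $e$ is a diagonal rather than a side of $P_D$, there is a point $u\in\varphi_D^{-1}(V)\cap\partial D$ on the far side of $e$. If $w\notin\overline{D}$, the arc of $\partial D$ beyond $e$ lies in $D_\sigma$, so $u\in D_\sigma$. The immersions $\varphi_\sigma$ and $\varphi_D$ agree near $e$, hence on the convex set $D_\sigma\cap\overline{D}$. So $D_\sigma$ would contain a preimage of $V$, contradicting emptiness.

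\textbf{A simpler route, as in the paper.} You can drop the flood fill altogether. Once Step 1 is proved, take any $F\in\mathcal{D}$: its open face meets $\tau^\circ$ for some $\tau\in\mathcal{T}$. Since $\tau^\circ$ lies in the open face of $D_\tau$, Step 1 gives $F=F_{D_\tau}$. This shows two things at once:
\begin{itemize}
\item every circumdisk in $\mathcal{D}$ is some $D_\tau$, which your flood fill tacitly assumes, since it starts from a triangle of $D$;
\item each face is exactly the union of the triangles it circumscribes.
\end{itemize}

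\textbf{Points to make explicit for the CW structure.} You should say that every edge of $\mathcal{D}$ is an edge of $\mathcal{T}$. Then embeddedness of edges, disjointness of their interiors, and disjointness from open faces all follow from $\mathcal{T}$ together with Step 1. This is a legitimate replacement for the paper's intersecting-chords Lemma~\ref{edgenointer} and Corollary~\ref{edgefaceinter}, which the paper proves for arbitrary cone surfaces. Likewise, local finiteness comes from $\mathcal{T}$, not from the discreteness of $V$ alone.

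\textbf{A gap in Step 1.} In the self-overlap case you must exclude $f(B)=B$ separately. In that case $f$ is a nontrivial rotation or reflection fixing the center of $B$, which contradicts $\varphi$ being an immersion near that center.
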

We remark that Proposition \ref{tessellation} was first proved by 
\cite{bobenkos2007} for closed Euclidean cone metric surfaces. Our proof follows their proof.

Due to this proposition, following \cite{bobenkos2007}, if $(S, V, d)$ is polyhedral, we call $\mathcal D$ the \underline{Delaunay tessellation} of the surface.

We begin by introducing some notation and establishing basic properties of the edges and faces of $\mathcal D$ for general cone metric surfaces $(S,V,d)$. Throughout, for distinct points $p \neq q$ in $\mathbb K^2$, where $\K =\E, \H$ or $\mathbb S$,  we use $[p,q]$ and $(p,q)$ to denote a geodesic segment from $p$ to $q$ and its interior. 

\begin{lemma}
\label{edgenointer} For a marked surface $(S,V, d)$ with a cone metric $d$, the interior of every edge of $\mathcal{D}$ is embedded, and the interiors of distinct edges are disjoint. 
\end{lemma}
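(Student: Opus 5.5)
The plan is to work in the developing picture: every edge of $\mathcal D$ is the image $\varphi_\alpha([p,q])$ of a boundary edge $[p,q]$ of the polygon $C_{\mathbb K}(\varphi_\alpha^{-1}(V))\subset \overline{B_\alpha}$. Here $p,q\in\partial B_\alpha$ are consecutive points of $\varphi_\alpha^{-1}(V)$ on the circle, so the open chord $(p,q)$ lies in the open disk $B_\alpha$. The whole argument reduces statements about edges to the local injectivity of the isometric immersion $\varphi_\alpha$ together with the emptiness of circumdisks.

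\textbf{Embeddedness.} First I show that $\varphi_\alpha|_{(p,q)}$ is injective. Suppose $\varphi_\alpha(x)=\varphi_\alpha(y)$ for distinct $x,y\in(p,q)$. Since $\varphi_\alpha$ is a local isometry into $S-V$, which is a smooth constant-curvature surface, there are two cases at the point $z=\varphi_\alpha(x)$.

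If the germs of $\varphi_\alpha$ at $x$ and $y$ differ by an isometry $g$ of $\mathbb K^2$ with $g(x)=y$, then by analytic continuation $\varphi_\alpha\circ g=\varphi_\alpha$ on the connected set $g^{-1}(B_\alpha)\cap B_\alpha$. The map $g$ sends the chord line to itself only if the image geodesics coincide. I then argue that $g$ must preserve $\overline{B_\alpha}\cap \varphi_\alpha^{-1}(V)$-structure, so iterating $g$ produces infinitely many preimages of a vertex in $\overline{B_\alpha}$, or geodesic loops of length $\to 0$ at a point. This contradicts the finiteness of $\varphi_\alpha^{-1}(V)$ already established in the text.

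If instead the image geodesic segments cross transversally at $z$, then $\varphi_\alpha((p,q))$ is a geodesic that self-intersects. Then I pass to the alternative comparison below, treating the two pieces as two different edges.

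\textbf{Disjointness.} The main step is the following. Suppose edges $e_1=\varphi_1([p_1,q_1])$ and $e_2=\varphi_2([p_2,q_2])$ (possibly with $\varphi_1=\varphi_2$) have interiors meeting at $z$, coming from circumdisks $B_1,B_2$. Lift near $z$: choose $x_i\in(p_i,q_i)$ with $\varphi_i(x_i)=z$. Develop $B_2$ into the chart of $B_1$ by the isometry $g$ of $\mathbb K^2$ matching the germs of $\varphi_2$ at $x_2$ and of $\varphi_1$ at $x_1$. Then $B_1\cup g(B_2)$ immerses, since the two immersions agree on the component of $B_1\cap g(B_2)$ containing $x_1$. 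I need this intersection to be connected, and it is: the intersection of two round disks (in a hemisphere for $\mathbb S$) is convex.

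Now the chords $[p_1,q_1]$ and $g[p_2,q_2]$ meet at the interior point $x_1$. Emptiness of $B_1$ means the vertices $g(p_2),g(q_2)$ are not in $B_1$, and symmetrically $p_1,q_1\notin g(B_2)$. This is impossible when two circles meet in at most two points and the chords cross transversally: one endpoint of a crossing chord would lie strictly inside the other disk. One must check that the endpoint lies in the region where the developing maps agree, which is the convex intersection, and hence maps to an actual vertex inside an empty disk — a contradiction. So the chords are collinear, hence both lie on the radical line of the two circles, and therefore $\{p_1,q_1\}=\{g p_2, gq_2\}$. Consequently the edges coincide as sets, so distinct edges have disjoint interiors.

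The same argument with $B_1=B_2$ handles the transverse self-intersection case in embeddedness. The expected obstacle is the careful verification that the relevant endpoint lies in the agreement region $B_1\cap g(B_2)$ containing $x_1$, so that emptiness applies.
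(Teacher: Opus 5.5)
There are two genuine gaps: the case where the two developed circles coincide, and the collinear case of embeddedness.

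Where the two circles differ, your argument works. You develop $B_2$ by $g$, use agreement of $\varphi_1$ and $\varphi_2\circ g^{-1}$ on the convex set $\overline{B_1}\cap g(\overline{B_2})$, and apply emptiness on both sides. This is a legitimate alternative to the paper's comparison via the $\tanh$/$\tan$ intersecting-chords identity, but only once you prove the claim that some endpoint lies strictly inside the other disk. One way: if the disks are nested, the circles share at most one point, so one of the two endpoints of the inner chord lies strictly inside the outer disk. Otherwise the geodesic through the two intersection points separates the arc of $\partial B_1$ outside $g(B_2)$ from the arc of $\partial g(B_2)$ outside $B_1$, so chords with endpoints on those arcs cannot cross transversally.

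\textbf{The coincident-circle case.} Your phrase ``two circles meet in at most two points'' silently excludes $g(B_2)=B_1$, and that case is not degenerate. It contains two different edges of the same Delaunay cell (take $\varphi_1=\varphi_2$, $g=\mathrm{id}$). It also contains the transverse self-crossing in which $g$ preserves $B_\alpha$.

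Here emptiness gives nothing, because all four endpoints lie on the common circle and none is inside. Indeed, the two diagonals of a cyclic cell with four vertices join preimages of $V$ in an empty disk, and they do cross. So your argument, as written, never uses the property that distinguishes edges from diagonals.

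What is needed is that edges are \emph{sides} of the polygon. Since $\varphi_1=\varphi_2\circ g^{-1}$ on all of $\overline{B_1}$, $g$ maps $\varphi_2^{-1}(V)$ onto $\varphi_1^{-1}(V)$. Hence $[p_1,q_1]$ and $g[p_2,q_2]$ are both sides of $C_{\mathbb K}(\varphi_1^{-1}(V))$, and two sides of a convex polygon cannot cross at interior points. This is exactly how the paper concludes: the crossing chord is forced to have both endpoints on $\partial B_1$ at preimages of $V$, so the open edge lies in the interior of the convex hull, contradicting that it is an edge. You record the ``consecutive points'' property at the start but never use it, and without it the statement is false.

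\textbf{The collinear case of embeddedness.} Iterating $g$ does not produce infinitely many vertex preimages in $\overline{B_\alpha}$. The map $g$ acts on the chord line $\ell$ as a translation or a reflection, so its iterates leave $[p,q]$ after finitely many steps. Moreover, $\varphi_\alpha\circ g=\varphi_\alpha$ is only available on $\overline{B_\alpha}\cap g^{-1}(\overline{B_\alpha})$. Finiteness of $\varphi_\alpha^{-1}(V)$ is not the relevant fact.

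Argue instead as follows.
\begin{itemize}
\item If $g$ reverses $\ell$, it fixes the midpoint of $[x,y]$. That point lies in $B_\alpha\cap g^{-1}(B_\alpha)$, so $\varphi_\alpha$ fails to be locally injective there.
\item If $g$ translates $\ell$ by $d(x,y)<d(p,q)$, it moves one endpoint, say $p$, into $(p,q)\subset B_\alpha$. Continuity then gives $\varphi_\alpha(g(p))=\varphi_\alpha(p)\in V$, contradicting emptiness of $B_\alpha$.
\end{itemize}
Alternatively, your own collinear disjointness argument with $B_1=B_2$ yields $g[p,q]=[p,q]$, which leaves only the reflection case.
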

\begin{proof}
    For polyhedral surfaces with Euclidean background metrics, this has been proved in \cite{bobenkos2007}.  We only need to deal with polyhedral surfaces with spherical and hyperbolic background metrics. We start with the hyperbolic case. 

Suppose $e_i=\phi_i([v_i, w_i])$ are edges in $\mathcal D$ contained in the circumdisks $\phi_i: B_i \to (S, V, d)$ for $i=1,2$ with $v_i, w_i \in \partial B_i \cap \phi^{-1}_i(V)$. There are two cases to consider:
\begin{enumerate}
    \item $e_1 \neq e_2$, where we need to conclude that $\phi_1((v_1, w_1)) \cap \phi_2((v_2, w_2)) =\varnothing$, and
    \item $e_1=e_2$, where we need to conclude that $\phi_1|_{(v_1, w_1)}$ is injective. 
\end{enumerate}

For case (1), suppose otherwise that there exist $q_i \in (v_i, w_i)$, for $i=1,2$, such that $\phi_1(q_1)=\phi_2(q_2)$. Since $e_1 \neq e_2$, $\phi_i(v_i, w_i) \cap V =\varnothing$, and $\phi_i(v_i), \phi_i(w_i) \in V$, we see that $\phi_1([v_1, w_1])$ intersects  $\phi_2([v_2, w_2])$ transversely at $\phi_1(q_1)$. 
We claim that
\begin{equation} \label{chord1}
\tanh \frac{d(q_1, v_1)}{2}\tanh \frac{d(q_1, w_1)}{2} = \tanh\frac{d(q_2, v_2)}{2}\tanh\frac{d(q_2, w_2)}{2}.
\end{equation}
Suppose not, we may assume
\begin{equation} \label{chord11}
\tanh \frac{d(q_1, v_1)}{2}\tanh \frac{d(q_1, w_1)}{2} > \tanh\frac{d(q_2, v_2)}{2}\tanh\frac{d(q_2, w_2)}{2}.
\end{equation}
Since both $\phi_1$ and $\phi_2$ are local isometries with $\phi_1(q_1) =\phi_2(q_2)$, there exists an isometry $f \in \Isom(\H^2)$ such that $f(q_1)=q_2$ and $\phi_1=\phi_2 \circ f$ 
in a neighborhood of $q_1$. This implies, due to the connectivity of $\overline{B_1} \cap f^{-1}(\overline{B_2})$, that
\begin{equation}\label{chord2}
\phi_1(x)=\phi_2 (f(x)), 
\quad \text{for all $x \in \overline{B_1} \cap f^{-1}(\overline{B_2})$}.
\end{equation}

Consider the geodesic segment $f^{-1}([v_2, w_2]):=[v_2', w_2']$ which intersects $[v_1, w_1]$ transversely at $q_1$.  Since $\phi^{-1}_1(V) \cap B_1 =\varnothing$ and both $v_2' \in\phi_1^{-1}(\phi_2(v_2))$ and $w_2'\in \phi_1^{-1}(\phi_2(w_2))$ are in $\phi_1^{-1}(V)$, we see that $v_2'$ and $w_2'$ are outside of $B_1$. In particular, $[v_2', w_2']$ intersects $\partial B_1$ at two distinct points $v_2''$ and $w_2''$ such that  $v_2', v_2'', q_1, w_2'', w_2'$ are in the linear order on $[v_2', w_2']$. See Figure \ref{chord6}. This implies,
\begin{equation}\label{chord 3}
d(q_1, v_2') \geq d(q_1, v_2''), \quad \text{and}, \quad  d(q_1, w_2') \geq d(q_1, w_2'')
\end{equation}

\begin{figure}[htbp]  
\begin{center}
\begin{overpic}[width=0.7\linewidth]{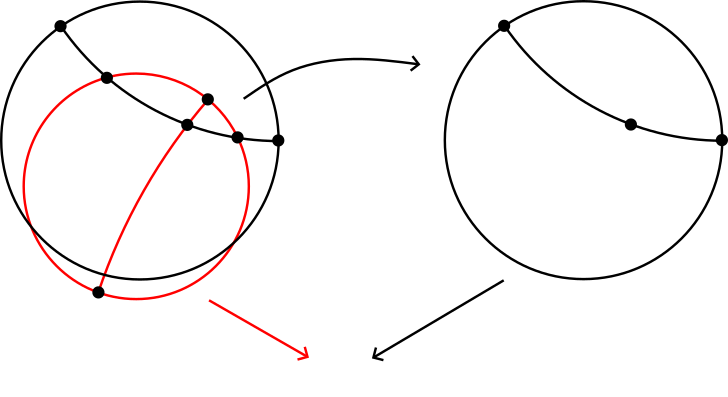}
  \put(13,39){$v''_2$}
  \put(3.5,52){$v'_2$}
  \put(40,35){$w'_2$}
  \put(28,31){$w''_2$}
  \put(14,49){$f^{-1}(B_2)$}
  
  \put(5,29){\textcolor{red}{$B_1$}}
  \put(12,11){$v_1$}
  \put(28,43){$w_1$}
  \put(21,36){$q_1$}
  
  \put(43,49){$f$}
  \put(41,42){$f(q_1)=q_2$}
  
  \put(64,52){$v_2$}
  \put(83,35){$q_2$}
  \put(101,35){$w_2$}
  \put(78,19){$B_2$}
  
  \put(35,12){\textcolor{red}{$\phi_1$}}
  \put(57,13){$\phi_2$}
  \put(43,1){$(S, V, d)$}
\end{overpic}

\caption{Two intersecting edges.}  
\label{chord6}
\end{center}
\end{figure}

Since $v_1, v_2'', w_1, w_2''$ are on the circle $\partial B_1$, by the intersecting chords theorem in hyperbolic geometry, we have,
\begin{equation} \label{chord4}
\tanh \frac{d(q_1, v_1)}{2}\tanh \frac{d(q_1, w_1)}{2} = \tanh\frac{d(q_1, v_2'')}{2}\tanh\frac{d(q_1, w_2'')}{2}.
\end{equation}

Together with \eqref{chord 3}, we obtain,
\begin{equation}
    \begin{aligned}
        & \tanh \frac{d(q_1, v_1)}{2}\tanh \frac{d(q_1, w_1)}{2} = \tanh\frac{d(q_1, v_2'')}{2}\tanh\frac{d(q_1, w_2'')}{2}\\
        \leq & \tanh\frac{d(q_1, v_2')}{2}\tanh\frac{d(q_1, w_2')}{2}
=\tanh\frac{d(q_2, v_2)}{2}\tanh\frac{d(q_2, w_2)}{2}.
    \end{aligned}
\end{equation}
But this contradicts the assumption \eqref{chord11}. 
It follows that \eqref{chord1} and also both equalities in \eqref{chord 3} hold. In particular, we see that $v_2'=v_2''$ and $w_2'=w_2''$ and both of them are in $\partial B_1 \cap \phi_1^{-1}(V)$.  Since $[v_2', w_2']$ intersects $[v_1, w_1]$ transversely at an interior point $q_1$, it follows that the open geodesic segment $(v_1, w_1)$ is in the interior of the convex hull $C(\phi_1^{-1}(V))$ in $B_1$. This contradicts the assumption that $e_1$ is an edge.

For case (2), we take $\phi_2=\phi_1$, $B_2=B_1$, $v_2=v_1$ and $w_2=w_1$. Suppose the conclusion does not hold, i.e., $\phi_1(q_1)=\phi_1(q_2)$ for two distinct points $q_1, q_2 \in (v_1, w_1)$. By the same argument as in case (1), we find an isometry $f \in \Isom(\H^2)$ such that $f(q_1)=q_2$ and $\phi_1 \circ f =\phi_1=\phi_2$ in a neighborhood of $q_1$. There are two subcases which can occur: 
\begin{enumerate}
    \item[(2.1)] $f([v_1, w_1])$ intersects $[v_1, w_1]$ transversely at $q_2$, and

    \item[(2.2)] $f([v_1, w_1])$ and $[v_1, w_1]$ are contained in one geodesic, i.e., $f([v_1, w_1]) \cap [v_1, w_1]$ contains an open geodesic segment. 
\end{enumerate}

In  case (2.1), exactly the same argument as in case (1) applies with $v_2':=f^{-1}(v_1), w_2':=f^{-1}(w_1)$. Hence, we see that this is impossible.

In  case (2.2), $f$ preserves the geodesic $L$ through $v_1, w_1$. Note that $f|_L$ must be orientation preserving. Otherwise, then $f|_L$ is an isometric reflection of $L$ and has a fixed point $p \in L$.  Furthermore, $p$ is the midpoint of $q_1$ and $q_2=f(q_1)$. This implies $p \in [q_1, q_2]$ and in particular, $\phi_1(p)$ is defined. Now $\phi_1(x) =\phi_1 \circ f(x)$ for $x$ near $p$ implies that $\phi_1$ is not locally injective near $p$. This contradicts that $\phi_1$ is an immersion. 
Since $\phi_1 \circ f =\phi_1$ and $f$ is an orientation preserving isometry on $L$, it follows that $ e=\phi_1([v_1, w_1])$ is a closed geodesic in $(S, d)$. 

If $e$ is a simple closed geodesic, then there exists a point $s \in (q_1, q_2) \subset B_1$ such that $\phi_1(s) =\phi_1(v_1) \in V$. But this contradicts the condition that $\phi_1^{-1}(V) \cap B_1 =\varnothing.$  If $e$ is a non-simple closed geodesic, we replace $q_1$ and $q_2$ by two distinct points, still denoted by $q_1$ and $q_2$, such that $\phi_1(q_1)=\phi_1(q_2)$ is the self transverse intersection point of the $e$. In this case, the isometry $f^{-1}$ sending $q_2$ to $q_1$ with $\phi_1 \circ f =\phi_1$ maps $[v_1, w_1]$ to $f^{-1}([v_1, w_1])$ which intersects $[v_1, w_1]$ transversely at $q_1$. By case (2.1), this is again impossible.

  Notice that in the case of surfaces $(S, V, d)$ with spherical background metrics, all circumdisks are contained in some open hemisphere. 
   By the intersecting chords theorem in spherical geometry for a spherical ball with two chords $[p,q]$, $[x,y]$ intersecting at $z$,     $$\tan \frac{d(p, z)}{2}\tan \frac{d(z, q)}{2} = \tan\frac{d(x, z)}{2}\tan\frac{d(y, z)}{2},$$
     the above proof still works in this case since $\tan x$ is a strictly increasing function on $[0, \pi/2)$ and the lengths of edges of geometric triangulations of a polyhedral surface with a spherical background metric are in the open interval $(0, \pi)$. \end{proof}

An open face in $\mathcal D$ is defined to be $\phi_{\alpha}(C_{\K}(\phi_{\alpha}^{-1}(V))^{\circ})$. 
\begin{corollary}
\label{edgefaceinter}
    Each edge is disjoint from every open face in $\mathcal D$.
\end{corollary}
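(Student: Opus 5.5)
The plan is to reduce Corollary \ref{edgefaceinter} to the chord argument in the proof of Lemma \ref{edgenointer}. Let $e=\phi_1([v_1,w_1])$ be an edge coming from a circumdisk $(B_1,\phi_1)$, and let $F=\phi_2(P_2^\circ)$ be an open face with $P_2=C_{\K}(\phi_2^{-1}(V))\subset\overline{B_2}$. Suppose, for contradiction, that $\phi_1(q_1)=\phi_2(q_2)$ for some $q_1\in[v_1,w_1]$ and some $q_2\in P_2^\circ$.

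First I will dispose of endpoints. We have $\phi_2(P_2^\circ)\cap V=\varnothing$, because $P_2^\circ\subset B_2$ and $\phi_2(B_2)\subset S-V$. So $q_1$ cannot be $v_1$ or $w_1$, and we may assume $q_1\in(v_1,w_1)$.

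Next I will pass to a common chart. As in the proof of Lemma \ref{edgenointer}, there is an isometry $f\in\Isom(\K^2)$ with $f(q_1)=q_2$ and $\phi_1=\phi_2\circ f$ near $q_1$. By connectedness of $\overline{B_1}\cap f^{-1}(\overline{B_2})$, this identity holds on the whole intersection. Set $P_2'=f^{-1}(P_2)$; its vertices lie in $\phi_1^{-1}(V)$ after pulling back, i.e.\ they are points of $\overline{B_1}$-chart preimages of $V$ and hence lie outside $B_1$. Since $q_1$ lies in the interior of the convex polygon $P_2'$, some segment through $q_1$ inside $P_2'$ crosses the line of $[v_1,w_1]$ transversely. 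To build it, choose a diagonal or an edge of $P_2'$ together with a suitable interior chord: concretely, pick two vertices $a,b$ of $P_2'$ on opposite sides of the geodesic through $v_1,w_1$ (these exist because $q_1$ is interior to $P_2'$) and, if needed, a segment from a vertex through $q_1$ to the opposite boundary of $P_2'$.

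The cleanest route is then to apply the edge/edge case directly. Since $q_1\in P_2'^{\circ}$, some edge $\phi_2(e')$ of $F$'s polygon, or more simply the segment $[v_1,w_1]$ itself, enters $P_2'$. A point of $[v_1,w_1]$ near where it enters $P_2'$ meets $\partial P_2'$ at an interior point of an edge $[a,b]$ of $P_2'$, with the crossing transverse; entering at a vertex is excluded because vertices map into $V$. If the crossing point lies in $\overline{B_1}\cap f^{-1}(\overline{B_2})$, we obtain two edges of $\mathcal D$ whose interiors intersect transversely. Case (1) of Lemma \ref{edgenointer} forbids this unless the edges coincide, and transverse edges cannot coincide.

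The main obstacle is making sure the crossing point stays in the region where $\phi_1=\phi_2\circ f$ holds. I would handle this by following $[v_1,w_1]$ from $q_1$ toward $v_1$ inside $B_1$: along this path $q\in B_1$, and $q$ stays in $P_2'\subset f^{-1}(\overline{B_2})$ until the first exit point. That exit point exists because $v_1\notin P_2'^{\circ}$, since $v_1$ maps to $V$.

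There is one degenerate case, when the exit point of $[v_1,w_1]$ from $P_2'$ occurs along a common line with an edge of $P_2'$. In that case $[v_1,w_1]$ would be contained in that edge line from $q_1$'s side, contradicting $q_1\in P_2'^\circ$. The spherical case is handled identically, using that everything lies in an open hemisphere.
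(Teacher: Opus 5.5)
\textbf{There is a genuine gap: the case where $e$ is a diagonal of the face is missing.} Your exit-point argument fails exactly when $v_1$ and $w_1$ are both vertices of $P_2'$, i.e.\ when $f([v_1,w_1])$ is a diagonal of the cyclic polygon $P_2$.

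In that configuration the whole open segment $(v_1,w_1)$ lies in $(P_2')^{\circ}$. Walking from $q_1$ toward $v_1$ (or toward $w_1$), the first exit point from $(P_2')^{\circ}$ is the endpoint itself. Your remark that ``entering at a vertex is excluded because vertices map into $V$'' only rules out vertices of $P_2'$ lying in $(v_1,w_1)\subset B_1$. The endpoint $v_1\in\partial B_1$ maps into $V$ legitimately, so there is no contradiction. No edge of $\mathcal D$ crosses $e$ transversely, and Lemma \ref{edgenointer} is never triggered.

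You should also walk toward $w_1$ when $v_1$ happens to be a vertex of $P_2'$; after that, only the case where both endpoints are vertices remains. A quick check that an idea is missing: in this case your argument never uses that $[v_1,w_1]$ is a side of $P_1$ rather than a diagonal. Yet a diagonal of a face certainly meets that open face. This diagonal case is precisely the ``edge contained in an open face'' half of the paper's proof, and it needs its own argument.

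\textbf{The paper's fix: compare the two circumdisks.} Both $\partial B_1$ and $\partial f^{-1}(B_2)$ pass through $v_1,w_1$, and $\phi_1=\phi_2\circ f$ on $\overline{B_1}\cap f^{-1}(\overline{B_2})$.
\begin{itemize}
\item If $B_1\neq f^{-1}(B_2)$, the two circles meet only at $v_1,w_1$. Hence one of the two open arcs of $\partial f^{-1}(B_2)$ cut off by the chord lies in $B_1$. Because $[v_1,w_1]$ is a diagonal, that arc carries a vertex of $P_2'$. That vertex would be a point of $\phi_1^{-1}(V)$ inside $B_1$, contradicting emptiness.
\item If $B_1=f^{-1}(B_2)$, then $P_1=P_2'$, and $[v_1,w_1]$ would be both a side and a diagonal of the same polygon.
\end{itemize}

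\textbf{An alternative fix: complete your first, abandoned idea.} Take vertices $a,b$ of $P_2'$ strictly on opposite sides of the geodesic through $v_1,w_1$. Three distinct concyclic points are never collinear, so $[a,b]$ crosses $(v_1,w_1)$ transversely at a point of $B_1\cap f^{-1}(B_2)$. The intersecting-chords argument of case (1) of Lemma \ref{edgenointer} then goes through unchanged. Equality of the chord products uses only the emptiness of both disks, and the final contradiction uses only that $[v_1,w_1]$ is a side of $P_1$. So the second chord may be a diagonal with endpoints in the preimage of $V$. As written, however, you invoke that lemma only for two edges of $\mathcal D$, which does not cover this case.
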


\begin{proof}
    Note that such an intersection implies either two edges intersect in their interiors, which is impossible by Lemma \ref{edgenointer}, or that an edge is contained in an open face. 

    For the latter case, suppose $\phi:D \to S$ is the circumdisk of the open face $F$, and $e$ is an edge inside $F$ and $e$ is contained in another circumdisk $\phi_1:D_1\to S$. Since the vertices of $e$ cannot be in the circumdisk $D$ and $e$ is not an edge of the face $F$ (since $e$ is inside $F$), we know that $e$ must be a diagonal of $F$. Let $E$ be the chord of $D$ connecting two points in $\phi^{-1}(V)$ such that $\phi(E)=e$.
    Then $E$ divides $\partial D$ into two open arcs, denoted by $A_1$ and $A_2$. Then $A_1$ and $A_2$ both have some vertex of the face $F$.

Since $\phi(D)$ and $\phi_1(D_1)$ contain the same edge $e$, we may use an isometry $g$ of $\K^2$ to change circumdisk $(D_1, \phi_1)$ to $(g(D_1), \phi_1\circ g^{-1})$ such that (i) $E$ is a chord of $g(D_1)$, (ii) $\phi_1 \circ g^{-1}|_E =\phi|_E$, and (iii) $\phi_1 \circ g^{-1} =\phi$ in a neighborhood of $E$ using an additional reflection about $E$ if necessary.  For simplicity, we assume that $g=id$.  Therefore, there exists an isometric immersion $\hat{\phi}: D\cup D_1\to S$ such that $\hat{\phi}|_D=\phi$ and $\hat{\phi}|_{D_1}=\phi_1$.
    If $D\neq D_1$, we know that $D_1$ contains either $A_1$ or $A_2$. Thus $\phi_1(D_1)=\hat{\phi}(D_1)$ contains some vertex, which is impossible for an empty-disk. Therefore $(D,\phi)$ and $(D_1,\phi_1)$ are the same circumdisk.
\end{proof}

The following lemma shows that $\{(\varphi_\alpha, C_{\K}(\phi^{-1}_{\alpha}(V))\}$ are the attaching maps of faces. 

\begin{lemma}
\label{facehomeo} For a surface $(S, V, d)$ with a cone metric, let $(B, \varphi)$ be a circumdisk with $\varphi^{-1}(V) = \{p_1, p_2, \cdots\, p_k\}$ and $k \geq 3$, and $P = C_{\mathbb K}(p_1, p_2, \cdots p_k)$ be the convex hull in $\overline{B}$ with interior $P^\circ$.  Then $\varphi|_{P^\circ}$ is a homeomorphism.
Furthermore,  if two faces intersect in their interiors, they are the same face. 
\end{lemma}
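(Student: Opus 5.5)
The plan is to argue by contradiction, reusing the mechanism of Lemma~\ref{edgenointer} and Corollary~\ref{edgefaceinter}. The main tool is analytic continuation of local isometries between circumdisks. Since $\varphi$ is an isometric immersion of $B$ into $S-V$, it is a local homeomorphism on $P^\circ\subset B$. So it suffices to show that $\varphi|_{P^\circ}$ is injective, because an injective local homeomorphism is a homeomorphism onto its (open) image.

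For injectivity, suppose $x\neq y\in P^\circ$ with $\varphi(x)=\varphi(y)$. Because $\varphi$ is a local isometry at both points, there is an isometry $f\in\Isom(\mathbb K^2)$ with $f(x)=y$ and $\varphi=\varphi\circ f$ near $x$. Connectivity of $\overline B\cap f^{-1}(\overline B)$ then gives $\varphi=\varphi\circ f$ on all of it, exactly as in \eqref{chord2}.

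\textbf{Case $f(B)\neq B$.} The disk $f^{-1}(B)$ and $B$ are distinct round disks of the same radius, and both contain $x$. Since $x\in P^\circ$, the chords of $P$ surround $x$. The circle $\partial B$ minus $f^{-1}(B)$ is an arc, and I claim some vertex $p_i$ lies in $\partial B\cap f^{-1}(B)$. This follows because the vertices of $P$ cannot all lie in the closed arc $\partial B\setminus f^{-1}(B)$ while $x\in P^\circ\cap f^{-1}(B)$. Then $f(p_i)\in B$ and $\varphi(f(p_i))=\varphi(p_i)\in V$, which contradicts $\varphi^{-1}(V)\cap B=\varnothing$.

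Justifying that claim is the main obstacle. I would first try the following: if every $p_i$ lies outside $f^{-1}(B)$, then $P\subset\overline B\setminus f^{-1}(B)$, because that region is convex relative to the chord cut out by $\partial f^{-1}(B)$. This is the lens-complement argument: the chord through the two points of $\partial B\cap\partial f^{-1}(B)$ separates $P$ from the lens. But then $x\notin f^{-1}(B)$, a contradiction. The same intersecting-chords monotonicity ($\tanh$, identity, or $\tan$ versions) used in Lemma~\ref{edgenointer} handles the three geometries uniformly.

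\textbf{Case $f(B)=B$.} Then $f$ is a rotation or reflection of $B$ permuting $\{p_i\}$. The map $f$ is not the identity since $f(x)=y\neq x$. If $f$ has a fixed point in $P^\circ$, then $\varphi$ fails to be locally injective there, contradicting that $\varphi$ is an immersion; this is the same argument as in case (2.2). A rotation of $B$ fixes the center and a reflection fixes a diameter. Since $f$ preserves $P$, the fixed set of $f$ meets $P^\circ$: the barycenter of the vertices of $P$ lies in $P^\circ$ and is fixed by $f$. This gives the contradiction.

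For the second statement, let two faces given by $(B_1,\varphi_1)$ and $(B_2,\varphi_2)$ meet at $\varphi_1(x_1)=\varphi_2(x_2)$ with $x_i\in P_i^\circ$. Take $f$ with $f(x_1)=x_2$ and $\varphi_1=\varphi_2\circ f$ near $x_1$; the equality extends over $\overline{B_1}\cap f^{-1}(\overline{B_2})$. If $f(B_1)\neq B_2$, the same lens argument applied to $P_2$ (or to $P_1$) yields a vertex of $P_2$ inside $f(B_1)$, or a vertex of $P_1$ inside $f^{-1}(B_2)$, contradicting emptiness. Hence $f(B_1)=B_2$, so $f(P_1)=P_2$ and the two faces coincide.
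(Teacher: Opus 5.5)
\textbf{The gap is in the case $f(B)\neq B$}, at the step you flag as the main obstacle. The claim is that if no $p_i$ lies in $f^{-1}(B)$, then $P\subset \overline B\setminus f^{-1}(B)$. This is false. That crescent is not convex. Moreover, because $B$ and $f^{-1}(B)$ have equal radii, the lens $B\cap f^{-1}(B)$ is symmetric under reflection in the geodesic through $\partial B\cap\partial f^{-1}(B)$, so that geodesic cannot separate $P$ from the lens.

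For a counterexample, let $B$ and $f^{-1}(B)$ be the unit disks centered at $(-\tfrac12,0)$ and $(\tfrac12,0)$. The triangle with vertices $(-\tfrac32,0)$ and $(0,\pm\tfrac{\sqrt3}{2})$ has all its vertices on the closed arc $\partial B\setminus f^{-1}(B)$. Yet its interior contains $(-\tfrac14,0)\in B\cap f^{-1}(B)$. So the information you actually use, namely $\{p_i\}\cap f^{-1}(B)=\varnothing$ and $f(x)\in B$, gives no contradiction. The much stronger hypothesis $f(x)=y\in P^\circ$ never enters. The intersecting-chords monotonicity of Lemma \ref{edgenointer} is not what is needed here.

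\textbf{The missing idea} is to use emptiness on both sides and to compare $P$ with $f(P)$ rather than with the lens. Your identity $\varphi=\varphi\circ f$ on $\overline B\cap f^{-1}(\overline B)$ is equivalent to $\varphi=\varphi\circ f^{-1}$ on $\overline B\cap f(\overline B)$, which gives $\{p_i\}\cap f(B)=\varnothing$. Your own argument gives $\{f(p_i)\}\cap B=\varnothing$. Hence the vertices of $P$ lie on $\partial B\setminus f(B)$ and those of $f(P)$ lie on $\partial f(B)\setminus B$. These two arcs lie in opposite closed half-planes of the geodesic $L$ through $\partial B\cap\partial f(B)$. So $P^\circ$ and $f(P^\circ)$ are disjoint, contradicting $y=f(x)\in P^\circ\cap f(P^\circ)$. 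This is the paper's argument.

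\textbf{Second statement.} Your proof invokes the same false lens step. Repaired with the two-sided separation applied to $f(P_1)$ and $P_2$, it does give a direct proof that $f(B_1)=B_2$. That is a legitimate alternative to the paper's route through Corollary \ref{edgefaceinter}. However, $B_1$ and $B_2$ may have different radii, so you must also exclude the nested cases $f(B_1)\subsetneq B_2$ and $B_2\subsetneq f(B_1)$. This is easy: then at most one boundary point of the smaller disk lies outside the larger one, which is too few to carry the $\geq 3$ vertices.

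\textbf{Case $f(B)=B$.} This part is fine. Using the center of $B$ instead of a barycenter is simpler: it is fixed by $f$, $\varphi$ is an immersion on all of $B$, and you avoid having to define a barycenter in the non-Euclidean geometries.
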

\begin{proof}
    It suffices to show $\varphi|_{P^\circ}$ is injective.
    If not, then there exist $p \neq p'\in {P}^\circ$ such that $\varphi(p) = \varphi(p')$ in $S$. Since $\varphi$ is an isometric immersion,  there exists a neighborhood $U$ of $p$ in $B$ and an isometry $f$ of $\mathbb{H}^2$, or $\E^2$, or $\mathbb{S}^2$ such that $f(p)=p'$, $f(U)\subset B$, and for any $z\in U$, $\varphi(z)=\varphi(f(z))$. 
    Then $\varphi\circ f^{-1}: f(B)\to S$ is also a circumdisk. By construction of $f$,  $\varphi(z)=\varphi \circ f^{-1} (z)$ for $z\in f(U)$.
    Since $f(U)\subset B\cap f(B)$ is an open subset and $B \cap f(B)$ is connected, we see that 
    \begin{equation} \label{phiequ1}
    \varphi =\varphi \circ  f^{-1}, \quad \text{ on $\overline{B}\cap f(\overline{B})$}. \end{equation}

Note that $p_i \notin f(B)$. For otherwise, $p_i =f(q_i)$ with $q_i \in B$. Furthermore $p_i \in \overline{B} \cap f(\overline{B})$. By \eqref{phiequ1}, $\varphi(q_i) =\varphi(f^{-1}(p_i)) =\varphi(p_i) \in V$.  It follows that $q_i \in B \cap \varphi^{-1}(V) =\varnothing,$ contradicting the empty-disk condition. Thus $\{p_1, ..., p_k\} \subset \partial B -f(B)$. 
Similarly, since $\{f(p_1), ..., f(p_k)\}$ is the preimage of $V$ under $\varphi \circ f$ in $f(\overline{B})$, we have $f(p_i) \notin B$. In particular, $\{f(p_1), ..., f(p_n)\} \subset \partial f(B) - B$.

\begin{figure}[ht!]  
\begin{center}
\begin{overpic}[scale=1]{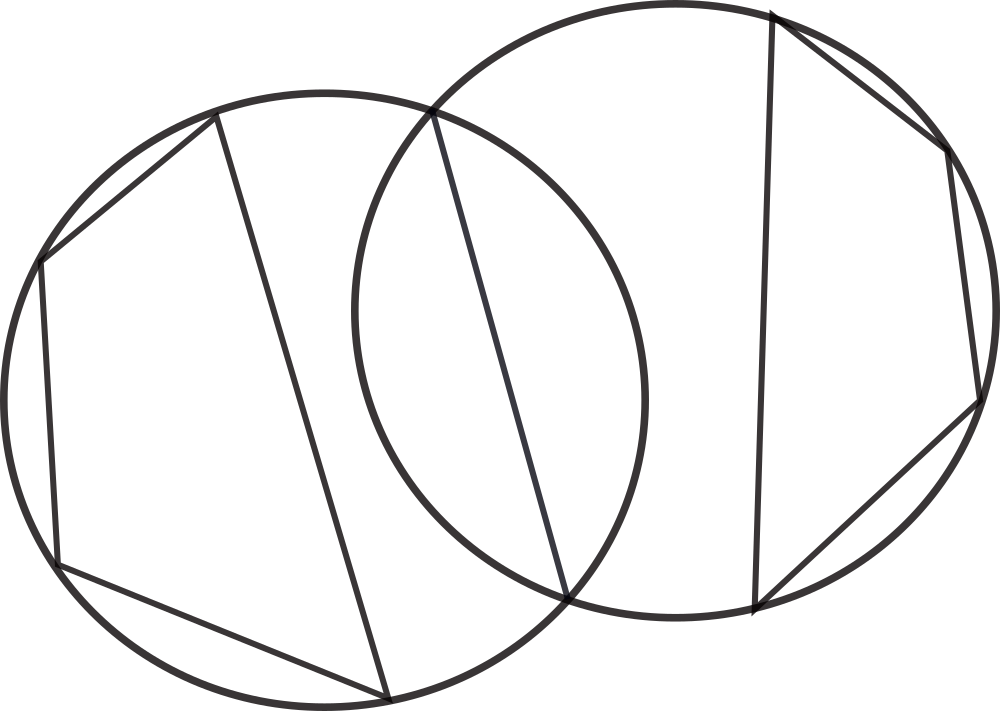}
   \put(20,20){$P$}
   \put(18,35){$p$ \makebox(0,0){\color{black}\circle*{1.5}}}
   \put(40,4){$B$}
   \put(83,32){$f(P)$}
    \put(82,3){$f(B)$}
    \put(50,60){$x$}
    \put(55,6){$y$}
    \put(52,33){$L$}
\end{overpic}
\caption{Two intersecting faces.}  
\label{nointersection}
\end{center}
\end{figure}

If $B=f(B)$, then $f$ is either a rotation or a reflection. Hence $f$ fixes the center $O$ of $B$. Then for any neighborhood $V$ of $O$, there exists $x\in V-\{O\}$ such that $f(x)\in V-\{O, x\}$. Thus $\varphi(x)=\varphi'(f(x))=\varphi(f(x))$, which contradicts that $\varphi$ is an immersion. Thus, $B$ and $f(B)$ are not identical.

    Since $f$ is an isometry, $B\cap f(B)\supset f(U)\neq \varnothing$, and $B$ and $f(B)$ do not contain each other,  we know that $\partial\overline{B}\cap f(\partial\overline{B}) = \{x, y\}$, where $x, y$ are not antipodal points in the case of $\mathbb{K}^2=\mathbb S^2$ since $B$ is an open ball of radius less than $\pi/2$,  as illustrated in Figure \ref{nointersection}. 
Consider the geodesic $L$ through $x, y$ which cuts $\partial (B \cap f(B))$ into two parts $f(B) \cap \partial B$ and $B \cap \partial f(B)$. By the condition $\{p_1, ..., p_n\} \cap (f(B)\cap \partial B ) =\varnothing$ and $\{f(p_1), ..., f(p_n)\} \cap (B \cap \partial f(B)) =\varnothing$, we see that $\{p_1, ..., p_n\}$ lies in a closed half-space bounded by $L$,  and $\{f(p_1), ..., f(p_n)\}$ lies in the other closed half-space bounded by $L$. 
It follows that the interiors of their convex hulls $P^\circ$ and $f(P)^\circ$ lie on different sides of $L$. In particular, $P^\circ \cap f(P)^\circ =\varnothing$. But that contradicts $p' \in P^\circ \cap f(P)^\circ.$

Finally, we verify that the interiors of distinct faces are disjoint.      Assume the opposite, and let $(B_1, \varphi_1)$ and $(B_2, \varphi_2)$ be two distinct circumdisks of two faces $F_1$ and $F_2$ of $\mathcal{D}$ such that  there are two points $p\in F_1^\circ\cap F_2^\circ$ and $q\in {F_1}^\circ - {F_2}^\circ$, without loss of generality. Consider the unique geodesic $\gamma$ in $\varphi_1^{-1}(F_1)$ connecting $\varphi_1^{-1}(p)$ and $\varphi_1^{-1}(q)$.
    Then $\varphi_1(\gamma)$ is an arc in $S$ connecting $p$ and $q$, and hence intersects $\partial F_2$. Since both $p,q$ are interior points of $F_1$, we have $\varphi(\gamma)\subset {F}_1^\circ$.
    Thus $\partial F_2$ intersects ${F}_1^\circ$, and thus an edge of $F_2$ intersects ${F}_1^\circ$, which contradicts Corollary \ref{edgefaceinter}.
\end{proof}

With the above preparations, now we prove Proposition \ref{tessellation}.

\begin{proof}[Proof of Proposition \ref{tessellation}]
Suppose $(S, V, d)$ is polyhedral with a Delaunay triangulation $\T$. By definition, the circumdisk $D_{\tau}$ of a triangle $\tau$ in $\T$ is a circumdisk in $\mathcal D$ and $\tau \subset D_{\tau}$. Since $S$ is a union of triangles $\tau \in \T$, it follows that each point $p \in S$ is contained in a face associated with the circumdisk $D_{\tau}$ for $\tau \in \T$. Therefore, by Lemma \ref{facehomeo}, we conclude that each circumdisk $D$ is of the form $D_{\tau}$ for some $\tau \in \T$. Furthermore, by Lemmas \ref{edgenointer}, \ref{facehomeo}, and Corollary \ref{edgefaceinter}, $\mathcal D$ forms a 
CW-decomposition of the surface $S$. Furthermore, each face $D$ in $\mathcal D$ is triangulated by triangles $\tau's$ such that $D_{\tau} =D$, i.e., $\T$ is a subdivision of $\mathcal D$ with vertex set $V(\T)=V$.

Conversely, suppose $\mathcal D$ is a CW-decomposition of the surface $S$. Since each face $F$ of $\mathcal D$ is the isometric image of a finite-sided convex cyclic polygon, we can triangulate $\mathcal D$ by adding geodesic diagonals to $F$. This produces a geodesic triangulation $\T$ of the surface $(S, V, d)$. By the construction, for each triangle $\tau \in \T$, its circumdisk $D_{\tau}$ is a circumdisk in $D$, and hence is an empty-disk whose closure is compact. Therefore, $\T$ is a Delaunay triangulation, and $(S, V, d)$ is polyhedral.
\end{proof}


\begin{corollary}
    \label{triagulationrefine}
    Suppose $(S, V, d)$ is a polyhedral surface and $\mathcal{T}$ is a Delaunay triangulation of $(S, V, d)$. Then, 
    \begin{enumerate}[label=(\arabic*)]
        \item the hyperbolic metric $d^*$ on $S-V$ associated to $\mathcal{T}$ depends only on the Delaunay tessellation of $(S, V, d)$; and
        \item if $\phi: (S, V, d) \to (S, V, d)$ is an isometry of the polyhedral surface, then $\phi$ preserves the Delaunay tessellation and  induces an isometry $\phi^*: (S-V, d^*) \to (S-V, d^*).$
    \end{enumerate}
\end{corollary}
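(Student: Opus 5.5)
By Proposition \ref{tessellation}, any two Delaunay triangulations $\T$ and $\T'$ of $(S,V,d)$ are both geometric subdivisions of the same Delaunay tessellation $\mathcal D$ with vertex set $V$. So for (1) it suffices to show the following: for each face $F$ of $\mathcal D$, the hyperbolic structure that $d^*_{\T}$ puts on $F-V$ does not depend on how $F$ is triangulated by diagonals. Once this is known, the metrics $d^*_{\T}$ and $d^*_{\T'}$ agree on every open face. They also agree along every edge of $\mathcal D$, because the gluing of adjacent faces along an edge $e$ is determined by the geodesic $e^*$ and by the isometry of $\mathbb K^2$ that glues the two faces, and neither depends on the triangulation.

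For a single face, let $(B,\varphi)$ be its circumdisk and $P=C_{\mathbb K}(\varphi^{-1}(V))\subset\overline B$ the cyclic polygon with vertices $p_1,\dots,p_k\in\partial B$. Under the identifications of \S2 (so that $B$ sits in $\partial\H^3$), take the ideal polygon $P^*=C(\{p_1,\dots,p_k\})$. It lies in the totally geodesic plane $\partial C(\overline B)$ spanned by the circle $\partial B$. Any triangulation of $P$ by diagonals, say into triangles $\tau_j$, gives ideal triangles $\tau_j^*$, and these all lie in this same plane and tile $P^*$. The identifications of adjacent $\tau_j^*$ along $e^*$ that appear in the construction of $d^*_{\T}$ are the identity on this plane: the adjacent triangles already lie on opposite sides of $e$ in $\mathbb K^2$, so the gluing isometry $f$ is the identity and $f^*=\mathrm{id}$. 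Hence $\bigcup_j\tau_j^*=P^*$ isometrically. Moreover, the maps $\phi_{\tau_j}=\phi_{\mathbb K}|_{\tau_j^*}$ are restrictions of the single map $\phi_{\mathbb K}|_{P^*}\colon P^*\to P$. This map is a homeomorphism onto $P$ minus its vertices, by the convexity argument of Lemma \ref{convexhulld*} applied to the finite set $\{p_i\}$. Pushing forward by $\varphi$, which is injective on $P^\circ$ by Lemma \ref{facehomeo}, the metric on $\varphi(P)-V$ is the pushforward of the metric of $P^*$ under $\varphi\circ\phi_{\mathbb K}|_{P^*}$. This is visibly independent of the diagonals chosen.

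The step I expect to be the main obstacle is the careful bookkeeping for the edges of $\mathcal D$. The maps used for $\T$ and $\T'$ on the two sides of an edge $e=\varphi(C_{\mathbb K}\{p_i,p_{i+1}\})$ must be checked to coincide on $e$. I would argue this with Lemma \ref{natural-h}(iii): both triangulations contain a triangle adjacent to $e$ in each incident face. On $e$ the relevant maps are $\phi_{\mathbb K}$ restricted to $e^*$, composed with $\varphi$, and these do not depend on the third vertex of the triangle. The gluing across $e$ in both constructions is therefore the same isometry $f^*$ extending the isometry of $\mathbb K^2$ matching the two circumdisk charts. It follows that $d^*_{\T}=d^*_{\T'}$ on all of $S-V$.

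For (2), let $\phi$ be an isometry of $(S,V,d)$. Then $\phi(V)=V$, and $\phi\circ\varphi$ is again an isometric immersion of $B$ into $S-V$ with the same preimage of $V$. So $\phi$ permutes circumdisks, and therefore maps faces and edges of $\mathcal D$ to faces and edges. Given a Delaunay triangulation $\T$, the image $\phi(\T)$ is again a subdivision of $\mathcal D$, hence Delaunay by Proposition \ref{tessellation}. By Lemma \ref{natural-h}, applied to the chart isometries $f$ and their extensions $f^*$, $\phi$ induces an isometry from $(S-V,d^*_{\T})$ to $(S-V,d^*_{\phi(\T)})$. By part (1) both metrics equal $d^*$, so $\phi^*:=\phi|_{S-V}$ is an isometry of $d^*$.
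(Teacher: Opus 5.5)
Your proposal is correct and follows essentially the paper's argument. The paper glues the ideal cyclic polygons $F_i^*$ of the Delaunay tessellation into a metric $d^*_{\mathcal D}$ and notes that any diagonal subdivision of $F_i$ gives ideal triangles tiling $F_i^*$, so $d^*_{\T}=d^*_{\mathcal D}$; for (2) it uses $\phi(\mathcal D)=\mathcal D$ and Lemma \ref{natural-h} to let $\phi$ permute the ideal cells. Your face-by-face comparison of $\T$ with $\T'$, and your derivation of (2) from the isometry $(S-V,d^*_{\T})\to(S-V,d^*_{\phi(\T)})$ together with part (1), are the same ideas with more explicit bookkeeping along the edges of $\mathcal D$.
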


\begin{proof}
Let $\mathcal{D}$ be the Delaunay tessellation with 2-cells $\{D_1, \cdots, D_n, \cdots\}$. By construction, the interior of $D_i$ is isometric to the interior of a convex polygon $F_i$ in $\mathbb{K}^2$ which is inscribed in a circle,  and the surface $(S, V, d)$ is isometric to the isometric gluing of convex polygons $F_i$ along pairs of edges. 
For simplicity, let us assume $(S, V, d)$ is a Euclidean cone metric surface. One constructs a hyperbolic metric $d^*_{\mathcal{D}}$ in a similar way by replacing each cyclic convex polygon $F_i$ with vertices $\{u_1, ..., u_k\}$ by the hyperbolic convex hull $F_i^*=C(v_1, ..., v_k) \subset \H^3_U$ and isometrically gluing $F_i^*$ to $F_j^*$ along their common edge by the lifting hyperbolic isometry $f^*$ of the Euclidean isometry $f$ associated to the gluing of $F_i$ and $F_j$. See Figure \ref{777}.

We claim that $d^*_{\mathcal{D}}=d^*_{\T}$ for any Delaunay triangulation $\T$. Indeed, since $\T$ is a refinement of $\mathcal{D}$ without any extra vertices, each 2-cell $F_i$ is subdivided into a collection of triangles $\tau_1, ..., \tau_{k-2}$ in $\T$. Since $F_i$ is inscribed in a circle, we see that $\tau_1^* \cup ... \cup \tau_{k-2}^* =F_i^*$. It follows from the definition that $d^*_{\mathcal{D}}=d^*_{\T}$.

Finally, to see part (2), by the uniqueness of $\mathcal{D}$ and that $\mathcal{D}$ is invariant under isometries, we see that $\phi(\mathcal{D}) =\mathcal{D}.$  By Lemma \ref{natural-h}, since $d^*$ is constructed out of $\mathcal{D}$, i.e., isometric gluing of ideal hyperbolic polygons $\sigma^*$ associated to 2-cells $\sigma$ in $\mathcal{D}$, we see that the isometry $\phi$ induces an isometry $\phi^*$ of $d^*$, which is constructed by permuting the 2-cells $\sigma^*$'s. 
\end{proof}

A basic property of a polyhedral surface is the finiteness of edges in each circumdisk. 
\begin{proposition} \label{finiteness}
   Suppose $(S, V, d)$ is a polyhedral surface and $\mathcal{T}$ is a Delaunay triangulation of $(S, V, d)$ with $E = E(\mathcal{T})$ being the set of all edges. If $\Phi: B \to (S-V, d_{S-V})$ is an immersed empty circumdisk, then the set $\Phi^{-1}(\cup_{e\in E}{e}^\circ)$ has only finitely many connected components. 
\end{proposition}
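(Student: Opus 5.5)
The plan is to reduce the statement to a compactness argument on the closure of the empty circumdisk. By definition, $\Phi$ extends continuously to $\overline{B}$, so $K=\Phi(\overline{B})$ is a compact subset of $S$. The first step is to show that $K$ meets only finitely many triangles of $\T$. Since $V$ is closed and discrete, $K\cap V$ is finite. Since $\T$ is a triangulation of a surface, it is locally finite: every point of $S$ has a neighborhood meeting only finitely many closed triangles. For a vertex this is the cone neighborhood; for an interior point of an edge or face it is immediate. Covering $K$ by finitely many such neighborhoods, we see that only finitely many triangles $\tau_1,\dots,\tau_m$ meet $K$, and hence only finitely many edges $e_1,\dots,e_N$ meet $K$.

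Next we pull the edges back. Because $\Phi$ is an isometric immersion of the convex disk $\overline B\subset\K^2$, the argument in the proof of Lemma \ref{edgenointer} applies here. Near any point $x\in B$ with $\Phi(x)\in e_j^\circ$, the preimage $\Phi^{-1}(e_j^\circ)$ is a geodesic segment of $\K^2$. Its maximal extension inside $B$ is a chord: the lifted geodesic continues until it leaves $B$ or reaches a point mapping to $V$, and the latter cannot occur inside $B$. So each connected component of $\Phi^{-1}(\cup e^\circ)$ is an open subarc of a geodesic chord of $B$, and it lies in the preimage of a single $e_j$, since distinct edges have disjoint interiors and meet only at vertices, which do not lie in $\Phi(B)$.

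The remaining step, which I expect to be the main obstacle, is to show that each $e_j$ contributes only finitely many chords. Each chord $c$ that maps into $e_j^\circ$ is a geodesic segment of length at least some $\delta>0$, because $\Phi|_c$ is an isometric immersion into the finite-length arc $e_j$ and $c$ has endpoints on $\partial B$ or at preimages of endpoints of $e_j$. A cleaner route than a length bound uses injectivity-radius-type local rigidity. Suppose infinitely many distinct chords mapped into $e_j$. Since each is determined by a point of $\overline{B}$ together with a direction, compactness of $\overline{B}\times \SS^1$ gives a sequence of distinct chords $c_n$ converging to a limit segment $c$. Choose points $x_n\in c_n$ with $x_n\to x\in \overline B$ and $\Phi(x_n)$ lying in $e_j$, with corresponding points on $e_j$ converging.

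If $x\in B$, then $\Phi$ is injective on a neighborhood $U$ of $x$, and $\Phi(U)\cap e_j$ is a single arc (after shrinking $U$, using the local picture of the surface at $\Phi(x)$, which is either a flat point or a cone point). Hence the $c_n$ eventually coincide in $U$, which is a contradiction. If $x\in\partial B$, we use the continuous extension at $\Phi(x)$. If $\Phi(x)\in V$, a cone neighborhood of $\Phi(x)$ meets only finitely many edge germs and $\Phi$ is injective on $B\cap U$ for a small neighborhood $U$, which again forces the $c_n$ to be eventually equal. If $\Phi(x)\notin V$, then $\Phi$ extends to an isometric immersion on a neighborhood of $x$ in $\K^2$, and the interior argument applies.

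Thus each $e_j$ contributes finitely many components, and since there are only $N$ edges to consider, $\Phi^{-1}(\cup_{e\in E}e^\circ)$ has finitely many components.
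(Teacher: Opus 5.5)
Your route is essentially the paper's. Compactness of $\Phi(\overline B)$ gives finitely many edges, each component of $\Phi^{-1}(\cup_e e^\circ)$ is a full open chord mapping into a single edge $e_j$, and infinitely many such chords accumulate at some $x\in\overline B$, where a local argument gives a contradiction. Your cases $x\in B$ and $x\in\partial B$ with $\Phi(x)\notin V$ are fine; the paper handles both at once using uniqueness of geodesics in a small ball around a non-vertex point.

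The gap is the case $x\in\partial B$, $\Phi(x)=v\in V$. There you assert that $\Phi$ is injective on $B\cap U$ for a small neighborhood $U$ of $x$. This is false whenever the cone angle $\theta$ at $v$ is less than $\pi$. Near $x$ the disk fills a sector of angle $\pi$. $\Phi$ sends the radial segment from $x$ in direction $\alpha\in(0,\pi)$ to the radial segment at $v$ in direction $\pm\alpha+c$ modulo $\theta$, and this map is not injective on $(0,\pi)$ when $\theta<\pi$. Such vertices do lie on circumdisk boundaries, since every vertex of a Delaunay triangle does. A concrete example is the doubled equilateral triangle, with cone angles $2\pi/3$. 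The circumdisk of one face is that face plus three caps, each folded onto the other face. The two caps adjacent to a vertex $v$ both cover the other face's corner at $v$, so $\Phi$ is two-to-one on $B\cap U$ for every neighborhood $U$ of $x$. Hence your conclusion that the $c_n$ are eventually equal does not follow from the reasoning you give.

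The fix, which is what the paper does, replaces injectivity by uniqueness of geodesics to the cone point.
\begin{itemize}
\item Take a small ball $N$ about $v$ in which every point is joined to $v$ by a unique geodesic, and in which $e_j\cap N$ is the union of the at most two radial germs of $e_j$ at $v$.
\item For large $n$, $\Phi([x_n,x])$ is a geodesic of length $d(x_n,x)$ inside $N$ running from $\Phi(x_n)\in e_j$ to $v$. So it equals the radial segment $[\Phi(x_n),v]\subset e_j$.
\item Therefore $[x_n,x)\subset\Phi^{-1}(e_j^\circ)$, and every $c_n$ is a chord ending at $x$.
\item By the direction formula above, only finitely many directions at $x$ map onto the germs of $e_j$. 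So two of the distinct $c_n$ coincide, which is a contradiction.
\end{itemize}

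Separately, your aside about a uniform lower bound $\delta$ on chord lengths is unjustified, but you do not use it.
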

\begin{proof}
   We prove it by contradiction. Suppose $\Phi^{-1}(\cup_{e\in E} {e}^\circ)$ has infinitely many components $\{{I_n}^\circ, n\in\Z_+\}$, where $I_n$ are geodesic segments $[x_n, y_n]$ with end points $x_n, y_n$ in $\partial B$.
    
   Given a vertex $v$ in $\T$, let the open star $\st(v)$ of $v$ be the union of all open simplexes $\tau$ having $v$ as a vertex.  Then $\st(v)$ is an open set in $S$.  Since $\Phi(\overline{B})$ is compact, it is covered by finitely many open stars $\st(v)$. Moreover, since the open cover $\{ \st(v) : v \in V(\T)\}$ is locally finite, and any two open stars $\st(v)$ and $\st(v')$ intersect if and only if $v$ and $v'$ are adjacent, we know that
   $\Phi(\overline{B})$ intersects only finitely many open stars. Hence, $\Phi(\overline{B})$ intersects only finitely many edges $e_1, ..., e_m$ in $\T$. We may assume that all these  disjoint intervals $I_n$ are mapped by $\Phi$ into one edge, say, $e_1$, i.e., $\Phi(I_n) \subset e_1$ for all $n$. 
    
   By the compactness of $\overline{B}$, after taking a subsequence, we may assume that $\lim_{n\to\infty} x_n = x$ and $\lim_{n\to\infty} y_n = y$ in $\overline{B}$.  
   Let $z_n$ be the midpoint of $x_n$ and $y_n$, and $z$ the midpoint of $x$ and $y$.       
   Then $\lim_{n\to\infty} z_n = z$. Let $p = \Phi(z) \in e_1$ in $S$. 
   By the construction, we have, 
   \begin{equation} \label{disjoint}
   [z_n, z_{n+1}] \not\subset  \Phi^{-1}(e_1). \end{equation}
    
   Since $(S, V, \mathcal{T}, d)$ is a geometrically triangulated cone metric surface, each point $q \in S$ has a small ball neighborhood $N(q)$ with the following properties shown in Figure \ref{fig:nbhd of q}:
   \begin{enumerate}[label=(\arabic*)]
       \item if $q$ is not a vertex of $\T$, then $N(q) \subset S-V(\T)$ such that for any pair of distinct points $u, v \in N(q)$, there is only one geodesic segment, namely, $[u,v]$, joining $u$ to $v$ in $N(q)$, and 

       \item if $q \in V(\T)$, then for any $u \in N(q) -\{q\}$, there exists only one geodesic segment, namely, $[u, q]$, joining $u$ to $q$ in $N(q).$
   \end{enumerate}

\begin{figure}[htbp]  
\begin{center}
\begin{overpic}[width=0.5\linewidth]{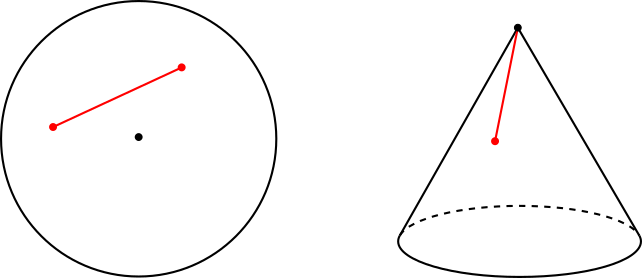}
  \put(23,19){$q$}
  \put(3.5,24){$u$}
  \put(30,30){$v$}
  \put(74,18){$u$}
  \put(82,39){$q$}
\end{overpic}
\caption{Small ball neighborhood of an arbitrary point $q\in S$.}  
\label{fig:nbhd of q}
\end{center}
\end{figure}

Let $N$ be the small ball neighborhood of $p=\Phi(z) \in e_1$ in $S$ as above.  If $p$ is not a vertex of $\T$, then due to $\Phi(I_n) \subset e_1$, we see that $p$ is in the interior of $e_1$. We may choose $N$ to be sufficiently small so that $N \cap e_1$ consists of one geodesic segment. For $n$ large, due to convergence, we see that $\Phi(z_n)$ and $[\Phi(z_n), \Phi(z_{n+1})]$ are contained in $N$. Since $\Phi$ is a local isometry, $ \Phi([z_n, z_{n+1}])$ is a geodesic segment in $N$ joining $\Phi(z_n) \in e_1$ to $\Phi(z_{n+1}) \in e_1$. Since $e_1 \cap N$ is the only geodesic segment in $N$ that can join two points in $e_1$,  it follows that $\Phi([z_n, z_{n+1}]) =[\Phi(z_n), \Phi(z_{n+1})] \subset e_1$. But this contradicts \eqref{disjoint}. See the left pictures in Figure \ref{fig:nbhd of p}.
    
If $p$ is an endpoint of $e_1$, consider the geodesic segment $[p, \Phi(z_n)]$ in $N$. Since $N$ is the small ball neighborhood of $p$, we see that $\Phi([z, z_n]) = [p, \Phi(z_n)]$. Therefore, $\Phi([z, z_n]) \subset e_1$, i.e.,
$[z, z_n] \subset \Phi^{-1}(e_1)$. This shows that  $[z, z_n] \subset I_n$. Since the ${I_n}^\circ$ are disjoint, it follows that all intervals $I_n$ share the same endpoint $z$ in $\overline{B}$ for sufficiently large $n$. 
But this contradicts the fact that $\Phi: \overline{B} \to S$ is an isometric immersion sending $I_n$ into $e_1$. See the right pictures in Figure \ref{fig:nbhd of p}. \qedhere

\begin{figure}[htbp]  
\begin{center}
\begin{overpic}[width=\linewidth]{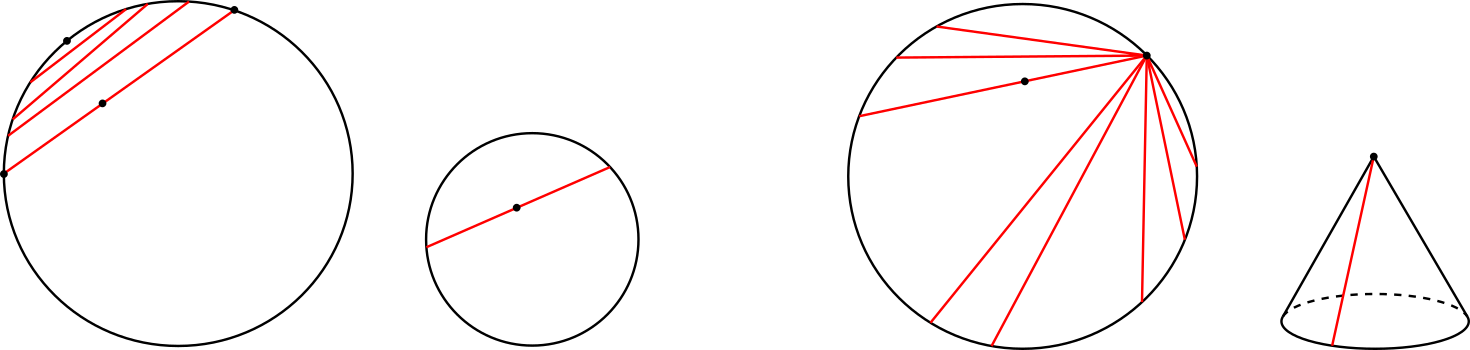}
  \put(17,24){$y_n$}
  \put(3,22){$z$}
  \put(8,16){$z_n$}
  \put(1,11){$x_n$}
  \put(15,2){$\overline{B}$}
  \put(38,12.5){$e_1$}
  \put(33,7){$p=\Phi(z)$}
  \put(35,2){$N$}
  \put(70,16){$z_n$}
  \put(80,19){$z$}
  \put(73,2){$\overline{B}$}
  \put(101,2){$N$}
  \put(90,15){$p=\Phi(z)$}
  \put(93,6){$e_1$}
\end{overpic}

\caption{Small ball neighborhood and the local isometric map $\Phi$ near the limit point $p=\Phi(z)$.}  
\label{fig:nbhd of p}
\end{center}
\end{figure}
\end{proof}

The following proposition illustrates that passing to a covering space will not change the fundamental properties of polyhedral surfaces.

\begin{proposition} \label{covering}
Suppose  $(\tilde S, \tilde V, \tilde d)$ is a covering space of a marked cone metric surface $(S, V, d)$ with the pull-back vertex set $\tilde V$ and pull-back cone metric $\tilde d$.
\begin{enumerate}[label=(\roman*)]
    \item Then $(\tilde S, \tilde V, \tilde d)$ is a polyhedral surface if and only if $(S, V, d)$ is.
    \item Suppose $(S, V, d)$ is polyhedral with the associated hyperbolic metric $d^*$, and $\tilde{d}^*$ is the associated hyperbolic metric for $(\tilde S, \tilde V, \tilde d)$. Then $\tilde d^*$ is the pullback of $d^*$ under the covering map,  and $\tilde d^*$ is a complete hyperbolic metric if and only if $d^*$ is. 
\end{enumerate}
\end{proposition}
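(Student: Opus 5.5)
The plan is to work through the Delaunay tessellation $\mathcal D$ and Proposition \ref{tessellation}, which characterizes polyhedrality intrinsically: $(S,V,d)$ is polyhedral if and only if $\mathcal D$ is a CW-decomposition of $S$. Let $\pi:(\tilde S,\tilde V,\tilde d)\to(S,V,d)$ be the covering map. It is a local isometry, and $\tilde V=\pi^{-1}(V)$. The first step is to set up a bijective correspondence between circumdisks of the two surfaces.

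\emph{From $\tilde S$ to $S$.} If $(B,\tilde\varphi)$ is a circumdisk in $\tilde S$, then $(B,\pi\circ\tilde\varphi)$ is a circumdisk in $S$. Indeed, it is an isometric immersion into $S-V$ and extends continuously to $\overline B$. Moreover $(\pi\circ\tilde\varphi)^{-1}(V)=\tilde\varphi^{-1}(\tilde V)$, so it contains at least three points.

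\emph{From $S$ to $\tilde S$.} If $(B,\varphi)$ is a circumdisk in $S$, then $\overline B$ is simply connected. So $\varphi:\overline B\to S$ lifts through $\pi$ to $\tilde\varphi$, and each choice of base point over $\varphi(\text{center})$ gives one lift. The lifts are exactly the deck-translates when $\pi$ is regular; in general they are indexed by the fiber. Each lift is again a circumdisk, with $\tilde\varphi^{-1}(\tilde V)=\varphi^{-1}(V)$. Note that compactness of $\tilde\varphi(\overline B)$ is automatic, since it is the continuous image of a compact set.

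Consequently the cells of $\tilde{\mathcal D}$ are precisely the lifts of the cells of $\mathcal D$. Equivalently, $\pi$ maps each closed cell of $\tilde{\mathcal D}$ homeomorphically onto a cell of $\mathcal D$, and $\pi^{-1}$ of each cell of $\mathcal D$ is a disjoint union of cells of $\tilde{\mathcal D}$.

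For part (i), a CW-decomposition of $S$ pulls back under a covering map to a CW-decomposition of $\tilde S$. Closed cells are contractible (convex polygons), so they lift; the cells cover $\tilde S$, interiors stay disjoint, and local finiteness is preserved. Conversely, if $\tilde{\mathcal D}$ is a CW-decomposition, then its image under $\pi$ is $\mathcal D$. The cells cover $S$ by surjectivity of $\pi$. Interiors of distinct faces are disjoint by Lemma \ref{facehomeo}, edges and open faces are disjoint by Corollary \ref{edgefaceinter}, and edge interiors are disjoint by Lemma \ref{edgenointer}; all three of these hold for arbitrary cone surfaces. Then Proposition \ref{tessellation} gives (i).

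A more concrete alternative route to (i) is to lift or push down a Delaunay triangulation directly. Pulling back a Delaunay triangulation $\T$ of $S$ gives a geometric triangulation $\pi^{-1}(\T)$ whose triangles have the lifted compact circumdisks. In the other direction, $\tilde{\mathcal D}$ is invariant under deck transformations when $\pi$ is regular; reduce to that case by passing to the universal cover, which covers both surfaces. Then subdivide $\tilde{\mathcal D}$ equivariantly and push the subdivision down. I would use the tessellation route as the main argument, because it avoids needing an equivariant choice of diagonals.

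For part (ii), take a Delaunay triangulation $\T$ of $S$ and use its lift $\tilde\T$, which is Delaunay by the above. By Corollary \ref{triagulationrefine}, $\tilde d^*$ may be computed from $\tilde\T$. The construction of $d^*$ replaces each triangle $\tau$ by $\tau^*$ and pushes forward via $\phi_\tau$, gluing by the lifted isometries $f^*$; this is local and natural by Lemma \ref{natural-h}. Hence on each triangle of $\tilde\T$ the metric $\tilde d^*$ coincides with the pullback of $d^*$ on its image triangle, compatibly with the edge gluings. This gives $\tilde d^*=\pi^*d^*$, and $\pi|:(\tilde S-\tilde V,\tilde d^*)\to(S-V,d^*)$ is a Riemannian covering. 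Completeness then follows from the standard fact that a Riemannian covering space is complete if and only if the base is. In one direction, Cauchy sequences, or equivalently geodesics, project. In the other, geodesics lift, so Hopf--Rinow applies.

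The main obstacle I expect is the bookkeeping in the ``$S$ to $\tilde S$'' direction. One must make sure that the pushed-down cells really satisfy all CW axioms, in particular local finiteness and that closed cells are attached correctly when a face of $\mathcal D$ has identified boundary points. The proposal handles this by relying only on the earlier lemmas, which are valid for general cone surfaces.
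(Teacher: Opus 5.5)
Your proposal is correct. Part (ii) is essentially the paper's proof: lift a Delaunay triangulation, use Lemma~\ref{natural-h} to see that the covering map induces a local isometry $(\tilde S-\tilde V,\tilde d^*)\to(S-V,d^*)$ that is a Riemannian covering, and then apply the standard completeness criterion for Riemannian coverings.

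Part (i) takes a different route. The paper only says that it ``follows from the definition and the lifts to $\tilde S$ of circumdisks.'' That handles the implication $S$ polyhedral $\Rightarrow$ $\tilde S$ polyhedral, since a Delaunay triangulation lifts. It is silent on the converse. There, a Delaunay triangulation of $\tilde S$ need not be invariant under deck transformations, because Delaunay triangulations are not unique, so it cannot simply be pushed down. You instead go through the canonical tessellation $\mathcal D$ and Proposition~\ref{tessellation}. You use that circumdisks of $\tilde S$ project to circumdisks of $S$, and that Lemmas~\ref{edgenointer}, \ref{facehomeo} and Corollary~\ref{edgefaceinter} hold for arbitrary cone surfaces. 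This is exactly what makes the converse direction rigorous, and it avoids any equivariant choice of diagonals.

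One ingredient is not supplied by the lemmas you cite, so state it explicitly: local finiteness of $\mathcal D$ on $S$, which is needed for $\mathcal D$ to be a CW-decomposition. It follows from local finiteness of $\tilde{\mathcal D}$. Let $\tilde W$ be a neighborhood of a point over $q$ that meets only finitely many cells of $\tilde{\mathcal D}$ and lies in a single sheet over an evenly covered set. Then every face $\varphi(P)$ of $\mathcal D$ meeting $U=\pi(\tilde W)$ at $\varphi(x)$ has a lift $\tilde\varphi$ with $\tilde\varphi(x)\in\tilde W$. Since distinct faces of $\mathcal D$ have distinct lifts, only finitely many faces of $\mathcal D$ meet $U$.
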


\begin{proof} Part (i) follows from the definition and the lifts to $\tilde{S}$ of circumdisks.  To see part (ii), recall a well-known theorem in Riemannian geometry (see for instance P. Petersen \cite[Chapter 5, Proposition 23]{peterson2006}) that a Riemannian manifold $(M, g)$ is complete if and only if its Riemannian covering space $(\tilde{M}, \tilde{g})$ is complete.  We claim that $(\tilde{S} -\tilde V, \tilde{d}^*)$ is a Riemannian covering space of $(S-V, d^*)$, then the result follows immediately. 

Let $\pi:(\tilde S, \tilde V, \tilde{d})\to (S, V, d)$ be a covering map. It induces a covering map $\pi^*:(\tilde S-\tilde V, \tilde{d}^*) \to (S-V, d^*)$ as follows. Assume that $\mathcal T$ is a Delaunay triangulation of $(S, V, d)$, which lifts to a Delaunay triangulation $\tilde{\mathcal{T}}$ from Part (i). Recall that in Lemma \ref{natural-h}, the process of constructing $(S-V, d^*)$ and $(\tilde S-\tilde V, \tilde{ d}^*)$ is induced by the homeomorphism $\phi_\tau$ for any triangle $\tau$ in $\mathcal{T}$ or $\tilde{\mathcal{T}}$ respectively. Let $\tilde{\tau}$ be a lift of $\tau$ in $\tilde{\mathcal{T}}$ and $\pi$ restricted to $\tilde{\tau}$ is an isometry to $\tau$. By Lemma \ref{natural-h}, it induces an isometry restricted to $\tilde \tau$ of $(\tilde S - \tilde V, \tilde d^*)$ to $(S-V, d^*)$. Similarly, let $\sigma$ be a triangle of $\mathcal{T}$ lifting to $\tilde \sigma$ sharing an edge with $\tau$. Then $\pi$ again restricts to an isometry from $\tilde{\tau}\cup\tilde{\sigma}$ to $\tau\cup\sigma$, inducing an isometry restricted to $\tilde{\tau}\cup\tilde{\sigma}$ from $(\tilde S - \tilde V, \tilde d^*)$ to $(S-V, d^*)$. Therefore, since every $x\in (S-V, d^*)$ is either in the interior of some triangle $\tau$ or in some edge shared by $\tau$ and $\sigma$, $\pi$ induces a local isometry $\pi^*$ from $\tilde{\tau}\cup\tilde{\sigma}$ to $\tau\cup\sigma$. It is also straightforward to see that $\pi^*$ is a covering map since $\pi$ is a covering map and we can choose a sufficiently small evenly covered neighborhood $x\in U$  such that $U\cap V = \varnothing$. This completes the proof of the claim that  $(\tilde{S} -\tilde V, \tilde{d}^*)$ is a Riemannian covering space of $(S-V, d^*)$.
\end{proof}

\section{The completeness of the associated hyperbolic metric}

In this section, we prove:
\begin{theorem} \label{thm:completeness}
Suppose $(S, V, d)$ is a polyhedral surface whose background metric satisfies one of the following:
\begin{enumerate}
    \item  $d$ is a hyperbolic cone metric;
    \item  $d$ is a Euclidean cone metric, and the diameters of its  circumdisks are uniformly bounded above; or
    \item  $d$ is a spherical cone metric, and the diameters of its  circumdisks are uniformly bounded away from $\pi$.
\end{enumerate}
Then the associated metric $d^*$ on $S-V$ is a complete hyperbolic metric with a cusp end at each $v \in V$. 
\end{theorem}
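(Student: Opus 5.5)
Both conclusions of Theorem \ref{thm:completeness} will be handled by working face-by-face in the Delaunay tessellation $\mathcal D$ of Proposition \ref{tessellation}. By Corollary \ref{triagulationrefine}, $d^*$ is obtained by gluing the ideal polygons $F^*$ associated to the cyclic faces $F$ of $\mathcal D$ along edges. By Proposition \ref{covering}(ii), completeness may be checked on the universal cover. It is then enough to show that every $d^*$-Cauchy sequence (equivalently, every $d^*$-finite-length path) that leaves every compact subset of $S-V$ must accumulate at a vertex $v\in V$, and that every vertex has a cusp neighbourhood. For the cusp claim I would use a horocyclic description of $d^*$. At each $v$, take a horocycle in every incident ideal polygon $F^*$. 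The gluing maps $f^*$ come from isometries of $\mathbb K^2$ preserving circumdisks. So the shear along each edge $e^*$ is given explicitly by the circumradii/angles of the two adjacent Delaunay faces, and the Delaunay condition forces the total shear around $v$ to vanish. This vanishing is the standard fact, cf.\ \cite{glsw}, \cite{bps}, that in the closed case the link of $v$ in $d^*$ is a closed horocycle. Here it is a local statement at each vertex, since $V$ is discrete and each vertex has finitely many incident faces.

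The main step is completeness away from vertices. My plan is to bound the \emph{shear} of the edges, i.e.\ the hyperbolic distance between the feet of the perpendiculars from the opposite ideal vertices onto a common edge $e^*$. Each face $F$ sits inside a compact circumdisk $D_F$. In the Euclidean case, normalize $D_F$ to the unit disk by a similarity. The ideal polygon $F^*$ is then a piece of the hemisphere over $D_F$, and the "center" of $F^*$ (the top of the hemisphere) lies at hyperbolic height $\log$(circumradius) above the scale fixed by neighboring faces. With circumdiameters uniformly bounded above, I would show a uniform bound in the following form. Any path crossing $n$ consecutive faces, without entering fixed horoball neighborhoods of the vertices, has $d^*$-length $\ge c\,n$ for some $c>0$. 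Combined with local finiteness (each compact set meets finitely many faces, Proposition \ref{finiteness}), this shows that a finite-length path escaping all compacta must enter the horoball neighborhood of some vertex. There the horocyclic structure shows it converges to the cusp $v$, i.e.\ it leaves every compact subset of $S-V$ only by going into a cusp, which is what completeness requires.

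The hyperbolic case (1) follows the same scheme. A compact circumdisk in $\mathbb H^2$ corresponds to a hyperbolic plane $\partial C(D)$ meeting the hemisphere $C(\partial\D)$. Discreteness together with the fact that the whole $F^*$ lies in a fixed supporting structure over $\mathbb D$ gives the needed lower bound automatically; no hypothesis is needed because $\D^c$ is part of $Y$. Concretely, on the universal cover I would try to show directly that $d^*$ agrees with the boundary of the convex set $\bigcap_F C(D_F)^c$, which is contained in the dome over $\D$. Then I would apply Thurston's completeness theorem for $\partial C(\cdot)$, or a comparison with it via the $1$-Lipschitz nearest-point retraction. This direct approach could also replace the shear argument in all cases. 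In the Euclidean and spherical cases, on the universal cover, $d^*$ is the intrinsic metric of a locally convex pleated surface in $\H^3$ built from the hemispheres over circumdisks. The bounded-diameter assumption (resp.\ bounded away from $\pi$) ensures that the pleated surface stays at uniformly bounded distance below a horosphere based at $\infty$ (resp.\ outside a ball about $0$). This gives a uniform lower bound on the injectivity scale away from cusps.

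I expect the main obstacle to be this uniform estimate. One must show that a $d^*$-short path cannot cross infinitely many faces when the circumdisks shrink or drift. Equivalently, one must rule out the phenomenon behind the Appendix counterexamples, where unbounded circumdisks let infinitely many ideal polygons accumulate within finite $d^*$-distance. I would first attempt the height argument: bound the Euclidean height of the pleated surface above each point by the circumradius of the face below it, and compare $d^*$ with the hyperbolic metric on a region of bounded height.
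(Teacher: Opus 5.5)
There is a genuine gap at the step you flag as the main one. You never derive the uniform estimate that a path crossing $n$ consecutive faces outside fixed horoball neighbourhoods has $d^*$-length $\ge cn$, and it cannot come from the hypothesis. The ideal polygons $F^*$ and the shears between them depend only on length cross-ratios, which are scale-invariant. So a bound on circumdiameters gives no local control over them: degenerate cyclic quadrilaterals produce thin necks away from the cusps at every scale. Concretely, after a similarity the finite piece $Q_0\cup\dots\cup Q_M$ of the Euclidean example in Appendix~\ref{appen:incomplete} has circumdiameters at most $1$. Yet the path $\gamma_0*\cdots*\gamma_M$ runs through $Q_0^*,\dots,Q_M^*$ with $d^*$-length bounded independently of $M$, staying near a single point of $\H^3$. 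So no such $c$ can be extracted from the diameter bound. What the hypothesis excludes is an \emph{infinite} chain of this kind, and that needs a global argument.

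Your framing of completeness is also inverted. Cusps are at infinite $d^*$-distance, so a finite-length path cannot ``converge to the cusp $v$''. Completeness means no finite-length path, equivalently no Cauchy sequence, leaves every compact subset of $S-V$ at all. (Incidentally, the vanishing of the total shear at a vertex is automatic, since length cross-ratios telescope; it does not use the Delaunay condition.)

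Your fallback routes do not close the gap either.
\begin{itemize}
\item Identifying $d^*$ on the universal cover with $\partial\bigl(\bigcap_F C(D_F)^c\bigr)$ presupposes a global development in which circumdisks are honest disks in $\mathbb K^2$. With cone points the universal cover of $S$ does not develop, and the development of $\widetilde{S-V}$ is only an immersion. Immersed locally convex pleated surfaces need not be complete; the surfaces of Appendix~\ref{appen:incomplete} are exactly such. A global convex-hull realization is what Theorem~\ref{lw} supplies only \emph{after} completeness is known, so this route is circular.
\item The height comparison yields only $d\le R\,d^*$, since vertical projection below height $R$ is $R$-Lipschitz. That is insufficient, because the background metric $d$ is not assumed complete.
\end{itemize}

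The missing idea is the paper's contradiction argument.
\begin{enumerate}
\item Take a finite-length $d^*$-geodesic ray $\gamma^*$ escaping every compact set. Develop by $\Psi$ only the strip of triangles it crosses, which is simply connected.
\item Finite hyperbolic length keeps $\Psi(\gamma^*)$ in a compact subset of $\H^3$. This bounds the separation of the endpoints of the crossed edges below by some $\delta>0$, and, in the Euclidean case together with the diameter bound, it confines all circumdisks $D_i$ to a fixed compact set.
\item The diameter hypothesis is used exactly to make the projection $\phi$ uniformly Lipschitz on the planes $D_i^*$, so $\phi\circ\Psi(\gamma^*)$ has finite length.
\item A Hausdorff-limit compactness argument gives $\epsilon>0$ with $d(\phi\Psi(\gamma_i^*),\partial D_i)>\epsilon$ for all $i$.
\item Hence a tail of the projected path lies in a single circumdisk $D_n$, on which the immersions $\varphi_m$ agree with $\varphi_n$. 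This produces infinitely many disjoint chords of $D_n$ mapped into edges, contradicting Proposition~\ref{finiteness} in its actual form (finitely many edge-chords in one circumdisk), not as ``compact sets meet finitely many faces''.
\end{enumerate}
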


\begin{remark} 
Note that for a polyhedral surface $(S, V, \T, d)$ in  Euclidean background and a Delaunay triangulation $\T$, the uniform boundedness of the diameters of circumdisks is equivalent to the uniform boundedness of the edge lengths. 
\end{remark}

To see the above remark, since the diameter of the circumdisk is an upper bound for the edge lengths,  one direction is clear.  On the other hand, suppose the edge lengths are uniformly bounded by $C$. If there is a circumdisk $f: D \to S$ of diameter at least $4C$ and with center $O$, then we claim that there exists $z \in D$ with $d(z,O)\leq C$ such that $f(z)\in S$ is on some edge. If no such $z$ exists, then $f(B(O,C))$ is contained in the interior of some triangle $\tau_1$; here $B(O,C)$ is the open disk centered at $O$ of radius $C$, which is contained in $D$, and $\tau_1$ is not necessarily the same triangle  $\tau$ corresponding to $D$. Since $f$ is a local isometry and the interior of $\tau_1$ is an embedded triangle in $S$, $f|_{B(O,C)}$ is actually an embedding. Thus $\tau_1$ contains an embedded disk of radius $C$, which contradicts that the edge lengths of $\tau_1$ are bounded by $C$.
Let $L$ be a chord passing through $z$ such that $f(L)$ is contained in an edge $e$. By the same argument in the proof of Lemma \ref{edgenointer}, we note that $f|_L$ is injective. Hence the length of $e$ is at least the length of $L$, which implies that the length of $L$ is bounded by $C$. By $d(z,O)\leq C$, the diameter of $D$ is then bounded by $3C$, which contradicts the assumption.

\begin{remark}
    The assumptions on the circumdisk diameters for Euclidean and spherical cone metrics are necessary. See Appendix \ref{appen:incomplete} for examples where the associated hyperbolic metrics are incomplete if the assumption is dropped.
\end{remark}

\begin{proof}[Proof of Theorem \ref{thm:completeness}] The statement that if $(S, V, d)$ is a Euclidean or hyperbolic cone metric, then metric $d^*$ has a cusp end at each $v \in V$ was proved in \cite[Section 2]{glsw} and \cite[Section 3]{gglsw}. The proof in \cite{gglsw} also works for polyhedral surfaces with spherical background metrics.

We begin with the case of hyperbolic background metrics.    
Let $\mathcal{T}$ be a Delaunay triangulation of a polyhedral surface $(S, V, d)$ with a hyperbolic background metric, and $d^*$ is constructed using the triangulation $\mathcal{T}$.  Instead of using the upper-half-space model $\H^3_U$, we will use the Poincar\'e model $\H^3_P$ for producing ideal triangles and $d^*$ by replacing $\D\subset \partial \H^3_U$ with the upper hemisphere $\SS^2_+\subset \partial\H^3_P$.
To see this, we identify $\H^2_P$ with the upper hemisphere $\mathbb{S}^2_+$ by the inversion $\rho$ about the sphere centered at $(0,0,-1)\in\R^3$ of radius $\sqrt{2}$ shown in the left figure in Figure \ref{xyz2}, and then $\rho$ induces the hyperbolic metric $d_\H$ on $\SS^2_+$. Thus we view each triangle $\tau\in\T$ as a hyperbolic triangle in $\SS^2_+$, while $\rho(\tau)$ is in $\H^2_P$. The associated $\tau^*$ of $\tau$ is the ideal triangle in $\H^3_P$ with the same vertex set as $\tau$ in $\mathbb S^2_+$. See the two middle figures in Figure \ref{xyz2}.
If two triangles $\tau$ and $\sigma$ in $\T$ share a common edge $e$ in $\mathcal{T}$, $\tau^*$ and $\sigma^*$ are isometrically glued along their corresponding common edges by the lifted isometry $f^* \in Iso(\H^3_P)$ of the isometry $f$ which glues $\tau$ to $\sigma$ along their edges, as illustrated in the right figure in Figure \ref{xyz2}. Let $\pi\colon {\H^3_P} \to \H_P^2$ be the short distance projection. It extends continuously to $\overline{\H^3_P}-\partial \H^2_P \to \H^2_P$, and we still denote it by $\pi$. Let $\phi=\rho\circ\pi$, and we then know that its restriction to each $\tau^*$ is a homeomorphism to $\tau$. Since $\pi$ is distance-decreasing and the hyperbolic metric on $\SS^2_+$ is induced by the inversion $\rho$, we know that $\phi$ is also distance-decreasing, i.e., $1$-Lipschitz. 

\begin{figure}[ht!]
    \centering
    \begin{overpic}[width=\linewidth]{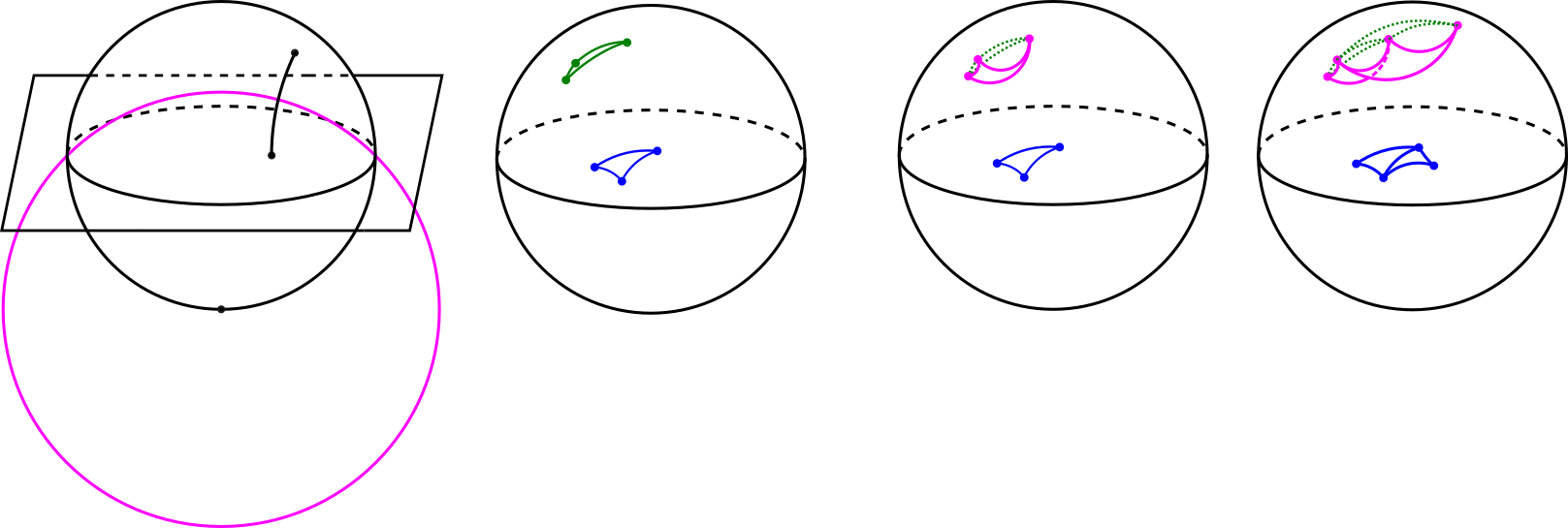}
    \put(10,11.5){$(0,0,-1)$}
    \put(18,22){$v$}
    \put(20,32){$\rho(v)$}
    
    \put(43,23){\textcolor{blue}{$\rho(\tau)$}}
    \put(41,30){\textcolor[HTML]{008000}{$\tau$}}

    \put(66,32){\textcolor[HTML]{008000}{$\tau$}}
    \put(66,28){\textcolor[HTML]{FF00FF}{${\tau}^*$}}
    \put(60,18){\textcolor{blue}{$\rho(\tau)=\pi(\tau^*)$}}

    \put(83.5,30.5){\textcolor[HTML]{008000}{$\tau$}}
    \put(82,27){\textcolor[HTML]{FF00FF}{${\tau}^*$}}
    \put(89,33){\textcolor[HTML]{008000}{$\sigma$}}
    \put(90,27){\textcolor[HTML]{FF00FF}{$\sigma^*$}}
    \put(81.5,23){\textcolor{blue}{$\rho(\tau)$}}
    \put(92.5,23){\textcolor{blue}{$\rho(\sigma)$}}
\end{overpic}
    \caption{Inversion $\rho$, ideal triangle $\tau^*$ for hyperbolic triangle $\tau$, and the gluing of ideal triangles.}
    \label{xyz2}
\end{figure}

By Proposition \ref{covering}, we may assume that $S$ is a non-compact simply connected surface.
Suppose the result is false, i.e., that $d^*$ is not complete. Then there exists a finite length geodesic ray $\gamma^*:[0, L)\to (S-V, d^*)$ with $L\in\R_+$ such that $\gamma^*([0, L))$ is not contained in any compact subsets of $S-V$. Let $\mathcal{T}^*$ be the ideal triangulation of $(S-V, d^*)$ corresponding to $\T$ of $(S, V, d)$. Then there is a sequence $\{ e_n^* \}_{n=1}^\infty$ of edges in $\mathcal{T}^*$ that intersects $\gamma^*$ and are ordered by the direction of the ray $\gamma^*$. Hence we have a sequence of points $\gamma^*_i = \gamma^*(t_i)\in e_i^*$ such that $t_i<t_{i+1}$ and $\gamma^*(t_i, t_{i+1})\cap (\cup_{e^*\in E(\T^*)}e^* )= \varnothing$. Denote by $\tau^*_i$ the ideal triangle adjacent to $e^*_i$ and $e^*_{i+1}$. We note that the edges $e_i^*$ may not be distinct, and the same holds for the triangles $\tau_i^*$. Let $\tau_i, e_i$ be the corresponding triangles and edges in $(S,V,d)$.

Let $\tau_i^*\times\{i\}$ and $\tau_i\times\{i\}$ be a copy of $\tau_i^*$ and $\tau_i$, respectively.
Denote by $e_i^*\times\{i\}$ and $e_{i+1}^*\times\{i\}$ the corresponding edges of $\tau_i^*\times\{i\}$, and $e_i\times\{i\}$ and $e_{i+1}\times\{i\}$ for $\tau_i\times\{i\}$.
We glue $\tau_i^*\times\{i\}$ and $\tau_{i+1}^*\times\{i+1\}$ by identifying $e_{i+1}^*\times\{i\}$ and $e_{i+1}^*\times\{i+1\}$, as $\tau^*_i$ and $\tau^*_{i+1}$ are glued along $e^*_{i+1}$.
Denote the quotient surface by
\begin{equation}
    A^*=\faktor{\bigsqcup_{ i \geq 1} \tau_i^*\times\{i\}}{e_{i+1}^*\times\{i\}\sim e_{i+1}^*\times\{i+1\}}.
\end{equation} 
Similarly, we define
\begin{equation}
    A=\faktor{\bigsqcup \tau_i\times\{i\}}{e_{i+1}\times\{i\}\sim e_{i+1}\times\{i+1\}}.
\end{equation}
By using the Seifter-Van Kampen Theorem countably many times, we know $A^*$ and $A$ are simply connected. Furthermore, since the gluing maps are isometric, $A$ and $A^*$ are (incomplete) hyperbolic manifolds.

By the monodromy theorem, there is an isometric immersion $\Psi\colon A\to \SS^2_+$, which has been identified with $\H^2_P$ by $\rho$. Then $\Psi$ is a developing map for the surface $A$. 
Since each triangle in $A$ shares the same vertices as the corresponding ideal triangle in $A^*$, and each ideal triangle is determined by its vertices, $\Psi$ then induces an isometric immersion of $A^*$ into $\H^3_P$, which is still denoted by $\Psi$.
Moreover,  with a slight abuse of notation, we denote $\tau_i^*\times\{i\}$ and $e_i^*\times\{i\}$ in $A^*$ by $\tau_i^*$ and $e_i^*$, respectively, and similarly for $\tau_i$ and $e_i$.
Then $\Psi (e_i^*)$ is a bi-infinite geodesic in $\H^3_P$ with endpoints $x_i^*$ and $y_i^*$. Note that $x_i^*$ and $y_i^*$ are also the endpoints of the geodesic $\Psi (e_i)\in\SS^2_+$. By $\Psi(A)\subset \SS^2_+$, we know that $\Psi(A^*)$ is also above the $xy$-plane in $\R^3$.

Since $\Psi (\gamma^*)$ is a finite length piecewise geodesic intersecting $\Psi (e_i^*)$, which has endpoints $x_i^*, y_i^*$, there exists $\delta >0$ such that $d_\E(x_n^*, y_n^*) = |x_n^* - y_n^*|\geq \delta >0$ for all $n \geq 1$. 
Let $D_i$ be the open circumdisk of $\Psi(\tau_i)$ in $\SS^2_+$, and $D^*_i$ the hyperbolic plane containing $\Psi(\tau^*_i)$ in $\H^3_P$. Then $\partial D_i=\partial D^*_i$.
Since $\phi$ is Lipschitz, we see that the hyperbolic length of $\phi \circ \Psi(\gamma^*)$ is finite. In particular, we have $\lim_{i\to\infty} l_{\H}(\phi \circ \Psi(\gamma^*[t_i, L)) = 0$ where $l_{\H}$ is the hyperbolic length. 

Now we show that there exists a positive constant $\epsilon$ such that,
\begin{equation}\label{eq3} d_\H(\phi \circ \Psi(\gamma_i^*), \partial D_i)>\epsilon, \end{equation}
for all $i$, where $d_\H$ is the hyperbolic distance in $\SS^2_+$. Suppose otherwise that there exists a sequence of $\phi \circ \Psi(\gamma_{n_i}^*)$ whose hyperbolic distance to $\partial D_{n_i}$ tends to zero. Due to the estimate that for $x, y \in \mathbb S^2_+$, $d_\H(x, y) \geq d_\E(x, y)$, we have 
\begin{equation}\label{eq31} 
\lim_i d_\E(\phi \circ \Psi(\gamma_i^*), \partial D_i)=0. \end{equation}

Using the Hausdorff distance with respect to $d_\E$ on $\overline{\H_P^3}$, we may assume, by passing to a subsequence, that $\overline{D_{k_i}^*}$ converges to $D^*_\infty\subset \overline{\H^3_P}$.  Let $D_\infty=\phi(D^*_\infty)$ be a disk in $\overline{\SS^2_+}$. Furthermore, we may assume that $\Psi(x_{n_i}^*)$ and $ \Psi(y_{n_i}^*)$ converge to $x^*\neq y^*$ in $\overline{ \mathbb{S}^2_+}$ respectively, and $\Psi (\gamma_{n_i}^*)$ converges to a point $q$ contained in the geodesic that connects $x^*$ and $y^*$. Then $d_\E(x^*,y^*)\geq \delta$ and $q \in \H^3_P$. This is due to the fact that since $\Psi(\gamma^*)$ has finite length, $\Psi(\gamma^*_{n_i}) \in \Psi(\gamma^*)$ lies in a compact set in $\mathbb{H}^3_P$. By the continuity of $\phi$, we see that $\phi(q)$ lies in the interior of the disk $D_\infty$. In particular, $d_\E(\phi(q),\partial D_\infty)>0$.
On the other hand, 
\begin{equation} \label{equ-3456}
  d_\E(\phi(q), \partial D_\infty) =  \lim_{i\to\infty} d_\E(\phi \circ \Psi(\gamma_{n_i}^*), \partial D_{n_i}) = 0,
\end{equation} 
by the Hausdorff convergence of $\partial D_{n_i}$ to $\partial D_{\infty}$ in $d_\E$ metric and $ \phi \circ \Psi(\gamma_{n_i}^*) \to \phi(q)$. This is a contradiction to \eqref{eq31}, and hence \eqref{eq3} holds. 

We claim that \eqref{eq3} contradicts Proposition \ref{finiteness}. 
Due to the simple connectivity of $S$, we let $\varphi_i: D_i\to S$ be the orientation-preserving isometric immersion as the circumdisk of $\tau_i$ such that $\varphi_i|_{\Psi(\tau_i)}$ is the inverse of $\Psi|_{\tau_i}$. In particular,   $\varphi_{i+1}|_{\Psi(e_{i+1})}=\varphi_{i}|_{\Psi(e_{i+1})}$.   Let $n\in\Z_+$ be such that for all $m\geq n$, $l_\H(\phi \circ \Psi(\gamma^*[t_m, L)))<\epsilon<d_\H(\phi \circ \Psi(\gamma_i^*), \partial D_i)$ for any $i$. Hence when $m\geq n$, $\phi \circ \Psi(\gamma^*[t_m, L))\subset D_m$. In particular,  $D_n$ contains $\phi \circ \Psi(\gamma_m^*)$ and intersects $\Psi(e_m)$. Let $c_i=\phi \circ \Psi(\gamma^*[t_{i}, t_{i+1}])$ be the geodesic segment connecting $\phi \circ \Psi(\gamma^*(t_{i}))$ and $\phi \circ \Psi(\gamma^*(t_{i+1}))$ in the triangle $\tau_i$. Then $c_m\subset D_m$ and $c_m\subset D_n$ for $m \geq n$.

Now we show $\varphi_m|_{D_n\cap\cdots\cap D_m}=\varphi_n|_{D_n\cap\cdots\cap D_m}$, for all $m\geq n$, by induction on $m$.  
For $m=n+1$, we know $\varphi_{n+1}|_{\Psi(e_{n+1})}=\varphi_{n}|_{\Psi(e_{n+1})}$ since $\Psi(\tau_n)$ and $\Psi(\tau_{n+1})$ are glued together along their common edge $\Psi(e_{n+1})$.
Then due to that $\varphi_n$ and $\varphi_{n+1}$ are orientation-preserving, $\varphi_n$ and $\varphi_{n+1}$ coincide on a neighborhood of $\Psi(e_{n+1})$ and hence on $D_n\cap D_{n+1}$.
Now suppose this statement holds for $m$. Then by the same reason, we have $\varphi_{m+1}|_{D_{m+1}\cap D_m}=\varphi_{m}|_{D_{m+1}\cap D_m}$. Thus
\begin{equation} \label{overlap}
    \varphi_{m+1}|_{D_{m+1}\cap D_m\cap\cdots \cap D_n}=\varphi_{m}|_{D_{m+1}\cap D_m\cap\cdots \cap D_n}=\varphi_{n}|_{D_{m+1}\cap D_m\cap\cdots \cap D_n},
\end{equation}
which completes the induction.

By $c_{m}\subset D_m\cap\cdots \cap D_n$, 
we have $\varphi_{m}|_{N(c_{m})}=\varphi_n|_{N(c_{m})}$, for some neighborhood $N(c_{m})\subset D_m\cap\cdots \cap D_n$ of $c_m$ and all $m\geq n$.
Since $\Psi(e_m)\cap D_n\neq \varnothing$, we know that $\varphi_n^{-1} (\cup_{e\in E} {e}^\circ)$ contains a subinterval of the edge $\Psi(e_m)$. 
Moreover, since $\varphi_n$ and $\varphi_m$ are isometric immersions, we know that $\varphi_n|_{\Psi(e_m)\cap D_n}$ and $\varphi_m|_{\Psi(e_m)\cap D_n}$ are both determined by $\varphi_n|_{N(c_{m})}=\varphi_m|_{N(c_{m})}$. Thus $\varphi_n|_{\Psi(e_m)\cap D_n} = \varphi_m|_{\Psi(e_m)\cap D_n}$.
Therefore $\Psi(e_m)\cap D_n$ does not contain the vertices of $\phi(e_m)$ due to the fact that $(D_n,\varphi_n)$ an empty-disk. 

Since the developing map $\Psi$ is an isometric immersion, the two triangles $\Psi(\tau_m)$ and $\Psi(\tau_{m+1})$ are on the opposite sides of the edge $\Psi(e_{m+1})$, for any $m$. Thus the edge $\Psi(e_m)$ and $\Psi(e_{m+2})$ are on the opposite sides of the edge $\Psi(e_{m+1})$. In particular, when $m\geq n$, the chords $\Psi(e_m)\cap D_n$ and $\Psi(e_{m+2})\cap D_n$ are on the opposite sides of the chord $\Psi(e_{m+1})\cap D_n$. Therefore, we have a sequence of disjoint chords $\Psi(e_m)\cap D_n$ for all $m\geq n$.
By $(\Psi(e_n))^\circ\subset D_n$, we have for all $m>n$,
\begin{equation}
    (\Psi(e_n))^\circ\cap (\Psi(e_m))^\circ=
    (\Psi(e_n))\cap D_n)\cap (\Psi(e_m)\cap D_n)=\varnothing.
\end{equation}
Thus the $(\Psi(e_m))^\circ$ are pairwise disjoint when $m\geq n$.
It then follows that $\varphi_n^{-1} (\cup_{e\in E} {e}^\circ)$ contains infinitely many components $\Psi(e_m)\cap D_n$, which contradicts Proposition \ref{finiteness}. Thus the case of polyhedral surfaces with hyperbolic background metrics is done.

Next, we consider polyhedral surfaces with Euclidean background metrics. For a Euclidean triangle $\tau\in\T$, we embed $\tau$ isometrically into $\C$ and construct the ideal hyperbolic triangle $\tau^*$ in the upper half-space model $\H^3_U$. See Figure \ref{777}.
We prove the result using the same argument as in the hyperbolic case.
Let $\gamma^*, \tau^*_n, e^*_n, x^*_n, y^*_n, D_n^*$ be as defined for the hyperbolic case, and $\phi: \H^3_U\to\C$ the vertical projection which sends each $\tau^*_n$ to $\tau_n$. We still denote by $D_n$ the circumdisk of $\tau_n$, and by the assumption, the diameters of $D_n$ are uniformly bounded by some constant $C_1$. Therefore, the restriction of $\phi$ to each $D_n^*$ is $L_1$-Lipschitz for some constant $L_1$ depending only on $C_1$. 

We claim that each $\partial D_n$ intersects a fixed compact set $K$. Indeed, since $\gamma^*$ has finite hyperbolic length, the Euclidean distance from $\gamma^*(L)$ to $\C$ has a lower bound. Since $\gamma^*$ intersects all $e^*_n$, it follows that there exists a positive number $\delta >0$ such that $|x_n^* -y^*_n| \geq \delta$. Furthermore, there exists a compact set $K \subset \C$ 
such that $\{x^*_n,y^*_n\} \cap K \neq \varnothing$ for all $n$. Thus $\partial D_n \cap K \neq \varnothing$.

As a consequence of this claim and the boundedness of the diameters of $D_n$, we see that there exists a compact set $K' \subset \C$ that contains all $D_n$. By the uniform Lipschitz continuity of $\phi$ on all $D_n^*$, our argument for the hyperbolic case works for the Euclidean case as well.

Finally, for polyhedral surfaces with spherical background metrics, we still apply the same argument from the hyperbolic case. Now each triangle $\tau\in\T$ is a spherical triangle in $\SS^2$, and its associated $\tau^*$ is the ideal triangle in $\H^3_P$ that has the same vertices as $\tau$.
Let $\gamma^*, \tau^*_n, e^*_n, x^*_n, y^*_n, D_n^*$ be as what we defined for the hyperbolic case, and $\phi: \H^3_P-\{O\}\to\SS^2$ the radial projection which sends each $\tau^*_n$ to $\tau_n$, where $O$ is the origin of $\R^3$. Here, each $\tau_n$ is contained in some open hemisphere, so $\phi$ is well-defined on each $\tau^*_n$. We still denote by $D_n$ the circumdisk of $\tau_n$,  and by the assumption, the diameters of $D_n$ are uniformly bounded by some constant $C_2$ less than $\pi$. In this case, we also have that the restriction of $\phi$ to each $D_n^*$ is $L_2$-Lipschitz for some constant $L_2$ depending only on $C_2$. Thus our argument for the hyperbolic case works as well.
\end{proof}

The following proposition shows that the standard model surfaces of constant curvature $-1$ that are discrete conformal to polyhedral surfaces satisfy the condition on the limit points of vertices in Theorem \ref{llz}.

\begin{proposition}\label{polylimit}
 Suppose a simply connected polyhedral surface $(S, V, d)$ is discrete conformal
 to $(\D,W,d_\H)$ for a discrete closed subset $W \subset \D$.
 Then every point in $\partial \D$ is a limit point of points of $W$, i.e., $\partial \D \subset \overline{W}$.
\end{proposition}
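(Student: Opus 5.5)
The plan is to argue by contradiction. Suppose some point $\zeta\in\partial\D$ is not a limit point of $W$. Since $W$ is closed and discrete in $\D$, there is then an open arc $I\subset\partial\D$ containing $\zeta$, and a neighborhood $U$ of $I$ in $\hat\C$, such that $U\cap W=\varnothing$. Put $Y=W\cup\D^c$. The contradiction will come from comparing two descriptions of the same hyperbolic surface $S-V$:
\begin{itemize}
\item the Thurston dome $\partial C(Y)$, which by Proposition \ref{chhm}(i) is $(\D-W,d^*_c)$;
\item the surface glued from ideal Delaunay polygons, which is $(S-V,d^*)$.
\end{itemize}
By Definition \ref{dcdef} there is an isometry between them that extends to a homeomorphism $(S,V)\to(\D,W)$, so cusps correspond to cusps.

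\textbf{Step 1 (local picture of the dome near $I$).} Choose two points $a,b\in I$ and a circle $C'$ through $a,b$ orthogonal to $\partial\D$, such that the closed lune $L\subset\overline{\D}$ cut off by $C'$ and the subarc $[a,b]\subset I$ lies in $U$, hence misses $W$. I claim that the region $\Pi$ of the hyperbolic plane $C(\partial\D)$ lying over $L$ is contained in $\partial C(Y)$. Indeed $\Pi\subset C(\D^c)\subset C(Y)$. Near $\Pi$, every point of $Y$ lies on the $\D^c$ side, because $W$ is bounded away from $L$. So for each $p\in\Pi$ one can find a support plane of $C(Y)$ through $p$: take a circle close to $\partial\D$, bending slightly into $\D$ only near $[a,b]$, whose disk avoids $Y$.

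This is the step I expect to be the main obstacle. If the support-plane construction is awkward, I would instead use the nearest-point retraction onto $C(Y)$. I would then only claim a weaker statement: $\partial C(Y)$ contains a totally geodesic half-plane-like region $\Pi$, bounded by the geodesic $C(\{a,b\})$, whose ideal boundary is the arc $(a,b)$ and which contains no cusps. In either version $\Pi$ contains points arbitrarily far (in $d^*_c$) from every cusp and from the geodesic $C(\{a,b\})$.

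\textbf{Step 2 (transport the Delaunay tessellation).} By Corollary \ref{triagulationrefine}, $(S-V,d^*)$ is the isometric gluing of ideal polygons $\sigma^*$, one for each $2$-cell of the Delaunay tessellation, each with all ideal vertices at cusps. Pushing these forward by the isometry gives a locally finite tiling of $\partial C(Y)$ by embedded ideal polygons whose ideal vertices are the cusps at points of $W$. Now pass to the universal cover of $\partial C(Y)$, which is $\H^2$ since the dome is complete by Thurston's theorem. The lifted tiles are ideal polygons in $\H^2$ with vertices in a set $P\subset\partial\H^2$ of lifted cusps. The half-plane $\Pi$ lifts isometrically to a half-plane $\tilde\Pi\subset\H^2$ with ideal boundary an open interval $J\subset\partial\H^2$. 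Since $\Pi$ contains no cusps, I expect that $P\cap J=\varnothing$ after shrinking $J$ slightly.

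\textbf{Step 3 (contradiction).} An ideal polygon all of whose vertices lie in $\partial\H^2-J$ is disjoint from the open half-plane $H_J$ bounded by the geodesic joining the endpoints of $J$, because ideal polygons are convex hulls of their vertices. Hence no tile meets $H_J\cap\tilde\Pi\neq\varnothing$, contradicting that the tiles cover $\H^2$. Therefore $\partial\D\subset\overline W$.

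The points to check carefully are two. First, that the lifted tiles really cover $\H^2$, which uses that the tessellation of $S$ covers $S$ and that the isometry carries cusps to cusps. Second, that $J$ can be chosen free of $P$, which uses the local finiteness of $W$ near $I$ together with Step 1.
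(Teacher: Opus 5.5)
\textbf{Step 1 is false, and it is the heart of the proposition.} Your Steps 2 and 3 are fine once you have an isometrically embedded hyperbolic half-plane in $\partial C(Y)$. They are a clean substitute for the paper's horocycle argument. Also, $P\cap J=\varnothing$ needs no shrinking: a parabolic fixed point $\xi\in J$ has a small horoball inside $\tilde\Pi$, preserved by a nontrivial deck transformation $g$. Then $g(\tilde\Pi)\cap\tilde\Pi\neq\varnothing$, contradicting that $\Pi$ is embedded.

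The problem is in Step 1. Since $Y\supseteq\D^c$, every support plane of $C(Y)$ is the hemisphere over $\partial D$ for an open round disk $D$ with $D\cap Y=\varnothing$, hence $D\subseteq\D$. If $D\neq\D$, the circles $\partial D$ and $\partial\D$ are nested, so the two planes meet at most in one ideal point. And $D=\D$ is impossible because $W\neq\varnothing$. So no point of $C(\partial\D)\cap\H^3$ lies on a support plane. That is, $C(\partial\D)$ lies in the interior of $C(Y)$, and $\partial C(Y)$ never touches it.

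The support circle you propose, which ``bends slightly into $\D$ only near $[a,b]$'', does not exist. A round circle cannot bend locally, and the support condition is global: the points of $W$, wherever they are in $\D$, sit on the wrong side of the unit hemisphere. Your fallback fails for the same reason. A totally geodesic piece of $\partial C(Y)$ whose ideal boundary contains the arc $(a,b)$ would lie in $C(\partial\D)$, and the geodesic $C(\{a,b\})$ is itself interior to $C(Y)$. So you never establish that the dome contains points arbitrarily far from all cusps.

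\textbf{What is missing} is a half-plane that is intrinsically hyperbolic but \emph{not} totally geodesic in $\H^3$. The paper builds it as follows.
\begin{enumerate}
\item For $p\in I$, let $B_p$ be the largest horoball of $\D$ at $p$ missing $W$. The hemisphere over $\partial B_p$ is a support plane, and $\partial B_p\cap W$ is finite and nonempty.
\item At most two horocycles of $\D$ pass through two given points, and $W$ is countable. So all but countably many $p\in I$ have $\partial B_p\cap W=\{w\}$ for a single point $w$.
\item A limiting argument using the discreteness of $W$ shows that this $w$ is constant on an open subarc $I'$. Hence the geodesics $C(\{q,w\})$, $q\in I'$, all lie in $\partial C(Y)$.
\item Apply a M\"obius map preserving $\mathbb S^1$ and sending $w$ to $\infty$. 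These geodesics become the vertical wall $J\times\R_{>0}$ over an arc $J$. This wall is isometric to the strip $(0,\ell)\times\R_{>0}$ in the upper half-plane, so it contains a hyperbolic half-plane.
\end{enumerate}
With this replacing Step 1, your Steps 2--3 complete the proof.
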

\begin{proof}

    Suppose otherwise that there exists an open interval $I$ on $\partial \D$ whose closure is disjoint from the closure of $W$. We claim that there exists a half-plane embedded in $\partial C(W \cup \D^c)$. Assuming the claim, since $(S-V, d^*)$ is isometric to $\partial C(W \cup \D^c)$, there exists an embedding $\phi: P \to  (S-V, d^*)$, where $P \subset \H^2_P$ is an open half-plane. We claim this is impossible. Indeed, if otherwise, let $\T$ be an ideal triangulation of $(S-V, d^*)$ and let $e$ be an edge in $\T$ such that $e$ intersects the image $\phi(P)$. Then $\gamma =\phi^{-1}(e)$ is a closed subset of $P$ which is contained in a geodesic. It follows that $\gamma$ is a geodesic ray ending at a point in the conformal infinity $\partial P \cap \partial \H^2_P$. Let the end point of the geodesic ray $\phi(\gamma)$ be the vertex $v \in V$, and 
 $Q$ be a horocycle at $v$ in $(S-V, d^*)$.  Then $\phi^{-1}(Q)$ is a non-empty closed subset of $P$, which is a finite length 1-dimensional submanifold contained in a horocycle. However, no such submanifold exists in $P$.

Now we prove the claim that $\partial C(W \cup \D^c)$ contains a half-plane.    
    By the assumption on $I$, for each point $p \in I$, there exists an open horoball in $\D$ tangent to $p$ and is disjoint from $W$.  If $B$ and $B'$ are two horoballs tangent to $p$, then $B \subset B'$ or $B' \subset B$.  Hence,  the union $B_p$ of all these open horoballs tangent to $p$ and disjoint from $W$ is an open horoball such that $\partial B_p \cap W \neq \varnothing$.  Furthermore, $\partial B_p \cap W$ is a finite set since $W$ is a closed, discrete subset and $p \in I$.  Consider the function $f: I \to \mathbb N$ sending $p$ to the number of points in $\partial B_p \cap W$. The set $\{ p \in I : f(p) \geq 2\}$ is countable since for any finite subset $Y \subset W$ containing at least two points, there are at most two horocycles in $\D$ containing $Y$ and $W$ is countable. In particular, since $I$ is uncountable, there exists $p \in I$ such that $f(p)=1$. See the left picture of Figure \ref{halfplane}.

    \begin{figure}[htbp]
  \centering
  \begin{overpic}[width=0.9\linewidth]{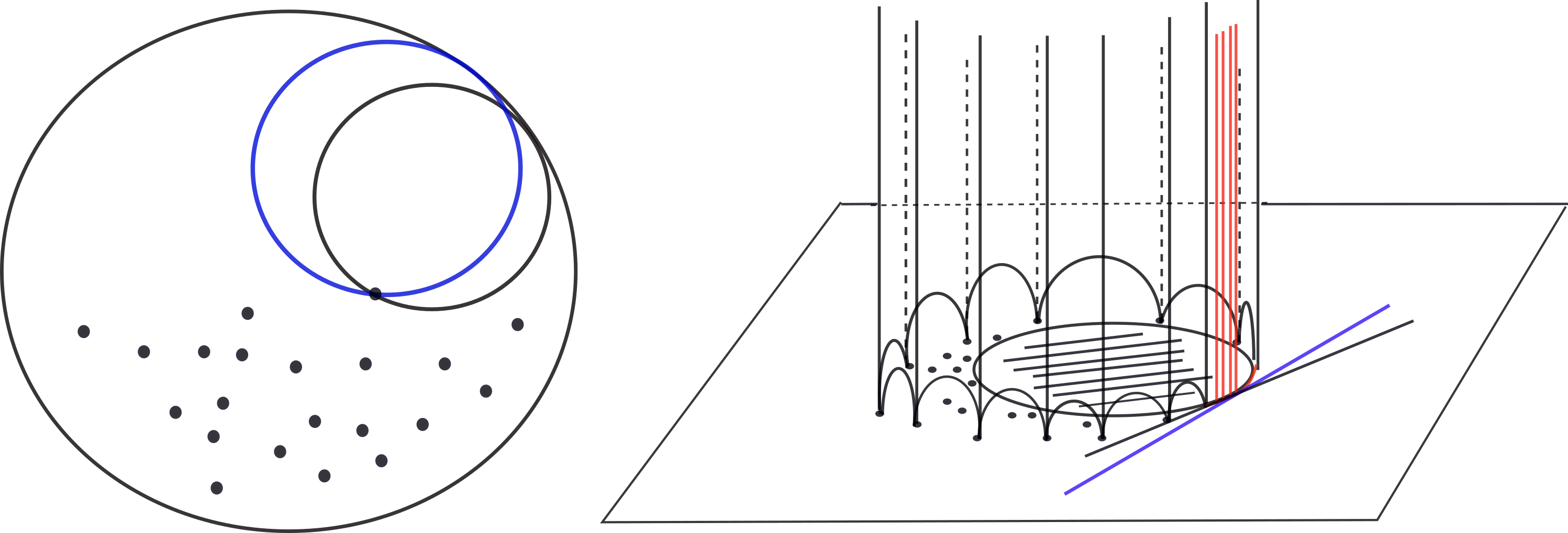}
    \put(35,26){$p$}
    \put(30,31){$q$}
    \put(18,26){$B_q$}
    \put(25,25){$B_p$}
    \put(22,13){$w$}
    \put(66,34){$w = \infty$}
    \put(81,25){Red lines in the}
    \put(81, 22.5){ruled surface $J\times \mathbb{R}_{>0}$}
    \end{overpic}
\caption{Left: Circles of supporting planes at $p$ and $q$. Right: Construction of the ruled surface. 
}
\label{halfplane}
\end{figure}
    
    We claim that $f^{-1}(1)$ is an open set in $I$.  Suppose otherwise, there exists a sequence of points $q_n \in I$ converging to $p$ such that $f(q_n) \geq 2$. Since $q_n \in I$ and $\overline{I} \cap \overline{W} =\varnothing$, the diameters of $B_{q_n}$ are bounded away from $0$. Let $u_n, v_n$ be two distinct points in $\partial B_{q_n} \cap W$. We may assume, after taking subsequences, that $\lim_n u_n =x$, $\lim_n v_n =y$ in $\overline{\D}$, and $\overline{B_{q_n}}$ converge to a closed horoball $\overline{B}$.
    Here, $B$ is an open horoball centered at $ p$ and disjoint from $ W$. Clearly $x$ and $y$ are not in $\partial \D$ since $\overline{I} \cap \overline{W}=\varnothing.$  Also, $x, y \in \partial B \cap W$. This shows $B=B_p$. But $f(p)=1$. Therefore $\{x\}=\{y\}= \partial B_p \cap W$. On the other hand, $W$ is discrete. This shows $u_n=v_n=x=y$ for $n$ large, which contradicts the choice of $u_n \neq v_n$.
    
    The above proof shows that there exists a small open interval neighborhood $I'$ of $p$ in $I$ such that for all $q \in I'$, $\partial B_q \cap W =\partial B_p \cap W$, which is a single point, denoted by $w$.  Consider the Möbius transformation of $\C$ sending $w$ to $\infty$,  $\mathbb S^1$ to $\mathbb S^1$, $W$ to $U$, and  $I'$ to an interval $J \subset \mathbb S^1$. Then $\partial C(W \cup \D^c)$ is isometric to $\partial C(U \cup \overline{\D})$. By construction, $\partial C(U \cup \overline{\D})$ contains $J \times \R_{>0}$ as a subsurface. See the right picture of Figure \ref{halfplane}. Let $l_{\E}(J)$ be the Euclidean length of $J$. It is well known\footnote{If $\alpha(t)$ is an arc length parameterization of a planar simple arc $J$ for $t \in (a,b)$, then the map $F(t,s)=(\alpha(t),s): (a, b) \times (0, \infty) \to J \times (0, \infty)$ is an isometry from the product metric to the induced hyperbolic metric on $J \times (0, \infty) \subset \H^3_U$.}     
    that the restriction of the hyperbolic metric on $\H^3_U$ to $J \times \R_{>0}$ is isometric to the subsurface $\{(t, 0, s) : t \in (0, l_{\E}(J)), s \in \R_{>0}\}$ which contains a hyperbolic half-plane.  Thus, the claim follows. 
\end{proof}

\section{Proof of the discrete uniformization theorem}

Recall that Theorem \ref{dut} states:
\newtheorem*{maintheorem1}{Theorem \ref{dut}}
\begin{maintheorem1}
 (i) Every connected polyhedral surface with a hyperbolic background metric is discrete conformal to a standard model surface.

(ii) Every connected polyhedral surface with a Euclidean background metric whose empty-disks have uniformly bounded diameters is discrete conformal to a standard model surface. Moreover, the assumption that the  empty-disk diameters are uniformly bounded is sharp.

(iii) Every connected polyhedral surface with a spherical background metric whose empty-disks have diameters uniformly bounded away from $\pi$ is discrete conformal to a standard model surface. Moreover, the assumption that the empty-disk  diameters are uniformly bounded away from $\pi$ is sharp. 

 (iv) Two standard model surfaces are discrete conformal if and only if they are related by a Möbius transformation.
\end{maintheorem1}

\begin{proof}[Proof of Theorem \ref{dut}]
    Let $(S, V, d)$ be a polyhedral surface whose background metric $d$ satisfies one of the following conditions:
    \begin{enumerate}
        \item it is a hyperbolic cone metric;
        \item it is a Euclidean cone metric, and the diameters of empty-disks in $(S,V,d)$ are uniformly bounded above; or
        \item it is a spherical cone metric, and the diameters of empty-disks in $(S, V, d)$ are uniformly bounded away from $\pi$.
    \end{enumerate}
    We first consider the case where $S$ is simply connected. Thus $S$ has one topological end $\delta$ if $S$ is homeomorphic to $\R^2$, or has no topological ends if $S$ is homeomorphic to $\mathbb S^2$.
    By Theorem \ref{thm:completeness}, its associated hyperbolic metric $d^*$ is complete.
    Then by Theorem \ref{lw}, $d^*$ is isometric to the boundary $\partial C(Y)$  of the convex hull of a circle-type closed set $Y$ in $\hat\C =\partial \H^3_U$. Notice that the components of $Y$ are in one-to-one correspondence with the points in $V$ together with the potential topological end $\delta$ of $S$. 
    Since $(S-V, d^*)$ has cusp ends at $V$,  $Y$ has at most one disk component. Here are the possible cases:
    \begin{enumerate}[label=(\arabic*)]
        \item If $S$ is homeomorphic to $\SS^2$, which is compact, then $V$ is a finite set since it is a closed discrete subset. Then $Y=V$ is a finite subset of $\hat{\C}$ that has the same cardinality as $V$. By Theorem \ref{rivin}, $(S, V, d)$ is discrete conformal to $(\SS^2, Y, d_\SS)$ and $Y$ is unique up to M\"obius transformations of $\SS^2$.

        \item If $S$ is homeomorphic to $\R^2$ and $\delta$ corresponds to a point $y$ in $Y$, then $V$ is an infinite set since $(S,V,d)$ admits a  Delaunay triangulation. 
        Without loss of generality, we  assume $y=\infty \in \partial \H^3_U$, and then all other points in $Y$ are in $\E^2 = \partial\H^3_U-\{\infty\}$. Then $(S-V,d^*)$ is isometric to $\partial C(Y)$.
        Since the points in $V$ are all cusp points in $(S-V,d^*)$,   points in $Y-\{\infty\}$ are cusp points in $\partial C(Y)$, and hence isolated. 
        We note that $\partial C(Y) \cup (Y-\{\infty\}) = (\partial C(Y) \cup Y)-\{\infty\}$ is homeomorphic to $\E^2$, hence $Y-\{\infty\}=Y\cap \E^2$ is a closed discrete subset in $\E^2$.
        By Theorem \ref{llr}, such a surface $(\E^2, Y-\{\infty\}, d_\E)$ is unique up to a similarity of $\E^2$.

        \item If $S$ is homeomorphic to $\R^2$ and $\delta$ corresponds to a disk component in $Y$, up to a M\"obius transformation, we may assume that the disk component is $\D^c$. Let $V_1:=Y - \D^c\subset\D$. Then $(S-V,d^*)$ is isometric to $\partial C(Y)=\partial C(V_1 \cup \D^c)$.
        Since the points in $V$ are all cusp points in $(S-V,d^*)$,  the points in $V_1$ are cusp points in $\partial C(Y)$, and hence isolated. 
        Thus $V_1$ is a closed discrete subset of $\D$, and $(S, V, d)$ is discrete conformal to $(\D, V_1, d_\H)$. 
        By Proposition \ref{polylimit}, $\overline{V_1} - V_1 = \partial \D$. Then by Theorem \ref{llz}, such a set $V_1$ is unique up to the M\"obius transformations of $\D$.
    \end{enumerate}

    If $(S,V,d)$ is not simply connected, consider its universal cover $(\tilde S, \tilde V, \tilde d)$ with the associated complete hyperbolic metric $\tilde d^*$ on $\tilde S-\tilde V$. 
    The fundamental group $\Gamma =\pi_1(S)$ acts isometrically as a deck transformation group on   $(\tilde S, \tilde V, \tilde d)$ with quotient isometric to 
    the surface $(S, V, d)$.  By Corollary \ref{triagulationrefine} Part (2), we see that $\Gamma$ acts isometrically, freely and discontinuously on $(\tilde S -\tilde V, \tilde d^*)$.      
    By the solution of the simply connected case, we see that $(\tilde S,  \tilde V, \tilde d)$ is discrete conformal to a standard model surface $(K, U, d_{\K})$, where $\K^2=\SS^2,\E^2$ or $\H^2_P=\D$ and $U$ is a closed discrete subset of $\K$ so that
    \begin{enumerate}[label=(\alph*)]
        \item $U$ is infinite if $\K^2=\E^2$, and
        \item every point in the conformal infinity of $\H^2_P$ is a limit point of $U$ if $\K^2=\H^2_P$.
    \end{enumerate}
    This implies that $(\tilde S -\tilde V, \tilde d^*)$ is isometric to $\partial C(Y)$ in $\H^3_U$; here $Y=U$, $U\cup\{\infty\}$ or $U\cup\D^c$ if $\K^2=\SS^2$, $\E^2$ or $\H^2_P=\D$, respectively.   
    In particular,  $\Gamma$ acts isometrically, freely and discontinuously on $\partial C(Y)$.
    By Theorems \ref{rivin}, \ref{llr} and \ref{llz}, 
    the isometric action of $\Gamma$ on $\partial C(Y)$ extends to an isometric action of $\Gamma$ on $\H^3_U$. 
    
    It then follows that $\Gamma$ acts isometrically, freely and discontinuously on $\SS^2$, $\E^2$ or $\H^2_P =\D$ preserving $U$. By construction, $(S, V, d)$ is discrete conformal to the standard model surface $(\SS^2,U,d_\SS)/\Gamma$, $(\E^2, U, d_\E)/\Gamma$, or $(\H^2_P, U, d_\H)/\Gamma$.

We recall that the associated hyperbolic metrics of standard model surfaces are complete. Hence, the examples presented in Appendix \ref{appen:incomplete} show that the assumptions on the circumdisk diameters for Euclidean and spherical background metrics are sharp.

Finally, the uniqueness of the standard model surfaces within a discrete conformal class is a corollary of the following stronger result, Theorem \ref{thm:new5.2}, and Remark \ref{rem:spherical-standard}.
\end{proof}

\begin{remark}  
By our above argument, Theorem \ref{dut} (ii) and (iii) hold for all polyhedral surfaces without the boundedness assumption as long as the associated hyperbolic metrics $d^*$  are complete.
\end{remark}

We recall the following theorems from \cite{luo-luo-rao} and \cite{Zhao2026rigidity}, which will be used to prove Theorem \ref{thm:new5.2}.

\begin{theorem} \label{5.2}

(i) \cite{luo-luo-rao}
Let $X_1$ and $X_2$ be two compact sets containing at least three points in
$\partial \H^3_P$ with vanishing
1-dimensional Hausdorff measure. 
Then every isometry between the boundary of the convex hulls $\partial C(X_1)$ and $\partial C(X_2)$ in $\H^3_P$ extends to an ambient isometry of $\H^3_P$.   The extended isometry of the ambient space $\H^3_P$ is unique unless $X_1$ and $X_2$ are contained in circles, in which case the extended isometry is unique up to composition with inversions about the circles.

(ii) \cite{Zhao2026rigidity}
Let $X_1$ and $X_2$ be two circle-type closed sets with only one disk component in 
$\partial \H^3_P$  such that the 
1-dimensional Hausdorff measure of $X_i$ with the disk component removed being zero for $i=1,2$. 
Then every isometry between the boundary of the convex hulls $\partial C(X_1)$ and $\partial C(X_2)$ in $\H^3_P$ extends to an ambient isometry of $\H^3_P$. 
The extended isometry of the ambient space $\H^3_P$ is unique unless both $X_1$ and $X_2$ are closed disks, in which case the extended isometry is unique up to composition with inversions about the boundary circles of $X_1$ and $X_2$.
\end{theorem}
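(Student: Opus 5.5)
The statement is Theorem \ref{5.2}, which the paper attributes to \cite{luo-luo-rao} (part (i)) and \cite{Zhao2026rigidity} (part (ii)). I do not have a complete proof of either part. What follows is the strategy I would try to reconstruct, with the step I expect to be hardest flagged.

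For part (i), fix an isometry $h:\partial C(X_1)\to\partial C(X_2)$. The idea is to convert the intrinsic isometry into a map between the domains of discontinuity $\Omega_i=\hat\C-X_i$, and show that map is conformal.

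\begin{enumerate}
\item \textbf{Pass to the domains.} Thurston's nearest-point retraction $r_i:\Omega_i\to\partial C(X_i)$ is the map used earlier through the projections $\phi_{\mathbb K}$. It identifies the dome with $\Omega_i$ via the bending lamination: the dome is a pleated surface, and each point of $\Omega_i$ corresponds to a supporting half-space.
\item \textbf{Show $h$ preserves the bending lamination.} I would argue as in Alexandrov's approach to Cauchy rigidity. The lamination is intrinsic in a measure-theoretic sense, because the flat (totally geodesic) pieces are exactly the regions the isometry must respect. The hypothesis $\mathcal H^1(X_i)=0$ enters here: it guarantees that the dome carries no "hidden" bending supported on a set of positive length at infinity. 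Concretely, I would try to show that $X_i$ is removable for conformal maps. A set of zero $1$-dimensional Hausdorff measure is removable for bounded holomorphic functions by Painlevé's theorem, and I would aim to obtain the analogous statement for quasiconformal maps.
\item \textbf{Build a quasiconformal map of the domains.} Using the Epstein--Marden--Markovic theorem that $r_i$ is close to a quasiconformal map, the isometry $h$ yields a quasiconformal homeomorphism $\Omega_1\to\Omega_2$.
\item \textbf{Extend and show it is conformal.} Extend this map across $X_1$ using removability. Then show it is conformal by a bending-matching argument: match the support planes of the two domes face-by-face and apply Cauchy-type rigidity to the limit pleating. This gives a Möbius map.
\item \textbf{Uniqueness.} The Möbius map extends to an isometry of $\H^3_P$. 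Any two such extensions agree on the dome, hence on the set spanned by its support planes. This pins the extension down unless the dome is planar, i.e. unless $X_i$ lie on circles. In that case composing with the inversion in the circle gives exactly the stated ambiguity.
\end{enumerate}

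For part (ii) I would run the same scheme with the disk component handled separately. After normalizing the disk component of each $X_i$ to be $\D^c$, the domain $\Omega_i$ is a subdomain of $\D$. Removability is needed only for the remaining part of $X_i$, which has zero $\mathcal H^1$-measure. Near $\partial\D$ the dome is asymptotic to the hemisphere $C(\partial\D)$, and I would expect the map to extend continuously to $\partial\D$ in a Schwarz-lemma fashion, as in Theorem \ref{llz}. Uniqueness follows as in part (i), with the exceptional case being when both $X_i$ are closed disks.

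The main obstacle is Step 2: proving that an intrinsic isometry of domes respects the pleating when the vertex set is not discrete. This is where the Hausdorff-measure hypothesis must do the work. I would attack it first by approximating $X_i$ from outside by finite sets, using Rivin's Theorem \ref{rivin} and the continuity of domes in the Hausdorff topology. I have not checked that this approximation argument goes through.
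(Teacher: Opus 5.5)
The paper itself gives no proof of this statement. It is imported from \cite{luo-luo-rao} and \cite{Zhao2026rigidity} and used as a black box in Theorem \ref{thm:new5.2}. Judged on its own, your outline has a genuine gap at exactly the step you flag, and the gap is conceptual rather than technical.

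The intrinsic metric of a dome is hyperbolic straight across the bending lines. So there is no intrinsic feature (``flat pieces'' versus bending) that an isometry $h$ is forced to respect. Proving that $h$ matches the pleating loci and bending measures of $\partial C(X_1)$ and $\partial C(X_2)$ is, after routine developing, equivalent to the conclusion. Step 2 is therefore the theorem itself, not a lemma toward it.

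The heuristic is in fact false without the hypothesis. Let $X_1$ be a closed round disk and $X_2$ a closed square. Then $\partial C(X_1)$ is a totally geodesic plane and $\partial C(X_2)$ is a genuinely bent dome. Both are isometric to $\H^2$ by Thurston's theorem, yet no isometry of $\H^3$ relates them. So $\mathcal H^1(X_i)=0$ must be used in a specific, essential way to exclude this behavior, and your outline never says how.

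Steps 3--4 do not supply the missing mechanism either.
\begin{enumerate}
\item The Epstein--Marden--Markovic quasiconformal comparison concerns simply connected domains. Here $\hat\C-X_i$ is multiply connected: infinitely connected once $X_i$ is infinite, and not uniformly perfect when $X_i$ has isolated points.
\item Even granting quasiconformal maps $f_i:\hat\C-X_i\to\partial C(X_i)$, the composite $f_2^{-1}\circ h\circ f_1$ has no reason to have dilatation $1$.
\item Removability is not the obstacle. Compact sets of $\sigma$-finite length are already removable for conformal and quasiconformal homeomorphisms by classical Besicovitch-type results; Painlev\'e's theorem on bounded holomorphic functions is not the relevant statement. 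But removability only helps once you have a map that is conformal off $X_1$, and you never produce one.
\item The ``bending-matching'' in Step 4 simply reinvokes Step 2.
\end{enumerate}

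Your fallback also fails as stated. Finite approximants of $X_1$ and $X_2$ have non-isometric domes, so Theorem \ref{rivin} says nothing about them. You cannot transfer $h$ to them without a quantitative stability version of Rivin's theorem.

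For (ii), the difficulty sits at the disk component.
\begin{enumerate}
\item Its boundary circle has positive length and is not removable.
\item The disk-versus-square example shows that intrinsic geometry alone does not detect roundness of a continuum component.
\item Appealing to Theorem \ref{llz} is circular, since that theorem is the special case of (ii) with $X_i-\D^c$ discrete.
\end{enumerate}

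Only your uniqueness step is sound. An isometry of $\H^3$ fixing a non-coplanar subset of the dome pointwise is the identity, which leaves exactly the reflection ambiguity in the planar case.
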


\begin{theorem}\label{thm:new5.2}
Suppose $(S_i, X_i, d_i)$ is a complete Riemannian surface of constant curvature $0$ or $-1$ equipped with non-empty closed subsets $X_i$ of vanishing $1$-dimensional Hausdorff measure, such that $(S_i, X_i, d_i)$ is not exceptional, for $i=1,2$. 
If their associated convex-hull hyperbolic metrics $(d_1)^*_c$ and $(d_2)_c^*$ are isometric by an isometry $h$ which can be extended to be a homeomorphism $\bar{h}$ from $(S_1, X_1)$ to $(S_2, X_2)$, then 

(i) $\bar{h}$ is a similarity transformation from $(S_1, X_1, d_1)$ to $(S_2, X_2, d_2)$  if $d_1$ has vanishing curvature; and

(ii) $\bar{h}$ is an isometry from $(S_1, X_1, d_1)$ to $(S_2, X_2, d_2)$ if $d_1$ has  curvature $-1$. 

Furthermore, if two standard model surfaces $(S_1, X_1, d_1)$ and $(S_2, X_2, d_2)$ are two discrete conformal, 
then $(S_1, X_1, d_1)$ and $(S_2, X_2, d_2)$ are related by a M\"obius transformation.  
\end{theorem}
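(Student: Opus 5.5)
The plan is to lift everything to the universal covers, apply Theorem \ref{5.2} there, and then descend using equivariance.

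\emph{Step 1: lift to universal covers.} Let $F_i:(\tilde S_i,\tilde X_i,\tilde d_i)\to(S_i,X_i,d_i)$ be the Riemannian universal covers. By Proposition \ref{chhm}(ii), $(\tilde d_i)^*_c=F_i^*(d_i)^*_c$. The homeomorphism $\bar h$ lifts to a homeomorphism $\tilde{\bar h}:(\tilde S_1,\tilde X_1)\to(\tilde S_2,\tilde X_2)$. It satisfies $\tilde{\bar h}\gamma=\rho(\gamma)\tilde{\bar h}$ for an isomorphism $\rho:\pi_1(S_1)\to\pi_1(S_2)$ of the deck groups. Its restriction $\tilde h$ is an isometry $(\tilde S_1-\tilde X_1,(\tilde d_1)^*_c)\to(\tilde S_2-\tilde X_2,(\tilde d_2)^*_c)$.

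\emph{Step 2: curvature and non-degeneracy.} Both curvatures lie in $\{0,-1\}$. Conjugating by isometries of $\H^3$, we identify each $\tilde S_i$ with $\E^2$ or $\D$. By Proposition \ref{chhm}(i), $\tilde h$ becomes an isometry $\partial C(Y_1)\to\partial C(Y_2)$, with $Y_i$ as in \eqref{eq:def-Y}. The point $\infty$ corresponds to the unique non-cusp end, and $\D^c$ to a disk component. Since $\tilde{\bar h}$ is a homeomorphism of pairs, it matches these ends, so both surfaces have the same curvature.

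A closed set of vanishing $1$-dimensional Hausdorff measure is totally disconnected. Hence:
\begin{itemize}
\item in the flat case, $Y_i=\tilde X_i\cup\{\infty\}$ is compact, has $\mathcal H^1(Y_i)=0$, and contains at least three points by non-exceptionality;
\item in the hyperbolic case, $Y_i$ is circle-type with exactly one disk component $\D^c$, and $\mathcal H^1(\tilde X_i)=0$.
\end{itemize}
One care point: when $\tilde X_i$ is uncountable, $\partial C(Y_i)$ need not be the region over $\tilde S_i-\tilde X_i$ via $\phi_{\K}$. I would check that the $Q$ maps of Lemma \ref{lem:Q-S-X-d} still give a homeomorphism that extends to $\tilde X_i$ as the identity on the ideal points.

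\emph{Step 3: extend to an ambient isometry.} Theorem \ref{5.2}(i) (flat case) or (ii) (hyperbolic case) gives an isometry $A$ of $\H^3$ extending $\tilde h$. Its boundary map satisfies $A(Y_1)=Y_2$. Since $A$ extends the boundary-at-infinity behaviour of $\tilde h$, it sends the distinguished end to the distinguished end:
\begin{itemize}
\item in the flat case $A(\infty)=\infty$, so $A|_{\E^2}$ is a similarity;
\item in the hyperbolic case $A(\D^c)=\D^c$, so $A|_{\D}$ is a hyperbolic isometry.
\end{itemize}

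\emph{Step 4: identify $A$ with the lift of $\bar h$.} We must show $A|_{\tilde X_1}=\tilde{\bar h}|_{\tilde X_1}$. I would argue that both are the continuous extension of $\tilde h$ to ideal boundary points, using cusp or limiting sequences on $\partial C(Y_i)$. Where Theorem \ref{5.2} allows a nonunique extension (all of $Y_i$ in a circle, the Alexandrov double case), I would pick the inversion so that $A\circ Q_1=Q_2\circ A$ matches $\tilde h$ on each side of the double.

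\emph{Step 5: equivariance and descent.} By the uniqueness clause of Theorem \ref{5.2}, $A\gamma A^{-1}$ is the ambient extension of $\tilde h\gamma\tilde h^{-1}=\rho(\gamma)$, so $A\gamma A^{-1}=\rho(\gamma)$. In the ambiguous circle case this follows because both are isometries of the plane fixing the correct side. Hence $A$ descends to a similarity (resp. isometry) $S_1\to S_2$. This map agrees with $\bar h$ on $X_1$ and, via $Q$, is isotopic to $\bar h$. I expect the claim "$\bar h$ is" to be meant up to this identification.

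\emph{Final claim.} For standard model surfaces, a discrete conformal map is exactly such an $h$, and finite vertex sets trivially have $\mathcal H^1=0$. Curvatures $0$ and $-1$ are handled by (i)–(ii). The spherical case follows from Theorem \ref{5.2}(i) applied to finite sets, via the lift to $\SS^2$ for $\R P^2$. In all cases the resulting map is Möbius.

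\emph{Main obstacle.} I expect Step 4 to be hardest: matching the ambient extension with the given homeomorphism on $\tilde X_1$ when $\tilde X_1$ is not discrete, and handling the double-cover ambiguity.
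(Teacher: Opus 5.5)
Your overall route (lift to universal covers, apply Theorem \ref{5.2}, descend) is the paper's, but the curvature comparison in Step 2 does not work. Both universal covers are planes, so any homeomorphism of pairs $(\E^2,\tilde X_1)\to(\D,\tilde X_2)$ automatically sends the end at $\infty$ to the end at $\D^c$. That is no contradiction: nothing topological distinguishes the end of $\partial C(\tilde X_1\cup\{\infty\})$ at the point $\infty$ from the end of $\partial C(\tilde X_2\cup\D^c)$ at the disk $\D^c$. (Also, when $\tilde X_i$ is not discrete, $\infty$ is not even the unique non-cusp end.) Excluding an isometry between the dome of a point-type set and the dome of a set with a disk component is a genuine rigidity statement. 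Theorem \ref{5.2} does not cover such a mixed pair: part (i) needs $\mathcal H^1=0$ for both sets, and part (ii) needs a disk component in both. The paper invokes the stronger rigidity theorem \cite[Theorem 1.2]{Zhao2026rigidity} at exactly this point. For discrete vertex sets you could instead compare the conformal types of the filled-in domes ($\C$ versus $\D$, cf.\ Propositions \ref{typeEu} and \ref{typehy}). That does not reach general closed sets of vanishing $\mathcal H^1$.

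Second, your Steps 4--5 only produce a similarity (resp.\ isometry) that agrees with $\bar h$ on $X_1$ and is isotopic to it, and you propose reading the theorem up to that identification. The statement asserts that $\bar h$ itself is a similarity (resp.\ isometry). The missing ingredient is the equivariance \eqref{equivq} of the canonical maps $Q_i=Q_{\tilde S_i,\tilde X_i,\tilde d_i}$ from Lemma \ref{lem:Q-S-X-d}. Once $A(Y_1)=Y_2$ and $A(\infty)=\infty$ (resp.\ $A(\D)=\D$), it gives $A\circ Q_1=Q_2\circ A$. Since $A|_{\partial C(Y_1)}=Q_2\circ\tilde h\circ Q_1^{-1}$, you get
\[
\tilde h=Q_2^{-1}\circ A\circ Q_1=Q_2^{-1}\circ Q_2\circ A=A \quad\text{on } \tilde S_1-\tilde X_1 .
\]
Because $\tilde X_1$ has empty interior, continuity gives $\tilde{\bar h}=A|_{\tilde S_1}$. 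So the step you flagged as the main obstacle is automatic, and no boundary-limit argument is needed. Moreover, $F_2\circ\tilde{\bar h}=\bar h\circ F_1$ with $F_i$ local isometries shows $\bar h$ is a local similarity (resp.\ isometry). Being a homeomorphism, it is then a global one, which also makes your conjugation argument $A\gamma A^{-1}=\rho(\gamma)$ unnecessary. In the two-dimensional flat case you still choose, between the two ambient extensions, the one respecting the copies of the double, as both you and the paper do.
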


\begin{remark}\label{rem:spherical-standard}
    The last statement in Theorem \ref{thm:new5.2} also holds  if both $(S_i,d_i)$ have curvature $1$. Indeed, if the $S_i$ are $\SS^2$, it follows from Rivin's Theorem \ref{rivin}. If the $S_i$ are $\R P^2$, it follows by taking the universal covers and then the same argument.
\end{remark}

\begin{proof}
    Since $(S_1,X_1)$ is homeomorphic to $(S_2,X_2)$ by $\bar{h}$, we may assume that $S_1=S_2=S$.
    We first show that $(S,X_1,d_1)$ and $(S,X_2,d_2)$ must be of the same type, that is, the metrics $d_1$ and $d_2$ have the same curvature. Since $(S,X_i, d_i)$, $i\in\{1,2\}$, is spherical if and only if $S$ is a sphere or a projective plane, it suffices to distinguish the hyperbolic and Euclidean background metrics. By assumption, the surfaces $(S-X_1, (d_1)_c^*)$ and $(S-X_2, (d_2)_c^*)$ are isometric via an isometry $h$ which extends to a homeomorphism $\bar{h}$ of $(S,X_1)$ onto $(S,X_2)$. 
    For each $i\in\{1,2\}$, let $(\tilde{S},\tilde{X}_i,\tilde{d}_i)$ be the universal cover of $(S,X_i,d_i)$ and let $(\tilde{d}_i)_c^*$ be the convex-hull hyperbolic metrics defined on  $\tilde{S}-\tilde{X}_i.$  By the definition of convex hull metrics, 
    the surfaces $(\tilde{S}-\tilde{X}_1, (\tilde{d}_1)_c^*)$ and $(\tilde{S}-\tilde{X}_2, (\tilde{d}_2)_c^*)$ are also isometric. By definition, each $(\tilde{S}-\tilde{X}_i, (\tilde{d}_i)_c^*)$ is isometric to a convex surface $\partial C(\tilde{X_i} \cup W_i)$, where $W_i$ is either a point or a round circle in $\partial \mathbb{H}^3_U$. Suppose otherwise, say,  that $d_1$ is Euclidean, and $d_2$ is hyperbolic. Then  $W_1$ is a point and $W_2$ is a circle. 
    However, the strong rigidity result in \cite[Theorem 1.2]{Zhao2026rigidity} implies that the two sets $W_1$ and $W_2$ must be related by an M\"obius transformation. This is a contradiction since $W_1$ is a point and $W_2$ is a circle. This contradiction shows that the two metrics $d_1$ and $d_2$ have the same curvature.

To prove the theorem, we first consider the case where $S$ is simply connected, i.e., $(S, d_1) = (S, d_2) = \E^2$ or $\H^2_P$. For $(S, d_i) = \H^2_P$ considered as a subset of $\partial \H^3_U$, the definition of convex hull metrics and the assumption that $h:(S-X_1, (d_1)_c^*) \to (S-X_2, (d_2)_c^*)$ is an isometry imply that $Q_{S, X_2, d_2} \circ h \circ Q_{S, X_1, d_1}^{-1}$ is an isometry from $\partial C(Y_1)$ to $\partial C(Y_2)$, where $Y_i = X_i \cup \D^c$. Since $X_1$ and $X_2$ are non-empty, the surfaces $\partial C(Y_1)$ and $\partial C(Y_2)$ are not contained in any geodesic plane. By Theorem \ref{5.2}(ii), we conclude that the isometry $Q_{S, X_2, d_2} \circ h \circ Q_{S, X_1, d_1}^{-1}$ extends to an isometry $g$ of $\H^3_U$.
Therefore, 
\begin{equation}\label{hQ}
h = Q_{S, X_2, d_2}^{-1} \circ g \circ Q_{S, X_1, d_1}. 
\end{equation} 
We claim that this implies 
$$ h=g|_{\D-X_1}. $$ 
Indeed, by \eqref{hQ}, we see that $g(\D^c)=\D^c$, and hence $g(X_1)=X_2$. The equivariant property \eqref{equivq} of $Q_{S, X_i, d_i}$ then implies that $g \circ Q_{S, X_1, d_1}= Q_{S, X_2, d_2} \circ g$. Therefore, $g|_{\D-X_1}=h$. Consequently, $h$ extends to a hyperbolic isometry $g|_{\D}: \D \to \D$ in the Poincar\'e model, which yields conclusion (ii). Note that in this proof, we do not use the assumption that $h$ extends to a homeomorphism from $S$ to $S$.

For the case where $S =\E^2 \subset \partial \H^3_U$, we set $Y_i=X_i \cup \{\infty\}$. If both $C(Y_1)$ and $C(Y_2)$ are three-dimensional, we follow the same strategy as above and use Theorem \ref{5.2}(i) to obtain an isometry $g$ of $\H^3_U$ such that \eqref{hQ} holds. Since $h$ extends to a homeomorphism from $(S, X_1)$ to $(S, X_2)$, the equality \eqref{hQ} implies $g(\infty) =\infty$. By \eqref{equivq}, we again deduce that $h=g|_{\E^2-X_1}$, and that $\bar h=g|_{\E^2}$ is a similarity map.  
Now suppose that at least one of $C(Y_1)$ and $C(Y_2)$ is two-dimensional. The arguments in \cite[Sections 5.2 and 5.3]{luo-luo-rao} imply that both of them must be two-dimensional, and that $h$ maps the relative boundary of $C(Y_1)$ to that of $C(Y_2)$. It follows that $h$ induces an isometry $\hat{h}: C(Y_1) \to C(Y_2)$. To adapt the strategy used in the three-dimensional case, we let $g$ be the unique isometry of $\mathbb{H}^3_U$ such that $g|_{C(Y_1)}=\hat{h}$ and $g$ preserves the mapping of the respective copies, i.e., for each $\sigma \in \{+, -\}$, the map $Q_{{S}, {X}_2, {d}_2} \circ g \circ Q_{{S}, {X}_1, {d}_1}^{-1}$ sends each copy $C(Y_1)_\sigma$ of $C(Y_1)$ in $\partial C(Y_1)$ isometrically to the corresponding copy $h(C(Y_1)_{\sigma})$ of $C(Y_2)$ in $\partial C(Y_2)$. By a similar argument, we conclude that $g(\infty)=\infty$ and that $\bar h=g|_{\E^2}$ is a similarity map.

Now consider the case where the surface $S$ is not simply connected. Let $F_i: (\tilde S, \tilde d_i) \to (S, d_i)$ be the Riemannian universal cover, $\tilde V_i =F_i^{-1}(V_i)$, and $f: \tilde S \to \tilde S$ be the homeomorphism which is a lift of the homeomorphism $\bar h$.
Since $\bar h(V_1)=V_2$, we have $f(\tilde V_1) =\tilde V_2$ and $F_2 \circ f = \bar h \circ F_1.$  By Proposition \ref{chhm} (ii), i.e., \eqref{eq-chh},
$F_i|: (\tilde S-\tilde V_i, (\tilde d_i)^*_c) \to (S-V_i, (d_i)^*_c)$ are local isometries for $i=1,2$. Using the assumption that $h:(S-V_1, (d_1)^*_c) \to (S-V_1, (d_2)_c^*)$ is an isometry and $ F_2 \circ f  = h \circ F_1$ on $\tilde S -\tilde V_1$, we see that  $f$ is a local isometry and hence an isometry from $(\tilde S -\tilde V_1, (\tilde d_1)^*_c)$ to $(\tilde S -\tilde V_2, (\tilde d_2)^*_c)$. By the simply connected case just proved, we conclude that $f: (\tilde S, \tilde d_1) \to (\tilde S, \tilde d_2)$ is an isometry if $d_1$ is hyperbolic, or is a similarity map if $d_1$ is flat.  This implies that $\bar h: (S, d_1) \to (S, d_2)$ is either an isometry or a similarity map accordingly. Thus cases (i) and (ii) hold.

Finally, we need to prove the case of $V_i$ being a discrete closed subset. By the definition of discrete conformality, there exists an isometry $h:(S_1-V_1, (d_1)^*_c) \to (S_2-V_2, (d_2)^*c)$ such that $h$ is can be extended to a homeomorphism $\bar h: (S_1, V_1) \to (S_2, V_2)$. 
Therefore, by the two cases (i) and (ii) just proved, we see that $(S_1, V_1, d_1)$ and $(S_2, V_2, d_2)$ are related by a M\"obius transformation.
\end{proof}

\section{Proof of the Discrete Riemann Mapping theorem}

In this section, we prove Theorem \ref{disriemap}, the Discrete Riemann Mapping theorem.
Recall Theorem \ref{disriemap} states:

\newtheorem*{maintheorem2}{Theorem \ref{disriemap}}
\begin{maintheorem2}
 Suppose $\Omega$ is a simply connected domain in the Riemann sphere $\hat \C$ such that its boundary $\partial \Omega$ contains at least three points and $V \subset \Omega$ is a discrete approximation of $\Omega$. Then there exists a discrete approximation $W$ of $\D$  such that the Thurston dome
$\partial C(V \cup \Omega^c)$ is isometric to $\partial C(W \cup \D^c)$. Furthermore, the discrete set $W$ is unique up to Möbius transformation of $\D$.
\end{maintheorem2}

By Thurston's theorem and the discussion in \S4, $\partial C(V \cup \Omega^c)$ is a hyperbolic surface with a cusp end at each $v \in V$. It follows that $\partial C(V \cup \Omega^c)\cup V$ is a Riemann surface by filling in cusp ends. We begin with the following proposition.

\begin{proposition}
\label{typehy}
If $V$ is a closed discrete subset of a simply connected domain $\Omega \subsetneq\C$, then the Riemann surface $\partial C(V\cup \Omega^c)\cup V$ is conformal to $\D$. 
\end{proposition}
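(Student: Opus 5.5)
We need to show that the surface $\Sigma := \partial C(V\cup\Omega^c)\cup V$ is conformally the disk. The plan is to rule out the other two possibilities, $\hat\C$ and $\C$, and then apply the classical uniformization theorem.

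First, topology. The vertical projection $\phi_{\E}$, or more intrinsically the nearest-point retraction onto $C(V\cup\Omega^c)$ viewed from the complement, gives a homeomorphism from $\Omega-V$ onto the dome $\partial C(V\cup\Omega^c)$. It extends across the points of $V$, which are cusps and are filled in as punctures. Hence $\Sigma$ is homeomorphic to $\Omega$, i.e.\ to the open disk. In particular it is simply connected and non-compact, so $\Sigma$ is conformal to either $\C$ or $\D$, and it remains to exclude $\C$.

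\textbf{Main step.} To exclude $\C$, I would construct a nonconstant bounded holomorphic function on $\Sigma$, or equivalently show that $\Sigma$ carries a Green's function. The key observation is that the part of $\Sigma$ near its ideal boundary looks like a Thurston dome over a domain without vertices. Concretely, since $\partial\Omega$ contains at least three points (indeed infinitely many, as $\Omega$ is simply connected and proper), there is a point $x_0\in\Omega^c$. Take a horoball region, or a supporting hyperbolic plane $P=\partial C(D)$ with $D$ a round disk in $\hat\C$ meeting $\Omega^c$ but whose complement contains infinitely many points of $V$. Then I would look for an open subset $U\subset \partial C(V\cup\Omega^c)$, containing no cusps, that is isometric to a region in the hyperbolic plane and contains a hyperbolic half-plane, or at least a subsurface of infinite area bounded by a geodesic. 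This is as in the proof of Proposition~\ref{polylimit}, where a ruled piece $J\times\R_{>0}$ over an arc $J$ of a boundary circle contains a half-plane.

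The real issue is that $\Omega^c$ may be wild, so no such ruled piece need exist. I would therefore instead argue via hyperbolic geometry and type criteria. The dome metric $d^*$ is complete hyperbolic on $\Sigma-V$, with cusps at $V$. A complete hyperbolic surface whose underlying Riemann surface, after filling cusps, is $\C$ must have every end parabolic. So it suffices to show that the end of $\Sigma$ at $\partial\Omega$ is hyperbolic in the potential-theoretic sense: a Brownian-motion / extremal-length argument. I would estimate the extremal length of the family of curves in $\Sigma$ separating a fixed compact set from the ideal boundary and show it is finite, i.e.\ that the modulus of the end is finite.

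For this estimate I would compare with Sullivan's theorem, in the Epstein--Marden form \cite{epsteinm2006}: the nearest-point retraction $r:\Omega\to\partial C(\Omega^c)$ is a quasi-isometry, with uniform constants, between the Poincar\'e metric of $\Omega$ and the dome metric. Adding the discrete set $V$ alters the dome only inside the horoball-like regions above $\Omega$ that are cut off by the extra vertices. I would show that the dome $\partial C(V\cup\Omega^c)$ agrees with $\partial C(\Omega^c)$ outside a neighborhood of the points of $V$ whose size is controlled. A cleaner route is to build a map $\Sigma\to\partial C(\Omega^c)$, defined by the nearest-point projection onto the larger convex set $C(V\cup\Omega^c)$ composed with retraction, that is $K$-quasiconformal off a compact set in each fundamental region, and conclude that $\Sigma$ is quasiconformally equivalent to $\Omega\cong\D$. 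Quasiconformal maps preserve type, so $\Sigma\cong\D$. I expect this quasiconformality estimate, uniform near $\partial\Omega$ despite infinitely many cusps, to be the main obstacle. The fallback is a direct argument that $\partial C(V\cup\Omega^c)$ contains an embedded hyperbolic half-plane near a boundary point of $\Omega$ that is accessible and where $V$ is locally sparse. An embedded half-plane in a complete hyperbolic surface homeomorphic to a punctured disk forces hyperbolic type, as in the first part of the proof of Proposition~\ref{polylimit}.
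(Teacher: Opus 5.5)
\textbf{There is a genuine gap: the main step, excluding $\C$, is never carried out.} Your reduction to that step is fine in outcome. Its justification is not: neither $\phi_{\E}$ nor the nearest-point retraction is a homeomorphism onto the dome in general. $\phi_{\E}$ is not injective on vertical walls, and the retraction collapses regions onto bending lines with positive bending. A radial projection from an interior point of $C(V\cup\Omega^c)$ does work.

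None of the routes you sketch for the main step succeeds as stated.
\begin{itemize}
\item The Sullivan comparison rests on the claim that $\partial C(V\cup\Omega^c)$ agrees with $\partial C(\Omega^c)$ away from controlled neighborhoods of $V$. This is false. Take $\Omega=\D$ and any nonempty $V$. The only support plane of $C(\D^c)$ at a point of the hemisphere $C(\partial\D)$ is the hemisphere itself, and its open side facing $\D$ contains points of $C(V\cup\D^c)$. So every point of the hemisphere is interior to $C(V\cup\D^c)$, and the two domes are disjoint. Nothing then supports a uniform quasiconformality estimate near $\partial\Omega$ in the presence of infinitely many cusps.
\item The half-plane fallback needs $V$ to be sparse near some accessible boundary point, which the hypotheses do not give. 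When $V$ is a discrete approximation of $\Omega$ (the setting in which this proposition is used for Theorem~\ref{disriemap}), Steps 1--4 of that proof show that $\partial C(V\cup\Omega^c)$ contains no embedded hyperbolic half-plane at all.
\item Your extremal-length sentence has the direction reversed. For the family $\Gamma$ of curves separating the two ends of an annulus $R$, $\Mod(R)=1/\operatorname{EL}(\Gamma)$. Finite modulus therefore requires $\operatorname{EL}(\Gamma)>0$, not $\operatorname{EL}(\Gamma)<\infty$.
\end{itemize}

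The missing idea is a conformal metric of finite total area together with a positive lower bound on the lengths of separating curves. This needs no information about how $V$ accumulates at $\partial\Omega$.
\begin{itemize}
\item Realize the dome in the Poincar\'e ball and restrict the ambient Euclidean metric to it. Since the Poincar\'e metric is conformal to the Euclidean one, this is a conformal metric on $S=\partial C(V\cup\Omega^c)\cup V$; the filled-in points have zero area.
\item Its total area is at most $16\pi$ by \cite[Theorem 3.1]{luow2024}.
\item Fix $p\in V$ and a loop $\alpha\subset S-V$ cutting off the annular end $R$. The set $\hat S=S\cup\Omega^c$ is a compact topological sphere. Hence a curve in $\Gamma$ of small Euclidean length has small diameter, and cannot separate $\alpha$ from $\Omega^c$, which has more than one point.
\item So $\inf_{\gamma\in\Gamma} l(\gamma)=:c>0$, giving $\operatorname{EL}(\Gamma)\ge c^2/(16\pi)>0$ and $\Mod(R)<\infty$.
\item The end is therefore not a puncture, and uniformization gives $S$ conformal to $\D$.
\end{itemize}
This is the paper's proof.
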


\begin{proof}
Let $S=\partial C(V \cup \Omega^c)\cup V$ and $\hat S = S\cup \Omega^c$. We claim that $\hat{S}$ is a topological 2‑sphere.
Indeed, if $C(V\cup \Omega^c)$ is three-dimensional, we can choose an interior point $O$ of  $C(V\cup \Omega^c)$ and use the radial projection from $O$ to construct a homeomorphism $h$ from $\hat S$ to $\hat \C$ with $h|_{\Omega^c \cup V} =id$.  If $C(V\cup \Omega^c)$ is two‑dimensional, by the Case 3.4 (1) in the proof of Lemma \ref{lem:Q-S-X-d}, there exists a homeomorphism $h$ from $\hat S$ to $\hat \C$ with
$h|_{\Omega^c \cup V} =id$. In particular, we conclude that 
$S$ is homeomorphic to $\Omega$, and hence is simply connected.

Choose a point $p \in V \subset S$ and a simple closed curve $\alpha \subset S-V$ that separates $p$ from the end of $S$. Let $R$ be the component of $S-\alpha$ which does not contain $p$. Since $S$ is topologically a disk, $R$ is topologically an annulus.
We will prove the modulus of $R$, $\operatorname{Mod}(R)<\infty.$
Since a punctured disk has infinite modulus, this shows that the end of $S$ is not a puncture.  Then,  the uniformization theorem implies that $S$ is conformally equivalent to $\D$.

We estimate the modulus of $R$ using the extremal lengths. Recall that for a family $\Gamma$ of rectifiable curves in a Riemann surface $X$ and a conformal metric $\rho$ on $X$ with finite area $A(\rho)$, we define 
    \begin{equation*}
        l_{\rho}(\Gamma) = \inf\left\{l_{\rho}(\gamma) : \gamma\in \Gamma\right\}.  
    \end{equation*}
    The \underline{extremal length} $EL(\Gamma)$ of $\Gamma$ is defined to be 
    \begin{equation*}
        EL(\Gamma) = \sup\left\{\frac{l_d^2(\Gamma)}{A(\rho)} : \rho \text{ is a finite area conformal metric on } X\right\}.
    \end{equation*}

For the annulus $R$, let $\Gamma$ be the family of all rectifiable simple closed curves in $R$ which separate the two ends of $R$, then 
$\operatorname{Mod}(R)=(\operatorname{EL}(\Gamma))^{-1}.
$
Thus, it is enough to prove $\operatorname{EL}(\Gamma)>0$.

To achieve this, we take the Poincar\'e model $\H^3_P$ of the hyperbolic space, and consider the domain $\Omega$  as a subset of the conformal infinity $\mathbb S^2$ of $\H^3_P$. 
Let $d_{\E}$ be the ambient Euclidean metric from $\R^3$ on the Poincaré ball model $\H^3_P$, and let $d$ be the path metric on $\partial C(V \cup \Omega^c)$ induced by $d_{\E}$. Because the Poincaré hyperbolic metric is conformal to the Euclidean metric, $d$ is a conformal metric for the Riemann surface $S$ since the filled points in $V$ have zero area. 
By \cite[Theorem 3.1]{luow2024}, which states that the Euclidean area of the boundary of any convex set of dimension at least two in the Poincaré model $\H^3_P$ is at most $16\pi$,  we have $A(d)\leq 16\pi$.
Thus, it suffices to show that $l_d(\Gamma)>0$.

Let $$\Phi(x) = \frac{2x}{1+|x|^2} : \H^3_P \to \H^3_K$$
    be the isometry from the Poincar\'e model $\H^3_P$ to the Klein model $\H^3_K$. Then by \cite[Proposition 3.2]{luow2024}, $\Phi$ is $2$-Lipschitz with respect to the Euclidean metric, namely 
    $$|\Phi(x) - \Phi(y)| \leq 2|x - y|.$$
    Let $d'$ be the path metric on $\Phi(S)$ induced by the Euclidean metric $d_{\mathbb{E}}$. Then the $2$-Lipschitz property implies that for any path $\gamma$ on $S$, $$l_d(\gamma)\geq \frac{1}{2} l_{d'}(\Phi(\gamma)).$$ 
    Thus, it suffices to show that $\inf_{\gamma\in\Gamma}l_{d'}(\Phi(\gamma))>0$. For simplicity, from now on we identify the convex surface with its Klein realization and suppress $\Phi$ from the notation.

We claim that there exists $c>0$ such that $
l_{d'}(\gamma) \geq c$ for every  $\gamma \in \Gamma.$
Suppose otherwise, then there is a sequence $\gamma_{n} \in \Gamma$ such that $l_{d'}\left(\gamma_{n}\right) \to 0.$ Since the diameter of a connected rectifiable curve is at most its length,
$$
\operatorname{diam}_{d'}(\gamma _{n})\leq l_{d'}(\gamma_{n})\to 0.
$$
Choose $x_{n} \in \gamma_{n}$. Since $\hat{S}$ is compact, after passing to a subsequence,
$x_{n} \to x \in \hat{S}.$ Let $h\colon \hat{S} \to \SS^{2}$ be the homeomorphism constructed above. Then
$\operatorname{diam}_{\SS^{2}}\bigl(h(\gamma_{n})\bigr)\to 0.$
On the other hand, note that the loop $h(\alpha)$ and the set $h(\Omega^c)$ are disjoint closed sets in $\SS^{2}$.
Moreover, $\Omega^c$ (and then $h(\Omega^c)$) contains at least two points.
Therefore, for sufficiently large $n$, the small loop $h(\gamma_{n})$ cannot separate $h(\alpha)$ from $h(\Omega^c)$. This is a contradiction, because each $\gamma_{n}\in\Gamma$ separates $p$ from $\Omega^c$ in $S$.
Thus
$\inf _{\gamma \in \Gamma} l_{d'}(\gamma)>0$, and then
$\operatorname{Mod}(R)<\infty.$
Hence,  $S$ is conformally equivalent to $\mathbb{D}$.
\end{proof}

Now we prove Theorem \ref{disriemap}.


\begin{proof}[Proof of Theorem \ref{disriemap}]

The uniqueness of $W$ up to M\"obius transformation follows from Theorem \ref{llz}.

To prove the existence of $W$, by Theorem \ref{lw}, there 
exists a compact set 
$Z \subset \hat{\mathbb{C}}$ whose connected components are points and round disks such that the convex hull $\partial C(Z)$ is isometric to $\partial C(V \cup \Omega^c)$ in $\H^3_U$. Let 
$f:\partial C(Z)\to\partial C(V \cup \Omega^c)$ be an isometry.
Since $V$ is closed and discrete in $\Omega$,
by considering the complex structure on $\partial C(V \cup \Omega^c)$, 
all but one topological ends of $\partial C(V \cup \Omega^c)$ are cusps and hence removable singularities, and the same holds for $\partial C(Z)$. Since $V$ is countable, the Riemann surface $\partial C(V \cup \Omega^c)$ has countably many punctured ends. 
After filling in all punctured ends of $\partial C(V \cup \Omega^c)$, by Proposition \ref{typehy}, the resulting surface has only one end that is not a cusp. 
We then perform the same filling on the corresponding punctured ends in $\partial C(Z)$ through the isometry $f$. Again, by Proposition \ref{typehy}, the resulting surface constructed from $\partial C(Z)$ also has only one end, which is not a cusp. 
Thus $Z$ has exactly one disk component, which we may assume to be $\mathbb{D}^c\subset \hat{\C}$ after using a Möbius transformation. Let $W =Z \cap \D$ and it remains to show that $\overline{W} - W = \partial \mathbb{D}$.

Suppose otherwise that $\overline{W} - W \neq \partial \mathbb{D}$.  
The proof of Proposition \ref{polylimit} shows that there exists a hyperbolic half-plane $Q \subset \H^2_P$ and an isometric embedding $\rho: Q \to \partial C(W \cup  \mathbb{D}^c)$.  Let $P=\rho(Q)$ and $\Phi : \partial C(W \cup  \mathbb{D}^c)
\to \partial C(V \cup \Omega^c)$ be the isometry.  
We now show that due to the existence of the half-plane $\Phi(P) \subset \partial C(V \cup \Omega^c)$, 
some neighborhood of a prime end of $\Omega$ is disjoint from $V$, which contradicts the hypothesis that $V$ is a discrete approximation of $\Omega$.  
The proof of this statement is divided into the following four steps.

\vspace{1ex}
\noindent \textbf{Step 1.} We first show that there is no bi-infinite geodesic $\gamma\subset \mathbb{H}^3_U$ contained in $\partial C(V \cup \Omega^{c})$ such that $\gamma\subset \Phi(P)$.
Assume otherwise that such a geodesic $\gamma$ exists. Notice that if the endpoints of $\gamma$ contain a cusp point in $V$, then $\gamma$ can not be contained in a half-plane $\Phi(P)$. Thus, the closure of
$\gamma$ is an embedded closed arc whose two endpoints are distinct points in $\Omega^c$. 

Since $\Phi|_P$ is an isometric embedding and $P$ is a hyperbolic half-plane,
$\gamma$ is a complete geodesic contained in $\Phi(P)$.
Among the two components of
$\Phi(P)-\gamma$, there is a unique one, denoted by $U$, whose closure in $\Phi(P)$ is disjoint from the geodesic boundary $\partial \Phi(P)$.

Since the interior of $C(V\cup \Omega^c)$ is not empty, the radial projection from an interior point of $C(V \cup \Omega^c)$ defines a
homeomorphism
 $ h:\Omega^c \cup \partial C(V\cup \Omega^c)\cup V \to \mathbb S^2$ such that $h$ is the identity map on $V \cup \Omega^c$. 
It follows that $\partial C(V\cup \Omega^c)\cup V$ is homeomorphic via $h$ to the simply connected domain $\Omega$. By the Jordan curve theorem, $\gamma$ separates $\partial C(V\cup \Omega^c)\cup V$ into two components. Since $U$ is connected, $U$ is contained in one of the components. On the other hand, since $V$ is a closed subset in $\partial C(V\cup \Omega^c)\cup V$ that is disjoint from $U$, we see that $U$ is also a closed subset in this component. Therefore, $U$ is a component of $\partial C(V\cup \Omega^c)\cup V -\gamma $. 

Hence we have that
$\alpha:=h(\gamma)$
is a crosscut of $\Omega$, and that $h(U)$ is one of the two components of $\Omega-\alpha$.
Finally, by
$h(U)\cap V=\varnothing$, 
we have therefore produced a crosscut $\alpha$ of $\Omega$ for which one component of
$\Omega-\alpha$ is disjoint from $V$. This contradicts the hypothesis that $V$ is a discrete
approximation of $\Omega$.

\vspace{1ex}
\noindent \textbf{Step 2.} We recall that the structure theorem for the convex surface $\partial C(X)$ for a compact set $X \subset \hat{\mathbb{C}}$  containing at least three points (see Thurston \cite{thurston} or 
Epstein--Marden \cite{epsteinm2006}).  
A face $F \subset \partial C(X)$ is a surface with boundary such that 
$F = R \cap C(X)$ where $R$ is a supporting plane for $C(X)$ in $\mathbb{H}^3_U$.  
It is isometric to the convex hull $C(K) \subset \mathbb{H}^2$ where 
$K$ is a compact set in $\partial \mathbb{H}^2$.
A bending line is either a bi-infinite geodesic $\gamma$ such that $\gamma = R \cap C(X)$ where $R$ is a supporting plane for $C(X)$ in $\mathbb{H}^3_U$ or a boundary component of $\partial F$ for a face $F$. 
The structure theorem in \cite{thurston, epsteinm2006} for $\partial C(X)$ states that 
$\partial C(X)$  is the disjoint union of bending lines and interiors of faces. Since the number of faces is countable, there are countably many bending lines in the decomposition that come from the boundary components of faces. 

By Step 1, we see that if $F$ is a face of $\partial C(V \cup \Omega^{c})$ such that  $F \cap \Phi(P) \neq \varnothing$, no bi-infinite geodesic $\gamma$ in $F$ is contained in $\Phi(P)$.  In particular,  no boundary 
component of $\partial F$ is contained in $\Phi(P)$, and $\Phi(P)$ is not contained in the face $F$.  It follows that the non-empty intersection of the half-plane $Q$ with $\rho^{-1}(\Phi^{-1}({F}^\circ))$ is an open triangle as shown on the right of Figure \ref{fig:F-P}.  This shows that there is at most one point in the conformal infinity $\partial Q \cap \partial \D$ of $Q$ that comes from the conformal infinity of $\rho^{-1}(\Phi^{-1}({F}^\circ))$ for each face $F$. Since the set of all faces is countable, there are countably many conformal infinity points of $\cup_{F} \rho^{-1}(\Phi^{-1}({F}^\circ))$ in $\partial Q \cap \partial \D$. 

\begin{figure}[htbp]
  \centering
  \begin{overpic}[width=0.85\linewidth]{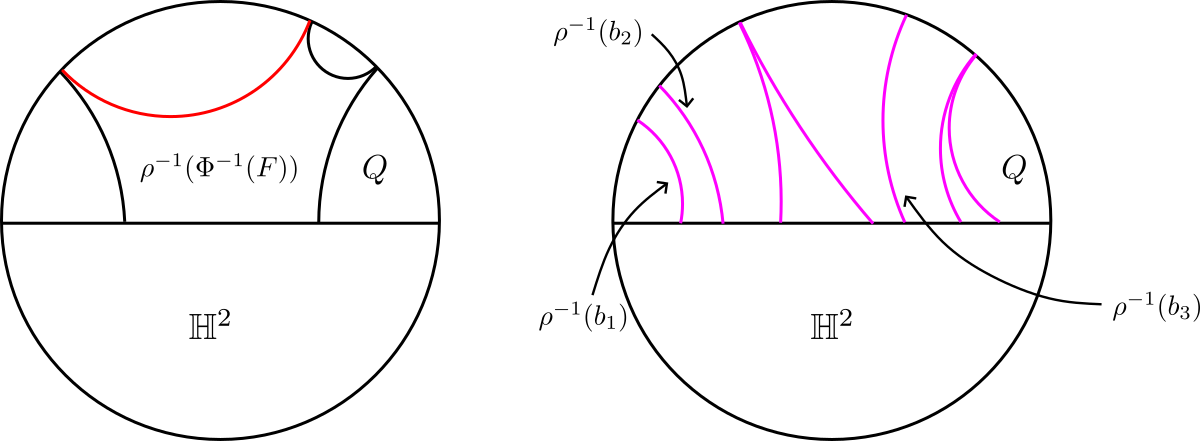}
\end{overpic}
\caption{In the left figure, there is a bi-infinite geodesic in $\rho^{-1} (\Phi^{-1}(F))$ and half-plane $Q$, which cannot happen. The right figure is the only possibility. }
\label{fig:F-P}
\end{figure}

Since $\partial C(V \cup \Omega^c)$ is a disjoint union of open faces and bending lines, by considering the restriction of the decomposition to $\Phi(P)$, $\Phi(P)$ is a disjoint union of geodesic rays $\Phi(P) \cap L$ for any bending line $L$ and $\Phi(P)\cap {F}^\circ$ for any face $F$.

By pulling the decomposition back to $Q$ by the isometry $\Phi \circ \rho$, we see that $Q$ is a disjoint union of geodesic rays and countably many open triangles.  It follows that there are three geodesic rays $\rho^{-1}(b_1), \rho^{-1}(b_2), \rho^{-1}(b_{3})$ in the decomposition of $Q$ such that, 

\begin{enumerate}
\item the end points of geodesic rays $\rho^{-1}(b_{i})$ in the $\partial Q \cap \partial \D$ are pairwise distinct and are disjoint from the conformal infinity of $\rho^{-1}(\Phi^{-1}({F}^\circ))$ for all faces $F$;
\item none of the geodesic rays $\rho^{-1}(b_i)$ is contained in $\rho^{-1}(\Phi^{-1}(F))$ for any face $F$ of $\partial C(V \cup \Omega^c)$;
\item  there exists a supporting plane $B_{i}$ for $C(V\cup \Omega^c)$ such that the geodesic ray 
$\Phi(b_{i}) \subset B_{i} \cap \partial C(V \cup \Omega^{c})$.
\end{enumerate}

Note that the two distinct rays $\Phi(b_i)$ and $\Phi(b_j)$ cannot be contained in a single supporting plane $B$.  Otherwise, $B\cap \partial C(V \cup \Omega^c)$ is a face of the convex hull which contains two rays $\Phi(b_i)$ and $\Phi(b_j)$. This contradicts condition (2). 
Thus, the supporting planes $B_i$ of $C(V \cup \Omega^c)$ that contain $\Phi(b_i)$ must be pairwise distinct.

\vspace{1ex}
\noindent \textbf{Step 3.}
Next, we show that at least two geodesic rays $\Phi(b_{i})$ and $\Phi(b_{j})$ 
end at different points in $\partial \Omega$.  
Suppose otherwise; say all of them end at $r \in \partial \Omega$.  

We claim that at least two supporting planes $B_{i}$ and $B_{j}$ intersect in $\mathbb{H}^3_U$.  
Otherwise, $\partial B_{1}$, $\partial B_{2}$ and $\partial B_{3}$ are tangent at $r$ in 
$\partial \mathbb{H}^3$.  
Let $D_{i}$ be the closed disk in $\partial \mathbb{H}^3_U-\{\infty\}$ bounded by $B_{i}$. By the definition of supporting plane and the convex hull of sets in $\partial \H^3_U$, the condition that $B_i$ is  a supporting plane for $C(V\cup \Omega^c)$  is equivalent to $D_{i}^\circ \cap (V\cup\Omega^c) = \varnothing$ and $|\partial D_i\cap (V\cup\Omega^c)|\geq 2 $. 
Now, due to tangency, we see that one of the $D_i$ is contained in $D_j$ for $j\neq i$. Since $\partial D_i \subset D_j^\circ \cup \{r\}$, it follows that $|\partial D_i\cap (V\cup\Omega^c)|=1$, which contradicts that $B_i$ is a supporting plane. 
Therefore, the claim holds.
Without loss of generality, we assume $B_{1} \cap B_{2} = L \neq \varnothing$.

\begin{figure}[htbp]
  \centering
  \begin{overpic}[width=0.4\linewidth]{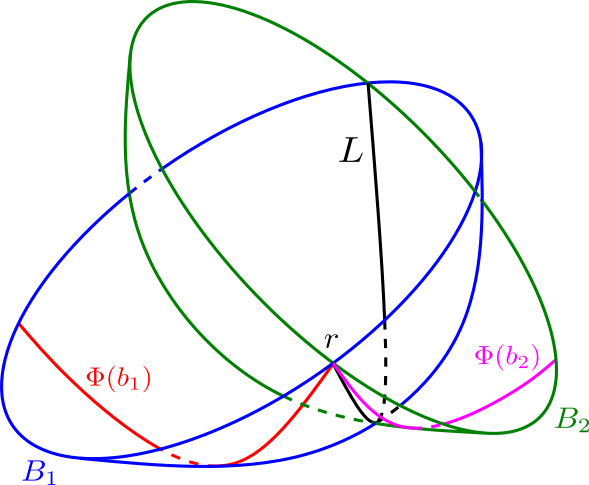}
\end{overpic}
\caption{Two supporting planes $B_1$ and $B_2$ intersecting at $L$, with geodesics $\Phi(b_1)$ and $\Phi(b_2)$.}
\label{fig:L-B}
\end{figure}

Since both $\Phi(b_{1})$ and 
$\Phi(b_{2})$ end at $r$, the geodesic $L$ also ends at the point $r$. See Figure \ref{fig:L-B}.
Let $b_{1}(t)$, $b_{2}(t)$ with $t \in [0, \infty)$, and $L(t)$ with $t \in \R$ be arc-length parameterizations of $b_1, b_2$ and geodesic $L$ such that 
$$
r = \lim_{t \to \infty} \Phi(b_{i}(t)) = \lim_{t \to \infty} L(t).
$$
Since both geodesic rays $L|_{[t_0,\infty)}$ and $\Phi(b_{i})|_{[t_0,\infty)}$ in $\H^3_U$ end at the same point $r$,  
there exists a constant $C > 0$ such that the distance in $B_{i}$ satisfies
$$
d_{B_i}\bigl(\Phi(b_{i})(t), L(t)\bigr) =d_{\mathbb{H}^3}\bigl(\Phi(b_{i})(t), L(t)\bigr)  \le C, \qquad \text{for all } t \ge 0.
$$
Let 
$$
\Psi : B_{1} \cup B_{2} \to \partial C(V \cup \Omega^{c})
$$
be the shortest-distance projection, which decreases length.
It follows that
$$
d_{\partial C(V \cup \Omega^{c})}\bigl(\Phi(b_{i}(t)), \Psi (L(t))\bigr) \le    d_{\H^3} \bigl(\Phi(b_{i}(t)), L(t))\bigr) \le  C
$$
for all $t\geq 0$. In particular, by the triangle inequality,
$$
d_{\partial C(V \cup \Omega^{c})}
\bigl(\Phi(b_{1})(t), \Phi(b_{2})(t)\bigr) \leq 
d_{\partial C(V \cup \Omega^{c})}
\bigl(\Phi(b_{1})(t), \Psi(L(t)) \bigr)+ d_{\partial C(V \cup \Omega^{c})}
\bigl(\Psi(L(t)), \Phi(b_{2})(t)\bigr)
\le 2C.
$$
Then
$$
d_{Q}(\rho^{-1}(b_{1}(t)), \rho^{-1}(b_{2}(t))) =
d_{\partial C(V \cup \Omega^{c})}
\bigl(\Phi(b_{1})(t), \Phi(b_{2})(t)\bigr)
\le 2C
\quad\text{for } t \ge 0.
$$
But the two rays $\rho^{-1}(b_{1}) $ and $\rho^{-1}(b_{2})$ end at different points in the conformal infinity of $Q$ by condition (1), which implies $\lim_{t\to \infty}  d_{Q}(\rho^{-1}(b_{1}(t)), \rho^{-1}(b_{2}(t)))      = \infty$. This contradiction shows $\Phi(b_1)$ and $\Phi(b_2)$
end at different points in $\partial \Omega$.

\vspace{1ex}
\noindent \textbf{Step 4.}
Finally, we join $b_{1}(t_{0})$ and $b_{2}(t_{0})$ by the shortest geodesic in $P$ and form 
the concatenation 
$$
\gamma_{0} = b_{1}|_{[t_{0},\infty)} \cup [b_{1}(t_{0}), b_{2}(t_{0})] \cup b_{2}|_{[t_{0},\infty)}.
$$
Then $\gamma_{0}$ is a simple crosscut for $P$.

By construction, $\pi(\Phi(\gamma_{0}))$ is a crosscut for $\Omega$.  
Let $Z$ be the component of $P - \gamma_{0}$ disjoint from $\partial P$.  
Then $\pi(\Phi(Z))$ is a component of 
$$
\Omega - \pi(\Phi(\gamma_{0}))
$$
disjoint from $V$.  
This contradicts the assumption on $V$, and then completes the proof.\qedhere

\end{proof}

We end this section by showing that the converse of Proposition \ref{typehy} holds. Namely, 

\begin{proposition}
\label{typeEu}
    If $V$ is a closed discrete infinite subset of $\C$,   then the Riemann surface $\partial C(V\cup \{\infty\})\cup V$ is conformal to $\C$. 
\end{proposition}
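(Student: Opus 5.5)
Let $S=\partial C(V\cup\{\infty\})\cup V$. As in the proof of Proposition \ref{typehy}, $S$ is homeomorphic to $\C$: when $C(V\cup\{\infty\})$ is three-dimensional, vertical projection gives the homeomorphism, and in the degenerate cases we use the maps $Q_{S,V,d}$ of Lemma \ref{lem:Q-S-X-d}. So $S$ is a simply connected open Riemann surface. By the uniformization theorem it is conformal either to $\C$ or to $\D$, and we must exclude $\D$. I would not argue by extremal length directly. Instead I would use the rigidity results already available, in the spirit of the proof of Theorem \ref{disriemap}.

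Suppose $S$ is conformal to $\D$. The complete hyperbolic metric on $S-V$ has a cusp at every $v\in V$ and exactly one non-cusp end. By Theorem \ref{lw}, $\partial C(V\cup\{\infty\})$ is isometric to $\partial C(Z)$ for a circle-type closed set $Z$. I expect that $Z$ then has exactly one disk component. The reason is this: if the single non-cusp end of $\partial C(Z)\cup(\text{cusps})$ corresponded to a point of $Z$, then that end of $\partial C(Z)\cup(Z-\text{pt})$ would behave like the infinity of the surface above. So I would first prove the dichotomy: for $Z=U\cup\{\mathrm{pt}\}$ with $U$ discrete, the filled surface has a parabolic end, and for $Z=U\cup\D^c$ it is hyperbolic. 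The second half is Proposition \ref{typehy}.

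For parabolicity I plan a modulus argument on the Euclidean-side surface $S$ itself. Take the annuli $R_k=\{z: 2^k<|\phi_{\E}(z)|<2^{k+1}\}$ on $S$, where $\phi_{\E}$ is vertical projection. Under the dilation $z\mapsto 2^{-k}z$, which is a hyperbolic isometry of $\H^3_U$ fixing $\infty$, each $R_k$ maps isometrically onto the corresponding annulus of the dome over $2^{-k}V$. I want a uniform positive lower bound on the modulus of these annuli. The natural choice is the conformal metric $\rho=|dz|/|z|$ pulled back via $\phi_{\E}$. Since the dome is a graph over $\C$ in $\H^3_U$ with hyperbolic metric $|dx|^2/t^2$, the plan is to compare the dome metric with the flat metric $|dz|^2/t^2$ and use scale invariance to bound ratios.

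The main obstacle is controlling the height $t$ of the dome over $z$, because $V$ can be arbitrarily sparse (for example, very lacunary). I would handle this without estimating $t$ at all. The key point is that $\phi_{\E}|_{\partial C}$ is $1$-Lipschitz from the Euclidean metric of $\R^3$ restricted to the dome onto $\C$. Then curves separating the boundaries of the annulus $\{2^k<|z|<2^{k+1}\}$ have Euclidean length at least $2^{k+1}$. A dome-side conformal metric, namely Euclidean length divided by $|x|$, has area at most a constant, by the scale-invariant form of \cite[Theorem 3.1]{luow2024} applied to the dome restricted to the region $2^k\le|x|\le 2^{k+1}$. This gives $\operatorname{Mod}(R_k)\ge c>0$ uniformly. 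Summing over $k$ then gives infinite modulus at the end, so $S$ is parabolic and hence conformal to $\C$. This contradicts the assumption, so the case $Z\supset$ disk is actually unnecessary. The delicate step is the uniform area bound, which I would derive by mapping each dyadic piece into a fixed ball via dilation and invoking the $16\pi$ bound in the Poincaré model.
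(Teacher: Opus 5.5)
Your rigidity detour does no work. After a M\"obius map, the ``parabolic half'' of the dichotomy you propose to prove first is exactly the proposition, and you end up proving it directly, so Theorem~\ref{lw} and Proposition~\ref{typehy} play no role. Everything rests on the dyadic modulus estimate, and that estimate has two problems.

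The first is the direction of the extremal-length inequality. A conformal metric in which every curve \emph{separating} the boundary components of $R_k$ has length $\ge c_1$, and whose area is $\le c_2$, shows $\operatorname{EL}(\Gamma_{\mathrm{sep}})\ge c_1^2/c_2$. That means $\Mod(R_k)\le c_2/c_1^2$, which is an upper bound. For a lower bound you need the family of curves \emph{joining} the two boundary components, as the paper does with $\Gamma^*$. This part is repairable: for horizontal annuli and the metric $|dx|/|\phi_{\E}(x)|$, joining curves have length $\ge\log 2$ because $\phi_{\E}$ is $1$-Lipschitz.

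The second problem is a genuine gap: you cut out the annuli by horizontal position.
\begin{itemize}
\item If $C_{\E}(V)\neq\C$, which can happen even when $C(V\cup\{\infty\})$ is three-dimensional, the dome has vertical faces over $\partial C_{\E}(V)$ reaching up to $\infty$. Then $\phi_{\E}$ is not injective and $\{2^k<|\phi_{\E}|<2^{k+1}\}$ does not separate the end.
\item Even when the dome is a graph, the area bound does not follow from \cite[Theorem 3.1]{luow2024}. Transported to $\H^3_U$ and rescaled, that theorem bounds the Euclidean area of the dome inside a fixed Euclidean half-ball. The dome over $\{1\le|u|\le2\}$ need not lie in any fixed half-ball, so the height control you meant to avoid is exactly what is needed.
\item In fact no lower bound on $\Mod(R_k)$ independent of $V$ holds. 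Let $V$ avoid an open disk $D$ of radius $\varrho\gg2^k$, centered on the positive real axis, with $\partial D$ passing through $2^{k-1}$. Put points of $V$ densely on $\partial D$ and on a fine lattice outside $D$. Every closed curve in $R_k$ winding around $0$ passes over $(2^k,2^{k+1})\subset D$, where the dome has height at least $H=\sqrt{\varrho\,2^{k-1}}$. It also passes over the negative real axis, where the dome is low. Take the Euclidean metric restricted to the part of $R_k$ below height $H$: its area is $O(2^kH)$, and it gives every such curve length $\gtrsim H$. Hence $\Mod(R_k)\lesssim 2^k/H\to0$ as $\varrho\to\infty$.
\item If instead $|x|$ means the norm in $\R^3$, the area bound becomes true. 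But curves in $R_k$ can then run at heights $\gg2^k$, and their lengths are no longer bounded below.
\end{itemize}

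The paper avoids this by defining annuli through the position of dome points relative to $\infty$ in $\R^3$, not through their horizontal shadow. In the ball model with $\infty=N$, the nearest-point projection $\pi$ satisfies $\tfrac18 d_{\SS}(x,N)\le d_{\E}(\pi(x),N)\le 8d_{\SS}(x,N)$. This yields a separating annulus $X$ between $\pi(B_{d_\SS}(N,r))$ and $\pi(\SS^2-B_{d_\SS}(N,81r)^\circ)$. The $2$-Lipschitz property of $\pi$ bounds the Euclidean area of $X$ by $c_2r^2$. Every curve joining the boundaries of $X$ must go from $B_{d_\E}(N,8r)$ to the complement of $B_{d_\E}(N,9r)$, so it has length $\ge r$. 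Degenerate hulls need no separate treatment.

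In $\H^3_U$ this amounts, up to bounded factors, to using Euclidean shells $\{2^k<|x|<2^{k+1}\}$ of $\R^3$ with the metric $|dx|/|x|$ and crossing curves. That is how your scaling idea can be repaired.
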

\begin{proof}
 We identify $\C$ with  $\SS^2-\{N\}$ where $N$, the north pole of $\SS^2$, is identified with $\infty$.  Let us normalize $V$ such that the south pole $(0, 0, -1)\in V$. Let $S = \partial C(V\cup\{N\})\cup V$ be the simply connected Riemann surface obtained by filling in the cusps corresponding to $V$. Take a simple closed loop $\alpha$ in $S-\{(0, 0, 1), (0, 0, -1)\}$ such that $\alpha$ separates $(0, 0, 1)$ and $(0, 0, -1)$, and $\alpha$ is contained in $B_{d_{\mathbb{E}}}(N, 1) - B_{d_{\mathbb{E}}}(N, 1/2)$. 

    We denote by $R$ the component of $S-\{\alpha\}$ that does not contain $(0, 0, -1)$. Then the topological surface $R$ is homeomorphic to an annulus. Our goal is to show that $R$, with the induced complex structure from $S$, is biholomorphic to the punctured disk $\D-\{0\}$. As a consequence, we see that the Riemann surface $S$ has a punctured end at $N$. Thus, by the uniformization theorem, $S$ is biholomorphic to $\C$.

    Since an annulus with a complex structure is biholomorphic to $\D-\{0\}$ if and only if its modulus is infinite, we will show that the modulus $\Mod(R) = +\infty$ by producing a sequence of pairwise disjoint annuli $R_1, R_2, \cdots, R_n, \cdots$ in $R$ so that
    \begin{enumerate}[label=(\arabic*)]
        \item each $R_i$ separates the two ends of $R$,
        \item the modulus $\Mod(R_i)$ of the annulus $R_i$ is uniformly bounded away from zero.
    \end{enumerate}
    By the subadditivity of modulus of rings \cite{Hubbard2006}, $\Mod(R)\geq \sum_{n=1}^\infty \Mod(R_i)$,  we see that $\Mod(R) = +\infty$. 

    The annuli $R_i$ are constructed as follows. Recall that by \cite[Lemma 3.7]{luow2024}, if $\pi:\partial \H^3_P \to S$ is the shortest distance projection in the Poincaré metric, then for all $x\in\partial\H^3_P$, we have 
    \begin{equation}
    \label{1/8}
        \frac{1}{8}d_{\mathbb{S}}(x, N)\leq d_{\mathbb{E}}(\pi(x), N)\leq 8d_{\mathbb{S}}(x, N),
    \end{equation}
    where $d_{\mathbb{E}}$ is the Euclidean metric and $d_{\mathbb{S}}$ is the spherical metric on $\partial \H^3_P$.     
    It follows that $\pi(B_{d_{\mathbb{S}}}(N, r)) \subset B_{d_{\mathbb{E}}}(N, 8r)$, and
    \begin{equation*}
        \pi(B_{d_{\mathbb{S}}}(N, r))\cap (\SS^2 - B_{d_{\mathbb{E}}}(N, 9r)) = \varnothing.
    \end{equation*}
    In particular, this implies that the sets 
    $$W_k:=\pi(B_{d_{\mathbb{S}}}(N, 9^{-4k}) - B_{d_{\mathbb{S}}}(N, 9^{-4k-2}) )$$
    are pairwise disjoint in $S$ for large $k$.

    The following lemma shows that each $W_k$ contains an annulus $R_k$ separating $N$ and $\alpha$ in $R$ and $\liminf \Mod(R_k)>0$, which will complete the proof of Proposition \ref{typeEu}. 
\end{proof}

  \begin{lemma}
        Let $r>0$ be small, for example, $r<1/(2\times 9^3)$. 
        Then the projection $$\pi(B_{d_{\mathbb{S}}}(N, 81r) - B_{d_{\mathbb{S}}}(N, r))$$ in $R$ contains an annulus $X$ separating $N$ and $\alpha$ such that $\Mod(X)\geq c_1>0$ for some constant $c_1$ independent of $r$.
    \end{lemma}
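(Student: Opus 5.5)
Throughout, $S=\partial C(V\cup\{N\})\cup V$ is viewed in the Poincaré ball, with $\pi$ the nearest-point retraction and \eqref{1/8} available. The plan is to build $X$ explicitly out of a round spherical annulus and use only two kinds of input: the bi-Lipschitz control \eqref{1/8} on distances to $N$, and the fact that $\pi$ (restricted to the Euclidean ambient metric, after passing through the $2$-Lipschitz Klein map $\Phi$) is Lipschitz with a universal constant near $N$. I will estimate the modulus through extremal length of the family $\Gamma_X$ of curves joining the two boundary components of $X$: $\Mod(X)=\operatorname{EL}(\Gamma_X)$. To get a lower bound on $\operatorname{EL}(\Gamma_X)$ I only need a test conformal metric, and I will take the Euclidean path metric $d$ on $S$ induced from $\R^3$. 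This metric is conformal on $S$ because the hyperbolic metric of the ball is conformally Euclidean and the filled cusps have zero area. The estimate then reduces to two bounds: a lower bound on the $d$-length of every curve crossing $X$, and an upper bound on the $d$-area of $X$.

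The annulus I would take is
\[
X:=\text{the component of } S-\bigl(\overline{\pi(B_{d_{\SS}}(N,r))}\cup \overline{\pi(\SS^2-B_{d_{\SS}}(N,81r))}\bigr) \text{ separating } N \text{ from } \alpha.
\]
More concretely, I would set $A=\{x: 9r\le d_\E(x,N)\le 9r'\}\cap S$ with $r'$ chosen around $81r/9\cdot\frac19$. Since $\pi$ is a retraction of $\overline{\H^3_P}$ onto the convex set that is the identity on $V\cup\{N\}$, and $\pi$ restricted to $\SS^2-\{N\}$ is onto $S$ (the radial/nearest-point picture used in the proof of Proposition \ref{typehy}), the inequalities \eqref{1/8} show the following. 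The Euclidean shells $E_\rho=\{y\in S: \rho\le d_\E(y,N)\le 9\rho\}$, for $\rho$ between roughly $8r$ and $81r/8$, lie inside $\pi(B_{d_\SS}(N,81r)-B_{d_\SS}(N,r))$ and separate $N$ from $\alpha$; the latter holds because $\alpha$ lies at Euclidean distance at least $1/2$ from $N$ and $r$ is small. So I take $X$ to be the component of $\{y\in S:\ 9r< d_\E(y,N)<81r/9\cdot 1\}$ (ratio $9$, with the constants adjusted so that \eqref{1/8} applies) that separates $N$ from $\alpha$, or more robustly the annular region between two separating Jordan curves chosen inside the inner and outer collars. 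This is where the condition $r<1/(2\cdot 9^3)$ enters: it keeps everything inside $B_{d_\E}(N,1/2)$, away from $\alpha$.

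The two estimates go as follows. First, any curve in $\Gamma_X$ goes from $\{d_\E(\cdot,N)=a\}$ to $\{d_\E(\cdot,N)=9a\}$, so its $d$-length is at least $8a$ by the triangle inequality in $\R^3$. Second, the $d$-area of $X$ is at most the area of $\partial C$ inside the Euclidean ball $B_{d_\E}(N,9a)$. Here the key point is a scale-invariant area bound: the area of the boundary of a convex body in the Poincaré/Klein ball intersected with a Euclidean ball of radius $\rho$ is $\le C\rho^2$. I would prove this by transferring to the Klein model via $\Phi$, which is bi-Lipschitz on a neighbourhood of $N$. There $C(V\cup\{N\})$ is a Euclidean convex set, and the area of the boundary of a convex set inside a ball of radius $\rho$ is at most the area of the sphere of radius $\rho$, namely $4\pi\rho^2$, by monotonicity of surface area under inclusion of convex sets (applied to the convex set cut by the ball). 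Combining the two bounds gives $\operatorname{EL}(\Gamma_X)\ge (8a)^2/(C\cdot 81a^2)=c_1$, independent of $r$.

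The main obstacle is the topological step: showing that $X$ really contains an annulus separating $N$ from $\alpha$, rather than merely a separating compact set. I would handle it by working through the homeomorphism $h:\hat S\to\SS^2$ of Proposition \ref{typehy}. Disjointness of the two closed sets $\overline{\pi(B(N,r))}$ and the complement region means that the component of the complement containing the separating shell is an open set separating $N$ from $\alpha$. Such a component contains a separating annulus, by taking a thin regular neighbourhood of a separating Jordan curve in it, and monotonicity of modulus then lets me replace it by the full region between the shells. The mild upper Lipschitz bound for $\Phi$ near $N$ also has to be checked, but that is routine.
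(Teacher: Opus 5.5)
Your overall architecture matches the paper's: bound $\Mod(X)=\operatorname{EL}(\Gamma^*)$ from below by testing against the Euclidean path metric $d_1$ on $S$, with a crossing length of order $r$ from \eqref{1/8} and an area of order $r^2$. The gap is in your justification of the area bound. The Klein map $\Phi(x)=2x/(1+|x|^2)$ is not bi-Lipschitz on any neighbourhood of $N\in\partial\H^3_P$. Its radial derivative $2(1-|x|^2)/(1+|x|^2)^2$ tends to $0$ at the sphere; for instance $|\Phi((1-\epsilon)N)-\Phi((1-2\epsilon)N)|\approx\tfrac32\epsilon^2$ for two points at distance $\epsilon$. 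So your Klein-side bound (the part of a Euclidean convex surface inside a ball of radius $\rho$ has area at most $4\pi\rho^2$), which is correct, gives no upper bound on Euclidean area on the Poincar\'e side. The global $2$-Lipschitz property of $\Phi$ only compares areas in the opposite direction. In the Poincar\'e ball the hull is not Euclidean convex, so the monotonicity cannot be applied there directly either. This is a real obstruction, not a matter of constants. Take a geodesic disk through the origin with $N$ on its boundary circle; this is what the surface looks like near $N$ when $V=\mathbb{Z}\subset\C$. Its part inside $B_{d_{\E}}(N,\rho)$ has Euclidean area $\asymp\rho^2$, while its Klein image has area $\asymp\rho^3$. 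The missing ingredient is one you list among your inputs but never use: the retraction from the sphere is $2$-Lipschitz, $d_1(\pi(x),\pi(y))\le 2d_{\SS}(x,y)$ \cite[Theorem 4.5]{luow2024}. Since $X\subset\pi(\{x\in\SS^2: r\le d_{\SS}(x,N)\le 81r\})$, the $d_1$-area of $X$ is at most four times the spherical area of that annulus, hence at most $c_2r^2$. This is how the paper argues, and no Klein model is needed.

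Two further steps need repair. First, the constants. \eqref{1/8} only gives $\pi(B_{d_{\SS}}(N,r))\subset B_{d_{\E}}(N,8r)$ and $\pi(\SS^2-B_{d_{\SS}}(N,81r)^\circ)\cap B_{d_{\E}}(N,81r/8)=\varnothing$. So no Euclidean shell of ratio $9$ fits inside $\pi(B_{d_{\SS}}(N,81r)-B_{d_{\SS}}(N,r))$, and the set $\{9r<d_{\E}(\cdot,N)<81r/9\}$ you write down is empty. The usable gap is from $8r$ to $9r$, which still forces every crossing curve to have $d_1$-length at least $r$. Second, the topology. A thin regular neighbourhood of a separating Jordan curve can have arbitrarily small modulus, and monotonicity of modulus cannot upgrade it. Curves crossing such a neighbourhood also need not cross the Euclidean gap, so neither of your estimates applies to it. Instead, as the paper does, take $X$ to be the component of $(S\cup\{N\})-\bigl(\pi(B_{d_{\SS}}(N,r))\cup\pi(\SS^2-B_{d_{\SS}}(N,81r)^\circ)\bigr)$ that separates these two disjoint continua. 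It is a doubly connected domain whose two boundary components lie in the two continua. Hence every curve of $\Gamma^*$ joining them crosses from $\partial B_{d_{\E}}(N,8r)$ to $\partial B_{d_{\E}}(N,9r)$, and $\operatorname{EL}(\Gamma^*)\ge r^2/(c_2r^2)=1/c_2$.
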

    \begin{proof}
    As we have seen from \eqref{1/8}, the two compact connected sets $\pi(B_{d_{\mathbb{S}}}(N, r))$ and $\pi(\SS^2 - B_{d_{\mathbb{S}}}(N, 81r)^\circ)$ are disjoint in $S\cup N$. Since $\alpha$ and $(0,0,-1)$ are contained in $\pi(\SS^2 - B_{d_{\mathbb{S}}}(N, 81r)^\circ)$, and $N\in\pi(B_{d_{\mathbb{S}}}(N, r))$,
    we see that there exists exactly one component $X$ of $S-(\pi(B_{d_{\mathbb{S}}}(N, r))\cup\pi(\SS^2 - B_{d_{\mathbb{S}}}(N, 81r)^\circ)$ such that
    \begin{enumerate}[label=(\arabic*)]
        \item $X$ separates $(0,0,1)$ and $(0,0,-1)$,
        \item $X$ is topologically an annulus.
    \end{enumerate}
    Let $\partial_1$ and $\partial_2$ be the boundary components of $X$ that are contained in $\pi(B_{d_{\mathbb{S}}}(N, r))$ and $\pi(B_{d_{\mathbb{S}}}(N, 81r))$ respectively. Furthermore, by the choice of $\alpha$ and the fact that $9\times81r<1/2$, we see that $X\subset R$ separates $N$ and $\alpha$. 

    To estimate the modulus $\Mod(X)$ of $X$, consider the family $\Gamma^*$ of all paths in $X$ joining $\partial_1$ and $\partial_2$. It is well-known that $\Mod(X) = EL(\Gamma^*)$. Since the Euclidean metric $d_{\mathbb{E}}$ is conformal to the Poincar\'e metric in $\H^3_P$, the path metric $d_1$ on $X$ induced by $d_{\mathbb{E}}$ is conformal with respect to the complex structure on $X$. In particular, by definition, 
    $$EL(\Gamma^*)\geq \frac{l_{d_1}^2(\Gamma^*)}{A(d_1)}$$
    where $A(d_1)$ is the area of $X$ with respect to $d_1$. Now we use a theorem of \cite[Theorem 4.5]{luow2024} which states:  
    \begin{theorem}
        Let $\pi:\partial\H^3_P\to S$ be the shortest path projection in $\H^3_P$ to a hyperbolic convex surface and $d_1$ is the path metric on $S$ induced by the Euclidean metric $d_{\mathbb{E}}$ on $\R^3$. Then for all $x, y \in \partial\H^3_P$, 
        $$d_1(\pi(x), \pi(y)) \leq 2d_{\mathbb{S}}(x, y).$$
    \end{theorem}
        This shows that the area $A(d_1)$ of $X$ is at most four times the spherical area of the annulus $\{x\in \SS^2 : r\leq d_{\mathbb{S}}(x, N)\leq 81r\}.$ Thus, $A(d_1)\leq c_2r^2$ for some universal constant $c_2$. 

        On the other hand, take any $\gamma^*\in \Gamma^*$. Since $\pi(B_{d_{\mathbb{S}}}(N, r))\subset B_{d_{\mathbb{E}}}(N, 8r)$ and $$B_{d_{\mathbb{E}}}(N, 9r) \cap \pi(\SS^2 - B_{d_{\mathbb{S}}}(N, 81r)^\circ) = \varnothing,$$ there exists a subarc $\tilde\gamma^*$ 
        in $\gamma^*$ joining the Euclidean spheres $\partial B_{d_{\mathbb{E}}}(N, 8r)$ and  $\partial B_{d_{\mathbb{E}}}(N, 9r)$. Therefore, $l_{d_1}(\gamma^*)\geq l_{d_1}(\tilde \gamma^*)\geq r$, which means that $l_{d_1}(\Gamma^*)\geq r$. As a consequence, 
        \begin{equation*}
            \Mod(X) = EL(\Gamma^*) \geq \frac{l_{d_1}^2(\Gamma^*)}{A(d_1)}\geq \frac{1}{c_2}.\qedhere
        \end{equation*}
    \end{proof}

\appendix
\section{Vertex scaling and Wu's example}
\subsection{Discrete conformal equivalence and vertex scaling }\label{appen:vs}

There are several equivalent ways to define discrete conformal equivalence for polyhedral surfaces in Definition \ref{dcdef}. See, for instance, \cite{luo2004, glsw, bps} and others. In this appendix, we  recall the \underline{vertex scaling} definition, which will be used to construct Tianqi Wu's example.

Fix a triangulation $\T$ of a surface $S$ with vertex set $V =V(\T)$. A triangulated surface $(S, V, \T, d)$ with Euclidean (resp. hyperbolic or spherical) cone metric can be represented by the edge length function $l_d: E(\T) \to \R_{>0}$. We consider $l_d$ as a coordinate for $d$.
One defines two Euclidean (resp. hyperbolic or spherical) edge length functions $l_d$ and $l_{d'}$ on $(S, V, \T)$ to be related by a \underline{vertex scaling} if there exists a function $\lambda: V \to \R_{>0}$ such that
\begin{equation}\label{vs}
    \begin{aligned}
& l_{d'}(e)= \lambda(v)\lambda(v')l_d(e) \\
\Bigl(\text{resp.}\quad
& \sinh\left(\frac{l_{d'}(e)}{2}\right)= \lambda(v)\lambda(v')
\sinh\left(\frac{l_d(e)}{2}\right) \\
\text{or}\quad & \sin\left(\frac{l_{d'}(e)}{2}\right)
= \lambda(v)\lambda(v') \sin\left(\frac{l_d(e)}{2}\right)\Bigr),
\end{aligned}
\end{equation}
where $v,v'$ are the end points of the edge $e$. For simplicity, we will write $\lambda*l_d$ for $l_{d'}$.

Vertex scaling operation $\lambda*l_d$ is closely related to the length cross-ratio. 
Recall that for an oriented Euclidean (resp. hyperbolic or spherical) quadrilateral $Q=\square v_iv_kv_jv_m$ with diagonal $e=v_kv_m$, its \underline{length cross-ratio}  $lcr(Q, e)$ is defined to be
\begin{equation}\label{eq:lcr}\tag{2}
   \frac{l_{ik}l_{jm}}{l_{im}l_{jk}} \quad \Bigl(\text{resp.} \quad \frac{\sinh(l_{ik}/2)\sinh(l_{jm}/2)}{\sinh(l_{im}/2)\sinh(l_{jk}/2)} \quad \text{or} \quad \frac{\sin(l_{ik}/2)\sin(l_{jm}/2)}{\sin(l_{im}/2)\sin(l_{jk}/2)} \Bigr).
   \end{equation}
   It is equal to the absolute value of the cross-ratio of its four vertices in $\C$.  For each edge $e$ in  a triangulated cone metric surface $(S, V, \T, l)$, its length cross-ratio $lcr_l(e)$ is defined to be the length cross-ratio $lcr(\tau\cup\sigma, e)$ where $\tau$ and $\sigma$ are the two triangles adjacent to $e$.
It can be shown easily that two triangulated cone metric surfaces $(S, V, \T, l)$ and $(S, V, \T, l')$ are related by a vertex scaling if and only if their corresponding length cross-ratios are the same, i.e., $lcr_l(e) =lcr_{l'}(e)$, for all edges $e$.

 The length cross-ratio $lcr_l(e)$ is equal to the shear coordinate of the two
 ideal triangles $C(v_i, v_k, v_m)$ and $C(v_j, v_k, v_m)$ along the geodesic $C(v_k, v_m)$ (see, for instance, \cite{glsw}). Specifically, the shear coordinate $Z(e)$ on the edge $e=v_kv_m$ satisfies $Z(e) = \log lcr_l(e)$. See Proposition 5.3.2 in \cite{bps}. Since the vertex scaling operation preserves the length cross-ratio, it does not change the shear coordinate and is closely related to the convex hull construction in the hyperbolic 3-space $\H^3$.  In particular, if $(S, V, \T, l)$ and $(S, V, \T, l')$ are two polyhedral surfaces with Delaunay triangulations $\T$ and $\T'$ and edge length functions $l, l'$ such that $l$ and $l'$ are related by a vertex scaling, then these two polyhedral surfaces are discrete conformal since their associated hyperbolic metrics are isometric.
The relationship between vertex scaling and discrete conformal equivalence is the following theorem proved in \cite{glsw}.   Suppose $(S, V, d_1)$ and $(S, V, d_2)$ are two closed polyhedral surfaces. Then they are discrete conformal if and only if there exists a sequence of Delaunay triangulated cone metric surfaces $(S, V, \T_i, l_i)$ for $i=1,2,..., n$ such that (1) $(S, V, \T_1, l_1) =(S, V, d_1)$ and $(S, V, \T_n, l_n) =(S, V, d_2)$, (2) if $\T_i =T_{i+1}$, then $l_{i+1} = w_i* l_i$ for some $w_i:V\to \R_{>0}$, and (3) if $\T_i \neq \T_{i+1}$, then $(S, V, \T_{i+1}, l_{i+1})$ is obtained from $(S, V, \T_i, l_i)$ by an edge flip.

In Appendix \ref{appendix:example}, Tianqi Wu constructed an example of a non-polyhedral standard model surface that is discrete conformal to a polyhedral surface. This prompts the following question.  It is known that there exists a closed discrete subset $V \subset \D$ with $\overline{V}-V =\partial \D$ such that $\partial C(V \cup \D^c)$ is not the union of  2-dimensional faces. More precisely, there exists a vertex $v \in V$ and infinitely many distinct geodesics $L_i$ in $\H^3$ ending at $v$ such that $L_i$ are contained in $\partial C(V \cup \D^c)$.
These are the geodesic laminations in Thurston's sense (\cite{thurston, epsteinm2006}). 
We conjecture these examples will not occur in the polyhedral category.

\begin{conjecture}
    If $(\D, V, d_{\H})$ is a standard model surface discrete conformal to a polyhedral surface, then $\partial C(V \cup \D^c)$ is a union of 2-dimensional faces of $C(V \cup \D^c)$.
\end{conjecture}

\begin{figure}[ht!]
\centering
\begin{overpic}[scale=0.88]{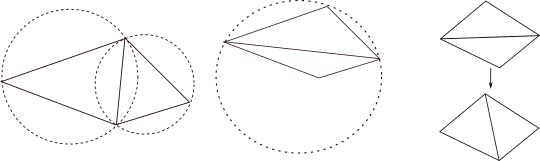}
      \put(22,25){$v_i$}
        \put(22,3){$v_j$}
        \put(-4,12){$v_k$}
        \put(36,10){$v_m$}
        \put(10,6){$l_{jk}$}
         \put(10,20){$l_{ik}$}
        \put(30,18){$l_{im}$}
        \put(28,6){$l_{jm}$}
        \put(6,13.5){$\theta_{k}^{ij}$}
         \put(28,11){$\theta_{m}^{ij}$}
        
        \put(60,30){$v_i$}
        \put(40,24){$v_k$}
        \put(58,12){$v_j$}
        \put(72,20){$v_m$}         
         \put(48,26){$l_{ik}$}
        \put(66,26){$l_{im}$}
         \put(48,14){$l_{jk}$}
        \put(66,14){$l_{jm}$}

\end{overpic} \hspace{.7cm}

\caption{The Delaunay condition satisfying $\theta_k^{ij} + \theta_m^{ij}\leq \pi$, a non empty-disk, and an edge flip.  
}
\label{edp}
\end{figure}

\subsection{Wu's  example}\label{appendix:example}
The following example, constructed by Tianqi Wu \cite{wupc}, 
shows that there exists a standard model surface  $(\E^2, V, d_\E)$ that is not polyhedral but is discrete conformal to a polyhedral surface $(S, V, d)$. Similar examples can be constructed for standard model surfaces with hyperbolic background metrics.

\begin{figure}[htbp]
\begin{center}
  \begin{overpic}[width=\linewidth]{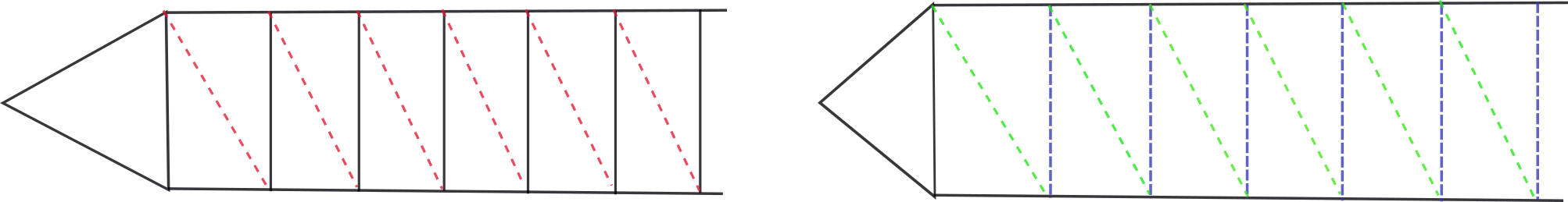}
    \put(3,0){$\sqrt{2}$}
    \put(3,10){$\sqrt{2}$}
    \put(12,9.5){$\frac{1}{\sqrt{2}}$}
    \put(12,3){$\frac{1}{\sqrt{2}}$}
    \put(7,5){$\sqrt2$}
    \put(53,0){$\sqrt{2}$}
    \put(53,10){$\sqrt{2}$}
    \put(62,9.5){$\frac{1}{\sqrt{2}}$}
    \put(62,3){$\frac{1}{\sqrt{2}}$}
    \put(57,5){$2$}
\end{overpic}
\caption{Left: two front parts of $(S, V, d)$ corresponding to $A$ and $B$; Right: the back part of $(S, V, d)$ corresponding to $C$.}
\label{poly3piece}
\end{center}
\end{figure}

We construct $(S, V, d)$ by gluing pieces of Euclidean triangles and rectangles with prescribed sizes shown in Figure \ref{poly3piece}. The left piece is constructed by gluing an equilateral triangle of length $\sqrt{2}$ with an infinite sequence of triangulated rectangles of lengths $\sqrt{2}$ and $1/\sqrt{2}$. The right piece is constructed by gluing an isosceles right triangle of lengths $2$ and $\sqrt{2}$ with an infinite sequence of triangulated rectangles of lengths $2$ and $1/\sqrt{2}$. We mark the longer sides of these rectangles with dashed lines with length equal to $2$.

Take two identical copies $A$ and $B$ of the left piece and one copy $C$ of the right piece and glue them along the boundary. Then $(S, V, d)$ is shown in the left picture of Figure \ref{poly3piece2} with a triangulation $\mathcal{T}$. It is straightforward to check that $\mathcal{T}$ is a Delaunay triangulation since the sum of two angles opposite to every edge is at most $\pi$. Let $\tilde l:E(\mathcal{T})\to \R$ denote the edge length function of $(S, V, \mathcal{T}, l)$.  One can also readily verify that $(S, V, d)$ is polyhedral.

We claim that $(S, V, d)$ is discrete conformal to the standard model $(\E^2, V, d_\E)$ with
$V =\{\pm(2^n, 0), (0, 2^n) : n \in \mathbb Z_{\geq 0 }\} \cup \{(0,0)\}$. This is clearly not a polyhedral surface, since it cannot be triangulated geometrically with $V$ as its vertex set. However, we can construct a topological tessellation of $(\E^2, V, d_\E)$ that is combinatorial equivalent to $\mathcal{T}$, as shown in Figure \ref{standardnopoly}. The edge length function $l: E(\mathcal{T})\to \R$ can be computed directly from the coordinates of vertices in $V$.

\begin{figure}[htbp]
\begin{center}
  \begin{overpic}[width=0.65\linewidth]{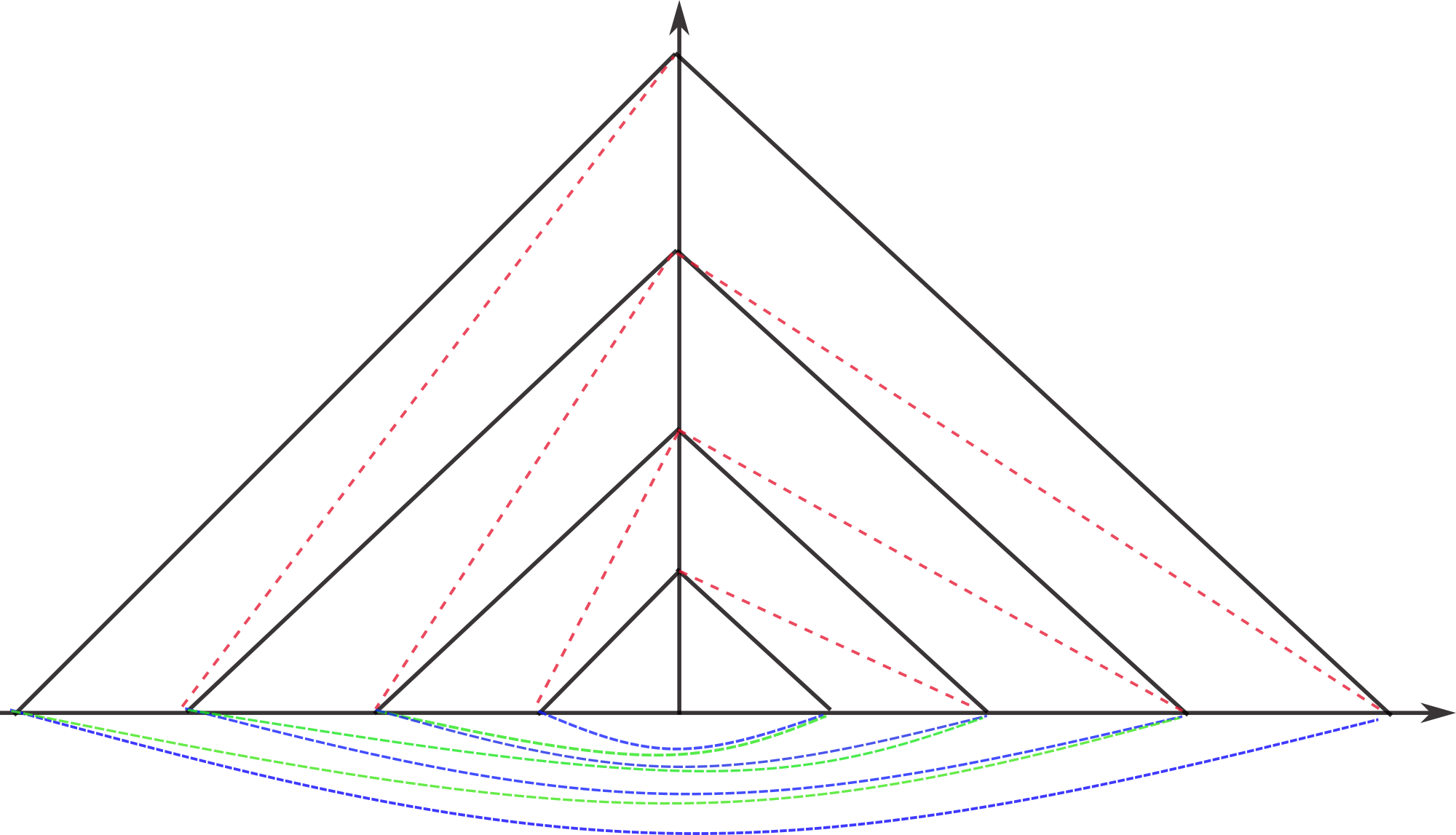}
  \put(44,7){$0$}
  \put(44,17){$1$}
  \put(44,27){$2$}
  \put(44,39){$4$}
  \put(44,52){$8$}
  \put(57,7){$1$}
  \put(68,7){$2$}
  \put(81,7){$4$}
  \put(91,7){$8$}
  \put(33,7){$-1$}
  \put(21,7){$-2$}
  \put(9,7){$-4$}
  \put(-3,7){$-8$}
\end{overpic}
\caption{A non-polyhedral standard model surface discrete conformal to a polyhedral surface.}  
\label{standardnopoly}
\end{center}
\end{figure}
 
A conformal factor $\lambda: V \to \R_{>0}$ can be defined as follows.    Set $\lambda(0) = \sqrt{2}$, and for $n \in \Z_{>0}$,  $$\lambda(\pm (2^n, 0)) = \lambda((0, 2^n))= 2^{-n/2}.$$ 
The three pieces $A$, $B$, and $C$ of $(S, V, d)$ correspond to the first quadrant, the second quadrant and the (closed) lower-half plane in $\E^2$ respectively. A simple calculation using the vertex scaling formula $\tilde l_{ij} = \lambda_i\lambda_j l_{ij}$ shows that the resulting surface is isometric to the isometric gluing of $A$, $B$, and $C$ along their boundaries, namely, $(S, V, d)$.
The diagonals of rectangles and isosceles trapezoids are also related by $\lambda$ via the vertex scaling formula. It can be seen from the Ptolemy identity for cyclic quadrilateral and the fact that the diagonals of   isosceles trapezoids have the same length.  Since $(S,V,d)$ and $(\E^2, V,d_\E)$ are related by a vertex scaling, their associated complete hyperbolic metrics are isometric. 

Indeed, the shear coordinates for the associated hyperbolic metric of $(S, V, d)$ can be computed directly from the formula $Z(e) = \log lcr(e)$.  The convex-hull hyperbolic metric associated with $(\E^2, V, d_\E)$ is induced from $\partial C(V\cup\{\infty\})$. Its shear coordinates can also be computed from $\mathcal{T}$. In Figure \ref{poly3piece2}, $\partial C(V\cup\{\infty\})$ contains a face with infinitely many vertices triangulated according to the triangulation of $C$ in Figure \ref{poly3piece}. The shear coordinates are computed by the same formula $Z(e) = \log lcr(e)$. Since $\tilde l_{ij} = \lambda_i\lambda_j l_{ij}$, they share the same length cross-ratio for every edge in $\mathcal{T}$, hence their associated hyperbolic metrics are isometric. Therefore, $(S, V, d)$ is discrete conformal to the standard model surface $(\E^2, V, d_\E)$.

\begin{figure}[htbp]
\begin{center}
    \includegraphics[width=0.2\textwidth]{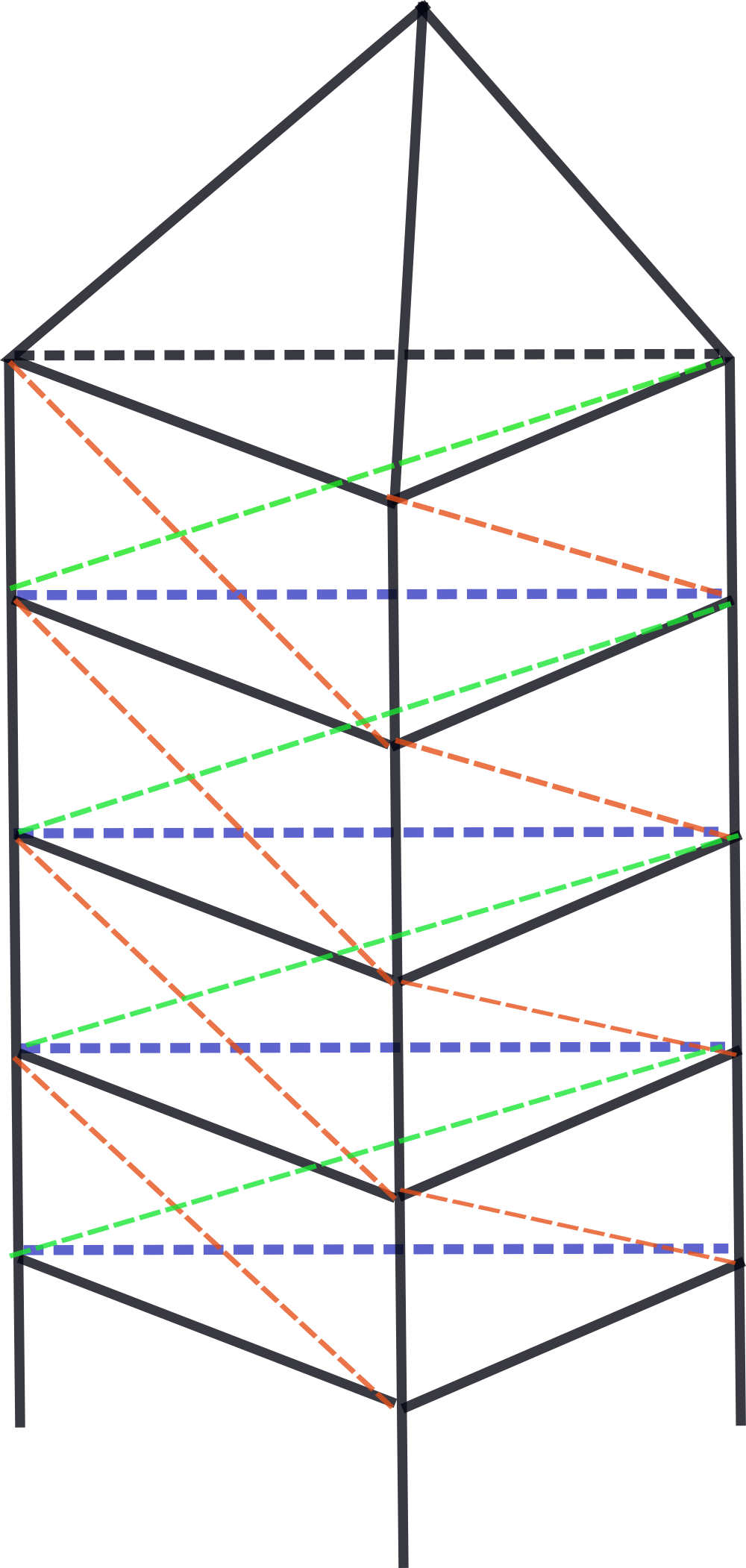}
    \qquad
    \includegraphics[width=0.6\textwidth]{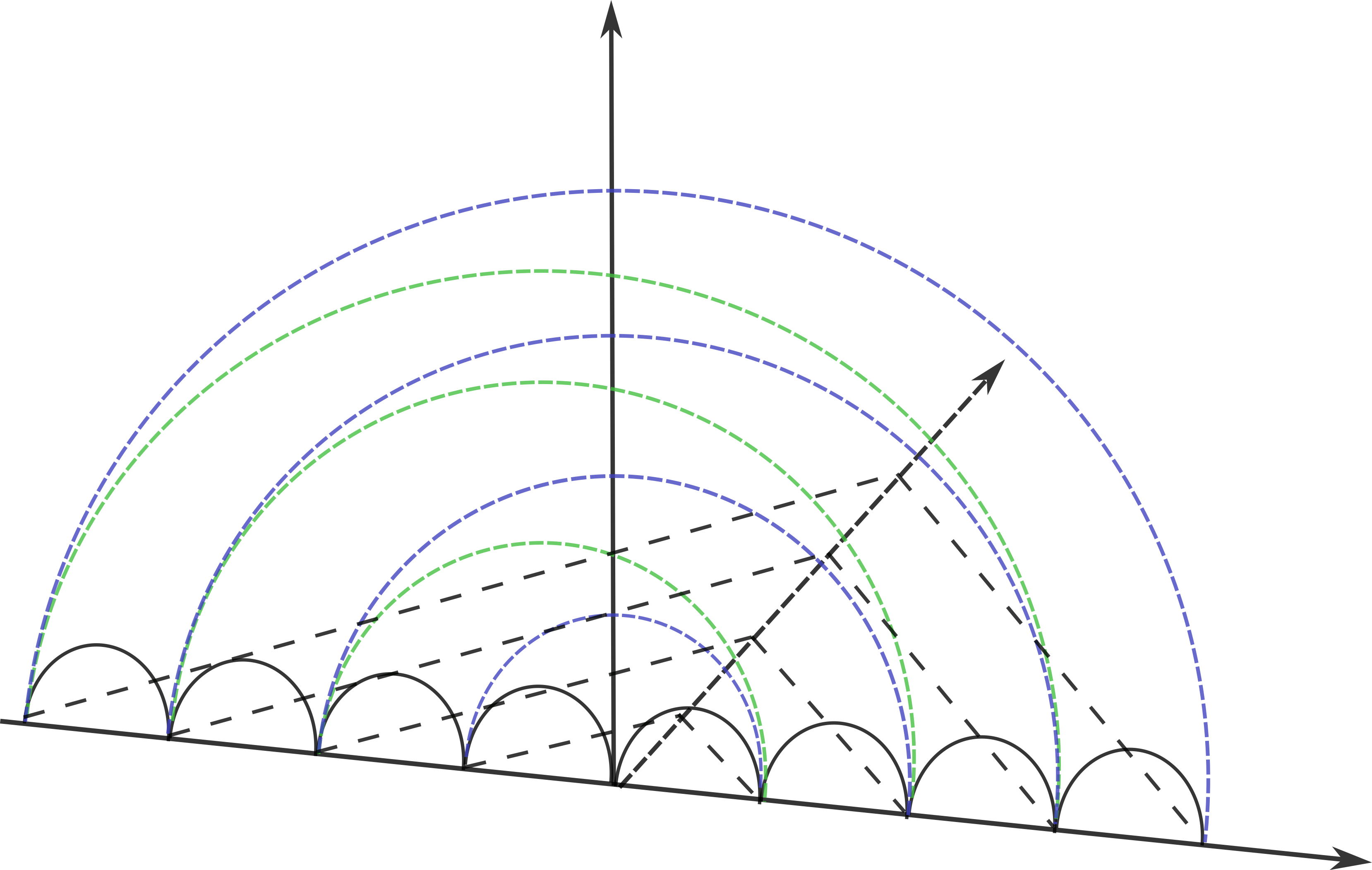}
\caption{The polyhedral surface $(S, V, d)$ and the infinite vertical face of $\partial C(V\cup\{\infty\})$.}
\label{poly3piece2}
\end{center}
\end{figure}

By the rigidity theorem, $(\E^2, V, d_\E)$ is the unique standard model discrete conformal to $(S, V,  d)$. 
This example shows that it is necessary to include non-polyhedral standard model surfaces in Theorem \ref{dut}.

\section{Polyhedral surfaces with incomplete $d^*$}\label{appen:incomplete}

In this section, we provide examples of polyhedral surfaces with Euclidean and spherical background metrics whose associated hyperbolic metrics $d^*$ are incomplete. Since standard model surfaces always have complete associated hyperbolic metrics, these examples show the sharpness of the assumptions on uniform bounds for the diameters of circumdisks in Theorem \ref{dut} (ii) and (iii).

\subsection{An example with a Euclidean background metric}

We now construct an example of polyhedral surface $(S, V, d)$ with Euclidean background metric whose associated hyperbolic metric $d^*$ is incomplete. 
The surface $S$ is chosen to be the universal cover of the punctured plane $\E^2-\{(0,0)\}$ with the flat metric. We construct a closed discrete subset $V$ such that the circumdisks in a Delaunay triangulation $\T$ of $(S,V,d)$ have unbounded diameters, while the associated hyperbolic surface $(S-V,d^*)$ contains a curve of finite length that escapes every compact set. 

Let $S=(0,\infty)\times\mathbb{R}$
with coordinates $(\rho,\theta)$ be the universal cover of $\E^2-\{(0,0)\}$ with the covering map $\Dev(\rho,\theta)=(\rho\cos\theta,\rho\sin\theta)$.
Then $(S,d)$ is a flat surface with induced metric $d=\Dev^*(d_\E)$.
We also use the map $\Dev$ as the developing map of $S$ into the Euclidean plane.
Let
\begin{equation}
    V=\left\{ a_{n,k}=\left(2^{k},\frac{n\pi}{2}\right):n,k\in\Z, |k|\geq |n|+2\right\}
\end{equation}
be the set of vertices.
We show that $(S,V,d)$ is a polyhedral surface with incomplete associated hyperbolic metric $d^*$. 

We note that each $a_{n,k}$ is in the line $L_n=\{(\rho,{n\pi}/{2}):\rho>0\}$, and we let 
\begin{equation}
    W_n=\{(\rho,\theta):\rho>0,\ \frac{n\pi}{2}\le\theta\le\ \frac{(n+1)\pi}{2}\}
\end{equation}
be the wedge with boundary $L_n$ and $L_{n+1}$.
Then the restriction of $\Dev$ to each $W_n$ is an embedding, and the image $U_n:=\Dev(W_n)$ is some quadrant in $\E^2$. 

\begin{figure}[ht]
\centering
\begin{tikzpicture}[scale=0.95, line cap=round, line join=round]
    \draw (0,0) -- (7.5,0) node[right] {$\Dev(L_n)$};
    \draw (0,0) -- (0,7.5) node[above] {$\Dev(L_{n+1})$};

    \coordinate (s_x)  at (1.4, 0);   
    \coordinate (s_y)  at (0, 1.4);   
    \coordinate (r_x)  at (2.8, 0);   
    \coordinate (R_x)  at (4.2, 0);   
    \coordinate (2R_x) at (5.6, 0);   
    \coordinate (S_y)  at (0, 5.6);   

    \draw (0.35,0) -- (0,0.35);
    \draw (0.7,0)  -- (0,0.7);
    \draw (1.4,0)  -- (0,1.4);
    \draw (0.7,0)  -- (0.35,0);
    \draw (0,0.7)  -- (0,0.35);
    \draw (1.4,0)  -- (0.7,0);
    \draw (0,1.4)  -- (0,0.7);
    \draw[densely dashed] (0.7,0) -- (0,0.35);
    \draw[densely dashed] (1.4,0) -- (0,0.7);

    \draw (r_x) -- (s_x) -- (s_y) -- cycle;

    \fill[gray!10] (r_x) -- (s_y) -- (S_y) -- (R_x) -- cycle;
    \draw[thick] (r_x) -- (s_y) -- (S_y) -- (R_x) -- cycle;
    \draw[thick, densely dashed] (r_x) -- (S_y);

    \draw (R_x) -- (2R_x) -- (S_y) -- cycle;

    \coordinate (M1_x) at (6.8, 0); 
    \coordinate (M1_y) at (0, 6.8);
    \draw (2R_x) -- (M1_x) -- (M1_y) -- (S_y);
    \draw[densely dashed] (2R_x) -- (M1_y);

    \draw (M1_x) -- (7.2,0);
    \draw (M1_y) -- (0,7.2);
    \draw[dotted] (6.8,0) -- (0,6.8);

    \fill (s_x)  circle (1.5pt) node[below=2pt] {$b_{n,-n-3}$};
    \fill (s_y)  circle (1.5pt) node[left=2pt]  {$b_{n+1,-n-3}$};
    \fill (r_x)  circle (1.5pt) node[below=2pt] {$b_{n,-n-2}$};
    \fill (R_x)  circle (1.5pt) node[below=2pt] {$b_{n,n+2}$};
    \fill (2R_x) circle (1.5pt) node[below=2pt] {$b_{n,n+3}$};
    \fill (S_y)  circle (1.5pt) node[left=2pt]  {$b_{n+1,n+3}$};

    \node at (1.2, 2.4) {$Q_n$};
    \node at (3.6, 1.5) {$T_n^{\mathrm{out}}$};
    \node at (1.3, 0.45) {$T_n^{\mathrm{in}}$};
    \node[align=center] at (0.6, 0.45) {\tiny inner\\ \tiny layers};
    \node[align=center] at (5.8, 5.8) {\footnotesize outer\\ \footnotesize layers};
\end{tikzpicture}
\caption{A Delaunay triangulation of one developed wedge $W_n$ for $n\ge0$. }
\label{fig:wedge}
\end{figure}

Since the surface $(S,d)$ is symmetric with respect to the line $\theta=0$, we will only consider the triangulation of the half plane $(0,\infty)\times\mathbb{R}_{\geq0}$. Fix $n\in\Z_+$, and let $b_{n,k}=\Dev(a_{n,k})=(2^k\cos(n\pi/2),2^k\sin(n\pi/2))$. We note that for each $k\geq n+3$ or $k\leq -n-4$, the four points $b_{n,k}, b_{n,k+1},b_{n+1,k}$, and $b_{n+1,k+1}$ are the vertices of an isosceles trapezoid, so they are concyclic. Besides, we have a triangle $T_{n}^{in}$ with vertices $\{b_{n,-n-3}, b_{n,-n-2},b_{n+1,-n-3}\}$, a triangle $T_{n}^{out}$ with vertices $\{b_{n,n+2}, b_{n,n+3},b_{n+1,n+3}\}$, and a quadrilateral $Q_n$ with vertices $\{b_{n,-n-2},\allowbreak b_{n,n+2},\allowbreak b_{n+1,n+3},\allowbreak b_{n+1,-n-3}\}$, where the vertices of $Q_n$ are concyclic.
It is not hard to verify that the interior circumdisk of each above cell does not contain any vertices in $V$, and hence they form the Delaunay tessellation (and then a Delaunay triangulation) of $U_n$. See Figure \ref{fig:wedge}. 
Thus we have a triangulation of $(S,V,d)$.

On the other hand, for each cell $\tau\subset U_n$, let $D$ be its open circumdisk. Then the closure $\overline{D}$ is contained in $U_{n-1}\cup U_n\cup U_{n+1}$. Since the restriction of $\Dev$ to $W_{n-1}\cup W_n\cup W_{n+1}$ is an isometric embedding onto $U_{n-1}\cup U_n\cup U_{n+1}$, $\Dev$ has a well-defined inverse $\iota: \overline{D} \to S$, which is an isometric embedding. Since $D$ is in the interior of $U_{n-1}\cup U_n\cup U_{n+1}$, we know that $\iota(D)$ is in the interior of $W_{n-1}\cup W_n\cup W_{n+1}$. Since $D$ does not contain any vertices in $U_n$, we know that $\iota(D)$ does not contain any vertices in $W_n$. Therefore, $\iota(D)$ is an empty-disk.
Therefore, $(S,V,d)$ is a polyhedral surface. In particular, the diameters of the circumdisks of $Q_n$ are unbounded.
 
We now consider the associated hyperbolic metric $d^*$ of $(S,V,d)$.
For every Euclidean triangle of $\mathcal{T}$, take the ideal hyperbolic triangle with the same three boundary points in $\partial\mathbb{H}^3_U=\mathbb{E}^2\cup\{\infty\}$.  The Euclidean gluing maps extend to hyperbolic isometries, producing the $d^*$ on $S-V$.
Since $\Dev$ is a global developing map, adjacent triangles will always be developed simultaneously in their actual relative positions in $\mathbb{E}^2$.  Thus the corresponding ideal triangles are glued along the common ideal geodesic in that simultaneous development.

For each $n$, the vertices of $Q_n$ are concyclic, and hence they are on the conformal boundary of some hyperbolic plane in $\H^3_U$. Then the four vertices form a convex ideal quadrilateral in $\H^3_U$, which is denoted by $Q_n^*$. 
Since $Q_{n-1}$ and $Q_{n}$ share the common edge connecting $b_{n,-|n|-2}$ and $b_{n,|n|+2}$, we know that $Q_{n-1}^*$ and $Q_{n}^*$ share the common side $E_n^*:=[b_{n,-|n|-2},b_{n,|n|+2}]$.
Let $t_n=2^{-|n|-2}$ and
\begin{equation}
    x_n=\left( \frac{2t_n}{1+t_n^2} \cos\frac{n\pi}{2}, \frac{2t_n}{1+t_n^2} \sin\frac{n\pi}{2}, \frac{1-t_n^2}{1+t_n^2}\right) \in \E^2\times\R_{>0}\cong\H^3_U. 
\end{equation}
One can verify that $x_n\in E_n^*$. 

\begin{figure}[htbp]
\centering
\begin{tikzpicture}[x=1.55cm,y=1.55cm,line cap=round,line join=round]
    \draw (-1.25,0) -- (5.25,0);
    \node[below right] at (4.85,-0.02) {$\Dev(L_n)$};
    \draw[-{Latex[length=2mm]}] (0,0) -- (0,2.25) node[left] {$z$};
    \draw[thick] (4,0) arc[start angle=0,end angle=180,radius=1.875];
    \fill (0.25,0) circle (1.4pt) node[below] {$b_{n,-|n|-2}$};
    \fill (4,0) circle (1.4pt) node[below] {$b_{n,|n|+2}$};
    \fill (0,1) circle (1.5pt) node[left] {$p$};
    \fill (0.48,0.8824) circle (1.5pt) node[right] {$x_n$};
    \node[align=center] at (2.45,1.5) {$E_n^*$};
\end{tikzpicture}
\caption{A 2-dimensional vertical cross-section. As $t_n\to0$, the point $x_n$ approaches the point $p$.}
\label{fig:hyperbolic-cross-section}
\end{figure}

Let $p=(0,0,1)\in\H^3_U$, and we have 
\begin{equation}
    d_{\H}(x_n,p)=2\arctanh t_n \leq \frac{2t_n}{1-t_n^2}<3t_n,
\end{equation}
since $t_n\leq 1/4$. See Figure \ref{fig:hyperbolic-cross-section}. Let $\gamma_n$ be the hyperbolic geodesic segment joining $x_n\in E_n^*$ to $x_{n+1}\in E_{n+1}^*$. Since $Q_n^*$ is convex, we have $\gamma_n\subset Q_n^*$. By the triangle inequality,
\begin{equation}\label{eq:gamma-bound-raw}
    \ell(\gamma_n)
    =d_{\mathbb{H}}(x_n,x_{n+1})
    \leq d_{\mathbb{H}}(x_n,p)+d_{\mathbb{H}}(p,x_{n+1})< 3t_n + 3t_{n+1}.
\end{equation}
Let 
\begin{equation}
    \gamma=\gamma_0*\gamma_1*\gamma_2*\cdots
\end{equation}
be the concatenation of the segments.
Due to
\begin{equation}
    \sum_{n=0}^{\infty}\ell(\gamma_n)
    < 3\sum_{n=0}^{\infty}(t_n+t_{n+1}) = 3\sum_{n=0}^{\infty}(2^{-n-2}+2^{-n-3}) < \infty,
\end{equation}
the path $\gamma$ has finite total length in $(S-V,d^*)$. Note that $\gamma$ escapes every compact subset of $(S-V,d^*)$, and thus the metric $d^*$ is incomplete.

\subsection{An example with a spherical background metric}

We then construct an example of a polyhedral surface $(S, V, d)$ with a spherical background metric whose associated hyperbolic metric $d^*$ is incomplete. 
We use a family of spherical quadrilaterals as building blocks for $(S, V, d)$, which is topologically an infinite cylinder. We then show that the circumdisks of triangles in a Delaunay triangulation of $(S, V, d)$ are compactly contained in $(S, V, d)$, while the associated metric $d^*$ is not complete.

    Let $N=(0,0,1), u_n=(\cos (n\pi/2),\sin (n\pi/2),0)$, for $n\in\Z$, be points in $\SS^2$. Let $\epsilon_n=2^{-|n|-2}$, and we have the following sequences in $\SS^2$:
	\begin{equation}\label{eq:ABdef}
		A_n=(\sin\epsilon_n)\cdot u_n+(\cos\epsilon_n)\cdot N,
		\qquad
		B_n=(\sin\epsilon_n)\cdot u_n-(\cos\epsilon_n)\cdot N.
	\end{equation}
	As $|n|\to+\infty$, we have $A_n\to N$ and $B_n\to -N$.
	
	For any $n\in\mathbb Z$, let $Q_n=A_nA_{n+1}B_{n+1}B_n$ be the spherical quadrilateral with cyclically ordered vertices $A_n,A_{n+1},B_{n+1}$ and $B_n$,
	where consecutive vertices are joined by the unique shortest geodesic segment in $\SS^2$. 
    We triangulate $Q_n$ by the diagonal $A_nB_{n+1}$ to produce two triangles $\triangle A_nA_{n+1}B_{n+1}$ and $\triangle A_nB_{n+1}B_n$.
	
    We let $\mathbf n_n=(\sin\epsilon_{n+1})\cdot u_n+(\sin\epsilon_{n})\cdot u_{n+1}$ and $h_n=\sin\epsilon_{n}\sin\epsilon_{n+1}$.
	Then
	\begin{equation}\label{eq:four-on-circle}
		\langle \mathbf n_n,A_n\rangle
		=\langle \mathbf n_n,A_{n+1}\rangle
		=\langle \mathbf n_n,B_{n+1}\rangle
		=\langle \mathbf n_n,B_n\rangle
		=h_n.
	\end{equation}
	Therefore, four points $A_n, A_{n+1}, B_n$ and $B_{n+1}$ lie on the boundary circle of the open spherical disk $D_n:=\{x\in\SS^2:\langle \mathbf n_n,x\rangle>h_n\}$, which is the open circumdisk of $Q_n$. Note that the diameter of $D_n$ is strictly less than $\pi$, but converges to $\pi$ as $n\to\infty$.

	\begin{figure}[htbp]
    	\centering
    	\begin{tikzpicture}[scale=0.92, line cap=round, line join=round]
    		\foreach \y/\lab in {0/0,1/1,2/2,3/3,4/0} {
    			\draw (-0.3,\y)--(6.3,\y);
    			\node[left] at (-0.35,\y) {$j=\lab$};
    		}
    		\foreach \x in {0,1,2,3,4,5,6} {
    			\draw (\x,0)--(\x,4);
    		}
    		\foreach \x in {0,1,2,3,4,5} {
    			\foreach \y in {0,1,2,3} {\draw (\x,\y+1)--(\x+1,\y);}
    		}
    		\node[below] at (0,0) {$n-1$};
    		\node[below] at (1,0) {$n$};
    		\node[below] at (2,0) {$n+1$};
    		\node[below] at (3,0) {$n+2$};
    		\draw[very thick,-{Latex[length=3mm]}] (6.55,4) arc[start angle=90,end angle=-90,radius=2];
    		\node[right,align=left] at (8.6,2) {top and bottom\\rows are identified};
    		\node[above] at (3,4.25) {four cyclically glued strips};
    	\end{tikzpicture}
    	\caption{The quotient surface. Each square represents a spherical quadrilateral $Q_n^{(j)}$.}
    	\label{fig:strips}
    \end{figure}    

	Next, we glue these quadrilaterals together to construct an infinite cylinder as follows. 
    For each $n\in\Z$, we take four copies $Q_n^{(j)}=A_n^{(j)}A_{n+1}^{(j)}B_{n+1}^{(j)}B_n^{(j)}$, $j\in\Z/4\Z$, of $Q_n$. Then for each $j\in\Z/4\Z$, we glue $Q_n^{(j)}$ with $Q_{n+1}^{(j)}$ along the edge $A_{n+1}^{(j)}B_{n+1}^{(j)}$, for all $n$, and obtain a strip $P^{(j)}$ with vertices $\{A_n^{(j)},B_n^{(j)}:n\in\mathbb Z\}$. The two boundary polygonal lines of $P^{(j)}$ are
	$$
	\partial_+P^{(j)}=\bigcup_n A_n^{(j)}A_{n+1}^{(j)},
	\qquad
	\partial_-P^{(j)}=\bigcup_n B_n^{(j)}B_{n+1}^{(j)}.
	$$
	Since $d_{\mathbb S^2}(A_n,A_{n+1})=d_{\mathbb S^2}(B_n,B_{n+1}),$ we glue $\partial_+P^{(j)}$ and $\partial_-P^{(j+1)}$ via the identification $A_n^{(j)}A_{n+1}^{(j)}\sim B_n^{(j+1)}B_{n+1}^{(j+1)}$ for all $n\in\mathbb Z$, and then obtain a quotient surface $S$ with the induced spherical cone metric $d$ and the vertex set $\{v_{n,j}=[A_n^{(j)}]=[B_n^{(j+1)}]\}$. See Figure \ref{fig:strips} for an illustration. Let $\T$ be the resulting triangulation via the triangulation of each $Q_n$ determined above. It is not hard to see that $S\cong \R\times S^1$ and $\T$ is locally finite.

	It follows that $(S, V, d)$ is a polyhedral surface with a Delaunay triangulation $\mathcal{T}$, once we show that each circumdisk is compactly contained in $(S, V, d)$.	
	Fix $n\in\mathbb Z$ and $j\in\mathbb Z/4\mathbb Z$.
	Let $K_{n,j}$ be the finite triangulated patch consisting of $Q_{n-1}^{(j)}\cup Q_n^{(j)}\cup Q_{n+1}^{(j)}$
	together with the one triangle of $P^{(j+1)}$ adjacent to the upper edge of $Q_n^{(j)}$ and the one triangle of $P^{(j-1)}$ adjacent to the lower edge of $Q_n^{(j)}$.
    Let $\triangle A_n^{(j)}A_{n+1}^{(j)}C_n^{(j+1)}$ be the triangle in $Q_n^{(j+1)}$ that is adjacent to $A_n^{(j)} A_{n+1}^{(j)}$, and $\triangle B_n^{(j)}B_{n+1}^{(j)}C_n^{(j-1)}$ the triangle in $Q_n^{(j-1)}$ adjacent to $B_n^{(j)} B_{n+1}^{(j)}$. See Figure \ref{fig:finitepatch} for a combinatorial figure.
    We note that $Q_{n-1}^{(j)}\cup Q_n^{(j)}\cup Q_{n+1}^{(j)}$ is naturally embedded in $\SS^2$ with the vertices $A_{n-1}, A_n, A_{n+1},B_{n-1}, B_n$ and $B_{n+1}$ in \eqref{eq:ABdef}. Since $K_{n,j}$ is simply connected, this embedding can be extended to a developing map $\Dev_{n,j}:K_{n,j}\to \SS^2$ by gluing $\triangle A_n^{(j)}A_{n+1}^{(j)}C_n^{(j+1)}$ and $\triangle B_n^{(j)}B_{n+1}^{(j)}C_n^{(j-1)}$.
    One can then directly verify that $D_n$, viewed as the circumdisk of $Q_n^{(j)}$, is contained in the interior of $\Dev_{n,j}(K_{n,j})$. Hence the circumdisk of $Q_n^{(j)}$ is compactly contained in $(S,V,d)$. We omit the details of computation.

	\begin{figure}[ht]
		\centering
		\begin{tikzpicture}[scale=1.05,line cap=round,line join=round]
			\coordinate (A0) at (0,1.2);
			\coordinate (A1) at (2,1.2);
			\coordinate (A2) at (4,1.2);
			\coordinate (A3) at (6,1.2);
			\coordinate (B0) at (0,0);
			\coordinate (B1) at (2,0);
			\coordinate (B2) at (4,0);
			\coordinate (B3) at (6,0);
			\coordinate (Cp) at (3,2.45);
			\coordinate (Cm) at (3,-1.25);
			\draw (A0)--(A1)--(A2)--(A3);
			\draw (B0)--(B1)--(B2)--(B3);
			\draw (A0)--(B0) (A1)--(B1) (A2)--(B2) (A3)--(B3);
			\draw (A0)--(B1) (A1)--(B2) (A2)--(B3);
			\draw (A1)--(Cp)--(A2);
			\draw (B1)--(Cm)--(B2);
			\foreach \P/\lab/\pos in {A0/$A_{n-1}^{(j)}$/above,A1/$A_n^{(j)}$/above left,A2/$A_{n+1}^{(j)}$/above right,A3/$A_{n+2}^{(j)}$/above,
				B0/$B_{n-1}^{(j)}$/below,B1/$B_n^{(j)}$/below left,B2/$B_{n+1}^{(j)}$/below right,B3/$B_{n+2}^{(j)}$/below,Cp/$C_n^{(j+1)}$/above,Cm/$C_n^{(j-1)}$/below} {
				\fill (\P) circle (1.4pt); \node[\pos] at (\P) {\lab};
			}
			\node at (1,0.62) {$Q_{n-1}^{(j)}$};
			\node at (3,0.62) {$Q_n^{(j)}$};
			\node at (5,0.62) {$Q_{n+1}^{(j)}$};
			\draw (A3)--(B3)
			node[midway,right=8pt] {$P^{(j)}$};
		\end{tikzpicture}
		\caption{The finite developed patch $K_{n,j}$.}
		\label{fig:finitepatch}
	\end{figure}

   Finally, we construct a path of finite total length that escapes every compact subset.
	For each spherical triangle of $\mathcal T$ in $\SS^2$, take the ideal hyperbolic triangle with the same three ideal vertices in the Poincar\'e ball model
	$\mathbb H^3_P=\{x\in\mathbb R^3:\|x\|<1\}$.  The spherical transition isometries extend to isometries of $\mathbb H^3_P$, so these ideal triangles glue to the metric $d^*_{\mathcal T}$ on $S-V$. We now construct a finite-length path escaping every compact set on $(S-V,d^*_{\mathcal T} )$.
	
	Fix one strip, say $P$ (we omit the superscript for now for simplicity).  For each $n$, the four ideal points $A_n,A_{n+1},B_{n+1},B_n$
	lie on the circle $\partial D_n$.  Hence, the two ideal triangles associated with the two triangles in $Q_n$ are coplanar and form an ideal hyperbolic quadrilateral, denoted $Q_n^*$.
    Denote by $P^*$ the strip obtained by gluing all $Q_n^*$.
	Let $E_n^*=[A_n,B_n]$ be a side of $Q_n^*$, which is the common edge of $Q_{n-1}^*$ and $Q_n^*$, and the developments of these two quadrilaterals agree on that edge. The ideal geodesic $E_n^*$ meets the equatorial plane $\{x_3=0\}\subset\R^3$ at a unique point $x_n$. Denote by $\gamma_n$ the hyperbolic segment in $\H^3_P$ joining $x_n$ to $x_{n+1}$. See Figure \ref{spherical-quad}.

    \begin{figure}[!htbp]
    \begin{center}
      \begin{overpic}[width=0.4\linewidth]{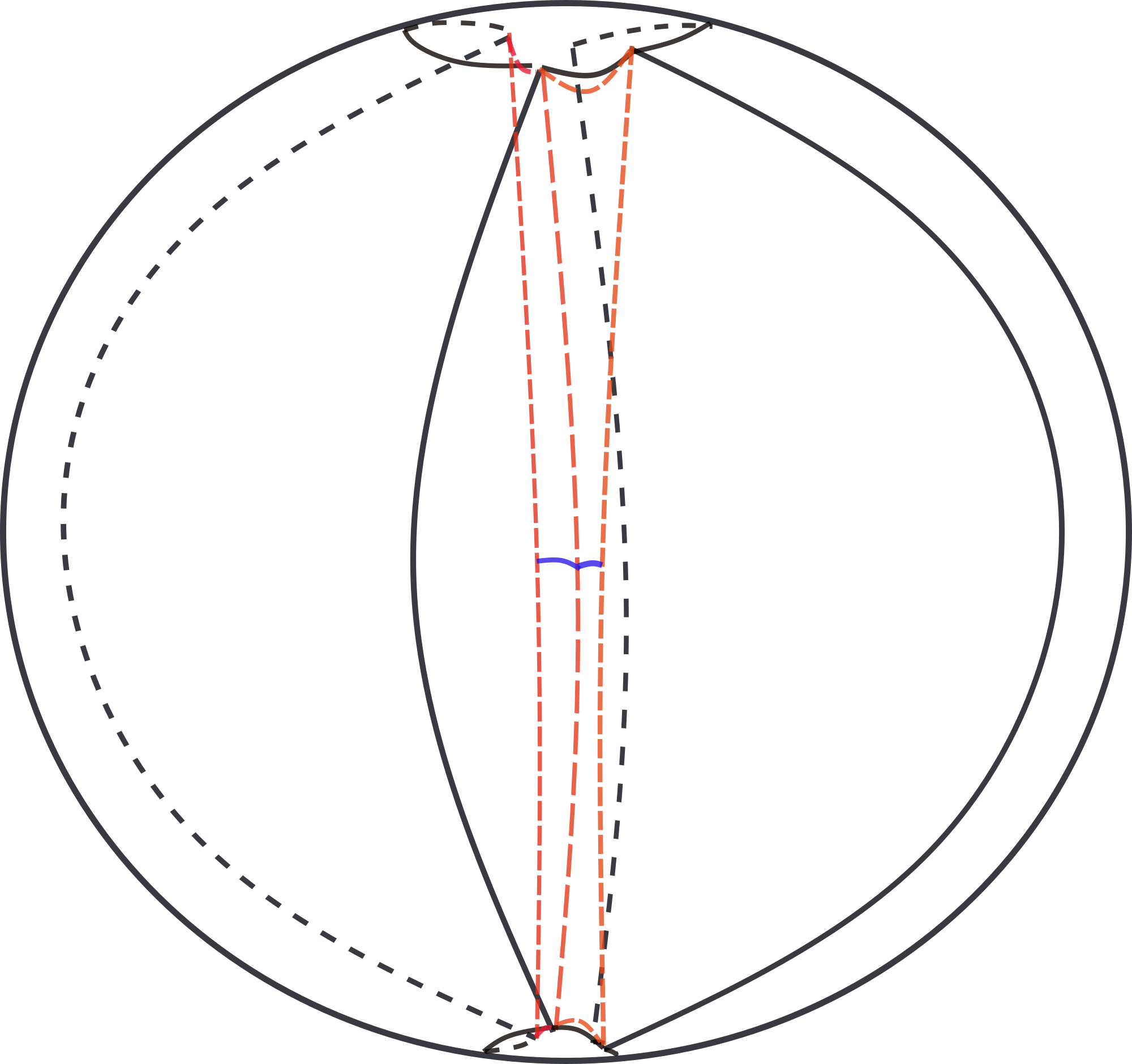}
        \put(60,50){$Q_n$}
        \put(94.2,50){$Q_{n+1}$}
        \put(22,50){$Q_{n-1}$}
        \put(40,30){$E^*_{n-1}$}
        \put(51,47){$\gamma_{n}$}
    
        \end{overpic}
    \caption{The development of $Q_{n-1}\cup Q_n\cup Q_{n+1}$.}
    \label{spherical-quad}
    \end{center}
    \end{figure}

    Let $O\in\H^3_P$ be the origin of $\R^3$. By basic hyperbolic geometry, we have
    \begin{equation}
        x_n=\left(\tan\frac{\epsilon_n}{2}\right)\cdot u_n
    \end{equation}
    and
    \begin{equation}
        d_{\H^3_P}(O,x_n)\leq2{\epsilon_n}.
    \end{equation}
    We skip the computation for the above two inequalities.

    Since the ideal quadrilateral $Q_n^*$ is convex in the hyperbolic plane containing it, we have $\gamma_n\subset Q_n^*$. By the triangle inequality in $\mathbb H^3_P$,
    \begin{equation}
        \ell(\gamma_n)=d_{\mathbb H^3_P}(x_n,x_{n+1})
		\le d_{\mathbb H^3_P}(x_n,O)+d_{\mathbb H^3_P}(O,x_{n+1})\leq 2\epsilon_n+2\epsilon_{n+1}.
    \end{equation}
	Let 
\begin{equation}
    \gamma=\gamma_0*\gamma_1*\gamma_2*\cdots
\end{equation}
be the concatenation of the segments.
Due to
\begin{equation}
    \sum_{n=0}^{\infty}\ell(\gamma_n)
    < 2\sum_{n=0}^{\infty}(\epsilon_n+\epsilon_{n+1}) = 2\sum_{n=0}^{\infty}(2^{-n-2}+2^{-n-3}) < \infty,
\end{equation}
the path $\gamma$ has finite total length in $(S-V,d^*)$. Note that $\gamma$ escapes every compact subset of $(P^*,d^*)$ and $P^*$ is not compact in $(S-V,d^*)$. Thus $(S-V,d^*)$ is incomplete.

\bibliographystyle{plain}
\bibliography{reference}
\end{document}